\documentclass[11pt,a4paper]{article}
\usepackage[utf8]{inputenc}
\usepackage[T1]{fontenc}
\usepackage{amsmath}
\usepackage{amsfonts}
\usepackage{amssymb}
\usepackage{amsthm}
\usepackage{bm}
\usepackage{enumitem}
\usepackage{upgreek}
\usepackage{mathrsfs}
\usepackage{color}

\theoremstyle{plain}
\newtheorem{theorem}{Theorem}
\newtheorem{assumption}[theorem]{Assumption}

\newtheorem{proposition}[theorem]{Proposition}

\newtheorem{lemma}[theorem]{Lemma}
\newtheorem{remark}[theorem]{Remark}
\newtheorem{corollary}[theorem]{Corollary}
\newtheorem*{remarque*}{Remark}

\newcommand{\dr}{\mathrm{d}}
\newcommand{\sca}{\mathfrak{s}}
\newcommand{\crule}[3][c]{%
    \par\noindent
    \makebox[\linewidth][#1]{\rule{#2\linewidth}{#3}}}

\usepackage[left=2.5cm,right=2.5cm,top=2cm,bottom=2cm]{geometry}
\usepackage{graphicx}
\usepackage{subcaption}
\usepackage{float}
\usepackage{verbatim}
\usepackage[colorlinks=true,
            linkcolor=black,
            urlcolor=black,
            citecolor=black]{hyperref}
\usepackage{cite}
\usepackage{dsfont}
\usepackage[multiple]{footmisc}

\title{\bf{Fractional diffusion limit for a kinetic Fokker-Planck equation with diffusive boundary conditions in the half-line}}
\author{Loïc Béthencourt\thanks{Sorbonne Université, CNRS, Laboratoire de Probabilités, Statistique et Modélisation, F-75005 Paris, France. Email: loic.bethencourt@sorbonne-universite.fr.}}
\date{}

\begin{document}

\maketitle
\begin{abstract}
We consider a particle with position $(X_t)_{t\geq0}$ living in $\mathbb{R}_+$, whose velocity $(V_t)_{t\geq0}$ is a positive recurrent diffusion with heavy-tailed invariant distribution when the particle lives in $(0,\infty)$. When it hits the boundary $x=0$, the particle restarts with a random strictly positive velocity. We show that the properly rescaled position process converges weakly to a stable process reflected on its infimum. From a P.D.E. point of view, the time-marginals of $(X_t, V_t)_{t\geq0}$ solve a kinetic Fokker-Planck equation on $(0,\infty)\times\mathbb{R}_+ \times \mathbb{R}$ with diffusive boundary conditions. Properly rescaled, the space-marginal converges to the solution of some fractional heat equation on $(0,\infty)\times\mathbb{R}_+$.
\end{abstract}

\crule{.75}{0.5pt}
\noindent
2010 \textit{Mathematics Subject Classification:} 60F17, 60J55, 60J60.

\noindent
\textit{Keywords and phrases: Fractional diffusion limit, Kinetic Fokker-Planck equation with diffusive boundary conditions, Scaling limit, Stable process reflected on its infimum, Reflected Langevin-type process.}

\section{Introduction}

In the last two decades, many mathematical works showed how to derive \textit{anomalous diffusion limit} results, also called \textit{fractional diffusion limits}, from different kinetic equations with heavy-tailed equilibria. In short, these types of results state that the properly rescaled density of the position of a particle subject to some kinetic equation, is asymptotically non-gaussian. A case of particular interest is when  the scaling limit of the position of the particle is a stable process, of which the time-marginals satisfies the fractional heat equation.

\medskip
Mellet, Mischler and Mouhot \cite{mellet2011fractional} showed a fractional diffusion limit result for a linearized Boltzmann equation with heavy-tailed invariant distribution using Fourier transform arguments, a result which was improved by Mellet \cite{mellet2010fractional} using a moments method. The proofs rely entirely on analytic tools. A similar result was shown in Jara, Komorowski and Olla \cite{ollamilton}, although it is derived from an $\alpha$-stable central limit theorem for additive functional of Markov chains, i.e. from a probabilistic result.

\medskip
Anomalous diffusion limits also occur for kinetic equations with degenerate collision frequency, see Ben Abdallah, Mellet and Puel \cite{ben2011anomalous}, and in transport of particles in plasma, see Cesbron, Mellet and Trivisa \cite{cesbron2012anomalous}. In \cite{bouin2020quantitative}, Bouin and Mouhot propose a unified approach to derive fractional diffusion limits from several linear collisional kinetic equations.

\medskip
In \cite{lebeau2019diffusion}, Lebeau and Puel showed a fractional diffusion limit result for a one-dimensional Fokker-Planck equation with heavy-tailed equilibria. From a probabilistic point of view, the corresponding toy model is a one-dimensional particle whose velocity is subject to a restoring force $\mathrm{F}$ and random shocks modeled by a Brownian motion $(B_t)_{t\geq0}$, which leads to the following stochastic differential equation:
\begin{equation}\label{eq_non_reflected}
    V_t = v_0 + \int_0^t \mathrm{F}(V_s)\dr s + B_t, \quad X_t = x_0 + \int_0^t V_s \dr s,
\end{equation}

\noindent
where $(x_0, v_0)\in\mathbb{R}^2$, $(V_t)_{t\geq0}$ and $(X_t)_{t\geq0}$ are the velocity and position processes of the particle. The force at stake is $\mathrm{F}(v) = -\frac{\beta}{2}\frac{v}{1+v^2}$, where $\beta\in(1,5)\setminus\{2, 3, 4\}$ leading to an invariant measure which behaves as $(1+|v|)^{-\beta}$ as $|v|\to\infty$. Their result states that in this case, the properly rescaled position process resembles a stable process in large time. When $\beta > 5$, Nasreddine and Puel \cite{nasreddine2015diffusion} established that $(\epsilon^{1/2}X_{t/\epsilon})_{t\geq0}$ resembles a Brownian motion as $\epsilon\to0$, corresponding to a classical diffusion limit type theorem. Then Cattiaux, Nasreddine and Puel \cite{cattiaux2019diffusion} later showed that in the critical case $\beta = 5$, the same result holds up to a logarithmic correction term.

\medskip
This phenomenon was actually observed by physicists who discovered experimentally that atoms cooled by a laser diffuse anomalously, see for instance Castin, Dalibard and Cohen-Tannoudji \cite{castin1991limits}, Sagi, Brook, Almog and
Davidson \cite{sagi2012observation} and Marksteiner, Ellinger and Zoller \cite{marksteiner1996anomalous}.  A theoretical study (see Barkai, Aghion and Kessler \cite{barkai2014area}) modeling the motion of atoms precisely by \eqref{eq_non_reflected} proved with quite a high level of rigor the observed phenomenons.

\medskip
Then, using probabilistic techniques, Fournier and Tardif \cite{fournier2018one} treated all cases of \eqref{eq_non_reflected} (i.e. $\beta>0$) for a slightly larger class of symmetric forces. When $\beta \geq 5$, the limiting distribution is Gaussian whereas when $\beta\in(1,5)$, they show that the following convergence in finite dimensional distributions holds, for any initial condition $v_0\in\mathbb{R}$:
\begin{equation}\label{result_fournier}
    \left(\epsilon^{1/\alpha}X_{t/\epsilon}\right)_{t\geq0} \overset{f.d.}{\longrightarrow} \left(\sigma_{\alpha}Z_t^{\alpha}\right)_{t\geq0}\qquad\hbox{as $\epsilon\to0$},
\end{equation}

\noindent
where $(Z_t^{\alpha})_{t\geq0}$ is a symmetric $\alpha$-stable process with $\alpha = (\beta + 1) / 3$, and $\sigma_{\alpha}$ is some positive diffusive constant. Naturally, they recover the result of \cite{nasreddine2015diffusion, cattiaux2019diffusion, lebeau2019diffusion} and even go beyond, treating the case $\beta\in(0,1)$ which was new. In this regime, the velocity is null recurrent, and the rescaled process was shown to converge to a symmetric Bessel process of dimension $\delta\in(0,2)$. Then the position process naturally converges to an integrated symmetric Bessel process, which is no longer Markov. Their proof heavily relies on Feller's representation of diffusion processes through their scale functions and speed measures, enabling them to treat all cases at once, even the critical cases $\beta = 1, 2, 5$. This method was generalized in \cite{bethencourt2021stable} and \eqref{result_fournier} was shown to be a special case of an $\alpha$-stable central limit theorem for additive functional of one-dimensional Markov processes. In a companion paper, Fournier and Tardif \cite{fournier_dimension} also showed that these results hold in any dimension and the proofs are much more involved than in dimension 1.

\medskip
As it is very natural in kinetic theory to consider gas particles interacting with a surface in thermodynamical equilibrium, corresponding to the case of diffusive boundary conditions, we propose in this article to study a version of the process $(X_t)_{t\geq0}$ living in $\mathbb{R}_+$ and reflected diffusively through its velocity when the particles hits $0$. In other words, we consider the case of a particle governed by \eqref{eq_non_reflected}, which interacts with a wall located at $x=0$. When the particle hits the boundary, it reemerges from the wall with a random velocity distributed according to some probability measure.

\medskip
The aim of this paper is to study the scaling limit of such a particle. More precisely, we will show that in the Lévy regime ($\beta \in(1,5)$), the rescaled position process converges in law to a stable process reflected on its infimum. This result should be related to the recent articles of Cesbron, Mellet and Puel \cite{cesbron2020fractional, cesbron2021fractional} and Cesbron \cite{cs2020refelctive}, which extend the results obtained in \cite{ben2011anomalous, mellet2011fractional, mellet2010fractional}. They deal with the kinetic scattering equation describing particles living in the half-space or in bounded domains, with specular and / or diffusive boundary conditions. They obtain as a scaling limit a fractional heat equation with some boundary conditions depending on the original boundary conditions. This differs from the normal diffusive case where one would always obtain the classical heat equation with Neumann boundary conditions.  Here, the kinetic equation at stake is different, but we obtain a similar limiting equation as in \cite{cesbron2020fractional, cesbron2021fractional, cs2020refelctive} with however some different behaviour at the boundary. We refer to the the PDE section below for more details. In \cite{cesbron2020fractional, cs2020refelctive}, the limiting PDE is clearly identified, but they have a uniqueness issue due to the weakness of their solutions. This problem was solved in dimension 1 in \cite{cesbron2021fractional}. We emphasize that we have no such issue and that the limiting process is quite explicit. We also point out that their result only holds for $\alpha\in(1,2)$.

\medskip
We should also mention the works of Komorowski, Olla, Ryzhik \cite{komo_olla} and Bogdan, Komorowski, Marino \cite{bogdan2022anomalous}, which are, to our knowledge, the only probabilistic results of fractional diffusion limits with boundary interactions. They both study a scattering equation (linear Boltzmann), as in \cite{cesbron2020fractional, cesbron2021fractional, cs2020refelctive}, but with a mixed reflective / transmissive / absorbing boundary conditions. We refer to the comments section below, where these papers are further discussed.

\medskip
Let us now introduce more fomally the model studied. We will denote by $\mathbb{N} = \{1,2,3, \cdots\}$ the set of positive integers. Let $v_0 > 0$, $(B_t)_{t\geq0}$ be a Brownian motion and $(M_n)_{n\in\mathbb{N}}$ be a sequence of i.i.d. random variables whose law $\mu$ is supported in $(0,\infty)$. Everything is assumed to be independent. The object at stake in this article is the strong Markov process $(\bm{X}_t, \bm{V}_t)_{t\geq0}$ valued in $E = ((0,\infty)\times\mathbb{R}) \cup (\{0\} \times(0,\infty))$ and defined by the following stochastic differential equation
\begin{equation}\label{reflected_equation}
    \left\lbrace
        \begin{aligned}
            \bm{X}_t & = x_0 + \int_0^t \bm{V}_s \dr s, \\
            \bm{V}_t & = v_0 + \int_0^t \mathrm{F}(\bm{V}_s)\dr s + B_t + \sum_{n\in\mathbb{N}}\left(M_n - \bm{V}_{\tau_n-}\right)\bm{1}_{\{\tau_n \leq t\}}, \\
            \bm{\tau}_1 & = \inf\{t> 0, \: \bm{X}_t = 0\} \quad \text{and} \quad \bm{\tau}_{n+1} = \inf\{t> \bm{\tau}_n, \: \bm{X}_t = 0\},
        \end{aligned}
    \right.
\end{equation}

\noindent
where $\mathrm{F}$ fulfills Assumption \ref{assump_fournier} below and $(x_0, v_0)\in E$. This equation is well-posed and we refer to Subsection \ref{subsection_well_pd} for more details. It describes the motion of a particle evolving in $[0,\infty)$ and being reflected when it hits $0$. More precisely, the velocity $(\bm{V}_t)_{t\geq0}$ and position $(\bm{X}_t)_{t\geq0}$ processes are governed by \eqref{eq_non_reflected} when $\bm{X}_t > 0$, and the particle is reflected through the velocity when it hits the boundary, i.e. when $\bm{X}_t = 0$. Note that $t \mapsto \bm{V}_t$ is a.s. càdlàg and that the jumps only occur when the particle hits the boundary, i.e. when $t = \bm{\tau}_n$ for some $n\in\mathbb{N}$. In this case, the value of the velocity after the jump is
\begin{equation*}
    \bm{V}_{\bm{\tau}_n} = \bm{V}_{\bm{\tau}_n-} + \Delta \bm{V}_{\bm{\tau}_n} = M_n
\end{equation*}

\noindent
As for every $n\in\mathbb{N}$, $M_n > 0$ a.s., the particle reaches $(0,\infty)$ instantaneously after hitting the boundary and thus spend a strictly positive amount of time in $(0,\infty)$. Hence the zeros of $(\bm{X}_t)_{t\geq0}$ are countable and the successive hitting times $(\bm{\tau}_n)_{n\in\mathbb{N}}$ are well defined. Finally, we point out that since a solution $(X_t, V_t)_{t\geq0}$ of \eqref{eq_non_reflected} reaches $\{0\}\times\mathbb{R}$ with a necessarily non-positive velocity, the process $(\bm{X}_{t-}, \bm{V}_{t-})_{t\geq0}$ is valued in $((0,\infty)\times\mathbb{R}) \cup (\{0\} \times(-\infty,0])$.

\medskip
Let us mention some related works, dealing with Langevin-type models reflected in the half-space with diffusive-reflective boundary conditions. In \cite{jacob2012langevin, jacob2013langevin}, Jacob studies the classical Langevin process, namely the process $(B_t, \int_0^t B_s \dr s)_{t\geq0}$, reflected at a partially elastic boundary. In \cite{jabir2019stable}, Jabir and Profeta study a stable Langevin process with diffusive-reflective boundary conditions. Roughly, their work treats the well-posedness of the equations and wether or not, the obtained process is conservative, observing some phase transitions. The question of the existence of a scaling limit does not make sense since the processes are intrinsically self-similar (at least in the purely reflective case).

\medskip
In the rest of the article, the process $(X_t, V_t)_{t\geq0}$ will always be defined by \eqref{eq_non_reflected}, and will be refered to as the ''free process``. On the other hand, $(\bm{X}_t, \bm{V}_t)_{t\geq0}$ will always refer to the process defined by \eqref{reflected_equation} and will be called the ''reflected process`` or ''reflected process with diffusive boundary conditions``. Let us now be a little more precise on the assumptions we will suppose. Regarding the force field $\mathrm{F}$, we assume the following.

\begin{assumption}\label{assump_fournier}
The Lipschitz and bounded force $\mathrm{F}:\mathbb{R}\to\mathbb{R}$ is such that $\mathrm{F} = \frac{\beta}{2}\frac{\Theta '}{\Theta}$ where $\beta \in (1,5)$ and $\Theta:\mathbb{R}\to(0,\infty)$ is a $C^1$ even function satisfying $\lim_{v\to\pm\infty}|v|\Theta(v) = 1$.
\end{assumption}

This assumption is a little bit stronger than the one in \cite{fournier2018one}, and thus \eqref{result_fournier} holds for the free process. The typical force we have in mind is $\mathrm{F}(v) = -\frac{\beta}{2}\frac{v}{1+v^2}$ studied in \cite{lebeau2019diffusion, nasreddine2015diffusion, cattiaux2019diffusion}, corresponding to the function $\Theta(v) = (1 + v^2)^{-1/2}$. The value of the diffusive constant $\sigma_{\alpha}$ from \eqref{result_fournier} is given by
\begin{equation}\label{sigma}
 \sigma_\alpha = \frac{3^{1- 2\alpha}2^{\alpha - 1}\pi c_{\beta}}{\Gamma^2(\alpha)\sin(\pi\alpha/2)}, \quad \text{where} \quad c_{\beta} = \left(\int_{\mathbb{R}}\Theta^{\beta}(v)\dr v\right)^{-1}
\end{equation}

\noindent
Regarding the probability measure $\mu$ governing the reflection, we will have two different assumptions.

\begin{assumption}\label{assump_measure}
\noindent
\begin{enumerate}[label=(\roman*)]
 \item There exists $\eta > 0$ such that $\mu$ has a moment of order $(\beta + 1) / 2 + \eta$.
 \item There exists $\eta > 0$ such that $\mu$ has a moment of order $(\beta + 1)(\beta + 2) / 6 + \eta$.
\end{enumerate}
\end{assumption}

Since $\beta > 1$, Assumption \ref{assump_measure}-\textit{(ii)} is obviously stronger than Assumption \ref{assump_measure}-\textit{(i)}. These assumptions are satisfied by many probability measures of interest such as the Gaussian density, every sub-exponential distributions, and many heavy-tailed distributions.

\medskip
During the whole paper, Assumption \ref{assump_fournier} is always in force and Assumption \ref{assump_measure} will be mentioned when necessary.

\medskip
For a family of processes $((Y_t^{\epsilon})_{t\geq0})_{\epsilon>0}$, we say that $(Y_t^{\epsilon})_{t\geq0} \overset{f.d.}{\longrightarrow}(Y_t^{0})_{t\geq0}$ as $\epsilon\to0$ if for all $n\geq1$, for all $t_1, \dots, t_n \geq 0$, the vector $(Y_{t_i}^{\epsilon})_{1\leq i \leq n}$ converges in law to $(Y_{t_i}^{0})_{1\leq i \leq n}$ in $\mathbb{R}^n$. Most of the convergence results obtained are actually stronger than convergence in finite dimensional distributions, i.e. we obtain convergence in law of processes in the space of càdlàg functions. As the usual Skorokhod topology is not suited for convergence of continuous processes to a discontinuous process, we will instead use a weaker topology, namely the $\bm{\mathrm{M}}_1$-topology, and we refer to Section \ref{section_conv_free} for more details. The following theorem is the main result of this paper.
\begin{theorem}\label{main_theorem}
 Grant Assumptions \ref{assump_fournier} and \ref{assump_measure}-\textit{(i)}, and let $(\bm{X}_t, \bm{V}_t)_{t\geq0}$ be a solution of \eqref{reflected_equation} starting at $(0,v_0)$ with $v_0 > 0$. Let $(Z_t^{\alpha})_{t\geq0}$ be a symmetric stable process with $\alpha = (\beta + 1) / 3$ and such that $\mathbb{E}[e^{i\xi Z_t^{\alpha}}] = \exp(-t\sigma_\alpha |\xi|^{\alpha})$ where $\sigma_{\alpha}$ is defined in \eqref{sigma}. Let $(R_t^{\alpha})_{t\geq0}$ be the stable process reflected on its infimum, i.e. $R_t^{\alpha} = Z_t^{\alpha} - \inf_{s\in[0,t]}Z_s^{\alpha}$. Then we have
 \[
(\epsilon^{1/\alpha}\bm{X}_{t/\epsilon})_{t\geq0}\overset{f.d.}{\longrightarrow}(R_t^{\alpha})_{t\geq0}\qquad \text{as }\epsilon\to0.
\]

\noindent
Moreover, if we grant Assumption \ref{assump_measure}-\textit{(ii)}, this convergence in law holds in the space of càdlàg functions endowed with the $\bm{\mathrm{M}}_1$-topology.
\end{theorem}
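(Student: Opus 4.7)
The plan is to combine the Fournier–Tardif convergence \eqref{result_fournier} of the free position process to a symmetric stable process with an excursion analysis of the reflected process $\bm{X}$, identifying the limit via the Skorokhod reflection map at zero. The observation is that $R_t^\alpha = Z_t^\alpha - \inf_{s \leq t} Z_s^\alpha$ is precisely the Skorokhod reflection $\Phi(Z^\alpha)$ of $Z^\alpha$ at zero, and $\Phi$ is continuous on the space of càdlàg functions endowed with the $\bm{\mathrm{M}}_1$-topology. Letting $\hat{X}_t := X_t - \inf_{s \leq t}(X_s \wedge 0)$ denote the Skorokhod reflection of the free process started at $(0,v_0)$, the continuous mapping theorem applied to \eqref{result_fournier} yields $(\epsilon^{1/\alpha}\hat{X}_{t/\epsilon})_{t\geq 0} \to (R_t^\alpha)_{t\geq 0}$ with the mode of convergence (f.d.d.\ or $\bm{\mathrm{M}}_1$) inherited from that of $X$. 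The heart of the proof is therefore to show that $\bm{X}$ and $\hat{X}$ share the same scaling limit, i.e.\ that $\epsilon^{1/\alpha}(\bm{X}_{t/\epsilon} - \hat{X}_{t/\epsilon}) \to 0$ in probability.

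For this I would carry out a sharp excursion analysis of $\bm{X}$. By the strong Markov property applied at each hitting time $\bm{\tau}_n$, the successive excursions of $\bm{X}$ above zero are conditionally independent given the i.i.d.\ relaunch velocities $(M_n)$, each being distributed as a free process started at $(0,M_n)$ and killed at its first return to zero. The asymptotic self-similarity encoded in \eqref{result_fournier} suggests that an excursion starting from velocity $v$ has duration of order $v^3$ and spatial extent of order $v^2$ when $v$ is large. The exponents $(\beta+1)/2$ and $(\beta+1)(\beta+2)/6 = 2\alpha$ appearing in Assumption \ref{assump_measure} are then precisely what is required so that the scaled contribution $\epsilon^{1/\alpha}M_n^2$ of a single excursion vanishes as $\epsilon \to 0$, and so that no excursion lasts macroscopically long at the scale $t/\epsilon$.

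Combining both ingredients, the finite-dimensional convergence at a fixed $t$ should follow by decomposing $\bm{X}_{t/\epsilon}$ into its value at the last reflection time before $t/\epsilon$—which coincides with $\hat{X}_{t/\epsilon}$ up to an excursion-sized error controlled by Assumption \ref{assump_measure}-\textit{(i)}—plus the displacement accumulated within the current excursion. Upgrading to $\bm{\mathrm{M}}_1$ convergence under the stronger Assumption \ref{assump_measure}-\textit{(ii)} requires tightness, to be obtained by bounding the oscillation of $\bm{X}$ on small intervals by the sum of excursion maxima straddling them, a sum which is uniformly controllable thanks to the stronger moment condition.

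Main obstacle: I expect the sharp excursion tail estimates for $T(v) = \inf\{t > 0, X_t = 0 \mid (X_0,V_0) = (0,v)\}$ and for the corresponding excursion supremum as $v \to \infty$ to be the technical core. Since the free process is only \emph{asymptotically} self-similar, these estimates should rely on a Feller-type analysis of the velocity diffusion in the spirit of \cite{fournier2018one,bethencourt2021stable}, and it is ultimately these estimates that dictate the precise exponents appearing in Assumption \ref{assump_measure}.
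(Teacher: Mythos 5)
Your overall skeleton (compare $\bm{X}$ with the free process reflected at its running infimum, use continuity of the reflection map for the $\bm{\mathrm{M}}_1$-topology, control the discrepancy through the boundary collisions with moment assumptions on $\mu$) is indeed the strategy of the paper, but the proposal has genuine gaps. First, a preliminary one: from \eqref{result_fournier}, which is only finite-dimensional, you cannot deduce anything about $\hat{X}_t=X_t-\inf_{s\le t}(X_s\wedge0)$, since the running infimum is a path functional; even the f.d.d.\ half of the theorem requires a \emph{functional} convergence of the free process, and this is precisely Theorem \ref{conv_m2}, which is not in \cite{fournier2018one} and has to be proved (Section \ref{append_proof_m1}). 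Second, and more seriously, your central claim $\epsilon^{1/\alpha}(\bm{X}_{t/\epsilon}-\hat{X}_{t/\epsilon})\to0$ in probability is exactly the hard part, and the excursion sketch you give does not contain the ingredient that makes it provable: one must control the \emph{number} of boundary collisions before time $t/\epsilon$, and this rests on the exact persistence asymptotics $\mathbb{P}(\tau>t)\sim Ct^{-1/2}$ for the duration of an excursion relaunched with $\mu$-distributed velocity (Lemma \ref{persistence_result}, proved via Wiener--Hopf factorization, excursion theory and Tauberian arguments). The lower bound in this tail is what forces the collision count to be $O_{\mathbb{P}}(\epsilon^{-1/2})$, so that the per-collision losses — whose moments are what Assumption \ref{assump_measure} actually controls, through $\mathbb{E}[T_0^{1/2+\delta}]<\infty$, $\mathbb{E}[(\sigma-\rho)^{1/2+\delta}]<\infty$ and $\mathbb{E}[X_{T_0}^{\alpha/2}]<\infty$ — sum to $o(t)$ and the time change satisfies $A_t'\sim t$ (Proposition \ref{conv_time_change}). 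Nothing in your plan produces this $t^{-1/2}$ tail; the part you designate as the technical core (the scaling in $v$ of a single excursion) is in fact the easy part, handled by Kac's moment formula, and your stated exponents are off: starting from velocity $v$ the duration is of order $v^{2}$ and the spatial extent of order $v^{3}$ (since $\sca(v)\sim v^{\beta+1}$ and $X_{T_0}\approx[\sca(v)]^{1/\alpha}$ with $\alpha=(\beta+1)/3$), not the reverse, and $(\beta+1)(\beta+2)/6\neq2\alpha$ unless $\beta=2$.

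There is also a gap in how the comparison itself would be carried out. Under the natural coupling (same driving Brownian motion) one only gets the one-sided bound $\bm{X}_t\ge \hat{X}_t$ (Proposition \ref{first_compar}); the matching upper bound is not available for that coupling and is obtained in the paper by a separate construction, the excised process $\mathfrak{X}$ built from stopping times $(\tau_n,\sigma_n)$ of the free path with $\mathfrak{X}\le\mathcal{X}$ and $\bm{X}_t=\mathfrak{X}_{T_t'}$ (Proposition \ref{second_comparison}, Theorem \ref{thm_second_cons}), after which the f.d.d.\ limit is extracted by sandwiching with suprema over small time windows rather than by proving your in-probability difference statement, which the paper never establishes and which is unclear under the naive coupling. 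So to repair the proposal you would need (i) the $\bm{\mathrm{M}}_1$ convergence of the free process, (ii) an explicit two-sided comparison construction replacing the vague ``last reflection time'' decomposition (note $\bm{X}$ vanishes at reflection times, so that decomposition as written says nothing), and (iii) the persistence estimate governing the collision count and the time change — which together constitute essentially all of the paper's actual proof.
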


It is likely that this theorem could be extended for any initial condition $(x_0, v_0) \in E$ with some small adjustments but for the sake of simplicity we will only consider the case $x_0 = 0$ and $v_0 > 0$.

\medskip
Let us try to explain informally why the limiting process should be the stable process reflected on its infimum. Remember that when $\bm{X}_{t} >0$, the process is governed by \eqref{eq_non_reflected}, and therefore, we should expect the limit process to behave like a stable process in $(0,\infty)$. Only the behavior at the boundary is to be identified. When $\bm{X}_{t}$ reaches the boundary with a very high speed, corresponding to a jump in the limit, it is suddenly reflected and slowed down, as the new velocity is distributed according to $\mu$; and the process somehow restarts afresh. As a consequence, we should expect the following behavior for the limiting process: when it tries to jump across the boundary, the jump is ''cut`` and the process restarts from $0$. This is exactly the behavior of $(R_t^{\alpha})_{t\geq0}$: when $R_t^{\alpha} > 0$, it behaves as $Z_t^{\alpha}$ and when $Z_t^{\alpha}$ jumps below its past infimum, the latter one is immediately ''updated``, corresponding to $R_t^{\alpha}$ trying to cross the boundary and being set to $0$.

\medskip
We emphasize that $(R_t^{\alpha})_{t\geq0}$ is a Markov process, see \cite[Chapter 6, Proposition 1]{bertoin1996levy}. Let us now point out an interesting phenomena: the limiting process really depends on the way we reflect the inital process. For instance, let us consider \eqref{reflected_equation} with a specular boundary condition, i.e. when the process hits the boundary, it is reflected with the same incoming velocity, which is flipped. Then it is easy to see that, since the force field is symmetric, $(|X_t|, \text{sgn}(X_t)V_t)_{t\geq0}$ is a solution of the corresponding reflected equation, where $(X_t, V_t)_{t\geq0}$ is a solution of \eqref{eq_non_reflected}. Then by \eqref{result_fournier}, the limiting process is $(|Z_t^{\alpha}|)_{t\geq0}$ whose behavior at the boundary is different from $(R_t^{\alpha})_{t\geq0}$: when it tries to cross the boundary, the process is moved back in $\mathbb{R}_+$ by a mirror reflection. Note that $(|Z_t^{\alpha}|)_{t\geq0}$ is a Markov process only in the symmetric case, so it is not clear what happens in the disymmetric case.

\medskip
Unlike the Brownian motion, there is no unique way to reflect a stable process. Indeed, by a famous result of Paul Lévy, it is well known that $(|B_t|)_{t\geq0}\overset{d}{=}(B_t - \inf_{s\in[0,t]}B_s)_{t\geq0}$. This is to be related with the fact that unlike the Laplacian, the fractional Laplacian is a non-local operator. We believe that, with a little bit of work, we could extend Theorem \ref{main_theorem} to the diffusive regime, i.e. when $\beta \geq 5$, and the rescaled process would converge to a reflected Brownian motion.

\medskip
On our way to establish Theorem \ref{main_theorem}, we will encouter a singular equation which describes the motion of a particle reflected at a completely inelastic boundary, which is very close to the equation studied by Bertoin in \cite{bertoin2008second}. We will study a solution of the following stochastic differential equation.
\begin{equation}\label{reflected_equation_zero}
    \left\lbrace
        \begin{aligned}
            \mathbb{X}_t & = \int_0^t \mathbb{V}_s \dr s, \\
            \mathbb{V}_t & = v_0 + \int_0^t \mathrm{F}(\mathbb{V}_s)\dr s + B_t - \sum_{0 <s\leq t}\mathbb{V}_{s-}\bm{1}_{\{\mathbb{X}_s = 0\}},
        \end{aligned}
    \right.
\end{equation}

\noindent
where $v_0 >0$. Let $a > 0$ and consider a solution $(\bm{X}_t^a, \bm{V}_t^a)_{t\geq0}$ of \eqref{reflected_equation} starting at $(0, v_0)$ for the particular choice $\mu = \delta_a$. Then, informally, $(\bm{X}_t^a, \bm{V}_t^a)_{t\geq0}$ should tend to $(\mathbb{X}_t, \mathbb{V}_t)_{t\geq0}$ as we let $a\to0$. Since for every $a>0$, the rescaled process $(\epsilon^{1/\alpha}\bm{X}_{t/\epsilon}^a)_{t\geq0}$ converges in law to $(R_t^{\alpha})_{t\geq0}$, it should be expected that $(\mathbb{X}_t)_{t\geq0}$ has the same scaling limit. We will see that it is indeed the case, see Theorem \ref{cv_free_ref} below. While it is clear that we can construct a solution to \eqref{reflected_equation}, it is non-trivial that \eqref{reflected_equation_zero} posesses a solution and let us quickly explain why.

\medskip
Consider a solution $(X_t, V_t)_{t\geq0}$ of \eqref{eq_non_reflected} starting at $(0,0)$, then one can easily see that $0$ is an accumulation point of the instants at which $X_t = 0$. Now if we consider a solution $(\mathbb{X}_t, \mathbb{V}_t)_{t\geq0}$ of \eqref{reflected_equation_zero}, its energy is fully absorbed when the particle hits the boundary i.e. when $\mathbb{X}_t = 0$, the value of the velocity is $\mathbb{V}_t = \mathbb{V}_{t-} - \mathbb{V}_{t-} = 0$. Hence it is not clear at all whether the particle will ever reemerge in $(0,\infty)$ or will remain stuck at $0$. As we will see, it turns out that \eqref{reflected_equation_zero} admits a conservative solution, which never gets stuck at $0$.

\medskip
In \cite{bertoin2007reflecting, bertoin2008second}, Bertoin studies equation \eqref{reflected_equation_zero} in the special case $\mathrm{F} =0$ and he establishes the existence and uniqueness (in some sense). We will not be interested in studying the uniqueness of \eqref{reflected_equation_zero} in our case but we will use the construction developped in  \cite{bertoin2007reflecting} and \cite{bertoin2008second}.

\subsection*{Comments and comparison with the litterature}
At this point, we should discuss the works of Komorowski, Olla, Ryzhik \cite{komo_olla} and Bogdan, Komorowski, Marino \cite{bogdan2022anomalous}. In \cite{komo_olla}, they study a scattering equation with a reflective / transmissive / absorbing boundary conditions. When the particle hits the boundary $x=0$, it is either reflected (by flipping the incoming velocity), either unchanged (transmitted), or killed. Their model is such that the rescaled particle always satisfies $\bm{X}_0^\epsilon = x$ for some $x > 0$, and they find the following limiting process. Consider a symmetric $\alpha$-stable process $(Z_t^\alpha)_{t\geq0}$ started at $x$ with $\alpha\in(1,2)$, as well as its successive crossing times $(\sigma_n)_{n\in\mathbb{N}}$ at the level $0$. Consider also i.i.d. random variables $(\xi_n)_{n\in\mathbb{N}}$ such that $\mathbb{P}(\xi_1 = 1) = p_+$, $\mathbb{P}(\xi_1 = -1) = p_-$ and $\mathbb{P}(\xi_1 = 0) = p_0$, with $p_+, p_-, p_0 > 0$ such that $p_+ + p_- + p_0 = 1$. The reflected stable process $(R_t^\alpha)_{t\geq0}$ is then defined as follows: $R_t^\alpha = Z_t^\alpha$ on $[0,\sigma_1)$, and for any $n\in\mathbb{N}$ and any $t\in[\sigma_{n}, \sigma_{n+1})$,
\[
 R_t^\alpha = \Big(\prod_{k=1}^n \xi_k\Big)Z_t^\alpha.
\]
In other words, each time $(Z_t^\alpha)_{t\geq0}$ crosses the boundary, the trajectory of $(R_t^\alpha)_{t\geq0}$ is transmitted with probability $p_+$, reflected with probability $p_-$ or absorbed with probability $p_0$. As it is well-known, a symmetric stable process with index $\alpha > 1$ eventually touches $0$, but crosses the boundary infinitely many times before doing so, see for instance \cite[Chapter VIII, Proposition 8]{bertoin1996levy}. Therefore $\sigma_\infty = \lim_{n\to\infty}\sigma_n$ is a.s. finite but since $p_0 > 0$, the reflected process is a.s. absorbed before $\sigma_\infty$ and $(R_t^\alpha)_{t\geq0}$ is naturally set to $0$ after $\sigma_\infty$.

\medskip
In \cite{bogdan2022anomalous}, they study the very same model, but the probability $p_0^\epsilon$ for the kinetic process to be absorbed when it hits the boundary vanishes as $\epsilon\to0$ and behaves as $1/|\log\epsilon|$. The limiting process obtained is the same as above with $p_0 = 0$ for $t < \sigma_\infty$, and the process is absorbed at $0$ at $t = \sigma_\infty$.

\medskip
Observe that in both cases, the limiting process is killed before (or precisely when) hitting the boundary. In the present paper, we thus have a substantial additional difficulty which is to characterize the limiting process when starting from $0$. Indeed, a symmetric stable process started from $0$ touches $0$ infinitely many times immediately after (when $\alpha > 1$), so that the behavior of the limiting process started from $0$ is not trivially defined. We cannot avoid this difficulty as the kinetic process is restarted with some small velocity.

\medskip
We emphasize that these kinds of results are very recent and were mostly treated from an analytic point of view, see \cite{cesbron2020fractional, cesbron2021fractional, cs2020refelctive}. With the papers \cite{komo_olla, bogdan2022anomalous}, our work seems to be the only probabilistic study of fractional diffusion limit with boundary conditions. Our proof borrows different tools from stochastic analysis such as excursion theory, Wiener-Hopf factorization and a bit of enlargment of filtrations.

\medskip
Regarding Assumption \ref{assump_measure}-\textit{(i)}, we believe that it is near optimality. As we will see, it appears naturally in the proofs at several places. Moreover, we believe the limiting process should differ at criticality, i.e. when $v\mapsto\mu((v,\infty))$ is regularly varying with index $-(\beta + 1) / 2$ as $v\to\infty$. Roughly, the restarting velocities are no longer negligeable and, the limiting process directly enters the domain after hitting $0$ by a jump with ''law'' $x^{-\alpha}\dr x$. Observe that this is the reason why they \cite{cesbron2020fractional, cesbron2021fractional} have to assume $\alpha > 1$. We think that, at criticality, we might obtain the same limiting distribution as in \cite{cesbron2020fractional, cesbron2021fractional}.

\medskip
Assumption \ref{assump_measure}-\textit{(ii)} is technical and we believe the convergence in the $\bm{\mathrm{M}}_1$-topology should also hold under Assumption \ref{assump_measure}-\textit{(i)}.

\medskip
The present results might be extended, through the same line of proof, to integrated powers of the velocity, i.e. to $\bm{X}_t = \int_0^t\text{sgn}(\bm{V}_s)|\bm{V}_s|^{\gamma}\dr s$ and some $\gamma >0$. At least, we already know from \cite[Theorem 5]{bethencourt2021stable} that an $\alpha$-stable central limit theorems holds for the free process.

\medskip
Finally, it would be interesting to study what happens in higher dimensions (using the results of \cite{fournier_dimension}), as well as a more complete model, with both diffusive and specular boundary conditions: when the particle hits the boundary, it is reflected diffusively with probability $p\in(0,1)$ and specularly with probability $1-p$.

\subsection*{Informal PDE description of the result}
Let us expose our result with a kinetic theory point of view, making a bridge with the P.D.E. papers \cite{nasreddine2015diffusion, cattiaux2019diffusion, lebeau2019diffusion, cesbron2020fractional, cesbron2021fractional, cs2020refelctive}. Let us denote by $f_t$ the law of the process $(\bm{X}_t, \bm{V}_t)_{t\geq0}$ solution to \eqref{reflected_equation} starting at $(0,v_0)$ with $v_0 > 0$, i.e. $f_t(\dr x, \dr v) = \mathbb{P}(\bm{X}_t \in \dr x, \bm{V}_t \in \dr v)$ which is a probability measure on $\mathbb{R}_+ \times \mathbb{R}$. Then, see Proposition \ref{prop_32} and Remark \ref{remark_33}, $(f_t)_{t\geq0}$ is a weak solution of the kinetic Fokker-Planck equation with diffusive boundary conditions
\begin{equation*}
    \left\lbrace
        \begin{aligned}
            \partial_t & f_t + v\partial_x f_t = \frac{1}{2}\partial_v^2 f_t - \partial_v[\mathrm{F}(v)f_t]  \quad &\text{for} \:\: (t,x,v)\in(0,\infty)^2 \times \mathbb{R} \\
            v &  f_t(0,v) = -\mu(v)\int_{(-\infty, 0)}w f_t(0,w) \dr w & \text{for} \:\: (t,v)\in(0,\infty)^2 \\
            \:\:f_0&  = \delta_{(0,v_0)} 
        \end{aligned}
    \right.
\end{equation*}
where we assume for simplicity that $\mu(dv) = \mu(v) \dr v$.

\smallskip
We now set $\rho_t(\dr x) = \mathbb{P}(R_t^\alpha \in \dr x)$ where $(R_t^\alpha)_{t\geq0}$ is the limiting process defined in Theorem \ref{main_theorem}. Then, see Proposition \ref{yyy} and Remark \ref{ggg}, $(\rho_t)_{t\geq0}$ is a weak solution of
\begin{equation*}
    \left\lbrace
        \begin{aligned}
            &\partial_t \rho_t(x) = \frac{\sigma_\alpha}{2}\int_{\mathbb{R}}\frac{\rho_t(x-z) \bm{1}_{\{x > z\}} - \rho_t(x) + z\partial_x\rho_t(x)\bm{1}_{\{|z|< x\}}}{|z|^{1+\alpha}}\dr z \;\; &\text{for} \:\: (t,x)\in(0,\infty)^2, \\
            &\int_{0}^\infty \rho_t(x) \dr x = 1 \;\; &\text{for} \:\: t\in(0,\infty), \\
            &\:\rho_0(\cdot)  = \delta_{0}. 
        \end{aligned}
    \right.
\end{equation*}
We believe that the above equation might be well-posed, just as for the heat equation $\partial_t \rho_t(x) = \partial_{xx}\rho_t(x)$ on $(0,\infty)^2$ where the Neumann boundary condition $\partial_x\rho_t(0) = 0$ can classicaly be replaced by the constraint $\int_{0}^\infty \rho_t(x) \dr x = 1$. The following statement immediately follows from Theorem \ref{main_theorem}
\begin{corollary}
 Grant Assumptions \ref{assump_fournier} and \ref{assump_measure}-\textit{(i)}. Let $g_t(\dr x) = \int_{v\in\mathbb{R}} f_t(\dr x, \dr v) = \mathbb{P}(\bm{X}_{t} \in \dr x)$ and set with an abuse of notation $g_t^{\epsilon}(x) = \epsilon^{-1/\alpha}g_{t/\epsilon}(\epsilon^{-1/\alpha}x)$, that is $g_t^\epsilon$ is the pushforward measure of $g_{t/\epsilon}$ by the map $x\mapsto
 \epsilon^{1/\alpha}x$. It holds that for each $t\geq0$, $g_t^{\epsilon}$ converges weakly (in the sense of measures) to $\rho_t$ as $\epsilon\to 0$.
\end{corollary}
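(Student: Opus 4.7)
This corollary is essentially immediate from Theorem \ref{main_theorem}, and my plan is simply to unpack the definitions. First, I would observe that by construction $g_t^\epsilon$ is precisely the law of the random variable $\epsilon^{1/\alpha}\bm{X}_{t/\epsilon}$: indeed, $g_{t/\epsilon}$ is the law of $\bm{X}_{t/\epsilon}$, and pushing it forward by $x\mapsto\epsilon^{1/\alpha}x$ yields the law of $\epsilon^{1/\alpha}\bm{X}_{t/\epsilon}$. Similarly, $\rho_t$ is by definition the law of $R_t^\alpha$.

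Next, I would invoke Theorem \ref{main_theorem}, which provides convergence in finite-dimensional distributions of $(\epsilon^{1/\alpha}\bm{X}_{t/\epsilon})_{t\geq 0}$ to $(R_t^\alpha)_{t\geq 0}$ as $\epsilon\to 0$. Taking $n=1$ in the definition of finite-dimensional convergence gives that for each fixed $t\geq 0$, the one-dimensional random variable $\epsilon^{1/\alpha}\bm{X}_{t/\epsilon}$ converges in law to $R_t^\alpha$ in $\mathbb{R}$. Since convergence in law of $\mathbb{R}$-valued random variables is by definition weak convergence of their distributions, this means $g_t^\epsilon \to \rho_t$ weakly in the sense of measures, which is exactly the conclusion.

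There is no real obstacle here: the corollary is a direct translation from the probabilistic statement of Theorem \ref{main_theorem} to its PDE counterpart at the level of space-marginals. The stronger $\bm{\mathrm{M}}_1$-convergence under Assumption \ref{assump_measure}-\textit{(ii)} is not needed; Assumption \ref{assump_measure}-\textit{(i)} already delivers the pointwise-in-$t$ weak convergence that the corollary requires.
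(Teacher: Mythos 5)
Your proof is correct and matches the paper's reasoning: the corollary is stated as an immediate consequence of Theorem \ref{main_theorem}, obtained exactly as you say by identifying $g_t^\epsilon$ with the law of $\epsilon^{1/\alpha}\bm{X}_{t/\epsilon}$, $\rho_t$ with the law of $R_t^\alpha$, and taking $n=1$ in the finite-dimensional convergence, which only requires Assumption \ref{assump_measure}-\textit{(i)}.
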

It is classical, see e.g. Bertoin \cite[Chapter VI, Proposition 3]{bertoin1996levy}, that for any fixed $t\geq0$, $R_t^\alpha$ has the same law as $\sup_{s\in[0,t]}Z_s^\alpha$. Hence, the results of Doney-Savov \cite[Theorem 1]{doney_asymptotic} tell us that $\rho_t$ has a continuous density $\rho_t(x)$ satisfying the following asymptotics:
\[
 \rho_t(x) \sim A tx^{-1 - \alpha} \quad \text{as }x\to\infty, \quad \text{and}\quad \rho_t(x) \sim B t^{-1/2}x^{\alpha/2 - 1} \quad \text{as }x\to0.,
\]
for some constants $A,B >0$ and for all $t >0$.

\smallskip
Regarding the limiting fractional diffusion equation, we stress that it is not the same as the one from \cite{cesbron2020fractional, cesbron2021fractional, cs2020refelctive}. Both corresponding Markov processes possess the same infinitesimal generator, with however a different domain: they both behave like a stable process when strictly positive but are not reflected in the same manner when hitting $0$. We do not find the same limiting P.D.E. inside the domain: they have an additional term expressing that particles can jump from the boundary to the interior of the domain, whereas in our case the process $(R_t^\alpha)_{t\geq0}$ leaves $0$ continuously. More precisely, their limiting P.D.E. can be written as
\begin{align*}
 \partial_t \rho_t(x) & = \int_{\mathbb{R}}\frac{\rho_t((x-z)_+) - \rho_t(x) + z\partial_x\rho_t(x)\bm{1}_{\{|z|< x\}}}{|z|^{1+\alpha}}\dr z \\
  & = \int_{\mathbb{R}}\frac{\rho_t(x-z) \bm{1}_{\{x > z\}} - \rho_t(x) + z\partial_x\rho_t(x)\bm{1}_{\{|z|< x\}}}{|z|^{1+\alpha}}\dr z + \rho_t(0)\frac{x^\alpha}{\alpha},
\end{align*}
together with some boundary condition ensuring the mass conservation.

\subsection*{Plan of the paper and sketch of the proof}

Once the limiting process is identified, it is very natural to try to establish the scaling limit of $(X_t - \inf_{s\in[0,t]}X_s)_{t\geq0}$, where $(X_t)_{t\geq0}$ is defined by \eqref{eq_non_reflected}. It should be clear that we need more than the convergence in finite dimensional distributions, and we need at least the convergence of past supremum and infimum. This is why we will use the convergence in the $\bm{\mathrm{M}}_1$-topology. 

\medskip
In Section \ref{section_preli}, we first explain why \eqref{reflected_equation} is well-posed. Then we recall and define rigorously the notion of convergence in the $\bm{\mathrm{M}}_1$-topology. This section ends with Theorem \ref{conv_m2}, which states that the convergence \eqref{result_fournier} actually holds in the space of càdlàg functions endowed with the $\bm{\mathrm{M}}_1$-topology, strengthening the result of \cite{fournier2018one}. The proof is postponed to Section \ref{append_proof_m1}.

\medskip
In Section \ref{section_inelastic}, we study the particle reflected at a completely inelastic boundary, i.e. the solution of \eqref{reflected_equation_zero}. We will see that $(X_t - \inf_{s\in[0,t]}X_s)_{t\geq0}$ plays a central role in the construction of a solution to \eqref{reflected_equation_zero}. This construction is essentially the same as in \cite{bertoin2007reflecting, bertoin2008second}. Then we prove, see Theorem \ref{cv_free_ref}, that $(X_t - \inf_{s\in[0,t]}X_s)_{t\geq0}$ and $(\mathbb{X}_t)_{t\geq0}$ have the same scaling limit, which is $(R_t^\alpha)_{t\geq0}$. The proof relies on Theorem \ref{conv_m2}, the continuous mapping theorem and Skorokhod's representation theorem.

\medskip
In Section \ref{reflected_section}, we finally establish the scaling limit of $(\bm{X}_t)_{t\geq0}$. The proof consists in using the scaling limit of $(X_t - \inf_{s\in[0,t]}X_s)_{t\geq0}$ and to compare this process with $(\bm{X}_t)_{t\geq0}$. More precisely, we will show two comparison results. First we will see that $\bm{X}_t \geq X_t - \inf_{s\in[0,t]}X_s$. Then, inspired by the work of Bertoin \cite{bertoin2007reflecting, bertoin2008second} and its construction of a solution to \eqref{reflected_equation_zero}, we will show how we can construct a solution to \eqref{reflected_equation} from the free process $(X_t, V_t)_{t\geq0}$. From this construction, we will remark that, up to a time-change $(A_t')_{t\geq0}$, we have $\bm{X}_{A_t'} \leq X_t - \inf_{s\in[0,t]}X_s$. Then the proof is almost complete if we can show that $A_t' \sim t$ as $t\to\infty$ and we will see that it is indeed the case. Subsections \ref{section_time_change} and \ref{section_persistence} are dedicated to the proof of this result. We believe that these are the most technical parts of the paper. We stress that Assumption \ref{assump_measure} is only used in Subsection \ref{section_persistence}.

\subsection*{Acknowledgements}

The author would like to thank Nicolas Fournier for his support, and for the countless fruitful discussions and advice. The author is also grateful to Camille Tardif and Quentin Berger for the forthcoming joint work, which proved to be of great help to finish this paper. Finally, the author would like to thank two anonymous referees for their careful reading and for pointing out some subtleties.

\section{Preliminaries}\label{section_preli}
\subsection{Well-posedness}\label{subsection_well_pd}

In this subsection, we explain quickly why \eqref{reflected_equation} possesses a unique solution. To do so, we will regard \eqref{eq_non_reflected} as an ordinary differential equation with a continuous random source. The force $\mathrm{F}$ being Lipschitz continuous, for any $(x_0, v_0) \in \mathbb{R}^2$, for every $\omega\in\Omega$, there exists a unique global solution $(X_t(\omega), V_t(\omega))_{t\geq0}$ to
\begin{equation}\label{free_equation_omega}
 V_t(\omega) = v_0 + \int_0^t \mathrm{F}(V_s(\omega))\dr s + B_t(\omega), \quad X_t(\omega) = x_0 + \int_0^t V_s(\omega)\dr s.
\end{equation}

One can easily construct a solution to \eqref{reflected_equation} by ''gluing`` together solutions of \eqref{eq_non_reflected} until their first hitting times of $\{0\}\times\mathbb{R}$. First consider the solution $(X_t^1, V_t^1)_{t\geq0}$ of \eqref{eq_non_reflected} starting at $(x_0, v_0)\in E$, define $\bm{\tau}_1 = \inf\{t> 0, \: X_t^1 = 0\}$ and set $(\bm{X}_t, \bm{V}_t)_{t\in[0,\bm{\tau}_1)} = (X_t^1, V_t^1)_{t\in[0,\bm{\tau}_1)}$. Then consider the solution $(X_t^2, V_t^2)_{t\geq0}$ of \eqref{eq_non_reflected} starting at $(0, M_1)$ with $B_t$ replaced by $B_{t+\bm{\tau}_1} - B_{\bm{\tau}_1}$, define $\bm{\tau}_2 - \bm{\tau}_1 = \inf\{t> 0, \: X_t^2 = 0\}$ and set $(\bm{X}_t, \bm{V}_t)_{t\in[\bm{\tau}_1,\bm{\tau}_2)} = (X_t^2, V_t^2)_{t\in[0,\bm{\tau}_2 -\bm{\tau}_1)}$. Iterating this operation indefinitely, we obtain a process $(\bm{X}_t, \bm{V}_t)_{t\in[0,\bm{\tau}_{\infty})}$ defined on $[0,\bm{\tau}_{\infty})$ where $\bm{\tau}_{\infty} = \lim_{n\to\infty}\bm{\tau}_n$, which solves \eqref{reflected_equation} on $[0,\bm{\tau}_{\infty})$.

\medskip
Consider now two solutions $(\bm{X}_t^1, \bm{V}_t^1)_{t\in[0,\bm{\tau}_{\infty}^1)}$ and $(\bm{X}_t^2, \bm{V}_t^2)_{t\in[0,\bm{\tau}_{\infty}^2)}$ of \eqref{reflected_equation} with the same initial condition, together with their respective sequence of hitting times $(\bm{\tau}_n^1)_{n\in\mathbb{N}}$ and $(\bm{\tau}_n^2)_{n\in\mathbb{N}}$. For every $\omega\in\Omega$, $(\bm{X}_t^1(\omega), \bm{V}_t^1(\omega))_{t\geq0}$ and $(\bm{X}_t^2(\omega), \bm{V}_t^2(\omega))_{t\geq0}$ are two solutions of \eqref{free_equation_omega} on the time interval $[0,\bm{\tau}_1^1(\omega)\wedge\bm{\tau}_1^2(\omega))$. Hence they are equal on this interval and $\bm{\tau}_1^1(\omega) = \bm{\tau}_1^2(\omega)$, and we can extend this reasoning to deduce that $\bm{\tau}_{\infty}^1 = \bm{\tau}_{\infty}^2 = \bm{\tau}_{\infty}$ and that $(\bm{X}_t^1, \bm{V}_t^1)_{t\in[0,\bm{\tau}_{\infty})}$ and $(\bm{X}_t^2, \bm{V}_t^2)_{t\in[0,\bm{\tau}_{\infty})}$ are equal. Therefore, uniqueness holds for \eqref{reflected_equation} for each $\omega\in\Omega$. Note that so far, we did not need the use of filtrations.

\medskip
For a solution $(\bm{X}_t, \bm{V}_t)_{t\geq0}$ of \eqref{reflected_equation}, we set $(\mathcal{F}_t)_{t\geq0}$ for the usual completion of the filtration generated by the process. Then $(\bm{X}_t, \bm{V}_t)_{t\geq0}$ is a strong Markov process in the filtration $(\mathcal{F}_t)_{t\geq0}$. Since $(\bm{\tau}_n)_{n\in\mathbb{N}}$ is the sequence of successive hitting times of $(\bm{X}_t, \bm{V}_t)_{t\geq0}$ in $\{0\}\times\mathbb{R}$, it is a sequence of $(\mathcal{F}_t)_{t\geq0}$-stopping times and we deduce from the strong Markov property that the sequence $(\bm{\tau}_{n+1} - \bm{\tau}_n)_{n\in\mathbb{N}}$ is i.i.d. and therefore $\bm{\tau}_n\to\infty$ as $n\to\infty$ almost surely. In other words, any solution of \eqref{reflected_equation} is global.

\medskip
Finally, we stress that uniqueness in law holds for \eqref{reflected_equation} as pathwise uniqueness for S.D.E. implies uniqueness in law. To summarize, \eqref{reflected_equation} is well-posed, i.e. there exists a unique and global solution to \eqref{reflected_equation} for any initial condition $(x_0, v_0)\in E$.

\subsection{$\mathrm{M}_1$-topology and the scaling limit of the free process}\label{section_conv_free}

The main result of \cite{fournier2018one}, see \eqref{result_fournier}, is a convergence in the finite dimensional distributions sense and we cannot hope to obtain a convergence in law as a process in the usual Skorokhod distance, namely the $\bm{\mathrm{J}}_1$-topology. This is due to the fact that the space of continuous functions is closed in the space of càdlàg functions endowed with the $\bm{\mathrm{J}}_1$-topology. But the process may converge in a weaker topology and we will show that the process actually converges in the $\bm{\mathrm{M}}_1$-topology, first introduced in the seminal work of Skorokhod \cite{skorohod}. In this subsection, we recall the definition and a few properties of this topology. All of the results stated here may be found in Skorokhod \cite{skorohod} or in the book of Whitt \cite[Chapter 12]{book_whitt}.

\medskip
For any $T > 0$, we denote by $\mathcal{D}_T = \mathcal{D}([0,T], \mathbb{R})$ the usual sets of càdlàg functions on $[0,T]$ valued in $\mathbb{R}$. For a function $x\in\mathcal{D}_T$, we define the completed graph $\Gamma_{T,x}$ of $x$ as follows:
\[
 \Gamma_{T,x} = \left\{(t,z)\in[0,T]\times\mathbb{R}, \: z \in [x(t-), x(t)]\right\}.
\]

\noindent
The $\bm{\mathrm{M}}_1$-topology on $\mathcal{D}_T$ is metrizable through parametric representations of the complete graphs. A parametric representation of $x$ is a continuous non-decreasing function $(u, r)$ mapping $[0,1]$ onto $\Gamma_{T,x}$. Let us denote by $\Pi_{T, x}$ the set of parametric representations of $x$. Then the $\bm{\mathrm{M}}_1$-distance on $\mathcal{D}_T$ is defined for $x_1, x_2 \in \mathcal{D}_T$ as
\[
 \dr_{\bm{\mathrm{M}}_1, T}(x_1, x_2)= \inf_{(u_i, r_i)\in\Pi_{T, x_i}}\left(||u_1 - u_2|| \vee ||r_1 - r_2||\right),
\]

\noindent
where $||\cdot ||$ is the uniform distance. The metric space $(\mathcal{D}_T, \dr_{\bm{\mathrm{M}}_1, T})$ is separable and topologically complete.

\medskip
Let us now denote by $\mathcal{D} = \mathcal{D}(\mathbb{R}_+, \mathbb{R})$ the set of càdlàg functions on $\mathbb{R}_+$. We introduce for any $t \geq0$, the usual restriction map $\bm{r}_t$ from $\mathcal{D}$ to $\mathcal{D}_t$. Then the $\bm{\mathrm{M}}_1$-distance on $\mathcal{D}$ is defined for $x, y\in \mathcal{D}$ as
\[
 \dr_{\bm{\mathrm{M}}_1}(x, y) = \int_0^{\infty}e^{-t}\left(\dr_{\bm{\mathrm{M}}_1, t}(\bm{r}_t(x), \bm{r}_t(y))\wedge 1\right) \dr t.
\]

\noindent
Again, the metric space $(\mathcal{D}, \dr_{\bm{\mathrm{M}}_1})$ is separable and topologically complete. We now briefly recall some characterization of converging sequences in $(\mathcal{D}, \dr_{\bm{\mathrm{M}}_1})$. To this end, we first introduce for $x\in\mathcal{D}$ and $\delta, T >0$, the following oscillation function
\[
 w(x, T, \delta) = \sup_{t\in[0,T]}\sup_{t_{\delta-}\leq t_1 < t_2 < t_3 \leq t_{\delta+}}d(x(t_2), [x(t_1), x(t_3)]),
\]

\noindent
where $t_{\delta-} = 0 \vee (t-\delta)$, $t_{\delta+} = T \wedge (t+\delta)$ and $d(x(t_2), [x(t_1), x(t_3)])$ is the distance of $x(t_2)$ to the segment $[x(t_1), x(t_3)]$, i.e.
\begin{equation*}
 d(x(t_2), [x(t_1), x(t_3)]) = \left\lbrace
        \begin{array}{ll}
            0 &\quad\hbox{if $x(t_2) \in [x(t_1), x(t_3)]$,} \\
            |x(t_2) - x(t_1)|\wedge |x(t_2) - x(t_3)| &\quad\hbox{otherwise.}
        \end{array}
    \right.
\end{equation*}

\noindent
We finally introduce, for $x\in\mathcal{D}$, the set $\mathrm{Disc}(x)=\{t\geq0, \Delta x (t) \neq 0\}$ of the discontinuities of $x$. We have the following characterization, which can be found in Whitt \cite[Theorem 12.5.1 and Theorem 12.9.3]{book_whitt}.
\begin{theorem}
 Let $(x_n)_{n\in\mathbb{N}}$ be a sequence in $\mathcal{D}$ and let $x$ in $\mathcal{D}$. Then the following assertions are equivalent.
 \begin{enumerate}[label=(\roman*)]
  \item $\dr_{\bm{\mathrm{M}}_1}(x_n, x)\to 0$ as $n\to\infty$.
  \item $x_n(t)$ converges to $x(t)$ in a dense subset of $\mathbb{R}_+$ containing $0$, and for every $T\notin\mathrm{Disc}(x)$, 
 \end{enumerate}
 \[
 \lim_{\delta\to0}\limsup_{n\to\infty}w(x_n, T, \delta) = 0.
\]
\noindent
In any case, we will write $x_n \overset{\bm{\mathrm{M}}_1}{\longrightarrow}x$ if $x_n$ converges to $x$ in $(\mathcal{D}, \dr_{\bm{\mathrm{M}}_1})$.
\end{theorem}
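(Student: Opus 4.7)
\medskip
\noindent
\textbf{Proof proposal.} The plan is to follow the classical strategy for characterizing $\bm{\mathrm{M}}_1$-convergence via parametric representations and oscillation control, as developed in Skorokhod \cite{skorohod} and Whitt \cite[Chapter 12]{book_whitt}. The first step is to reduce from $\mathcal{D}$ to the compact-interval spaces $\mathcal{D}_T$: since
\[
 \dr_{\bm{\mathrm{M}}_1}(x_n, x) = \int_0^{\infty} e^{-t}\left(\dr_{\bm{\mathrm{M}}_1, t}(\bm{r}_t(x_n), \bm{r}_t(x))\wedge 1\right) \dr t,
\]
a dominated-convergence argument combined with the monotonicity of $T\mapsto \dr_{\bm{\mathrm{M}}_1,T}(\bm{r}_T(x_n), \bm{r}_T(x))$ (away from $\mathrm{Disc}(x)$) shows that $\dr_{\bm{\mathrm{M}}_1}(x_n,x)\to 0$ is equivalent to $\dr_{\bm{\mathrm{M}}_1, T}(\bm{r}_T(x_n), \bm{r}_T(x))\to 0$ for every $T\notin\mathrm{Disc}(x)$. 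Thus the task reduces to proving the equivalence on each such $[0,T]$.

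For the direction (i)$\Rightarrow$(ii), I would pick parametric representations $(u_n, r_n)$ of $\bm{r}_T(x_n)$ and $(u, r)$ of $\bm{r}_T(x)$ realizing the infimum up to $\varepsilon_n\to 0$, so that $\|u_n - u\| \vee \|r_n - r\| \to 0$. If $t\in[0,T]$ is a continuity point of $x$ then $u^{-1}(t)$ reduces essentially to a single point on which $r$ has value $x(t)$, and the uniform convergence of $(u_n, r_n)$ to $(u, r)$ forces $x_n(t)\to x(t)$; the set of continuity points is dense and contains $0$. For the oscillation condition, the key observation is that if $t_1<t_2<t_3$ lie in a window $[t_{\delta-}, t_{\delta+}]$ and $x_n(t_2)$ is far from $[x_n(t_1), x_n(t_3)]$ by an amount $\eta$, then the completed graph $\Gamma_{T,x_n}$ must backtrack by $\eta$ within a horizontal span of $2\delta$, while $\Gamma_{T,x}$, being monotone along its parametrization, cannot exhibit such backtracking at scales smaller than its own jump sizes. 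Quantifying this via the parametric representation forces $\|u_n-u\|\vee\|r_n-r\|$ to be at least of order $\eta$ or $\delta$; taking $n\to\infty$ then $\delta\to 0$ yields the required vanishing of the $\limsup$.

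The direction (ii)$\Rightarrow$(i) is where I expect the main obstacle to lie, because one has to \emph{construct} parametric representations of $x_n$ close to a fixed parametric representation $(u,r)$ of $x$. The strategy is to fix, for each small $\delta>0$, a mesh $0 = s_0 < s_1 < \dots < s_K = 1$ such that each $t_i := u(s_i)$ lies in the dense set of pointwise convergence and such that the oscillation of $(u,r)$ between consecutive $s_i$'s (outside of the jump cells) is controlled by $\delta$. At each $t_i$ one has $x_n(t_i)\to x(t_i) = r(s_i)$, so a candidate $(u_n, r_n)$ is produced by parametrizing the completed graph $\Gamma_{T, x_n}$ piece by piece between the values $(t_i, x_n(t_i))$ in a non-decreasing fashion. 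The condition $\lim_{\delta\to 0}\limsup_n w(x_n, T, \delta) = 0$ ensures that on each slab the graph of $x_n$ stays within a $o(1)$-thin tube around the segment joining $(t_i, x_n(t_i))$ and $(t_{i+1}, x_n(t_{i+1}))$, which is the piece of $\Gamma_{T,x}$ that $(u, r)$ traces there. The delicate bookkeeping occurs near the jump times of $x$, where the mesh must be chosen to straddle the jump and the oscillation condition must be invoked to confirm that $x_n$ realizes the full vertical displacement within a vanishing horizontal window; once this matching is achieved, concatenating the piecewise estimates delivers parametric representations with $\|u_n - u\|\vee\|r_n-r\|\to 0$, hence $\dr_{\bm{\mathrm{M}}_1, T}(\bm{r}_T(x_n), \bm{r}_T(x))\to 0$ as required.
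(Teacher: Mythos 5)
You should first be aware that the paper does not prove this statement at all: it is quoted from Whitt \cite[Theorems 12.5.1 and 12.9.3]{book_whitt} (going back to Skorokhod \cite{skorohod}), so there is no in-paper argument to compare yours with. Your proposal is, in effect, a sketch of the textbook proof and follows the same classical route: reduce to the compact-interval metrics $\dr_{\bm{\mathrm{M}}_1,T}$, then work with parametric representations of completed graphs together with the oscillation function $w$.

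As a proof, though, it has genuine gaps. First, the reduction step is mis-stated: the map $T\mapsto \dr_{\bm{\mathrm{M}}_1,T}(\bm{r}_T(x_n),\bm{r}_T(x))$ is \emph{not} monotone in $T$ (restricting at a time where $x$ jumps can destroy closeness), so ``dominated convergence plus monotonicity'' does not yield the equivalence between decay of the integrated metric and convergence of every restriction at $T\notin\mathrm{Disc}(x)$; one needs a local comparison lemma relating $\dr_{\bm{\mathrm{M}}_1,T}$ to $\dr_{\bm{\mathrm{M}}_1,t}$ for $t$ slightly larger than $T$, valid precisely because $T$ is a continuity point — this is exactly what Whitt's Theorem 12.9.3 supplies and it is not automatic. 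Second, the direction (ii)$\Rightarrow$(i), which is the heart of the theorem, is only a plan: the actual construction of a nondecreasing parametrization of $\Gamma_{T,x_n}$ uniformly close to a fixed parametrization of $\Gamma_{T,x}$ — in particular the verification, near each jump of $x$, that $x_n$ performs essentially the entire vertical displacement inside a vanishing time window (which uses pointwise convergence at mesh points chosen on both sides of the jump \emph{and} the oscillation bound), followed by the diagonal argument in $\delta$ and $n$ — is precisely the nontrivial content and is asserted rather than carried out. Third, two smaller points: in (i)$\Rightarrow$(ii), choosing any $s_n$ with $u_n(s_n)=t$ only gives $r_n(s_n)\in[x_n(t-),x_n(t)]$, so to conclude $x_n(t)\to x(t)$ you should take the largest such parameter, where $r_n$ equals $x_n(t)$; and the statement that $\Gamma_{T,x}$ ``cannot backtrack at scales smaller than its jump sizes'' is not the right invariant — what the argument actually needs is the elementary fact that $w(x,T,\delta)\to0$ as $\delta\to0$ for every fixed c\`adl\`ag $x$, combined with a quantitative transfer of backtracking through nearby parametrizations. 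None of these ideas are wrong, but as written the proposal is an outline of Whitt's proof rather than a proof.
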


\noindent
We are now ready to characterize convergence in law for sequences of random variables valued in $(\mathcal{D}, \dr_{\bm{\mathrm{M}}_1})$. We have the following result, see for instance \cite[Theorem 3.2.1 and Theorem 3.2.2]{skorohod}. 

\begin{theorem}\label{thm_conv_m1}
Let $(X_n)_{n\in\mathbb{N}}$ be a sequence of random variables valued in $\mathcal{D}$ and let $X$ be a random variable valued in $\mathcal{D}$ such that for any $t\geq0$, $\mathbb{P}(\Delta X(t) \neq 0) = 0$. Then the following assertions are equivalent.
\begin{enumerate}[label=(\roman*)]
 \item $(X_n)_{n\in\mathbb{N}}$ converges in law to $X$ as $n\to\infty$ in $\mathcal{D}$ endowed with the $\bm{\mathrm{M}}_1$-topology.
 \item The finite dimensional distributions of $X_n$ converge to those of $X$ in some dense subset of $\mathbb{R}_+$ containing $0$, and for any $T >0$ and any $\eta >0$ we have
 \[
  \lim_{\delta\to0}\limsup_{n\to\infty}\mathbb{P}(w(X_n, T, \delta) > \eta) = 0.
 \]
\end{enumerate}
\end{theorem}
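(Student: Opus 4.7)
The plan is to prove the equivalence (i) $\Leftrightarrow$ (ii) by isolating first the appropriate $\bm{\mathrm{M}}_1$-tightness criterion on the Polish space $(\mathcal{D}, \dr_{\bm{\mathrm{M}}_1})$ and then using it in both directions, following the classical Billingsley-style scheme for the $\bm{\mathrm{J}}_1$-topology with the oscillation function $w(\cdot,T,\delta)$ replacing the usual modulus of continuity.

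The first step I would carry out is the $\bm{\mathrm{M}}_1$-compactness criterion: a set $K\subset\mathcal{D}_T$ is relatively compact if and only if
\[
\sup_{x\in K}\sup_{t\in[0,T]}|x(t)|<\infty \quad\text{and}\quad \lim_{\delta\to0}\sup_{x\in K}w(x,T,\delta)=0,
\]
which is the $\bm{\mathrm{M}}_1$ analogue of Arzelà--Ascoli (see Whitt's monograph). Combined with Prohorov's theorem, this yields that a family $(\mathbb{P}_n)$ of laws on $\mathcal{D}$ is tight if and only if, for all $T,\eta>0$,
\[
\lim_{A\to\infty}\sup_n\mathbb{P}_n\bigl(\textstyle\sup_{[0,T]}|x|>A\bigr)=0 \quad\text{and}\quad \lim_{\delta\to0}\sup_n\mathbb{P}_n\bigl(w(x,T,\delta)>\eta\bigr)=0.
\]

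For (i) $\Rightarrow$ (ii), tightness of $(X_n)$ is immediate from Prohorov's theorem applied to the convergent sequence of laws, and the second condition above is exactly the desired oscillation bound. For fdd convergence, I would apply the continuous mapping theorem to the projection $x\mapsto(x(t_1),\ldots,x(t_k))$, which is $\bm{\mathrm{M}}_1$-continuous at any path for which each $t_i$ is a continuity point (a direct consequence of the characterization stated just above the theorem). Under the assumption $\mathbb{P}(\Delta X(t)\neq0)=0$ for every $t\geq0$, the event that all $t_i$ are continuity points of $X$ has probability one, so continuous mapping gives $(X_n(t_1),\ldots,X_n(t_k))\to(X(t_1),\ldots,X(t_k))$ in law for every finite tuple --- in particular on a dense subset of $\mathbb{R}_+$ containing $0$.

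For (ii) $\Rightarrow$ (i), the strategy is to establish tightness and then identify every subsequential limit as $X$. The oscillation condition is given, so only the uniform bound on $\sup_{[0,T]}|X_n|$ remains: choosing a finite $\delta$-net of times $0=t_0<\cdots<t_N=T$ inside the dense set of fdd convergence (so that each $|X_n(t_i)|$ is tight) and using that the very definition of $w$ gives $|x(t_2)|\leq\max(|x(t_1)|,|x(t_3)|)+w(x,T,\delta)$ whenever $t_1<t_2<t_3$ lie within $\delta$, the supremum over $[0,T]$ is controlled in probability. Prohorov then produces subsequential limits $Y$, and the fdd convergence on the dense set identifies $Y\overset{d}{=}X$ at all such tuples; since $X$ has no fixed discontinuities, right-continuity extends this to all finite tuples, uniquely determining the law of $Y$ on $\mathcal{D}$ and forcing the whole sequence to converge. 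The main obstacle throughout is the $\bm{\mathrm{M}}_1$-compactness criterion itself: unlike the $\bm{\mathrm{J}}_1$-modulus, $w$ measures deviation from \emph{monotone} interpolation between three consecutive points, and both the compactness criterion and the continuity of coordinate projections at continuity points require delicate work --- but once these are available, the argument is a direct adaptation of the classical Prohorov--Billingsley machinery.
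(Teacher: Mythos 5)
First, a point of comparison: the paper does not prove this statement at all — it is quoted from Skorokhod \cite[Theorems 3.2.1 and 3.2.2]{skorohod}, with the deterministic characterizations taken from Whitt — so you are reconstructing a cited classical result, and your Prohorov-plus-identification scheme is indeed the standard route. The main gap is in your tightness step: the ``$\bm{\mathrm{M}}_1$ Arzel\`a--Ascoli'' criterion you state (uniform sup bound plus uniform decay of $w(\cdot,T,\delta)$) is not correct as written. Whitt's compactness theorem for $\bm{\mathrm{M}}_1$ also requires uniform control of the oscillation near the endpoints, e.g. of $\bar{v}(x,0,\delta)=\sup_{0\le t\le\delta}|x(t)-x(0)|$. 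Concretely, $x_n=\bm{1}_{[1/n,T]}$ satisfies $\sup_n\sup_{[0,T]}|x_n|=1$ and $w(x_n,T,\delta)=0$ for every $n,\delta$, yet has no $\bm{\mathrm{M}}_1$-convergent subsequence in $\mathcal{D}_T$: any limit $y$ would have $y(0)=\lim x_n(0)=0$ while $y(t)=1$ at all continuity points $t>0$, contradicting right-continuity at $0$. Hence in (ii) $\Rightarrow$ (i) your two displayed conditions do not by themselves give tightness; this is exactly where the hypothesis that the dense set contains $0$ must enter, and your proposal never uses it. The repair is to control the left-endpoint oscillation in probability: taking $t_1=0$ in the definition of $w$ gives $|X_n(t)-X_n(0)|\le|X_n(s)-X_n(0)|+w(X_n,T,\delta)$ for $0<t<s\le\delta$ with $s$ a small point of the dense set, and fdd convergence at $0$ and at $s$, together with right-continuity of $X$ at $0$, does the job (a similar remark handles the right endpoint, which is harmless on $\mathcal{D}(\mathbb{R}_+,\mathbb{R})$ once one works at continuity points $T$ of $X$).

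A second, smaller soft spot is the identification of subsequential limits. If $X_{n_k}\Rightarrow Y$ in $\bm{\mathrm{M}}_1$, the continuous-mapping argument gives fdd convergence to $Y$ only at times that are a.s. continuity points of $Y$; the given dense set $D$ may be countable, and the (countable) set of fixed discontinuity times of $Y$ could a priori contain $D\setminus\{0\}$, so ``fdd convergence on the dense set identifies $Y\overset{d}{=}X$'' does not follow immediately — you need an additional argument, e.g. first upgrading the fdd convergence from $D$ to all of $\mathbb{R}_+$ (using the oscillation bound and the fact that $X$ has no fixed discontinuities), or showing that any subsequential limit is continuous in probability, before concluding that the fdds of $Y$ and $X$ agree on a dense set and hence everywhere by right-continuity. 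Your (i) $\Rightarrow$ (ii) direction is fine (projections are $\bm{\mathrm{M}}_1$-continuous at paths for which the chosen times are continuity points, and $w\le$ the full compactness modulus), and the inequality $|x(t_2)|\le\max(|x(t_1)|,|x(t_3)|)+w(x,T,\delta)$ you use for the sup bound is correct, provided the grid points are taken in $D$ and extend slightly beyond $T$.
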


The following theorem shows that the convergence proved in \cite{fournier2018one} can be enhanced, in a stronger convergence.

\begin{theorem}\label{conv_m2}
Grant Assumption \ref{assump_fournier} and let $(X_t, V_t)_{t\geq0}$ be a solution of \eqref{eq_non_reflected} starting at $(0,v_0)$ with $v_0\in\mathbb{R}$. Let $(Z_t^{\alpha})_{t\geq0}$ be a symmetric stable process with $\alpha = (\beta + 1) / 3$ and such that $\mathbb{E}[e^{i\xi Z_t^{\alpha}}] = \exp(-t\sigma_\alpha |\xi|^{\alpha})$, where $\sigma_\alpha$ is defined by \eqref{sigma}. Then we have
\[
\left(\epsilon^{1/\alpha}X_{t/\epsilon}\right)_{t\geq0} \longrightarrow \left(Z_t^\alpha\right)_{t\geq0}\qquad\hbox{as $\epsilon\to0$}
\]

\noindent
in law for the $\bm{\mathrm{M}}_1$-topology.
\end{theorem}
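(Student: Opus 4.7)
By Theorem \ref{thm_conv_m1}, since $(Z_t^\alpha)_{t\geq 0}$ is a Lévy process and hence satisfies $\mathbb{P}(\Delta Z_t^\alpha \neq 0)=0$ for every fixed $t$, the conclusion reduces to two verifications: convergence of the finite-dimensional distributions on a dense subset of $\mathbb{R}_+$ containing $0$, and the oscillation estimate
\[
\lim_{\delta\to 0}\limsup_{\epsilon\to 0}\mathbb{P}\bigl(w(X^\epsilon,T,\delta)>\eta\bigr)=0 \qquad\text{for all }T,\eta>0,
\]
where $X^\epsilon_t := \epsilon^{1/\alpha}X_{t/\epsilon}$. The first point is precisely \eqref{result_fournier}, so the entire task is the oscillation estimate.

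The main structural observation is that $X^\epsilon$ is absolutely continuous with derivative $\epsilon^{1/\alpha-1}V_{t/\epsilon}$, and is therefore monotone on every subinterval on which $V$ keeps a constant sign. Consequently $d(X^\epsilon_{t_2},[X^\epsilon_{t_1},X^\epsilon_{t_3}])=0$ whenever $V$ does not vanish on $[t_1/\epsilon,t_3/\epsilon]$. Hence the event $\{w(X^\epsilon,T,\delta)>\eta\}$ is contained in the event that some window of width $2\delta$ inside $[0,T]$ straddles a sign change of $s\mapsto V_{s/\epsilon}$ and, on that window, $X^\epsilon$ performs a back-and-forth motion of amplitude at least $\eta$. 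In terms of excursions of $V$ away from $0$, this forces the window to contain two excursions of opposite sign whose rescaled areas $\epsilon^{1/\alpha}\bigl|\int_e V_s\,\dr s\bigr|$ are both of order at least $\eta$.

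The plan is to exploit the excursion-theoretic description of $V$ underlying \cite{fournier2018one,bethencourt2021stable}: under the Itô excursion measure $n$ of $V$ at $0$, the area $A=\int_0^\zeta V_s\,\dr s$ satisfies $n(|A|\geq x)\sim c\,x^{-\alpha}$ as $x\to\infty$ (this tail is precisely the mechanism producing the $\alpha$-stable limit in \eqref{result_fournier}). After rescaling, the excursions with rescaled area $\geq\eta$ form asymptotically a Poisson point process of intensity $\sim c\,\eta^{-\alpha}$ per unit of rescaled time, matching the jump structure of $Z^\alpha$. Covering $[0,T]$ by $O(T/\delta)$ overlapping windows of width $2\delta$ and applying a union bound then yields a probability of order $T\delta\,\eta^{-2\alpha}$ of finding two such large excursions of opposite sign inside a common window, which tends to $0$ with $\delta$. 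The remaining contribution, from the compensated sum of ``small'' excursion areas, is controlled by a Doob-type maximal inequality, which gives a uniformly small modulus of continuity because $\alpha<2$ forbids a diffusive component in the limit.

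The main obstacle is turning these heuristics into uniform-in-$\epsilon$ bounds. Concretely, one needs a tail estimate for the excursion area $A$ that is sharp enough to survive a union bound over sliding windows rather than at fixed times, together with a quantitative maximal inequality for the fluctuations generated by the small excursions. Both are classical in spirit for $\alpha$-stable functional limit theorems of additive functionals of Markov processes, but must be extracted concretely from the scale-function/speed-measure representation of $V$ (as in \cite{fournier2018one}). This is the most technical step and is the reason the proof is deferred to Section \ref{append_proof_m1}.
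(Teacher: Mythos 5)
Your reduction to the oscillation estimate via Theorem \ref{thm_conv_m1} and your key structural observation --- that $t\mapsto X_t$ is monotone on every excursion of $V$ away from $0$, so $\mathrm{M}_1$-oscillation can only be produced across sign changes of $V$ --- are exactly the right starting points, and indeed the same observation drives the paper's proof. But from there the paper takes a much softer route than the one you sketch: it introduces the L\'evy process $Z_t=X_{\gamma_t}$ (the free process sampled at the inverse local time of $V$), notes that for a L\'evy process functional $\bm{\mathrm{J}}_1$-convergence follows from one-dimensional convergence alone (Jacod--Shiryaev), obtains that one-dimensional convergence from a strengthening of the Fournier--Tardif representation to convergence of running infima/suprema (Proposition \ref{conv_inf_sup}), and then transfers the oscillation bound pathwise, proving $w(\epsilon^{1/\alpha}X_{t/\epsilon},T,\delta)\le w(\epsilon^{1/\alpha}Z_{t/\epsilon},T+1,2\delta)$ on the event where the rescaled local time is uniformly close to the identity. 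This completely avoids the quantitative excursion-measure estimates your plan requires; the price is the case analysis behind the pathwise comparison and a separate argument (a moment bound on $\sup_{[0,T]}|V_t|$) to pass from $v_0=0$ to general $v_0$.

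As it stands, your proposal is a programme rather than a proof, and two points need repair. First, the containment you assert is not literally correct: $w(X^\epsilon,T,\delta)>\eta$ does not force two single excursions of opposite sign each of rescaled area of order $\eta$; the up-move and the down-move can each be produced by an accumulation of moderate excursions. One must fix a threshold $b$ independent of $\eta$, split excursions into ``big'' (rescaled area $\ge b$) and ``small'', and show that the bad event forces either one big positive and one big negative excursion in the window (probability $O(T\delta b^{-2\alpha})$ after the union bound) or an oscillation $\ge\eta/2$ of the small-excursion sum (probability $O(Tb^{2-\alpha}\eta^{-2})$, uniform in $\delta$), and then take $\delta\to0$ followed by $b\to0$; the single-threshold-at-$\eta$ version of the union bound does not close. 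Second, the quantitative inputs you defer are precisely the hard part and are not available off the shelf in the form you need: a tail bound for the excursion area under $n$ that is uniform enough to survive sliding-window union bounds, a maximal inequality for the compensated small-excursion sums, uniformity of the local-time law of large numbers linking real-time windows of width $\delta/\epsilon$ to local-time windows (the paper needs the analogous \eqref{dini_local_time}), and a treatment of the initial segment before the first zero of $V$ when $v_0\neq0$ (the paper's Steps 2--3), which your excursion decomposition does not cover at all. Until those are supplied, the argument has genuine gaps, even though the overall strategy is plausible.
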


\noindent
The proof is postponed to Section \ref{append_proof_m1}.

\section{The particle reflected at an inelastic boundary and its scaling limit}\label{section_inelastic}

In this section, we construct a weak solution of \eqref{reflected_equation_zero} following Bertoin \cite{bertoin2007reflecting, bertoin2008second}. Although the author uses a Brownian motion instead of the process $(V_t)_{t\geq0}$, some of the trajectorial properties shown in \cite{bertoin2007reflecting, bertoin2008second} still hold in our case.

\medskip
Consider a weak solution $(X_t, V_t)_{t\geq0}$ of \eqref{eq_non_reflected} on some filtered probability space $(\Omega, \mathcal{F}, (\mathcal{F}_t)_{t\geq0}, \mathbb{P})$ supporting some Brownian motion $(B_t)_{t\geq0}$, starting at $(0, v_0)$ where $v_0 \geq 0$. We introduce
\begin{equation}\label{proc_ref_inf}
 \mathcal{X}_t = X_t - \inf_{s\in[0,t]}X_s.
\end{equation}

\noindent
Before introducing the solution of \eqref{reflected_equation_zero}, we will first study a little bit the process $(\mathcal{X}_t)_{t\geq0}$. It is a continuous process, which has the same motion as $X_t$ as long as $X_t > \inf_{s\in[0,t]}X_s$. When $X_t$ reaches its past infimum, it is necessarily with a non-positive velocity. When this happens with a strictly negative velocity, say at time $t_0$, the infimum process decreases until the velocity hits zero, i.e. on the interval $[t_0, d_{t_0}]$ where $d_{t_0} = \inf\{s \geq t_0, \: V_s = 0\}$. On this time interval, $X_t = \inf_{s\in[0,t]}X_s$ and thus $\mathcal{X}_t = 0$. It is interesting and useful to study
\[
 \mathcal{I}_{\mathcal{X}} = \left\{t\geq0, \: \mathcal{X}_t = 0\right\}.
\]

\noindent
As explained in \cite[page 2025]{bertoin2007reflecting}, the following random set appears naturally in the study of $\mathcal{I}_{\mathcal{X}}$:
\[
\mathcal{H} = \left\{t \geq 0,\: V_t < 0,\: \mathcal{X}_t = 0 \text{ and } \exists\epsilon >0, \:\forall s\in[t-\epsilon, t),\mathcal{X}_{s} > 0\right\}.
\]

\noindent
Each element of $\mathcal{H}$ is the first time at which $X_t$ reaches its infimum, after an ''excursion'' above its infimum. Each point of $\mathcal{H}$ is necessarily isolated and thus $\mathcal{H}$ is countable. We are now able to state the following lemma, which is identical to \cite[Lemma 2]{bertoin2007reflecting}, and which characterizes the set $\mathcal{I}_{\mathcal{X}}$.

\begin{lemma}\label{decomp_zero}
 The decomposition of the interior of $\mathcal{I}_{\mathcal{X}}$ as a union of disjoint intervals is given by
 \[
  \mathring{\mathcal{I}}_{\mathcal{X}} = \bigcup_{s\in\mathcal{H}}]s, d_s[,
 \]
\noindent
where $d_s = \inf\{u > s, \: V_u = 0\}$. Moreover, the boundary $\partial\mathcal{I}_{\mathcal{X}} = \mathcal{I_{\mathcal{X}}}\setminus\mathring{\mathcal{I}}_{\mathcal{X}}$ has zero Lebesgue measure.
\end{lemma}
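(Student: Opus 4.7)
The plan is to follow Bertoin's approach from \cite{bertoin2007reflecting}, whose backbone is two basic properties of $V$: \textbf{(P1)} the zero set $\{t\geq 0, V_t = 0\}$ has a.s. zero Lebesgue measure (by Fubini, since $V_t$ admits a density for each $t>0$), and \textbf{(P2)} at any (possibly random) time $\tau$ with $V_\tau = 0$, $V$ takes both strictly positive and strictly negative values in every right neighborhood of $\tau$. Property (P2) follows from the oscillation of Brownian motion at its zeros via the law of iterated logarithm, since near $\tau$ one has $V_s - V_\tau = (B_s - B_\tau) + O(s-\tau)$, the force $\mathrm{F}$ being bounded.

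For the easy inclusion $\bigcup_{s\in\mathcal{H}}(s, d_s) \subset \mathring{\mathcal{I}}_{\mathcal{X}}$, I would fix $s \in \mathcal{H}$, note that continuity of $V$ and the definition of $d_s$ force $V < 0$ on $[s, d_s)$, so $X$ is strictly decreasing on $[s, d_s]$; combined with $X_s = \inf_{r\leq s}X_r$, this propagates to $X_u = \inf_{r\leq u}X_r$ for every $u \in [s,d_s]$, giving $\mathcal{X} \equiv 0$ there. For the converse, I would pick a maximal open component $(\alpha, \beta) \subset \mathring{\mathcal{I}}_{\mathcal{X}}$ and show $\alpha \in \mathcal{H}$ and $\beta = d_\alpha$. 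Since $X$ is non-increasing on $[\alpha, \beta]$, one has $V \leq 0$ there, and (P2) applied at $\alpha$ rules out $V_\alpha = 0$; hence $V_\alpha < 0$ and $V < 0$ on some $[\alpha-\delta, \alpha+\delta]$. A short computation then yields $\mathcal{X}_s = (X_s - m)_+$ on this interval, with $m := \inf_{r\leq \alpha-\delta}X_r$, and maximality of $(\alpha, \beta)$ forces $X_\alpha = m$ (else $X_\alpha < m$ produces by continuity an $s^{\ast} < \alpha$ with $\mathcal{X} \equiv 0$ on $[s^{\ast}, \alpha]$, making the component extend strictly to the left of $\alpha$). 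This gives $\mathcal{X}_s > 0$ on $[\alpha-\delta, \alpha)$, whence $\alpha \in \mathcal{H}$. The easy inclusion already gives $\beta \geq d_\alpha$, and (P2) applied at $d_\alpha$ rules out $V \leq 0$ on any right neighborhood of $d_\alpha$, so $d_\alpha \notin \mathring{\mathcal{I}}_{\mathcal{X}}$ and $\beta = d_\alpha$.

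For the measure of the boundary, I would prove the containment $\partial\mathcal{I}_{\mathcal{X}} \subset \{0\} \cup \mathcal{H} \cup \{t\geq 0, V_t = 0\}$. Indeed, any $t > 0$ with $\mathcal{X}_t = 0$ satisfies $V_t \leq 0$ (strict positivity of $V$ near $t$ would force $X_s < X_t = \inf_{r\leq t}X_r$ for $s<t$ close, a contradiction), and the analysis above shows that $V_t < 0$ with $\mathcal{X}_t = 0$ places $t$ in $\mathcal{H} \cup \mathring{\mathcal{I}}_{\mathcal{X}}$. The set $\mathcal{H}$ is countable (each $s \in \mathcal{H}$ labels a distinct component $(s,d_s)$ of $\mathring{\mathcal{I}}_{\mathcal{X}}$), and $\{V_t = 0\}$ has zero Lebesgue measure by (P1). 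The main obstacle will be to put (P2) on firm footing (so as to use it at arbitrary boundary points of $\mathcal{I}_{\mathcal{X}}$) and to carry out the case analysis at the left endpoint $\alpha$ that excludes the pathological configuration $X_\alpha < m$; once these are settled, everything else reduces to elementary manipulations of the running infimum of the continuous path $X$.
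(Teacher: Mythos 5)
Your overall architecture (the easy inclusion, the identity $\mathcal{X}_t=(X_t-m)_+$ near the left endpoint of a component, the maximality argument forcing $X_\alpha=m$, and the bound $\partial\mathcal{I}_{\mathcal{X}}\subset\{0\}\cup\mathcal{H}\cup\{t:V_t=0\}$ giving the null boundary via Fubini) is sound, but there is a genuine gap at exactly the point you flag: the use of (P2) to exclude $V_\alpha=0$ at the left endpoint $\alpha$ of an arbitrary component of $\mathring{\mathcal{I}}_{\mathcal{X}}$. The pathwise statement ``a.s., at \emph{every} zero of $V$ the process takes both signs in every right neighborhood'' is false: the left endpoint of any negative excursion of $V$ is a zero followed by a stretch on which $V\leq0$, and such points are uncountably approximable configurations that cannot be ruled out by the law of the iterated logarithm plus the strong Markov property, because excursion starts are not stopping times (they are not measurable with respect to the past). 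The LIL argument does legitimately apply at $d_\alpha$ once $\alpha\in\mathcal{H}$ is known, since $\mathcal{H}$, and hence $\{d_s,\ s\in\mathcal{H}\}$, is exhausted by the countable family of stopping times $\inf\{t\geq q:\ \mathcal{X}_t=0\}$, $q\in\mathbb{Q}_+$; but for $\alpha$ itself you would have to prove that a.s. there is no time $t$ which is simultaneously a start of a negative excursion (or accumulation of such) of $V$ and a time at which $X$ sits at its running infimum. That exclusion is precisely the delicate content of Bertoin's Lemma 2 in \cite{bertoin2007reflecting}, and it requires a finer analysis (e.g.\ time reversal / sign changes of the integrated path near excursion starts), not just (P1)--(P2).

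The paper deliberately does not redo this work: it reduces to Bertoin's statement by a Girsanov change of measure. Under Assumption \ref{assump_fournier} the drift $\mathrm{F}$ is bounded, so Novikov applies on $[0,T]$, the law of $(V_t)_{t\in[0,T]}$ is equivalent to that of a Brownian motion started at $0$, and since the conclusion of the lemma is an almost sure pathwise property of $(V_t)_{t\geq0}$ (through $X_t=\int_0^tV_s\,\mathrm{d}s$), it transfers from the $\mathrm{F}=0$, $v_0=0$ case of \cite{bertoin2007reflecting} to the present one; the case $v_0>0$ is then handled by an elementary argument up to the time $d_\tau$, after which the process restarts from velocity $0$. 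If you want a self-contained proof along your lines, you must either import Bertoin's argument for the excluded configuration or carry out the corresponding time-reversal/oscillation estimate yourself; as written, the step ``(P2) applied at $\alpha$ rules out $V_\alpha=0$'' does not stand.
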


\begin{proof}
The idea is to use the result from \cite{bertoin2007reflecting} and the Girsanov theorem. Indeed, the same result is proved in \cite{bertoin2007reflecting} when $\mathrm{F} = 0$ and $v_0 = 0$.

\medskip\noindent
\textit{Step 1:} We first show that the results holds when $v_0 = 0$. We first define the following local martingales
\[
 L_t = - \int_0^t \mathrm{F}(V_s) \dr B_s \quad \text{and} \quad \mathcal{E}(L)_t = \exp\left(L_t - \frac{1}{2}\langle L \rangle_t\right)
\]

\noindent
For any $T>0$, $\mathbb{E}[e^{\frac{1}{2}\langle L \rangle_T}] \leq e^{\frac{1}{2}T||F^2||_{\infty}} < \infty$, by Assumption \ref{assump_fournier}, and therefore, by the Novikov criterion and the Girsanov theorem, the measure $\mathbb{Q}_T = \mathcal{E}(L)_T\cdot\mathbb{P}$ is a probability measure on $(\Omega, \mathcal{F}_T)$ and the process
\[
 V_t = \int_0^t \mathrm{F}(V_s)\dr s + B_t = B_t - \langle B, L \rangle_t
\]

\noindent
is an $(\mathcal{F}_t)_{t\in[0,T]}$-Brownian motion starting at $0$ under $\mathbb{Q}_T$. We deduce from Lemma 2 in \cite{bertoin2007reflecting} that $\mathbb{Q}_T$-a.s.,
\[
 \mathring{\mathcal{I}}_{\mathcal{X}}\cap[0,T] = \left(\bigcup_{s\in\mathcal{H}}]s, d_s[\right) \cap [0, T]
\]

\noindent
and $\partial\mathcal{I}_{\mathcal{X}}\cap[0,T]$ has zero Lebesgue measure. Since this holds for every $T > 0$, and since $\mathbb{Q}_T \sim \mathbb{P}$, this establishes our result for the initial condition $(0,0)$.

\medskip\noindent
\textit{Step 2:} We show the results when $v_0 > 0$. Let us define $\tau = \inf\{t > 0, \: X_t = 0\}$. Since $v_0 > 0$, it is clear that $\tau > 0$ a.s. and that $\tau\in\mathcal{H}$ and
\[
 \mathcal{I}_{\mathcal{X}} \cap [0, d_{\tau}] = \{0\} \cup [\tau, d_{\tau}].
\]

\noindent
Now the process $(\bar{X}_t, \bar{V}_t)_{t\geq0} = (X_{t+d_\tau} - X_{d_\tau}, V_{t+d_\tau} )_{t\geq0}$ is a solution of \eqref{eq_non_reflected} starting at $(0,0)$. Moreover, since $X_{d_\tau} = \inf_{s\in[0,d_\tau]}X_s$, we clearly have on the event $\{d_\tau < t\}$,
\[
 \inf_{s\in[0,t]}X_s = X_{d_\tau} + \inf_{s\in[0,t-d_\tau]}\bar{X}_s,
\]

\noindent
which yields
\[
 \mathcal{I}_{\mathcal{X}} = \{0\} \cup [\tau, d_{\tau}] \cup \bar{\mathcal{I}}, \quad \text{where} \quad \bar{\mathcal{I}} = \Big\{t\geq d_\tau, \:\bar{X}_{t-d_\tau} = \inf_{s\in[0,t-d_\tau]}\bar{X}_s\Big\} = \mathcal{I}_{\mathcal{X}} \cap [d_\tau,\infty).
\]

\noindent
By the first step,
\[
 \bar{\mathcal{I}} = \bigcup_{s\in\mathcal{H}\cap[d_\tau,\infty)}]s, d_s[,
\]

\noindent
whence the result.
\end{proof}

We are now ready to complete the construction of the solution of \eqref{reflected_equation_zero}. We introduce the following time-change, as well as its right-continuous inverse 
\[
 A_t = \int_0^t \bm{1}_{\{\mathcal{X}_s > 0\}}\dr s \quad \text{and} \quad T_t = \inf\left\{s>0, A_s > t\right\}.
\]

\noindent
We claim that by using the same arguments as in \cite[page 2024-2025]{bertoin2007reflecting}, or by using the Girsanov theorem as in the proof of Lemma \ref{decomp_zero}, it holds that
\begin{equation}\label{pouet}
 \mathcal{X}_t = \int_0^t V_s\bm{1}_{\{\mathcal{X}_s > 0\}} \dr s = \int_0^t V_s \dr A_s.
\end{equation}

\noindent
Therefore, by the change of variables theorem for Stieltjes integrals, we get
\begin{equation}\label{sol_reflec_zero}
 \mathbb{X}_t :=  \mathcal{X}_{T_t} = \int_0^{T_t}V_s \dr A_s = \int_0^t \mathbb{V}_s \dr s, \quad \text{where} \quad \mathbb{V}_t := V_{T_t}.
\end{equation}

\noindent
We finally set $\mathcal{G}_t = \mathcal{F}_{T_t}$ and state the main theorem of this section.

\begin{theorem}\label{thm_sol_eq_zero}
There exists a $(\mathcal{G}_t)_{t\geq0}$-Brownian motion such that $(\mathbb{X}_t, \mathbb{V}_t)_{t\geq0}$ is a solution of \eqref{reflected_equation_zero} on $(\Omega, \mathcal{F}, (\mathcal{G}_t)_{t\geq0}, \mathbb{P})$.
\end{theorem}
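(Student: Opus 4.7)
My plan is to exhibit the required Brownian motion as a time-changed stochastic integral. Define the continuous $(\mathcal{F}_t)$-martingale $\widetilde{B}_t := \int_0^t \bm{1}_{\{\mathcal{X}_s > 0\}}\,\dr B_s$, which has quadratic variation $A_t$, and set $\beta_t := \widetilde{B}_{T_t}$. Since each $T_t$ is an $(\mathcal{F}_t)$-stopping time (as the right-continuous inverse of the continuous adapted process $A$), optional stopping and the time-change formula for quadratic variations give that $\beta$ is a $(\mathcal{G}_t)$-local martingale with $\langle \beta \rangle_t = \langle \widetilde{B} \rangle_{T_t} = A_{T_t} = t$, where the last equality uses that $A$ is continuous with right-continuous inverse $T$. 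For continuity of $\beta$, I would note that at every jump of $T$, from some $s \in \mathcal{H}$ to $d_s$, the increment $\int_s^{d_s} \bm{1}_{\{\mathcal{X}_u > 0\}}\,\dr B_u$ vanishes since $\mathcal{X}_u = 0$ on $(s,d_s)$ by Lemma \ref{decomp_zero}. Lévy's characterization then shows that $\beta$ is a $(\mathcal{G}_t)$-Brownian motion.

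It remains to check that the SDE \eqref{reflected_equation_zero} holds with this $\beta$. Starting from $V_{T_t} = v_0 + \int_0^{T_t} \mathrm{F}(V_s)\,\dr s + B_{T_t}$, I split each of the two integrals according to whether $\mathcal{X}_s > 0$ or $\mathcal{X}_s = 0$. On $\{\mathcal{X}_s > 0\}$ the map $s \mapsto A_s$ is strictly increasing, so the change of variable $u = A_s$ turns $\int_0^{T_t} \mathrm{F}(V_s)\bm{1}_{\{\mathcal{X}_s > 0\}}\,\dr s$ into $\int_0^t \mathrm{F}(\mathbb{V}_u)\,\dr u$ and $\int_0^{T_t}\bm{1}_{\{\mathcal{X}_s > 0\}}\,\dr B_s$ into $\beta_t$. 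On $\{\mathcal{X}_s = 0\} \cap [0, T_t]$, Lemma \ref{decomp_zero} reduces the integration set, up to a Lebesgue-null boundary, to the union of those excursion intervals $[s,d_s]$ with $s \in \mathcal{H}$ and $A_s \leq t$; this exact range is forced because $T_t$ always lies outside the open intervals $(s,d_s)$, so each $[s,d_s]$ is either fully inside or fully outside $[0, T_t]$. These two excursion contributions therefore sum to $\sum_{s \in \mathcal{H},\, A_s \leq t}\bigl(\int_s^{d_s}\mathrm{F}(V_u)\,\dr u + B_{d_s} - B_s\bigr) = \sum_{s \in \mathcal{H},\, A_s \leq t}(V_{d_s} - V_s) = -\sum_{s \in \mathcal{H},\, A_s \leq t}V_s$, using $V_{d_s} = 0$ by definition of $d_s$.

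The final step is to recognise this last sum as $\sum_{0 < u \leq t} \mathbb{V}_{u-}\bm{1}_{\{\mathbb{X}_u = 0\}}$. Each $s \in \mathcal{H}$ produces a jump time $A_s$ of $\mathbb{V}$ at which $\mathbb{V}_{A_s-} = V_s$ (since $T_{A_s-} = s$ and $V$ is continuous) and $\mathbb{X}_{A_s} = \mathcal{X}_{d_s} = 0$. Any other $u > 0$ with $\mathbb{X}_u = 0$ either lies in an interval on which $\mathbb{V}$ is identically $0$, in which case $\mathbb{V}_{u-} = 0$, or it belongs to the Lebesgue-null boundary of $\mathcal{I}_{\mathcal{X}}$ and so contributes only over a null set of times. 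This yields \eqref{reflected_equation_zero}. The main delicacy I anticipate is the careful justification of the change of variable together with the verification that no mass is hidden in $\partial \mathcal{I}_{\mathcal{X}}$; this is precisely the reason Lemma \ref{decomp_zero} was established.
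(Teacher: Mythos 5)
Your argument is correct and follows essentially the same route as the paper: the same split of the drift and Brownian integrals according to $\{\mathcal{X}_s>0\}$ versus $\{\mathcal{X}_s=0\}$, the Brownian motion obtained by time-changing $\int_0^{\cdot}\bm{1}_{\{\mathcal{X}_s>0\}}\dr B_s$ whose quadratic variation at time $T_t$ is $A_{T_t}=t$, and the identification of the points of $\mathcal{H}$ with the jump times of $(T_t)_{t\geq0}$ via Lemma \ref{decomp_zero}. The only difference is that the paper spells out the two points you flag as delicate: the stochastic integral over $\partial\mathcal{I}_{\mathcal{X}}$ is a.s. null because its quadratic variation vanishes, and at non-jump times $u$ with $\mathbb{X}_u=0$ the summand $\mathbb{V}_{u-}$ itself equals zero (a sum, unlike an integral, is not controlled by noting that such times form a Lebesgue-null set).
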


\begin{proof}
 We have already seen that $\mathbb{X}_t = \int_0^t \mathbb{V}_s \dr s$. Concerning the velocity, we paraphrase the proof of Proposition 1 in \cite{bertoin2008second} and start by decomposing the process $(\mathbb{V}_t)_{t\geq0}$ as follows
 \[
  \mathbb{V}_t = v_0 + \int_0^{T_t}\bm{1}_{\{\mathcal{X}_s = 0\}}\dr V_s + \int_0^{T_t}\bm{1}_{\{\mathcal{X}_s > 0\}}\dr V_s =: v_0 + C_t + D_t. 
 \]
 
\noindent
Let us first deal with $D_t$. Using \eqref{eq_non_reflected}, we get
\[
 D_t = \int_0^{T_t}\bm{1}_{\{\mathcal{X}_s > 0\}}\mathrm{F}(V_s)\dr s + \int_0^{T_t}\bm{1}_{\{\mathcal{X}_s > 0\}} \dr B_s.
\]

\noindent
The last term is a $(\mathcal{G}_t)_{t\geq0}$-local martingale whose quadratic variation at time $t$ equals $A_{T_t} = t$ by the very definition of $(T_t)_{t\geq0}$ and thus, it is a Brownian motion that we will denote $(\mathbb{B}_t)_{t\geq0}$. Regarding the second term, we use again the change of variables for Stieltjes integrals, to deduce that $\int_0^{T_t}\bm{1}_{\{\mathcal{X}_s > 0\}}\mathrm{F}(V_s)\dr s = \int_0^{T_t}\mathrm{F}(V_s)\dr A_s = \int_0^{t}\mathrm{F}(\mathbb{V}_s)\dr s$. We have proved that $D_t = \int_0^{t}\mathrm{F}(\mathbb{V}_s)\dr s + \mathbb{B}_t$. We now deal with $C_t$. First we note that the semimartingale
\[
 \int_0^t \bm{1}_{\{s \in\partial\mathcal{I}_{\mathcal{X}}\}}\dr V_s = \int_0^t \bm{1}_{\{s \in\partial\mathcal{I}_{\mathcal{X}}\}}\mathrm{F}(V_s)\dr s + \int_0^t \bm{1}_{\{s \in\partial\mathcal{I}_{\mathcal{X}}\}}\dr B_s
\]

\noindent
is a.s. null. Indeed since $\partial\mathcal{I}_{\mathcal{X}}$ has zero Lebesgue measure by Lemma \ref{decomp_zero}, the first term is obviously equal to zero. By the same argument, the second term is a local martingale whose quadratic variation is equal to zero, and therefore it is null. Then we can write as in \cite{bertoin2007reflecting}
\[
 C_t = \int_0^{T_t}\bm{1}_{\{s \in\mathcal{I}_{\mathcal{X}}\}}\dr V_s = \int_0^{T_t}\bm{1}_{\{s \in\mathring{\mathcal{I}}_{\mathcal{X}}\}}\dr V_s = \sum_{u\in\mathcal{H}, u \leq T_t}(V_{d_u} - V_u) = -\sum_{u\in\mathcal{H}, u \leq T_t}V_u.
\]

\noindent
In the third equality, we used Lemma \ref{decomp_zero} and the fact that, by definition, $T_t\notin\mathring{\mathcal{I}}_{\mathcal{X}}$ and in the fourth that $V_{d_u} = 0$ for every $u\in\mathcal{H}$. To every point $u\in\mathcal{H}$ corresponds a unique jumping time $s$ of $(T_t)_{t\geq0}$ at which $\mathcal{X}_t$ hits the boundary, i.e. $u = T_{s-}$ and $\mathcal{X}_{T_{s-}} = \mathcal{X}_{T_{s}} = 0$. Indeed, the flat sections of $(A_t)_{t\geq0}$ are precisely $\mathring{\mathcal{I}}_{\mathcal{X}}$, and therefore, for every $u\in\mathcal{H}$, we have $A_u = A_{d_u}$ and thus if we set $s= A_u$, then $T_{s-} = u$ and $T_s = d_u$. Hence we have
\[
 C_t = -\sum_{0< s \leq t}V_{T_{s-}}\bm{1}_{\{\mathcal{X}_{T_{s}} = 0\}} = -\sum_{0< s \leq t}\mathbb{V}_{s-}\bm{1}_{\{\mathbb{X}_s = 0\}},
\]

\noindent
which completes the proof.
\end{proof}

We are now ready to study the scaling limits of $(\mathcal{X}_t)_{t\geq0}$ and $(\mathbb{X}_t)_{t\geq0}$.
\begin{theorem}\label{cv_free_ref}
 Let $(\mathcal{X}_t)_{t\geq0}$ and $(\mathbb{X}_t)_{t\geq0}$ be defined as in \eqref{proc_ref_inf} and \eqref{sol_reflec_zero}. Let also $(Z_t^{\alpha})_{t\geq0}$ be a symmetric stable process with $\alpha = (\beta + 1) / 3$ and such that $\mathbb{E}[e^{i\xi Z_t^{\alpha}}] = \exp(-t\sigma_\alpha |\xi|^{\alpha})$ where $\sigma_\alpha$ is defined by \eqref{sigma}. Let $R_t^\alpha = Z_t^{\alpha} - \inf_{s\in[0,t]}Z_s^{\alpha}$. Then we have
 \[
  \left(\epsilon^{1/\alpha}\mathcal{X}_{t/\epsilon}\right)_{t\geq0} \longrightarrow \left(R_t^\alpha\right)_{t\geq0} \quad \text{and} \quad \left(\epsilon^{1/\alpha}\mathbb{X}_{t/\epsilon}\right)_{t\geq0} \longrightarrow \left(R_t^\alpha\right)_{t\geq0} \quad \hbox{as $\epsilon\to0$}
 \]

\noindent
in law for the $\bm{\mathrm{M}}_1$-topology.
\end{theorem}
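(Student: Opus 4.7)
The plan is to prove the two convergences in turn. The first, $\epsilon^{1/\alpha}\mathcal{X}_{\cdot/\epsilon} \to R^\alpha$, will be obtained as a direct consequence of Theorem \ref{conv_m2} and the continuous mapping theorem. Writing $\Psi : x \mapsto x - \inf_{[0,\cdot]}x$ for the reflection-on-infimum map, the scaling of the infimum yields pathwise
\[
\epsilon^{1/\alpha}\mathcal{X}_{t/\epsilon} \;=\; \Psi\!\bigl(\epsilon^{1/\alpha}X_{\cdot/\epsilon}\bigr)(t).
\]
The functional $\Psi$ is continuous for the $\bm{\mathrm{M}}_1$-topology at almost every trajectory of $Z^\alpha$ (the running infimum is $\bm{\mathrm{M}}_1$-continuous at paths whose negative jumps are non-degenerate, see e.g.\ Whitt, Chapter 13), so Theorem \ref{conv_m2} gives $\epsilon^{1/\alpha}\mathcal{X}_{\cdot/\epsilon} \to R^\alpha$ in law for $\bm{\mathrm{M}}_1$.

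For the second convergence I would invoke Skorokhod's representation theorem to assume the above holds a.s. Set $\mathcal{X}^\epsilon_t = \epsilon^{1/\alpha}\mathcal{X}_{t/\epsilon}$, $\mathbb{X}^\epsilon_t = \epsilon^{1/\alpha}\mathbb{X}_{t/\epsilon}$, and
\[
A^\epsilon_t := \epsilon A_{t/\epsilon} = \int_0^t \bm{1}_{\{\mathcal{X}^\epsilon_s > 0\}}\dr s, \qquad T^\epsilon_t := \epsilon T_{t/\epsilon},
\]
so that $T^\epsilon$ is the right-continuous inverse of $A^\epsilon$ and, by direct change of variable, $\mathbb{X}^\epsilon_t = \mathcal{X}^\epsilon_{T^\epsilon_t}$.

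The crux is then to show that $A^\epsilon_t \to t$ (hence $T^\epsilon_t \to t$) a.s., locally uniformly in $t$. The key input is the classical fact that $\{t \geq 0 : R^\alpha_t = 0\}$ has a.s.\ zero Lebesgue measure: it is the closed range of the inverse local time at $0$ of $R^\alpha$, which is a pure-jump subordinator (related by Wiener-Hopf to the ascending ladder height of the symmetric stable process). Since $\bm{\mathrm{M}}_1$-convergence entails pointwise convergence outside a countable set, one has $\bm{1}_{\{\mathcal{X}^\epsilon_s > 0\}} \to \bm{1}_{\{R^\alpha_s > 0\}}$ Lebesgue-a.e., and bounded convergence gives $A^\epsilon_t \to t$ pointwise. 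The monotonicity of each $A^\epsilon$ together with the continuity of the limit upgrades this to local uniform convergence, and inversion transfers this to $T^\epsilon$.

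Finally, I would conclude $\mathbb{X}^\epsilon = \mathcal{X}^\epsilon \circ T^\epsilon \to R^\alpha$ in $\bm{\mathrm{M}}_1$ via Theorem \ref{thm_conv_m1}: pointwise convergence $\mathcal{X}^\epsilon_{T^\epsilon_t} \to R^\alpha_t$ at every continuity point $t$ of $R^\alpha$ follows by combining $T^\epsilon_t \to t$ with the $\bm{\mathrm{M}}_1$-convergence of $\mathcal{X}^\epsilon$, while the oscillation estimate
\[
w(\mathbb{X}^\epsilon, T, \delta) \leq w(\mathcal{X}^\epsilon, T+\kappa_\epsilon, \delta+\kappa_\epsilon),\qquad \kappa_\epsilon := \sup_{s\leq T+1}|T^\epsilon_s - s|\to 0,
\]
transfers the oscillation condition from $\mathcal{X}^\epsilon$ to $\mathbb{X}^\epsilon$. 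The main obstacle I expect is precisely this composition step, as composition is notoriously delicate in the $\bm{\mathrm{M}}_1$-topology; the saving feature here is that the inner time change converges \emph{uniformly} to the identity, which is strong enough to push $\bm{\mathrm{M}}_1$-convergence through.
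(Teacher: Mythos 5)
Your proposal is correct and follows essentially the same route as the paper: the first convergence via Theorem \ref{conv_m2} and $\bm{\mathrm{M}}_1$-continuity of the reflection map, and the second via Skorokhod representation, the a.s. Lebesgue-nullity of the zero set of $R^\alpha$ to get $A^\epsilon_t\to t$ (the paper uses Fatou where you use dominated convergence), and then composition with the time change $T^\epsilon$. The only divergence is the last step: the paper deduces joint convergence $(\mathcal{X}^\epsilon, T^\epsilon)\to(R^\alpha,\mathrm{id})$ by a Slutsky-type argument and invokes Whitt's continuity of the composition map on $\mathcal{D}\times\mathcal{C}_{\uparrow\uparrow}$, whereas you verify the criterion of Theorem \ref{thm_conv_m1} directly, using locally uniform convergence $T^\epsilon\to\mathrm{id}$ and the oscillation bound $w(\mathbb{X}^\epsilon,T,\delta)\leq w(\mathcal{X}^\epsilon,T+\kappa_\epsilon,\delta+\kappa_\epsilon)$; this is equally valid, and is in fact the same device the paper itself employs in Step 2 of the proof of Theorem \ref{main_theorem} to transfer tightness through a time change.
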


\begin{proof}
The convergence of $(\epsilon^{1/\alpha}\mathcal{X}_{t/\epsilon})_{t\geq0}$ is straightforward by the continuous mapping theorem and Theorem \ref{conv_m2}. Indeed the reflection map from $\mathcal{D}$ into itself, which maps a function $x$ to the function $y$, defined for every $t\geq0$ by $y(t) = x(t) - 0\wedge\inf_{s\in[0,t]}x(s)$, is continuous with respect to the $\bm{\mathrm{M}}_1$-topology, see Whitt \cite[Chapter 13, Theorem 13.5.1]{book_whitt}.

\medskip
We now study $(\mathbb{X}_t)_{t\geq0} = (\mathcal{X}_{T_t})_{t\geq0}$. By Skorokhod's representation theorem, there exist a family of processes $(\mathcal{X}_t^{\epsilon})_{t\geq0}$ indexed by $\epsilon > 0$, and a reflected symmetric stable process $(R_t^\alpha)_{t\geq0}$, both defined on the probability space $([0,1], \mathcal{B}([0,1]), \lambda)$, where $\lambda$ denotes the Lebesgue measure on $[0,1]$, such that for every $\epsilon > 0$, $(\mathcal{X}_t^{\epsilon})_{t\geq0}\overset{d}{=} (\mathcal{X}_{t/\epsilon})_{t\geq0}$ and such that,
\[
 \lambda-\text{a.s.,} \quad d_{\bm{\mathrm{M}}_1}((\epsilon^{1/\alpha}\mathcal{X}_t^{\epsilon})_{t\geq0}, (R_t^\alpha)_{t\geq0}) \underset{\epsilon\to0}{\longrightarrow}0.
\]

\noindent
Let us denote by $J$ be the set of discontinuities of $(R_t^\alpha)_{t\geq0}$. Then, by \cite[Chapter 12, Lemma 12.5.1]{book_whitt}, we get that
\[
 \lambda-\text{a.s.,} \quad \text{for every }t\notin J, \quad \epsilon^{1/\alpha}\mathcal{X}_t^{\epsilon} \underset{\epsilon\to0}{\longrightarrow}R_t^\alpha.
\]

\noindent
We introduce the time-change process $A_t^{\epsilon} = \int_0^t \bm{1}_{\{\epsilon^{1/\alpha}\mathcal{X}_s^{\epsilon} > 0\}} \dr s$ for every $\epsilon > 0$ and every $t\geq0$. Then we get by the Fatou lemma that $\lambda-$a.s., for every $t\geq0$,
\[
 \int_0^t \liminf_{\epsilon\to0}\bm{1}_{\{\epsilon^{1/\alpha}\mathcal{X}_s^{\epsilon} > 0\}} \dr s \leq \liminf_{\epsilon\to0}A_t^{\epsilon} \leq \limsup_{\epsilon\to0}A_t^{\epsilon} \leq t. 
\]

\noindent
Since $J$ is countable, we have $\lambda-$a.s., for a.e. $s\in[0,t]$, $\bm{1}_{\{R_s^\alpha > 0\}} \leq \liminf_{\epsilon\to0}\bm{1}_{\{\epsilon^{1/\alpha}\mathcal{X}_s^{\epsilon} > 0\}}$. Since the zero set of the reflected stable process is a.s. Lebesgue-null, we conclude that $\lambda-$a.s., for every $t\geq0$, $A_t^{\epsilon}\to t$ as $\epsilon\to0$.

\medskip
Let us denote by $(T_t^{\epsilon})_{t\geq0}$ the right-continuous inverse of $(A_t^{\epsilon})_{t\geq0}$. As an immediate consequence, we have $\lambda-$a.s., for every $t\geq0$, $T_t^{\epsilon}\to t$ as $\epsilon\to0$. Since the $\bm{\mathrm{M}}_1$-topology on the space of non-increasing functions reduces to pointwise convergence on a dense subset including $0$, see \cite[Corollary 12.5.1]{book_whitt}, we have $(T_t^{\epsilon})_{t\geq0}\longrightarrow(\mathrm{id}_t)_{t\geq0}$ as $\epsilon\to0$ in law in the $\bm{\mathrm{M}}_1$-topology, where $\mathrm{id}_t = t$ is the identity process.

\medskip
By a simple substitution, we see that $(\epsilon A_{t/\epsilon})_{t\geq0} \overset{d}{=} (A_{t}^{\epsilon})_{t\geq0}$, from which we deduce that $(\epsilon T_{t/\epsilon})_{t\geq0}\overset{d}{=} (T_{t}^{\epsilon})_{t\geq0}$ and that $(\epsilon T_{t/\epsilon})_{t\geq0}\longrightarrow(\mathrm{id}_t)_{t\geq0}$ as $\epsilon\to0$ in law in the $\bm{\mathrm{M}}_1$-topology. Hence, by a generalization of Slutsky theorem, see for instance \cite[Section 3, Theorem 3.9]{billingsley_conv}, we get that $(\epsilon^{1/\alpha}\mathcal{X}_{t/\epsilon}, \epsilon T_{t/\epsilon})_{t\geq0}$ converges in law to $(R_t^{\alpha}, \mathrm{id}_t)_{t\geq0}$ in $\mathcal{D}\times\mathcal{D} = \mathcal{D}(\mathbb{R}_+, \mathbb{R}^2)$ endowed with the $\bm{\mathrm{M}}_1$-topology.

\medskip
We are now able to conclude. Let us denote by $\mathcal{D}_\uparrow$ the set of càdlàg and non-decreasing functions from $\mathbb{R}_+$ to $\mathbb{R}_+$, and $\mathcal{C}_{\uparrow\uparrow}$ the set of continuous and strictly increasing functions from $\mathbb{R}_+$ to $\mathbb{R}_+$. Then the composition map, from $\mathcal{D}\times\mathcal{D}_\uparrow$ to $\mathcal{D}$, which maps $(x,y)$ to $x\circ y$, is continuous on $\mathcal{D}\times\mathcal{C}_{\uparrow\uparrow}$, see \cite[Chapter 13, Theorem 13.2.3]{book_whitt}. Hence $(\epsilon^{1/\alpha}\mathbb{X}_{t/\epsilon})_{t\geq0} = (\epsilon^{1/\alpha}\mathcal{X}_{T_{t/\epsilon}})_{t\geq0}$ converges to $(R_t^\alpha)_{t\geq0}$ by the continuous mapping theorem.
\end{proof}

\section{The reflected process with diffusive boundary condition}\label{reflected_section}

In this section, we finally study the process $(\bm{X}_t, \bm{V}_t)_{t\geq0}$, solution of \eqref{reflected_equation}, and its scaling limit. To establish our result, we will rely on the convergence of $(\mathcal{X}_t)_{t\geq0}$ from Theorem \ref{cv_free_ref}. First, we will show that $\bm{X}_t \geq \mathcal{X}_t = X_t - \inf_{s\in[0,t]}X_s$ where $(\bm{X}_t)_{t\geq0}$ and $(X_t)_{t\geq0}$ are the solutions to \eqref{reflected_equation} and \eqref{eq_non_reflected} with the same Brownian motion. Then, inspired by the work of Bertoin \cite{bertoin2007reflecting}, we will give another construction of $(\bm{X}_t, \bm{V}_t)_{t\geq0}$ and we will prove that, up to a time-change, $\bm{X}_t \leq \mathcal{X}_t$. Finally, we will conclude since the time-change at stake is asymptotically equivalent to $t$. We believe that the proof of the limit of the time-change is the most technical part of the paper and this will the subject of Subsections \ref{section_time_change} and \ref{section_persistence}.

\subsection{A comparison result}

Let $(\Omega, \mathcal{F}, \mathbb{P})$ be some probability space supporting a Brownian motion $(B_t)_{t\geq0}$ and a sequence of i.i.d $\mu$-distributed random variables $(M_n)_{n\in\mathbb{N}}$, independent of each other. Consider the solution $(\bm{X}_t, \bm{V}_t)_{t\geq0}$ of \eqref{reflected_equation}, starting at $(0, v_0)$ where $v_0 >0$, as well as its sequence of hitting times $(\bm{\tau}_n)_{n\in\mathbb{N}}$. We also consider on the same probability space the solution $(X_t, V_t)_{t\geq0}$ of \eqref{eq_non_reflected} starting at $(0, v_0)$, with the same driving Brownian motion $(B_t)_{t\geq0}$. Let also $(\mathcal{X}_t)_{t\geq0}$ be defined as in \eqref{proc_ref_inf}, i.e. $\mathcal{X}_t = X_t - \inf_{s\in[0, t]}X_s$. We have the following proposition.
\begin{proposition}\label{first_compar}
 Almost surely, for any $t\geq0$, $\bm{X}_t \geq \mathcal{X}_t$.
\end{proposition}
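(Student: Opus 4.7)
The plan is to introduce the difference process $\Phi_t := \bm{X}_t - X_t$ and to establish two properties: (i) $\Phi$ is non-decreasing, and (ii) $\Phi_t \geq -X_t$ for every $t \geq 0$. Property (ii) is immediate from $\bm{X}_t \geq 0$, which holds by construction of the reflected process. Once both are in hand, for any $0 \leq s \leq t$ monotonicity gives $\Phi_t \geq \Phi_s$, and (ii) applied at time $s$ gives $\Phi_s \geq -X_s$; taking the supremum over $s \in [0,t]$ produces
\[
\Phi_t \geq \sup_{s\in[0,t]}(-X_s) = -\inf_{s\in[0,t]}X_s,
\]
and adding $X_t$ to both sides yields $\bm{X}_t = X_t + \Phi_t \geq X_t - \inf_{s\in[0,t]}X_s = \mathcal{X}_t$, which is the claim.

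The heart of the matter is therefore property (i). Since neither $\bm{X}$ nor $X$ has position jumps, $\Phi_t = \int_0^t (\bm{V}_s - V_s)\dr s$, so it is enough to prove $\bm{V}_s \geq V_s$ for every $s \geq 0$. I would argue by induction on the reflection times $(\bm{\tau}_n)_{n \geq 0}$, setting $\bm{\tau}_0 := 0$. On each open interval $(\bm{\tau}_n, \bm{\tau}_{n+1})$, both $\bm{V}$ and $V$ are continuous and satisfy the same scalar SDE $\dr U_t = \mathrm{F}(U_t)\dr t + \dr B_t$ with the same driving Brownian motion; since $\mathrm{F}$ is Lipschitz by Assumption \ref{assump_fournier}, the standard pathwise comparison principle for one-dimensional SDEs propagates the inequality $\bm{V}_{\bm{\tau}_n} \geq V_{\bm{\tau}_n}$ to $\bm{V}_t \geq V_t$ on the whole interval. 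The induction is initialized on $[0,\bm{\tau}_1)$, where $\bm{V} = V$ by construction. To close the inductive step, continuity of $\bm{X}$ together with $\bm{X}_{\bm{\tau}_n} = 0$ and $\bm{X}_t > 0$ for $t \in (\bm{\tau}_{n-1},\bm{\tau}_n)$ forces the left-limit $\bm{V}_{\bm{\tau}_n-} \leq 0$; combining this with the induction hypothesis $\bm{V}_{\bm{\tau}_n-} \geq V_{\bm{\tau}_n-} = V_{\bm{\tau}_n}$ yields $V_{\bm{\tau}_n} \leq 0 < M_n = \bm{V}_{\bm{\tau}_n}$, which restores the comparison hypothesis immediately after the jump.

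The only slightly delicate point is controlling the left-limits $\bm{V}_{\bm{\tau}_n-}$ at the reflection times, but this is handled directly by the fact that the incoming velocity at a hitting time of a continuous non-negative process must be non-positive. Everything else is a clean combination of the pathwise SDE comparison and the elementary monotonicity trick relating a running infimum to pointwise bounds, so I do not anticipate any serious obstacle.
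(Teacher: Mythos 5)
Your proposal is correct and follows essentially the same route as the paper: a velocity comparison $\bm{V}_t \geq V_t$ proved by induction over the reflection times $(\bm{\tau}_n)$ via the pathwise comparison theorem (using $\bm{V}_{\bm{\tau}_n-}\leq 0 < M_n$ to restart the induction), followed by integration and the positivity $\bm{X}_s\geq 0$ to pass to the running infimum. Your monotone difference process $\Phi_t=\bm{X}_t-X_t$ is just a repackaging of the paper's inequality $X_t-X_s\leq \bm{X}_t-\bm{X}_s\leq \bm{X}_t$, so there is no substantive difference.
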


\begin{proof}
 \textit{Step 1:} We first prove that a.s. for any $t\geq0$, $V_t \leq \bm{V}_t$ and to do so, we use the classical comparison theorem for O.D.E's. We prove recursively that for any $n\in\mathbb{N}$, a.s. for any $t\in[0, \bm{\tau}_n)$, $V_t \leq \bm{V}_t$. This is true for $n=1$, since the processes are both solutions of the same well-posed O.D.E. on $[0,\bm{\tau}_1)$, with the same starting point. Hence they are equal on this interval. 
 
 \medskip
Now let us assume that for some $n\in\mathbb{N}$, a.s. for any $t\in[0, \bm{\tau}_n)$, $V_t \leq \bm{V}_t$. Then a.s. $V_{\bm{\tau}_n} = V_{\bm{\tau}_n -} \leq \bm{V}_{\bm{\tau}_n -} \leq 0$. We also see that $(\bm{V}_t)_{t\geq0}$ and $(V_t)_{t\geq0}$ are two solutions of the same O.D.E. on the interval $[\bm{\tau}_n, \bm{\tau}_{n+1})$. Indeed, a.s. for any $t\in[\bm{\tau}_n, \bm{\tau}_{n+1})$, we have
 \[
  \bm{V}_t = M_n + \int_{\bm{\tau}_n}^t\mathrm{F}(\bm{V}_s)\dr s + B_t - B_{\bm{\tau}_n} \quad \text{and} \quad V_t = V_{\bm{\tau}_n} + \int_{\bm{\tau}_n}^t\mathrm{F}(V_s)\dr s + B_t - B_{\bm{\tau}_n}.
 \]
\noindent
Since $M_n > 0$ with probability one, we have a.s. $M_n \geq V_{\bm{\tau}_n}$ and thus, by the comparison theorem, we deduce that a.s., for any $t\in[\bm{\tau}_n, \bm{\tau}_{n+1})$, $V_t \leq \bm{V}_t$. This achieves the first step.

\bigskip\noindent
\textit{Step 2:} We conclude. Almost surely, for any $0\leq s \leq t$, we have
\[
 X_t -X_s = \int_s^t V_s \dr s \leq \int_s^t \bm{V}_s \dr s = \bm{X_t} - \bm{X_s} \leq \bm{X_t},
\]

\noindent
the last inequality holding since $\bm{X}_s \geq 0$ a.s. This implies that a.s., $\inf_{s\in[0,t]}X_s \geq X_t - \bm{X}_t$, i.e. $\bm{X}_t \geq \mathcal{X}_t$, for any $t\geq0$.
\end{proof}

\subsection{A second construction}

In this subsection, we give another construction of $(\bm{X}_t, \bm{V}_t)_{t\geq0}$, which is inspired by the construction given by Bertoin \cite{bertoin2007reflecting} of the reflected Langevin process at an inelastic boundary.

\medskip
Let $(\Omega, \mathcal{F}, \mathbb{P})$ be a probability space supporting a Brownian motion $(B_t)_{t\geq0}$ and a sequence of i.i.d $\mu$-distributed random variables $(M_n)_{n\in\mathbb{N}}$, independent from the Brownian motion. We set $(\mathcal{F}_t)_{t\geq0}$ to be the filtration generated by $(B_t)_{t\geq0}$ after the usual completions, and we introduce the filtration $(\mathcal{G}_t)_{t\geq0}$ defined for every $t\geq0$ as 
\[
 \mathcal{G}_t = \mathcal{F}_{t} \vee \sigma\left(\left\{(M_n)_{n\in\mathbb{N}}\right\}\right).
\]
It is clear that $(B_t)_{t\geq0}$ remains a Brownian motion in the filtration $(\mathcal{G}_t)_{t\geq0}$. Next, we consider the strong solution $(X_t, V_t)_{t\geq0}$ of \eqref{eq_non_reflected} starting at $(0, v_0)$, which remains a strong Markov process in this filtration. We set $\sigma_0 = 0$, then we set $\tau_1 = \inf\{t >0, X_t = 0\}$ and we define recursively the sequence of random times
\[
 \sigma_{n} = \inf\left\{t > \tau_n,\: V_t = M_{n}\right\}\quad \text{and} \quad \tau_{n+1} = \inf\left\{t>\sigma_{n},\: X_t = X_{\sigma_{n}}\right\},
\] 
where $n\in\mathbb{N}$. We have the following lemma.
\begin{lemma}\label{lemma_random_times}
 The random times $(\sigma_n)_{n\in\mathbb{N}}$ and $(\tau_n)_{n\in\mathbb{N}}$ are $(\mathcal{G}_t)_{t\geq0}$-stopping times which are almost surely finite. Moreover, the sequence $(\tau_n - \sigma_{n-1}, \sigma_n - \sigma_{n-1})_{n\geq2}$ forms a sequence of identically distributed random variables and the subsequences $(\tau_{2n} - \sigma_{2n-1}, \sigma_{2n} - \sigma_{2n-1})_{n\geq1}$ and $(\tau_{2n + 1} - \sigma_{2n}, \sigma_{2n + 1} - \sigma_{2n})_{n\geq1}$ form sequences of i.i.d. random variables.
\end{lemma}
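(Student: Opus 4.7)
My plan is to verify the stopping-time property by a standard induction, handle the a.s.\ finiteness using recurrence of both $(V_t)$ and its integral $(X_t)$, and then derive the distributional assertions from iterated applications of the strong Markov property of the Brownian motion $B$ in the filtration $(\mathcal{G}_t)_{t\geq 0}$, by rewriting each cycle as a measurable functional of a fresh Brownian increment and two consecutive $M_k$'s.

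First, that $\tau_n$ and $\sigma_n$ are $(\mathcal{G}_t)$-stopping times follows by induction: every $M_k$ is $\mathcal{G}_0$-measurable, so once $\tau_n$ is known to be a stopping time, $\sigma_n$ is the first hitting of the $\mathcal{G}_{\tau_n}$-measurable level $M_n$ by the continuous adapted process $(V_t)$ after $\tau_n$, and similarly $\tau_{n+1}$ is the first passage of $(X_t)_{t\geq \sigma_n}$ at the $\mathcal{G}_{\sigma_n}$-measurable level $X_{\sigma_n}$. For the a.s.\ finiteness, I would observe that $(V_t)$ is a one-dimensional diffusion with a strictly positive invariant density on $\mathbb{R}$ (by Assumption \ref{assump_fournier}), hence point-recurrent, so that given $\tau_n <\infty$, the strong Markov property yields $\sigma_n < \infty$ a.s. For the finiteness of the $\tau_n$'s, by strong Markov at $\sigma_{n-1}$ it suffices to show that the free process started from $(0,v)$ with $v > 0$ returns to $0$ in finite time. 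I would prove this by embedding $(X_t)$ at the successive zeros $0 < s_1 < s_2 < \cdots$ of $(V_t)$: the excursions of $V$ from $0$ being symmetric (since $\mathrm{F}$ is odd, as $\Theta$ is even), the increments $X_{s_{k+1}} - X_{s_k}$ are i.i.d.\ and symmetrically distributed for $k \geq 1$, hence $(X_{s_k})_{k \geq 1}$ is a Chung--Fuchs recurrent walk, which, combined with the continuity of $X$, forces $X$ to cross every level a.s.

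For the distributional claims, set $W^{(n)} = (B_{\sigma_{n-1}+t} - B_{\sigma_{n-1}})_{t\geq 0}$ for $n\geq 2$. By the strong Markov property of $B$ in $(\mathcal{G}_t)_{t\geq 0}$, $W^{(n)}$ is a standard Brownian motion independent of $\mathcal{G}_{\sigma_{n-1}}$. Rewriting the SDE \eqref{eq_non_reflected} from time $\sigma_{n-1}$ onwards with driver $W^{(n)}$ and initial condition $(0, M_{n-1})$, the pair $(\tau_n - \sigma_{n-1}, \sigma_n - \sigma_{n-1})$ becomes a fixed measurable functional $\Phi(W^{(n)}, M_{n-1}, M_n)$, namely the first return to $0$ of the resulting $X$-component followed by the first hitting of $M_n$ by its $V$-component. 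Since for every $n \geq 2$ the triple $(W^{(n)}, M_{n-1}, M_n)$ has the same joint law (a Brownian motion independent of a pair of i.i.d.\ $\mu$-distributed variables), identical distribution of the pairs follows. For the i.i.d.\ property of the even subsequence, the strong Markov property of $B$ at $\sigma_{2n-1}$ gives that $W^{(2n)}$ is independent of $\mathcal{G}_{\sigma_{2n-1}}$; moreover $(M_{2n-1}, M_{2n})$ is independent of $\sigma(B, M_1, \ldots, M_{2n-2})$, and this last $\sigma$-algebra contains all preceding pairs $\Phi(W^{(2k)}, M_{2k-1}, M_{2k})$ for $k<n$. Combining, the triples $(W^{(2n)}, M_{2n-1}, M_{2n})_{n\geq 1}$ are mutually independent, and so are their images under $\Phi$. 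The odd subsequence is handled identically.

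The main obstacle I anticipate is a bookkeeping one: since all $M_k$ lie in $\mathcal{G}_0$, one cannot use the filtration alone to argue independence of the $M$'s from past quantities. One must instead identify the minimal randomness on which each pair $(\tau_n - \sigma_{n-1}, \sigma_n - \sigma_{n-1})$ actually depends, namely $W^{(n)}$ and the two consecutive $M$'s, and then invoke independence at the source. A secondary technical point is to justify the recurrence of $(X_t)$ when the invariant measure of $(V_t)$ has infinite mean (i.e.\ when $\beta \leq 2$), which is why I rely on the embedded random-walk argument at the zeros of $V$ rather than on an ergodic theorem.
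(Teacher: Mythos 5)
Your overall architecture coincides with the paper's: induction for the stopping-time property, recurrence of $(V_t)_{t\geq0}$ for the $\sigma_n$'s, and, for the distributional claims, the observation that each pair $(\tau_n-\sigma_{n-1},\sigma_n-\sigma_{n-1})$ is a fixed measurable functional of the post-$\sigma_{n-1}$ Brownian increments and of $(M_{n-1},M_n)$, combined with the strong Markov property. Your identical-distribution argument (the triple has law Wiener$\,\otimes\,\mu\otimes\mu$ for every $n\geq2$) is correct and is exactly the paper's point made explicit. However, one step fails as written: the a.s.\ finiteness of the $\tau_n$'s. The paper obtains $\limsup_{t\to\infty}X_t=+\infty$ and $\liminf_{t\to\infty}X_t=-\infty$ from Theorem \ref{conv_m2}, whereas you invoke a ``Chung--Fuchs recurrent'' embedded walk. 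The increments of $X$ over excursion blocks of $V$ are in the domain of attraction of a symmetric $\alpha$-stable law with $\alpha=(\beta+1)/3$, which is $<1$ when $\beta<2$; a symmetric walk with such increments is \emph{transient}, so the recurrence you claim is false in part of the admissible range. (Moreover, ``successive zeros'' of $V$ do not exist, since the zero set of $V$ has no isolated points; you must sample along the inverse local time of $V$ at $0$, i.e. use the paper's Lévy process $Z_t=X_{\gamma_t}$.) What you actually need, and what is true, is only oscillation: any non-degenerate random walk with symmetric increments satisfies $\limsup_n S_n=+\infty$ and $\liminf_n S_n=-\infty$ a.s.\ (drift/oscillation trichotomy plus symmetry), and by continuity of $X$ this forces $X$ to cross every level. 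So your route is salvageable and arguably more elementary than invoking Theorem \ref{conv_m2}, but not via the recurrence statement you give.

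A second, smaller flaw is in the i.i.d.\ step: the claim that the triples $(W^{(2n)},M_{2n-1},M_{2n})_{n\geq1}$ are mutually independent is false, because $W^{(2n+2)}$ is a measurable time-shift of $W^{(2n)}$ (by $\sigma_{2n+1}-\sigma_{2n-1}$, itself a functional of $W^{(2n)}$ and of $M_{2n-1},M_{2n},M_{2n+1}$), so the full unstopped drivers are strongly dependent. What your two ingredients do yield, and what suffices, is an induction on $n$: writing $Q_k$ for the $k$-th even pair, $Q_1,\dots,Q_{n-1}$ are measurable with respect to $\sigma(B,M_1,\dots,M_{2n-2})$; one then factors out $M_{2n}$ (independent of $\sigma(B,M_1,\dots,M_{2n-1})$), then $W^{(2n)}$ (independent of $\mathcal{G}_{\sigma_{2n-1}}$, which contains $M_{2n-1}$ and the previous pairs), then $M_{2n-1}$ (independent of $\sigma(B,M_1,\dots,M_{2n-2})$), to conclude that $Q_n$ is independent of $(Q_1,\dots,Q_{n-1})$ with the law of $Q_1$. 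This is essentially the paper's (terser) argument, which also stresses that each pair depends only on the Brownian increments on the stopped interval $[\sigma_{2n-1},\sigma_{2n}]$ — the key fact that makes the conclusion true even though your stated ``mutual independence of the triples'' is not.
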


\begin{proof}
 The process $(V_t)_{t\geq0}$ being a recurrent diffusion, see for instance \cite{fournier2018one}, we have almost surely, $\liminf_{t\to\infty}V_t = -\infty$ and $\limsup_{t\to\infty}V_t = \infty$. Moreover, by Theorem \ref{conv_m2}, we also have $\liminf_{t\to\infty}X_t = -\infty$ and $\limsup_{t\to\infty}X_t = \infty$ a.s. Hence, the random times previously defined are almost surely finite. Moreover, it is clear by a simple induction that the random times $(\sigma_n)_{n\in\mathbb{N}}$ and $(\tau_n)_{n\in\mathbb{N}}$ are $(\mathcal{G}_t)_{t\geq0}$-stopping times. Indeed, if $\sigma_n$ is a $(\mathcal{G}_t)_{t\geq0}$-stopping time for some $n\geq0$, then $\tau_{n+1} = \inf\left\{t>\sigma_{n},\: X_t = X_{\sigma_{n}}\right\}$ is obviously a $(\mathcal{G}_t)_{t\geq0}$-stopping time and since $\sigma_{n+1}$ is the first hitting of zero after $\tau_{n+1}$ of the $(\mathcal{G}_t)_{t\geq0}$-adapted process $(V_t - M_{n+1})_{t\geq0}$, it is also a stopping time for $(\mathcal{G}_t)_{t\geq0}$. 
 
 \smallskip
 Then, for any $n \geq 2$, applying the strong Markov property of $(X_t, V_t)_{t\geq0}$ at time $\sigma_{n-1}$, we see that for any $n\geq 2$, $(\tau_n - \sigma_{n-1}, \sigma_n - \sigma_{n-1})$ has the same law as $(\tau_2 - \sigma_1, \sigma_2 - \sigma_1)$. The fact that the subsequences form sequences of i.i.d. random variables follows again from the strong Markov property and the fact that for any $n \geq 2$, $(\tau_n - \sigma_{n-1}, \sigma_n - \sigma_{n-1})$ only depends on $(B_{t} - B_{\sigma_{n-1}})_{t\in[\sigma_{n-1}, \sigma_n]}$ and $(M_{n-1}, M_n)$
\end{proof}

We are finally ready to start the construction of the reflected process. We define the $(\mathcal{G}_t)_{t\geq0}$-adapted processes
\begin{equation}\label{second_cons}
 \mathfrak{X}_t = \sum_{n\in\mathbb{N}}\left(X_t - X_{\sigma_{n-1}}\right)\bm{1}_{\{\sigma_{n-1} \leq t <\tau_n\}} \quad\text{and}\quad\mathfrak{V}_t = \bm{1}_{\{\mathfrak{X}_t > 0\}}V_t.
\end{equation}

\noindent
We refer to Figure \ref{fig:decomp} for a visual representation. For every $n\in\mathbb{N}$, the process $(\mathfrak{X}_t)_{t\geq0}$ has the same trajectory as $(X_t)_{t\geq0}$ on $[\sigma_{n-1}, \tau_n]$ shifted by $X_{\sigma_{n-1}}$ and is null on $[\tau_n, \sigma_{n}]$. Now by the very definition of $\sigma_{n}$ and $\tau_n$, $X_t$ is above $X_{\sigma_{n-1}}$ on $[\sigma_{n-1}, \tau_n]$, and it should be clear that a.s., for every $t\geq0$, $\mathfrak{X}_t \geq0$ and that
\begin{equation}\label{zero_set}
 \mathcal{I}_{\mathfrak{X}} = \left\{t\geq0,\: \mathfrak{X}_t = 0\right\} = \{0\}\cup\left(\bigcup_{n\in\mathbb{N}}[\tau_n, \sigma_{n}]\right).
\end{equation}

\begin{figure}
\begin{center}
\includegraphics[scale=1]{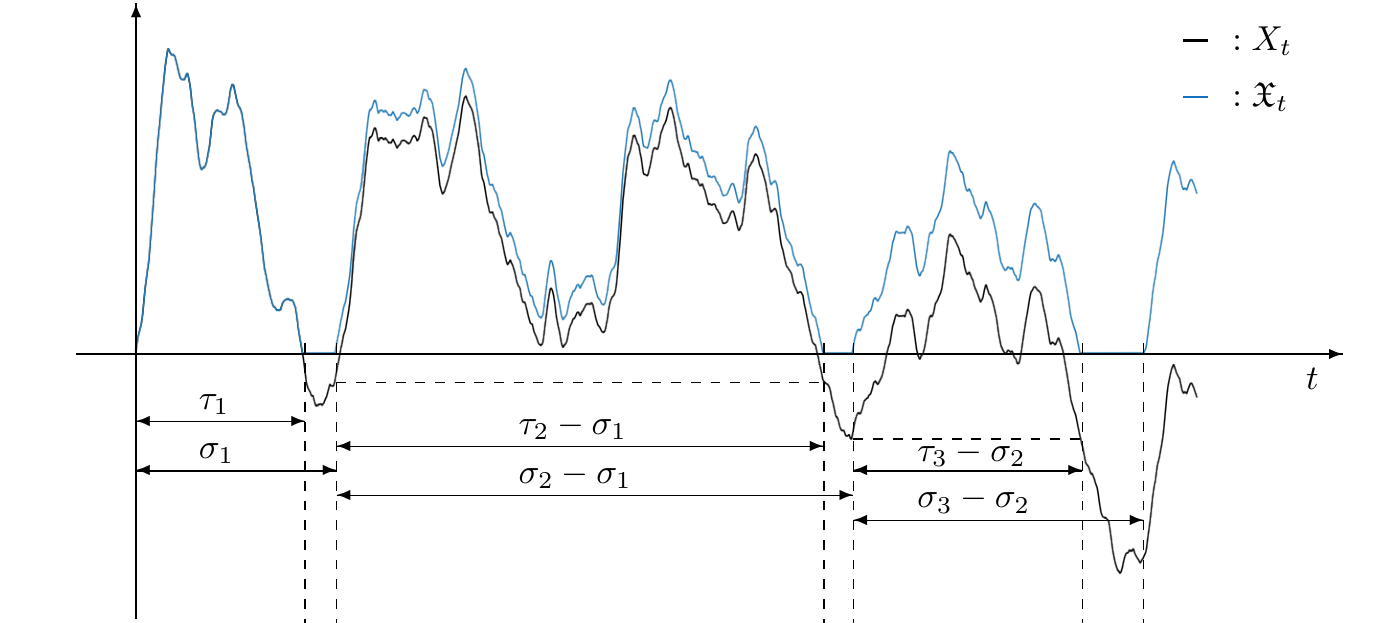}
\end{center}
\caption{\footnotesize Graphical representation of a realization of $(X_t)_{t\geq0}$ and $(\mathfrak{X}_t)_{t\geq0}$.}
\label{fig:decomp}
\end{figure}

\noindent
As in \eqref{sol_reflec_zero}, we also have
\begin{equation}\label{x_integ_v}
 \mathfrak{X}_t = \int_0^t \mathfrak{V}_s \dr s
\end{equation}

\noindent
Indeed, this can be easily checked using that for every $s\geq0$, $\bm{1}_{\{\mathfrak{X}_s > 0\}} = \sum_{n\in\mathbb{N}}\bm{1}_{\{\sigma_{n-1} < s <\tau_n\}}$ and that $X_{\tau_n} = X_{\sigma_{n-1}}$. As in the previous subsection, we can compare $\mathfrak{X}_t$ to $\mathcal{X}_t = X_t - \inf_{s\in[0,t]}X_s$.

\begin{proposition}\label{second_comparison}
Almost surely, for any $t\geq0$, $\mathfrak{X}_t \leq \mathcal{X}_t$.
\end{proposition}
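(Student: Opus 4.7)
The plan is to exploit the explicit formula defining $\mathfrak{X}_t$ together with the description \eqref{zero_set} of its zero set, and simply check the inequality on each piece of the natural partition of $[0,\infty)$ induced by the stopping times $(\sigma_n, \tau_n)_{n\geq 0}$. By Lemma \ref{lemma_random_times}, these times are a.s.\ finite, and from their construction we have $0=\sigma_0 \leq \tau_1 \leq \sigma_1 \leq \tau_2 \leq \cdots$. Using that the subsequences of increments are i.i.d.\ and a.s.\ strictly positive (the diffusion needs a positive amount of time to move from $X_{\sigma_{n-1}}$ up and back to it, and similarly to hit the fresh level $M_n$), one gets $\sigma_n \to \infty$ a.s., so that almost surely
\[
 [0,\infty) = \bigcup_{n\geq 1}[\sigma_{n-1},\tau_n) \;\cup\; \bigcup_{n\geq 1}[\tau_n,\sigma_n).
\]

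On the second family of intervals, the description \eqref{zero_set} gives $\mathfrak{X}_t = 0$, and the inequality $\mathfrak{X}_t \leq \mathcal{X}_t$ is trivial since $\mathcal{X}_t \geq 0$ by the very definition \eqref{proc_ref_inf}. On the first family, the definition \eqref{second_cons} yields $\mathfrak{X}_t = X_t - X_{\sigma_{n-1}}$ for $t\in[\sigma_{n-1},\tau_n)$. Since $\sigma_{n-1}\in[0,t]$, we trivially have $\inf_{s\in[0,t]}X_s \leq X_{\sigma_{n-1}}$, so
\[
 \mathfrak{X}_t = X_t - X_{\sigma_{n-1}} \leq X_t - \inf_{s\in[0,t]}X_s = \mathcal{X}_t,
\]
which concludes the argument.

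Unlike the comparison in Proposition \ref{first_compar}, there is essentially no technical obstacle here: the whole content is already encoded in the construction \eqref{second_cons} and the identification \eqref{zero_set} of the zero set, which are themselves straightforward from the definitions of $\sigma_n$ and $\tau_n$. The only point one needs to check carefully is that the intervals $[\sigma_{n-1},\tau_n)$ and $[\tau_n,\sigma_n)$ together cover $[0,\infty)$ a.s., which in turn only requires $\sigma_n \to \infty$; this is an easy consequence of Lemma \ref{lemma_random_times}.
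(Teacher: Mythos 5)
Your proof is correct and is essentially the paper's argument: the whole content is the inequality $\inf_{s\in[0,t]}X_s \leq X_{\sigma_{n-1}}$ on $\{\sigma_{n-1}\leq t<\tau_n\}$ together with $\mathcal{X}_t\geq 0$ elsewhere, which the paper packages as a one-line manipulation of the indicator sum in \eqref{second_cons}. The only difference is cosmetic — your explicit partition and the appeal to $\sigma_n\to\infty$ are not needed, since by definition $\mathfrak{X}_t=0\leq\mathcal{X}_t$ whenever $t$ lies in no interval $[\sigma_{n-1},\tau_n)$.
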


\begin{proof}
The result follows almost immediately from the definition of $(\mathfrak{X}_t)_{t\geq0}$. Indeed, almost surely, for any $n\in\mathbb{N}$ and any $t\geq0$,
\[
 X_{\sigma_{n-1}}\bm{1}_{\{\sigma_{n-1} \leq t <\tau_n\}} \geq \inf_{s\in[0,t]}X_s \times \bm{1}_{\{\sigma_{n-1} \leq t <\tau_n\}},
\]

\noindent
and therefore we have almost surely, for any $t\geq0$,
\[
 \mathfrak{X}_t = \sum_{n\in\mathbb{N}}\left(X_t - X_{\sigma_{n-1}}\right)\bm{1}_{\{\sigma_{n-1} \leq t <\tau_n\}} \leq \mathcal{X}_t\times\sum_{n\in\mathbb{N}}\bm{1}_{\{\sigma_{n-1} \leq t <\tau_n\}} \leq \mathcal{X}_t,
\]

\noindent
which achieves the proof.
\end{proof}
We are now ready to complete the construction of the solution of \eqref{reflected_equation}, as we did in Section \ref{section_inelastic}. We introduce the $(\mathcal{G}_t)_{t\geq0}$-adapted time-change $A_t' = \int_0^t \bm{1}_{\{\mathfrak{X}_s > 0\}} \dr s$ as well as its right-continuous inverse $T_t'$, and we define the processes
\[
 \bm{X}_t = \mathfrak{X}_{T_t'} = \int_0^{T_t'}V_s \dr A_s' = \int_0^t \bm{V}_s \dr s \quad \text{where} \quad \bm{V}_t = V_{T_t'}.
\]

\noindent
We finally define the filtration $(\bm{\mathcal{F}}_t)_{t\geq0} = (\mathcal{G}_{T_t'})_{t\geq0}$. We have the following theorem.
\begin{theorem}\label{thm_second_cons}
 There exists an $(\bm{\mathcal{F}}_t)_{t\geq0}$-Brownian motion such that the process $(\bm{X}_t, \bm{V}_t)_{t\geq0}$ is a solution of \eqref{reflected_equation} on $(\Omega, \mathcal{F}, (\bm{\mathcal{F}}_t)_{t\geq0}, \mathbb{P})$.
\end{theorem}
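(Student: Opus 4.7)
The plan is to mirror the proof of Theorem \ref{thm_sol_eq_zero}, adapted to account for the fact that the velocity now restarts at $M_n$ rather than being absorbed at $0$. The equation $\bm{X}_t=\int_0^t\bm{V}_s\dr s$ follows immediately by combining $\bm{X}_t=\mathfrak{X}_{T_t'}$, identity \eqref{x_integ_v}, and the change of variables for Stieltjes integrals (using that $A'$ is absolutely continuous with right-continuous inverse $T'$). Only the SDE for $\bm{V}$ requires work, and the natural strategy is to decompose $\bm{V}_t-v_0=V_{T'_t}-v_0=C_t+D_t$, where
\[
 C_t:=\int_0^{T'_t}\bm{1}_{\{\mathfrak{X}_s=0\}}\dr V_s,\qquad D_t:=\int_0^{T'_t}\bm{1}_{\{\mathfrak{X}_s>0\}}\dr V_s,
\]
and treat the two parts separately, exactly as in Theorem \ref{thm_sol_eq_zero}.

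For $D_t$, substituting \eqref{eq_non_reflected} and applying the change of variables with respect to $\dr A'_s=\bm{1}_{\{\mathfrak{X}_s>0\}}\dr s$ handles the drift, yielding $\int_0^{T'_t}F(V_s)\dr A'_s=\int_0^t F(\bm{V}_s)\dr s$. The remaining term $N_t:=\int_0^{T'_t}\bm{1}_{\{\mathfrak{X}_s>0\}}\dr B_s$ is a time-changed continuous $(\mathcal{G}_t)$-local martingale whose predictable quadratic variation equals $A'_{T'_t}=t$. Since $(T'_t)_{t\geq0}$ is a family of $(\mathcal{G}_t)$-stopping times and $\bm{\mathcal{F}}_t=\mathcal{G}_{T'_t}$, the time-change theorem for continuous local martingales and Lévy's characterization yield that $\bm{B}_t:=N_t$ is a $(\bm{\mathcal{F}}_t)$-Brownian motion. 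This is the step I expect to require the most care, as one must verify the compatibility between $B$, the time change $T'$, and the enlarged filtration $(\bm{\mathcal{F}}_t)$.

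For $C_t$, the explicit description \eqref{zero_set} shows that, up to the countable (hence Lebesgue-null) set $\{0\}\cup\{\tau_n,\sigma_n\}_{n\geq 1}$, one has $\{\mathfrak{X}_s=0\}=\bigcup_{n\geq 1}(\tau_n,\sigma_n)$; both the drift and the Brownian stochastic integral against the indicator of this null set vanish (the latter being a continuous local martingale with zero quadratic variation). Therefore
\[
 C_t=\sum_{n\geq 1}(V_{\sigma_n}-V_{\tau_n})\bm{1}_{\{\sigma_n\leq T'_t\}}=\sum_{n\geq 1}(M_n-V_{\tau_n})\bm{1}_{\{\sigma_n\leq T'_t\}},
\]
using $V_{\sigma_n}=M_n$. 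It remains to identify this sum with the prescribed jump term in \eqref{reflected_equation}. The flat sections of $A'$ are precisely the intervals $[\tau_n,\sigma_n]$, so $T'$ has countably many jumps at times $\bm{\tau}_n:=A'_{\tau_n}=A'_{\sigma_n}$, with $T'_{\bm{\tau}_n-}=\tau_n$ and $T'_{\bm{\tau}_n}=\sigma_n$; consequently $\{\sigma_n\leq T'_t\}=\{\bm{\tau}_n\leq t\}$ and $\bm{V}_{\bm{\tau}_n-}=V_{\tau_n}$, so $C_t=\sum_n(M_n-\bm{V}_{\bm{\tau}_n-})\bm{1}_{\{\bm{\tau}_n\leq t\}}$. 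Finally, $\bm{X}_{\bm{\tau}_n}=\mathfrak{X}_{\sigma_n}=0$ and $\bm{X}_t=\mathfrak{X}_{T'_t}>0$ for $t\in(\bm{\tau}_{n-1},\bm{\tau}_n)$ (since then $T'_t\in(\sigma_{n-1},\tau_n)$, on which $\mathfrak{X}>0$), confirming that the $\bm{\tau}_n$ are the successive hitting times of $0$ by $\bm{X}$, which completes the identification with \eqref{reflected_equation}.
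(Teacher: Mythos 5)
Your proposal is correct and follows essentially the same route as the paper: the same decomposition $\bm{V}_t = v_0 + C_t + D_t$, the same change-of-variables and time-change argument identifying $\bm{B}_t=\int_0^{T'_t}\bm{1}_{\{\mathfrak{X}_s>0\}}\dr B_s$ as an $(\bm{\mathcal{F}}_t)_{t\geq0}$-Brownian motion, and the same identification of $C_t$ with the jump term via the flat sections $[\tau_n,\sigma_n]$ of $A'$ and the relations $T'_{\bm{\tau}_n-}=\tau_n$, $T'_{\bm{\tau}_n}=\sigma_n$. The only cosmetic difference (indexing the sum by $\{\sigma_n\leq T'_t\}$ rather than $\{\tau_n\leq T'_t\}$) is immaterial since $T'_t$ never lies in $[\tau_n,\sigma_n)$.
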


\begin{proof}
 The proof is very similar to the proof of Theorem \ref{thm_sol_eq_zero}. We start by decomposing the process $(\bm{V}_t)_{t\geq0}$ as follows
\[
\bm{V}_t = v_0 + \int_0^{T_t'}\bm{1}_{\{\mathfrak{X}_s = 0\}}\dr V_s + \int_0^{T_t'}\bm{1}_{\{\mathfrak{X}_s > 0\}}\dr V_s =: v_0 + C_t + D_t.
\]

\noindent
As in the proof of Theorem \ref{thm_sol_eq_zero}, we easily see, using \eqref{eq_non_reflected}, the definition of $(T_t')_{t\geq0}$ and the change of variables theorem for Stieltjes integrals, that
\[
 D_t = \int_0^t \mathrm{F}(\bm{V_s})\dr s + \bm{B}_t,
\]
\noindent
where $(\bm{B}_t)_{t\geq0} = (\int_0^{T_t'}\bm{1}_{\{\mathfrak{X}_s > 0\}}\dr B_s)_{t\geq0}$ is an $(\bm{\mathcal{F}}_t)_{t\geq0}$-Brownian motion. We now deal with $C_t$ and we write
\[
 C_t = \int_0^{T_t'}\bm{1}_{\{s\in\mathcal{I}_{\mathfrak{X}}\}}\dr V_s = \sum_{n\in\mathbb{N}, \tau_n \leq T_t'}(V_{\sigma_n} - V_{\tau_n}) = \sum_{n\in\mathbb{N}, \tau_n \leq T_t'}(M_n - V_{\tau_n}).
\]
\noindent
In the second inequality, we used \eqref{zero_set} and that, by definition, $T_t'\notin\mathring{\mathcal{I}}_{\mathfrak{X}}$. Let us now define for all $n\in\mathbb{N}$, $\bm{\tau}_n = \sum_{k=1}^n(\tau_k - \sigma_{k-1})$. Then by definition of $A_t'$, we have for all $n\in\mathbb{N}$, $A_{\tau_n}' = \bm{\tau}_n$, which leads to $\tau_n = T_{\bm{\tau}_n -}'$. Indeed the flat section of $(A_t')_{t\geq0}$ consists in $\cup_{n\in\mathbb{N}}[\tau_n, \sigma_n]$ and thus the jumping times of $(T_t')_{t\geq0}$ are precisely the times $\bm{\tau}_n$. Then we get
\[
C_t = \sum_{n\in\mathbb{N}}(M_n - \bm{V}_{\bm{\tau}_n-})\bm{1}_{\{\bm{\tau}_n \leq t\}}.
\]

\noindent
Finally it is clear that $\bm{\tau}_1 = \inf\{t> 0, \: \bm{X}_t = 0\}$ and that $\bm{\tau}_{n+1} = \inf\{t> \bm{\tau}_n, \: \bm{X}_t = 0\}$.
\end{proof}

\subsection{Convergence of the time-change}\label{section_time_change}

The goal of this subsection is to see that the time change $(A_t')_{t\geq0}$ is asymptotically equivalent to $t$, i.e. the size of $\sigma_n - \tau_n$ is small compared to the size of $\tau_n -\sigma_{n-1}$. Recall we are concerned with process $(\mathfrak{X}_t)_{t\geq0}$ defined in \eqref{second_cons} and $A_t' = \int_0^t\bm{1}_{\{\mathfrak{X}_s > 0\}} \dr s$. The main result of this subsection is the following result.

\begin{proposition}\label{conv_time_change}
 Under Assumption \ref{assump_measure}-\textit{(i)}, we have $t^{-1}A_t' \overset{\mathbb{P}}{\longrightarrow}1$ as $t\to\infty$.
\end{proposition}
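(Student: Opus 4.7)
The plan is to exploit the cycle structure provided by Lemma \ref{lemma_random_times}. Denote by $U_n := \tau_n - \sigma_{n-1}$ the \emph{active} length of the $n$-th cycle (during which $\mathfrak{X}_t > 0$) and by $D_n := \sigma_n - \tau_n$ the \emph{dormant} length (on which $\mathfrak{X}_t = 0$). Setting $N_t := \max\{n : \sigma_n \leq t\}$, the elementary identities
\[
A_t' = \sum_{n=1}^{N_t} U_n + \bigl((t - \sigma_{N_t}) \wedge U_{N_t+1}\bigr), \qquad \sigma_{N_t} = \sum_{n=1}^{N_t}(U_n + D_n) \leq t,
\]
yield the deterministic bound
\[
\frac{t - A_t'}{t} \;\leq\; \frac{\sum_{n=1}^{N_t+1} D_n}{\sum_{n=1}^{N_t}(U_n + D_n)}.
\]
Since Lemma \ref{lemma_random_times} ensures that each $\sigma_n$ is a.s.\ finite (hence $N_t \to \infty$ a.s.), it suffices to prove that
\[
\frac{\sum_{n=1}^{N} D_n}{\sum_{n=1}^{N} U_n} \xrightarrow[N \to \infty]{\mathbb{P}} 0.
\]

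By Lemma \ref{lemma_random_times}, the sub-sequences $(U_{2n},D_{2n})_{n\geq 1}$ and $(U_{2n+1},D_{2n+1})_{n\geq 1}$ are each i.i.d., which enables the strong law of large numbers to be applied separately to both. My first step would be to show $\mathbb{E}[U_2] = +\infty$. By the strong Markov property, $U_2$ coincides in law with the first return to $0$ of the free process $(X_t, V_t)$ starting from $(0, M_1)$; combining the $\bm{\mathrm{M}}_1$-convergence of Theorem \ref{conv_m2} with the classical persistence asymptotic $\mathbb{P}(Z_s^\alpha > 0,\; \forall s \in [0,T]) \asymp T^{-1/2}$ for the symmetric $\alpha$-stable process furnishes a lower bound on the tail of $U_2$ that is heavy enough to force $\mathbb{E}[U_2] = +\infty$. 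The strong law of large numbers for non-integrable positive i.i.d.\ variables then yields $N^{-1}\sum_{n=1}^N U_n \to +\infty$ almost surely.

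The second step---controlling $\sum_{n=1}^N D_n$---is the genuine obstacle, and is precisely what motivates Subsection \ref{section_persistence}. Writing $D_n = H_{V_{\tau_n}, M_n}$ where $H_{v,w}$ denotes the hitting time of $w$ by the velocity diffusion $V$ started from $v$, the classical scale/speed-measure formula (applied with $s'(v) = \Theta(v)^{-\beta}$ and $m(v) \propto \Theta(v)^\beta$) only yields the crude bound $\mathbb{E}[H_{v,w}] \lesssim 1 + v^2 + w^{\beta+1}$ for $v \leq 0 \leq w$. A naive SLLN approach would therefore demand $\mathbb{E}[M^{\beta+1}] < \infty$, which Assumption \ref{assump_measure}-\textit{(i)} (supplying only moments of order $(\beta+1)/2 + \eta$) does not provide. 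Overcoming this gap is the heart of the matter: one must carry out a finer tail analysis of $H_{V_{\tau_n}, M_n}$ that exploits the full distribution rather than just its mean, combined with a \emph{persistence} estimate controlling the tail of $V_{\tau_n}$ (roughly, $\mathbb{E}[V_{\tau_n}^2] < \infty$). This is precisely the content of Subsection \ref{section_persistence}, and it is there that Assumption \ref{assump_measure}-\textit{(i)} enters the argument for the first time. Granted the resulting estimate $\sum_{n=1}^N D_n = o\bigl(\sum_{n=1}^N U_n\bigr)$ in probability, the proposition follows.
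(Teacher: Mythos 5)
Your overall architecture is the same as the paper's: split time into the active lengths $U_n=\tau_n-\sigma_{n-1}$ and the dormant lengths $D_n=\sigma_n-\tau_n$, bound $1-t^{-1}A_t'$ by the dormant contribution, and show the latter is negligible, with the hard quantitative input coming from Subsection \ref{section_persistence}. The genuine gap lies in the one step you do try to justify. You claim that the $\bm{\mathrm{M}}_1$-convergence of Theorem \ref{conv_m2}, combined with the persistence asymptotic for the limiting stable process, yields a lower bound on $\mathbb{P}(U_2>T)$ strong enough to force $\mathbb{E}[U_2]=\infty$. Weak convergence cannot deliver this: under the rescaling $\epsilon^{1/\alpha}X_{\cdot/\epsilon}$ the initial condition $(0,M_1)$ collapses to the origin, the limit is a symmetric $\alpha$-stable process started from $0$, and since $0$ is regular for the negative half-line such a process fails to stay nonnegative on any interval $[0,T]$ with probability one. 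The portmanteau inequality therefore only gives $\liminf_{\epsilon\to0}\mathbb{P}(\epsilon^{1/\alpha}X_{s/\epsilon}>0,\ \forall s\le T)\ge 0$, i.e.\ no polynomial lower bound on the prelimit persistence probability. This is exactly the obstruction the paper records just before Lemma \ref{persistence_result} (one cannot get the tail of $\tau$ by approximating $X$ by its scaling limit), and it is why the tail $\mathbb{P}(\tau>t)\sim Ct^{-1/2}$ is obtained instead through excursion theory and a Wiener--Hopf factorization.

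Two further points about what you defer. First, the estimate actually supplied by Subsection \ref{section_persistence} is not a moment bound on $V_{\tau_n}$ (the paper stresses that the law of $V_\tau$ is unknown and never controls it), and under Assumption \ref{assump_measure}-\textit{(i)} you should not expect $\mathbb{E}[D_n]<\infty$; what is proved is that $\tau$ and $\sigma$ share the \emph{same} leading tail $Ct^{-1/2}$, whence by Lemma \ref{equiv_tail} $\mathbb{P}(\sigma-\tau>t)=o(t^{-1/2})$, and Proposition \ref{conv_prob_0_neg} then gives $\sum_{k\le n}D_k=o_{\mathbb{P}}(n^{2})$, which is the correct scale since $\sum_{k\le n}U_k$ is of order $n^{2}$ by the $1/2$-stable limit theorem (your infinite-mean SLLN gives only superlinear growth, which is weaker but would still suffice here). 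Second, reducing to the ratio along deterministic $N$ is not enough: convergence in probability does not transfer to the random index $N_t$ by itself. The paper closes this by proving $\lim_{A\to\infty}\limsup_{t\to\infty}\mathbb{P}(N_t\ge At^{1/2})=0$ and then using the monotonicity of $n\mapsto\sum_{k\le n}D_k$ to compare with a deterministic number $\lfloor At^{1/2}\rfloor$ of terms; some such control of $N_t$ must be added to your argument.
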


We recall from from Lemma \ref{lemma_random_times} that $(\tau_{n} - \sigma_{n-1}, \sigma_{n} - \sigma_{n-1})_{n\geq 2}$ is a sequence of identically distributed random variables and each element is equal in law to the random variable $(\tau, \sigma)$ defined as follows: let $(\Omega, \mathcal{F}, (\mathcal{F}_t)_{t\geq0}, \mathbb{P})$ be a filtered probability space supporting an $(\mathcal{F}_t)_{t\geq0}$-Brownian motion $(B_t)_{t\geq0}$ and two independent $\mu$-distributed random variables $V_0$ and $M$, also independent of the Brownian motion. Consider the process $(X_t, V_t)_{t\geq0}$ solution of \eqref{eq_non_reflected} starting at $(0, V_0)$. Then $\tau$ and $\sigma$ are defined as
\[
 \tau = \inf\{t>0, \: X_t = 0\} \quad \text{and} \quad\sigma = \inf\{t\geq\tau, \: V_t = M\}.
\]

\noindent
For $t\geq0$, we define $N_t = \sup\{n\geq0, \:\sigma_n \leq t\}$, for which $\sigma_{N_t} \leq t < \sigma_{N_t + 1}$. Then the time-change $(A_t')_{t\geq0}$ satisfies, see \eqref{second_cons},
\begin{equation}\label{eq_time_change}
 A_t' = \sum_{k=1}^{N_t}(\tau_{k} - \sigma_{k-1}) + \tau_{N_t + 1} \wedge t - \sigma_{N_t}.
\end{equation}

\noindent
Roughly, the reason why Proposition \ref{conv_time_change} is true is that $\sigma - \tau$ is actually small compared to $\tau$. More precisely, we will show that $\tau$ and $\sigma$ have exactly the same probability tail.

\medskip
First, since $(X_t)_{t\geq0}$ resembles a symmetric stable process as $t$ is large, we should expect $\mathbb{P}(\tau > t)$ to behave like the probability for a symmetric stable process started at $\eta > 0$, to stay positive up to time $t$, which is well-known to behave like $t^{-1/2}$ as $t\to\infty$.

\medskip
Then, since $(V_t)_{t\geq0}$ is positive recurrent, we should expect $\mathbb{P}(\sigma - \tau > t)$ to have a lighter tail than $\tau$. Indeed, $X_t$ reaches $0$ at time $\tau$ with some random negative velocity $V_{\tau}$, and $\sigma - \tau$ is the amount of time it takes for $(V_t)_{t\geq0}$ to reach $M$, which should not be too big, thanks to the positive recurrence of the velocity and the assumption on $\mu$.

\medskip
However, we did not manage to employ this strategy, as the law of $V_{\tau}$ is unknown and it is not clear at all how to get an exact asymptotic of $\mathbb{P}(\tau > t)$ by approaching $(X_t)_{t\geq0}$ by its scaling limit. We will rather use tools introduced in Berger, Béthencourt and Tardif \cite{persistence_bbt}.

\medskip
We state the following crucial lemma which describes the tails of $\tau$ and $\sigma$. Its proof is rather technical and a bit independent of the rest, so it is postponed to Subsection \ref{section_persistence}.
\begin{lemma}\label{persistence_result}
Grant Assumptions \ref{assump_measure}-\textit{(i)}. Then there exists a constant $C >0$ such that
\[
 \mathbb{P}(\tau > t) \sim \mathbb{P}(\sigma > t) \sim Ct^{-1/2}\quad \hbox{as $t\to\infty$}.
\]
\end{lemma}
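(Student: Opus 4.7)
The heart of the lemma is the asymptotic $\mathbb{P}(\tau > t) \sim C t^{-1/2}$; once this is in hand, the equivalent for $\sigma$ will follow by showing that the additional time $\sigma - \tau$ needed for the velocity to reach the random level $M$ is negligible on the scale $t^{-1/2}$.

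For $\tau$, I would appeal to the persistence estimates of \cite{persistence_bbt} for integrated one-dimensional diffusions with heavy-tailed invariant measure. For a fixed deterministic initial velocity $v_0 > 0$, the free position process $(X_t)_{t \geq 0}$ behaves on large scales like the symmetric $\alpha$-stable process $Z^\alpha$, and the Sparre-Andersen/Rogozin exponent for $Z^\alpha$ remaining positive is $1/2$. The input from \cite{persistence_bbt} should give quantitatively
\[
 \mathbb{P}_{v_0}(\tau > t) \sim C_*(v_0)\, t^{-1/2} \qquad \text{as } t \to \infty,
\]
together with a uniform bound of the form $\mathbb{P}_{v_0}(\tau > t) \leq C t^{-1/2}(1 + v_0^{(\beta+1)/2})$ that allows one to interchange the limit $t \to \infty$ with integration against $\mu$. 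Indeed, by a scaling heuristic, a large initial velocity $v_0$ propels the particle up to a height of order $v_0^{(\beta+2)/2}$ before the velocity relaxes, yielding persistence constants growing polynomially in $v_0$. The exponent that appears is precisely $(\beta+1)/2$, which is exactly the moment required in Assumption \ref{assump_measure}-(i). Integrating against $\mu$ by dominated convergence then yields $\mathbb{P}(\tau > t) \sim C t^{-1/2}$ with $C = \int C_*(v_0)\, \mu(\mathrm{d}v_0) \in (0, \infty)$.

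For $\sigma$, the plan is to show $\mathbb{P}(\sigma - \tau > s) = o(s^{-1/2})$, which upgrades to the identity
\[
 \mathbb{P}(\sigma > t) - \mathbb{P}(\tau > t) \;=\; \mathbb{P}(\tau \leq t < \sigma) \;\leq\; \mathbb{P}\bigl(\tau \in ((1-\delta)t, t]\bigr) + \mathbb{P}(\sigma - \tau > \delta t)
\]
for any $\delta > 0$; the first term is $C\bigl((1-\delta)^{-1/2} - 1\bigr) t^{-1/2} + o(t^{-1/2})$, which goes to $0$ as $\delta \to 0$ after dividing by $t^{-1/2}$, and the second is $o(t^{-1/2})$ by assumption, so the difference is $o(t^{-1/2})$ and $\sigma$ inherits the same asymptotic with the same constant. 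To bound $\sigma - \tau$, I would use the strong Markov property at $\tau$: conditionally on $(\mathcal{F}_\tau, M)$, the increment $\sigma - \tau$ is the hitting time of $M$ by the velocity diffusion started at $V_\tau \leq 0$. Using the explicit scale function $\mathfrak{s}(v) = \int_0^v \Theta(y)^{-\beta} \mathrm{d}y \asymp \operatorname{sgn}(v)|v|^{\beta+1}$ and the finite speed measure $\Theta(y)^\beta \mathrm{d}y$, one has classical bounds of the form $\mathbb{E}[(\sigma - \tau)^q \mid V_\tau, M] \leq C_q(1 + |V_\tau|^{q(\beta+1)} + M^{q(\beta+1)})$ for $q \in (0,1)$ small, so choosing $q = 1/2 + \eta'$ for $\eta' > 0$ small enough that $q(\beta+1) \leq (\beta+1)/2 + \eta$ and invoking Markov's inequality gives the desired bound, provided $V_\tau$ also has finite moment of that order (which follows from moment bounds on $V_0 \sim \mu$ combined with standard SDE estimates for $V$ up to the hitting time $\tau$).

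The main obstacle is clearly the sharp tail $\mathbb{P}(\tau > t) \sim C t^{-1/2}$ for the free process: obtaining an exact first-order asymptotic (not merely $\asymp$), with a form of the constant $C_*(v_0)$ that is integrable against $\mu$, is the delicate analytic step, and it is precisely why the matching moment exponent $(\beta+1)/2$ from Assumption \ref{assump_measure}-(i) is ``near optimal''. The hitting-time moment bound for the velocity diffusion, by contrast, is routine 1D-diffusion bookkeeping, but one must be careful that the power of $|V_\tau|$ and $M$ that appears does not exceed the moment actually available, which is why only a fractional moment $q$ slightly larger than $1/2$ can be used.
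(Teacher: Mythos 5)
Your plan assumes away the hardest part of the lemma. The exact asymptotic $\mathbb{P}_{v_0}(\tau>t)\sim C_*(v_0)t^{-1/2}$ for a fixed initial velocity, together with a domination $\mathbb{P}_{v_0}(\tau>t)\leq Ct^{-1/2}(1+v_0^{(\beta+1)/2})$ uniform in $t$, is not an off-the-shelf consequence of \cite{persistence_bbt}: the paper explicitly notes that it ``did not manage'' to obtain an exact tail for $\tau$ by approximating $(X_t)_{t\geq0}$ by its scaling limit, and only borrows \emph{tools} from \cite{persistence_bbt}. The actual proof occupies Subsection \ref{section_persistence}: one shifts time to $T_0$, the first zero of the velocity, so that the post-$T_0$ free process is symmetric and started from $(0,0)$; the event $\{\tau-T_0>t\}$ is then sandwiched between events involving the running supremum $\xi$ sampled at $g_t$ and $d_t$ (Lemma \ref{equi_dt_gt}), and the tail $Ct^{-1/2}$ is extracted via excursion theory, the Wiener--Hopf/Fristedt factorization of the bivariate L\'evy process $(\gamma_t,\bar X_{\gamma_t})$, the renewal-function asymptotics $\mathcal{V}(x)\sim v_\alpha x^{\alpha/2}$ (Lemma \ref{renewal_equiv}), the moment bound $\mathbb{E}[X_{T_0}^{\alpha/2}]<\infty$ (Lemma \ref{lemma_power_x}, where Assumption \ref{assump_measure}-(i) enters), and Tauberian arguments; finally Lemma \ref{equiv_tail} transfers the tail from $\tau-T_0$ (resp.\ $\rho-T_0$) to $\tau$ (resp.\ $\sigma$) because $T_0$ and $\sigma-\rho$ have lighter tails by Kac's moment formula (Lemma \ref{tail_t0}). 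None of this is reproduced or replaced by an argument in your proposal, so the statement you need as ``input'' is essentially the lemma itself. (Incidentally, your scaling heuristic is off: starting from velocity $v_0$ the particle travels a height of order $v_0^{3}$ before the velocity relaxes, not $v_0^{(\beta+2)/2}$; the exponent $(\beta+1)/2$ you quote is recovered as $(v_0^{3})^{\alpha/2}$ through $\mathcal{V}$, not through your stated height.)

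The treatment of $\sigma$ has a second gap. You condition at $\tau$ and need $\mathbb{E}[|V_\tau|^{(\beta+1)/2+\varepsilon}]<\infty$, asserting this follows from ``standard SDE estimates''; but the law of $V_\tau$ is exactly what the paper flags as unknown and delicate (it is the speed at which the integrated process first returns to zero, related to overshoot distributions), and no routine bound gives its moments. The paper avoids $V_\tau$ altogether: it compares $\sigma$ with $\rho=\inf\{t\geq\tau:\ V_t=0\}$, so that $\sigma-\rho$ is the hitting time of the independent level $M$ by the velocity started from $0$, whose $(1/2+\delta)$-moment is controlled by Kac's formula and Assumption \ref{assump_measure}-(i) alone, and then applies Lemma \ref{equiv_tail} with $X=\rho-T_0$, $Y=\sigma-\rho+T_0$. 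Your outer decomposition $\mathbb{P}(\sigma>t)-\mathbb{P}(\tau>t)\leq\mathbb{P}(\tau\in((1-\delta)t,t])+\mathbb{P}(\sigma-\tau>\delta t)$ is fine and plays the role of Lemma \ref{equiv_tail}, but as written it rests on the unproved moment bound for $V_\tau$; rerouting through $\rho$ as in the paper is the way to close it.
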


With these results at hand, we are able to prove Proposition \ref{conv_time_change}.

\begin{proof}[Proof of Proposition \ref{conv_time_change}]
We seek to show that $(1-t^{-1}A_t')$ converges to $0$ in probability as $t\to\infty$. From \eqref{eq_time_change}, we have
\begin{align*}
 t- A_t' = & \sum_{k=1}^{N_t}(\sigma_{k-1} - \tau_k) + (t - \tau_{N_t + 1})\bm{1}_{\{\tau_{N_t + 1} \leq t\}} + \sigma_{N_t} \\
  = & \sum_{k=1}^{N_t}(\sigma_{k-1} - \tau_k) + (t - \tau_{N_t + 1})\bm{1}_{\{\tau_{N_t + 1} \leq t\}} + \sum_{k=1}^{N_t}(\sigma_k - \sigma_{k-1}) \\
  = & \sum_{k=1}^{N_t}(\sigma_k - \tau_k) + (t - \tau_{N_t + 1})\bm{1}_{\{\tau_{N_t + 1} \leq t\}}.
\end{align*}

\noindent
Since by definition, $\sigma_{N_t + 1} \geq t$, we have the following bound:
\[
 0 \leq 1 - \frac{A_t'}{t} \leq \frac{1}{t}\sum_{k=1}^{N_t + 1}(\sigma_k - \tau_k) = \frac{\sigma_1-\tau_1}{t} + \frac{1}{t}\sum_{k=2}^{N_t + 1}(\sigma_k - \tau_k).
\]

\noindent
Obviously, the first term on the right-hand side almost surely vanishes as $t\to\infty$. We now use Lemma \ref{persistence_result} to study the asymptotic behavior of $\sum_{k=2}^{N_t + 1}(\sigma_k - \tau_k)$ as $t\to\infty$. We divide the rest of the proof in two steps.

\smallskip\noindent
\textit{Step 1:} We first show that
\begin{equation}\label{N_t_limsup}
 \lim_{A\to\infty}\limsup_{t\to\infty} \mathbb{P}(N_t \geq At^{1/2}) = 0.
\end{equation}
To show this, we introduce $\tilde{N}_t = \min\{n \geq 1, \: \sum_{k=1}^n(\sigma_{2k} - \sigma_{2k - 1}) \leq t\}$. Since for any $n\geq 1$, we have $\sigma_{2n} \geq \sum_{k=1}^n(\sigma_{2k} - \sigma_{2k - 1})$, it should be clear that for any $t > 0$,  $N_{t} \leq \tilde{N}_t / 2$. Next, by Lemma \ref{lemma_random_times}, $(\sigma_{2n} - \sigma_{2n - 1})_{n\in\mathbb{N}}$ is a sequence of i.i.d. random variables whose common law is that of $\sigma$. Thanks to Lemma \ref{persistence_result}, we can apply the classical $\alpha$-stable central limit theorem, and it is clear that
\[
 \frac{1}{n^2}\sum_{k=1}^n(\sigma_{2k} - \sigma_{2k-1}) \overset{\mathcal{L}}{\longrightarrow}\mathcal{S}^{1/2}\qquad \hbox{as $n\to\infty$},
\]
where $\mathcal{S}^{1/2}$ is a positive $1/2$-stable random variable, see \cite[Chapter XII.6 Theorem 2]{feller1967introduction}. Then it is immediate that $\tilde{N}_t / t^{1/2}$ converges in law to some random variable $N_\infty$. Finally, we get that
\[
 \limsup_{t\to\infty} \mathbb{P}(N_{t} \geq At^{1/2}) \leq \limsup_{t\to\infty} \mathbb{P}(\tilde{N}_{t} \geq 2At^{1/2}) = \mathbb{P}(N_\infty \geq 2A).
\]
Letting $A\to\infty$ shows that \eqref{N_t_limsup} holds.

\smallskip\noindent
\textit{Step 2:} We show that $t^{-1}\sum_{k=1}^{N_t + 1}(\sigma_k - \tau_k)$ converges to $0$ as $t\to\infty$ in probability. First, for any $n\geq 1$, we can write
\begin{equation}\label{two_sums}
 \sum_{k=1}^n (\sigma_k - \tau_k) = \sum_{k \leq n,\: k\text{ even }} (\sigma_k - \tau_k) + \sum_{k \leq n,\: k\text{ odd }} (\sigma_k - \tau_k),
\end{equation}
which is then by Lemma \ref{lemma_random_times} the sum of two sums of i.i.d. random variables distributed as $\sigma - \tau$. Moreover, by Lemma \ref{persistence_result} and Lemma \ref{equiv_tail} in Appendix \ref{section_invert_time} with $X=\tau$ and $Y = \sigma - \tau$, the tail of $\sigma - \tau$ is lighter than the tail of $\sigma$:
\[
 \lim_{t\to\infty}t^{1/2}\mathbb{P}(\sigma - \tau > t) = 0.
\]
This entails, see Proposition \ref{conv_prob_0_neg} with $\alpha = 1/2$, that the two terms in the right-hand-side of \eqref{two_sums} divided by $n^2$, converges to $0$ as $n\to\infty$ in probability. At this point, we conclude that
\begin{equation}\label{pouet_2}
 \frac{1}{n^2}\sum_{k=2}^{n}(\sigma_k - \tau_k) \overset{\mathbb{P}}{\longrightarrow} 0\qquad \hbox{as $n\to\infty$}.
\end{equation}
Finally, for any $\eta > 0$ and any $A>0$, we have
\[
 \mathbb{P}\bigg(t^{-1}\sum_{k=1}^{N_t + 1}(\sigma_k - \tau_k)> \eta\bigg) \leq \mathbb{P}\bigg(t^{-1}\sum_{k=1}^{\lfloor At^{1/2}\rfloor}(\sigma_k - \tau_k)> \eta\bigg) + \mathbb{P}(N_t + 1 \geq At^{1/2}).
\]
Making $t\to\infty$ using \eqref{pouet_2}, then $A\to \infty$ using \eqref{N_t_limsup}, we complete the step.
\end{proof}

\subsection{Scaling limit of $(\mathfrak{X}_t)_{t\geq0}$}

In this subsection, we show that under Assumption \ref{assump_measure}-\textit{(ii)}, the scaling limit of $(\mathfrak{X}_t)_{t\geq0}$ is the stable process reflected on its infimum. The following proposition will help us showing the second part of Theorem \ref{main_theorem}.

\begin{proposition}\label{conv_strong_assump}
Grant Assumptions \ref{assump_fournier} and \ref{assump_measure}-\textit{(ii)}, and let $(\mathfrak{X}_t, \mathfrak{V}_t)_{t\geq0}$ be defined by \eqref{second_cons} with $v_0 > 0$. Then we have, in law for the $\bm{\mathrm{M}}_1$-topology,
\[
 (\epsilon^{1/\alpha}\mathfrak{X}_{t/\epsilon})_{t\geq0} \longrightarrow \left(R_t^\alpha\right)_{t\geq0} \quad \hbox{as $\epsilon\to0$},
\]
\noindent
where $R_t^{\alpha} = Z_t^{\alpha} - \inf_{s\in[0,t]}Z_s^{\alpha}$ and $(Z_t^{\alpha})_{t\geq0}$ is the stable process from Theorem \ref{main_theorem}.
\end{proposition}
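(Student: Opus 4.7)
I plan to use the representation $\mathfrak{X}_t = \bm{X}_{A_t'}$ supplied by Theorem \ref{thm_second_cons} together with the convergence of $\bm{X}$ and of the time change, to reduce the problem to a composition argument. Since the finite-dimensional statement in Theorem \ref{main_theorem} can be established independently under the weaker Assumption \ref{assump_measure}-\textit{(i)} via the chain of comparisons $\mathcal{X}_t \leq \bm{X}_t$ (Proposition \ref{first_compar}) and $\bm{X}_{A'_t}\leq\mathcal{X}_t$ (coming from Proposition \ref{second_comparison} via Theorem \ref{thm_second_cons}), combined with Theorem \ref{cv_free_ref} and Proposition \ref{conv_time_change}, I may freely assume as an input the finite-dimensional convergence of $(\epsilon^{1/\alpha}\bm{X}_{t/\epsilon})_{t\geq 0}$ to $(R_t^\alpha)_{t\geq 0}$.

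The finite-dimensional convergence of $\mathfrak{X}$ then follows by combining three ingredients: the finite-dimensional convergence of $\bm{X}$; the pathwise identity $\mathfrak{X}_t = \bm{X}_{A'_t}$; and the convergence $\epsilon A'_{\cdot/\epsilon}\to \mathrm{id}$ uniformly on compact sets in probability, which is obtained from Proposition \ref{conv_time_change} upgraded by a Dini-type argument that uses that $A'$ is non-decreasing, $1$-Lipschitz and bounded above by the identity. Writing $\epsilon^{1/\alpha}\mathfrak{X}_{t/\epsilon} = \epsilon^{1/\alpha}\bm{X}_{(\epsilon A'_{t/\epsilon})/\epsilon}$ and invoking the stochastic continuity of the Markov process $R^\alpha$ at each fixed $t$, a Slutsky-type argument yields the convergence $\epsilon^{1/\alpha}\mathfrak{X}_{t/\epsilon}\to R_t^\alpha$ in distribution, and the same argument applied to vector-valued processes covers any finite collection of times simultaneously.

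The main obstacle is the oscillation condition of Theorem \ref{thm_conv_m1}, $\lim_{\delta\to 0}\limsup_{\epsilon\to 0}\mathbb{P}(w(\epsilon^{1/\alpha}\mathfrak{X}_{\cdot/\epsilon},T,\delta)>\eta)=0$, which is precisely where the stronger moment condition of Assumption \ref{assump_measure}-\textit{(ii)} must be exploited. A direct squeezing argument cannot work, as $\bm{\mathrm{M}}_1$-oscillation is not monotone under pointwise comparison, so I would rely on the explicit excursion representation \eqref{second_cons}. On any excursion interval $[\sigma_{n-1},\tau_n]$, $\mathfrak{X}$ equals $X$ shifted by the constant $X_{\sigma_{n-1}}$, so its $\bm{\mathrm{M}}_1$-oscillation on a window contained in such an interval equals that of $X$, which is controlled by Theorem \ref{conv_m2}. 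On a rest interval $[\tau_n,\sigma_n]$ the process vanishes identically and contributes nothing. For a window straddling one or more transition points $\sigma_n$ or $\tau_n$, the continuity of $\mathfrak{X}$ at these boundaries together with $\mathfrak{X}_{\tau_n}=\mathfrak{X}_{\sigma_n}=0$ implies that the $\bm{\mathrm{M}}_1$-oscillation is dominated by $\sup_{s\in\text{window}}\mathfrak{X}_s$, which is in turn controlled by the local modulus of continuity of $X$ combined with the magnitude of the restart velocity $M_n$ governing the early portion of the excursion just after $\sigma_n$. The delicate final step is to show that, summed across the $O(\epsilon^{-1/2})$ excursions fitting in $[0,T/\epsilon]$ and rescaled by $\epsilon^{1/\alpha}$, the maximum of these boundary contributions vanishes in probability: this requires a tail estimate on $\max_{n\leq N_{T/\epsilon}}M_n$ of polynomial order tied to $\alpha$ and $\beta$, which is exactly what the moment bound of Assumption \ref{assump_measure}-\textit{(ii)} provides. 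Once both the finite-dimensional convergence and the oscillation estimate are in hand, Theorem \ref{thm_conv_m1} identifies the $\bm{\mathrm{M}}_1$-limit as $R^\alpha$, concluding the proof.
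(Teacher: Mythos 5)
Your route has two genuine gaps. The first is the finite-dimensional step: you propose to deduce the convergence of $\epsilon^{1/\alpha}\mathfrak{X}_{t/\epsilon}=\epsilon^{1/\alpha}\bm{X}_{A'_{t/\epsilon}}$ from the finite-dimensional convergence of $\bm{X}$, the convergence $\epsilon A'_{\cdot/\epsilon}\to\mathrm{id}$ and the stochastic continuity of $R^\alpha$. Composition with a random time change converging to the identity preserves convergence in law only if one has functional convergence (or at least a uniform oscillation control) of the \emph{prelimit} processes, not merely finite-dimensional convergence: a sequence of deterministic paths with thin spikes drifting in time converges f.d. to $0$ while its composition with $t\mapsto t+1/n$ does not, and stochastic continuity of the limit says nothing about this. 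You cannot repair the step by upgrading to the process-level $\bm{\mathrm{M}}_1$ convergence of $\bm{X}$ either, since in the paper that convergence (Step 2 of the proof of Theorem \ref{main_theorem}) is deduced from the present proposition: you would be circular. Any repair must go through a process-level input available independently, namely Theorem \ref{cv_free_ref} for $\mathcal{X}$ together with the comparisons of Propositions \ref{first_compar} and \ref{second_comparison} — which is what the paper does.

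The second gap is in your oscillation estimate for windows straddling a rest interval $[\tau_n,\sigma_n]$. Bounding the $\bm{\mathrm{M}}_1$-oscillation there by $\sup_{s\in\mathrm{window}}\epsilon^{1/\alpha}\mathfrak{X}_{s/\epsilon}$ is too lossy: the window contains the end of the previous excursion, where $\mathfrak{X}$ is typically of order one (the limit jumps down to $0$), so this supremum does not vanish; the quantity that must be small is the \emph{minimum} of the height just before $\tau_n$ and the height reached after $\sigma_n$ within the window. Moreover, even for the post-restart part, a union bound over the $O(\epsilon^{-1/2})$ restarts in $[0,T/\epsilon]$ fails: the maximum over that many roughly independent, heavy-tailed (index $\alpha$) restart-window suprema is of order $\delta^{1/\alpha}\epsilon^{-1/(2\alpha)}$ and blows up; and controlling $\max_n M_n$ is not the right estimate anyway, since the dangerous displacement after a restart is of order $M_n(\sigma_n-\rho_n)$ (velocity times relaxation time, cf. Lemma \ref{tail_t0}-\textit{(ii)}), while a ``local modulus of continuity of $X$'' is unavailable at this scaling because the limit of $X$ has jumps. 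The paper sidesteps the whole window analysis with one uniform comparison: $0\le\mathcal{X}_t-\mathfrak{X}_t\le\sum_{k\le N_t+1}M_k(\sigma_k-\rho_k)$, whence $\sup_{t\le T}\epsilon^{1/\alpha}(\mathcal{X}_{t/\epsilon}-\mathfrak{X}_{t/\epsilon})\to0$ in probability by Lemma \ref{tail_t0}-\textit{(ii)}, Proposition \ref{conv_prob_0_neg} and \eqref{N_t_limsup}; since both processes are continuous, uniform closeness dominates the $\bm{\mathrm{M}}_1$ distance, and the convergence-together theorem applied with Theorem \ref{cv_free_ref} yields simultaneously the finite-dimensional limits and the tightness you are trying to establish separately.
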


\begin{proof}
We consider, for $v_0 > 0$, the processes $(\mathcal{X}_t, \mathcal{V}_t)_{t\geq0}$ and $(\mathfrak{X}_t, \mathfrak{V}_t)_{t\geq0}$ defined in Sections \ref{section_inelastic} and \ref{reflected_section}, starting at $(0, v_0)$ and both constructed from the process $(X_t, V_t)_{t\geq0}$ solution of \eqref{eq_non_reflected} also starting at $(0, v_0)$. We recall that almost surely, for all $t\geq0$,
\[
 \mathcal{X}_t = \int_0^t V_s\bm{1}_{\{\mathcal{X}_s > 0\}} \dr s \quad \text{and} \quad \mathfrak{X}_t = \int_0^t V_s\bm{1}_{\{\mathfrak{X}_s > 0\}} \dr s,
\]

\noindent
see \eqref{pouet}, \eqref{second_cons} and \eqref{x_integ_v}. We show that, under Assumption \ref{assump_measure}-\textit{(ii)}, we have for any $T >0$,
\[
  \Delta_{T,\epsilon} := \sup_{t\in[0,T]}\epsilon^{1/\alpha}(\mathcal{X}_{t/\epsilon} - \mathfrak{X}_{t/\epsilon}) \overset{\mathbb{P}}{\longrightarrow}0\quad \hbox{as $\epsilon\to0$},
\]

\noindent
which will prove the result thanks to Theorem \ref{cv_free_ref} and \cite[Section 3, Theorem 3.1]{billingsley_conv}. Since a.s. for any $t\geq0$, we have $\mathcal{X}_t \geq \mathfrak{X}_t$ by Proposition \ref{second_comparison}, we can write
\[
 0 \leq \mathcal{X}_t - \mathfrak{X}_t = \int_0^t V_s\bm{1}_{\{\mathcal{X}_s > 0\}\cap\{\mathfrak{X}_s = 0\}} \dr s \leq \int_0^t (0\vee V_s)\bm{1}_{\{\mathfrak{X}_s = 0\}}\dr s \leq \sum_{k=1}^{N_t +1}M_k(\sigma_k - \rho_k),
\]

\noindent
where for any $k\geq1$, $\rho_k = \inf\{t\geq \tau_k, \: V_t = 0\} \leq \sigma_k$ and $N_t = \sup\{n\geq0, \:\sigma_n \leq t\}$, as in the previous subsection. Indeed, for any $t\geq0$, we have 
\[
 \mathcal{I}_{\mathfrak{X}} \cap [0, t] = \bigcup_{k=1}^{N_t + 1}[\tau_k, \sigma_k],
\]

\noindent
and for any $k\geq1$, the velocity $V_s$ is non-positive on $[\tau_k, \rho_k]$ and is smaller than $M_k$ on $[\rho_k, \sigma_k]$. The sequence $(M_n(\sigma_n - \rho_n))_{n\in\mathbb{N}}$ is a sequence of i.i.d random variables and we claim that there exists $\delta' \in (0, 1 - \alpha / 2)$ such that  $\mathbb{E}[(M_1(\sigma_1 - \rho_1))^{\alpha / 2 + \delta'}] < \infty$, see Lemma \ref{tail_t0}-\textit{(ii)} below. This implies by the Markov inequality that $t^{\alpha / 2}\mathbb{P}(M_1(\sigma_1 - \rho_1) > t) \to 0$ as $t\to\infty$. Therefore, by Proposition \ref{conv_prob_0_neg}, we have
\begin{equation}\label{prob_0_mk}
 \frac{1}{n^{2/\alpha}}\sum_{k=1}^n M_k(\sigma_k - \rho_k)\overset{\mathbb{P}}{\longrightarrow} 0\qquad \hbox{as $n\to\infty$}.
\end{equation}
But since we have
\[
 \Delta_{T,\epsilon} \leq \epsilon^{1/\alpha}\sum_{k=1}^{N_{T/\epsilon} + 1}M_k(\sigma_k - \rho_k),
\]
it comes that for any $\eta > 0$ and any $A >0$,
\[
 \mathbb{P}(\Delta_{T,\epsilon} > \eta) \leq \mathbb{P}\bigg(\epsilon^{1/\alpha}\sum_{k=1}^{\lfloor A\epsilon^{-1/2}\rfloor} M_k(\sigma_k - \rho_k)> \eta\bigg) + \mathbb{P}(N_{T/\epsilon} + 1 \geq A\epsilon^{-1/2}).
\]
Letting $\epsilon\to 0$ using \eqref{prob_0_mk}, and letting $A\to\infty$ using \eqref{N_t_limsup}, we conclude that $\mathbb{P}(\Delta_{T,\epsilon} > \eta) \to 0$ as $\epsilon\to0$.
\end{proof}

\subsection{Proof of the main result}\label{section_proof_main}

\begin{proof}[Proof of Theorem \ref{main_theorem}]
\textit{Step 1:} We start by showing the convergence in the finite dimensional sense. Let $(\bm{X}_t, \bm{V}_t)_{t\geq0}$ and $(X_t, V_t)_{t\geq0}$ be solutions of \eqref{reflected_equation} and \eqref{eq_non_reflected} starting at $(0, v_0)$. Let also $(\mathcal{X}_t)_{t\geq0}$ be defined as in \eqref{proc_ref_inf}. Let $n\geq1$, $t_1, \dots, t_n >0$ and $x_1, \dots, x_n \geq0$. By Proposition \ref{first_compar}, we have
\[
 \mathbb{P}\left(\epsilon^{1/\alpha}\bm{X}_{t_1 / \epsilon} \geq x_1, \dots, \epsilon^{1/\alpha}\bm{X}_{t_n / \epsilon} \geq x_n\right) \geq \mathbb{P}\left(\epsilon^{1/\alpha}\mathcal{X}_{t_1 / \epsilon} \geq x_1, \dots, \epsilon^{1/\alpha}\mathcal{X}_{t_n / \epsilon} \geq x_n\right),
\]

\noindent
from which we deduce, by Theorem \ref{cv_free_ref}, that
\[
 \liminf_{\epsilon\to0}\mathbb{P}\left(\epsilon^{1/\alpha}\bm{X}_{t_1 / \epsilon} \geq x_1, \dots, \epsilon^{1/\alpha}\bm{X}_{t_n / \epsilon} \geq x_n\right) \geq \mathbb{P}\left(R_{t_1}^{\alpha} \geq x_1, \dots, R_{t_n}^{\alpha} \geq x_n\right).
\]

Let us now consider the process $(\mathfrak{X}_t, \mathfrak{V}_t)_{t\geq0}$ starting at $(0, v_0)$, recall \eqref{second_cons}, built from the process $(X_t, V_t)_{t\geq0}$, as well as the time change $(A_t')_{t\geq0} = (\int_0^t\bm{1}_{\{\mathfrak{X}_s > 0\}} \dr s)_{t\geq0}$  and its right-continuous inverse $(T_t')_{t\geq0}$. Then by Theorem \ref{thm_second_cons}, the process $(\bm{X}_t, \bm{V}_t)_{t\geq0} = (\mathfrak{X}_{T_t'}, V_{T_t'})_{t\geq0}$ is a solution of \eqref{reflected_equation}. By Proposition \ref{conv_time_change}, we have 
\begin{equation}\label{eq_conv_prob_T}
 (\epsilon T_{t_1 / \epsilon}', \dots, \epsilon T_{t_n / \epsilon}') \overset{\mathbb{P}}{\longrightarrow}(t_1, \dots, t_n) \quad \hbox{as $\epsilon\to0$}.
\end{equation}

\noindent
Let $\delta > 0$ such that for every $k\in\{1,\cdots,n\}$, $\delta < t_k$. We introduce the events
\[
 A_{\epsilon} = \bigcap_{k=1}^n\left\{\epsilon^{1/\alpha}\bm{X}_{t_k / \epsilon} \geq x_k\right\}, \qquad B_{\epsilon, \delta} = \bigcap_{k=1}^n\left\{|\epsilon T_{t_k / \epsilon}' - t_k| \leq \delta\right\},
\]

\noindent
and
\[
 C_{\epsilon, \delta} = \bigcap_{k=1}^n\Big\{\sup_{s\in[t_k - \delta, t_k + \delta]}\epsilon^{1/\alpha}\mathfrak{X}_{s / \epsilon} \geq x_k\Big\} \quad \text{and} \quad D_{\epsilon, \delta} = \bigcap_{k=1}^n\Big\{\sup_{s\in[t_k - \delta, t_k + \delta]}\epsilon^{1/\alpha}\mathcal{X}_{s / \epsilon} \geq x_k\Big\}.
\]

\noindent
By \eqref{eq_conv_prob_T}, it is clear that $\mathbb{P}(B_{\epsilon, \delta}^c) \to 0$ as $\epsilon\to 0$. We easily see that $A_\epsilon \cap B_{\epsilon, \delta} \subset C_{\epsilon, \delta}$, and $C_{\epsilon, \delta} \subset D_{\epsilon, \delta}$ by Proposition \ref{second_comparison}. Therefore $\mathbb{P}(A_\epsilon \cap B_{\epsilon, \delta}) \leq \mathbb{P}(C_{\epsilon, \delta}) \leq \mathbb{P}(D_{\epsilon, \delta})$. As a consequence, we have $\mathbb{P}(A_\epsilon) \leq \mathbb{P}(D_{\epsilon, \delta}) + \mathbb{P}(B_{\epsilon, \delta}^c)$, whence
\[
 \limsup_{\epsilon\to0}\mathbb{P}(A_\epsilon) \leq \limsup_{\epsilon\to0}\mathbb{P}(D_{\epsilon, \delta}).
\]

\noindent
We claim that the following convergence holds
\begin{equation}\label{conv_set_d}
 \lim_{\epsilon\to0}\mathbb{P}(D_{\epsilon, \delta}) = \mathbb{P}(D_{\delta})\quad \text{where}\quad D_{\delta} = \bigcap_{k=1}^n\Big\{\sup_{s\in[t_k - \delta, t_k + \delta]}R_{s}^{\alpha} \geq x_k\Big\}.
\end{equation}

\noindent
Since a.s., $t_1, \dots, t_n$ are not jumping times of $(R_t^{\alpha})_{t\geq0}$, $\mathbb{P}(D_{\delta}) \to\mathbb{P}(R_{t_1}^{\alpha} \geq x_1, \dots, R_{t_n}^{\alpha} \geq x_n)$ as $\delta\to0$. Hence \eqref{conv_set_d} would imply
\[
 \limsup_{\epsilon\to0}\mathbb{P}\left(\epsilon^{1/\alpha}\bm{X}_{t_1 / \epsilon} \geq x_1, \dots, \epsilon^{1/\alpha}\bm{X}_{t_n / \epsilon} \geq x_n\right) \leq \mathbb{P}\left(R_{t_1}^{\alpha} \geq x_1, \dots, R_{t_n}^{\alpha} \geq x_n\right),
\]

\noindent
which would achieve the first step. We now show that \eqref{conv_set_d} holds.

\medskip
By Theorem \ref{cv_free_ref} and Skorokhod's representation theorem, there exist a family of processes $(\mathcal{X}_t^{\epsilon})_{t\geq0}$ indexed by $\epsilon > 0$, and a reflected symmetric stable process $(R_t^\alpha)_{t\geq0}$, both defined on the probability space $([0,1], \mathcal{B}([0,1]), \lambda)$, where $\lambda$ denotes the Lebesgue measure on $[0,1]$, such that for every $\epsilon > 0$, $(\mathcal{X}_t^{\epsilon})_{t\geq0}\overset{d}{=} (\mathcal{X}_{t/\epsilon})_{t\geq0}$ and such that,
\[
 \lambda-\text{a.s.,} \quad d_{\bm{\mathrm{M}}_1}((\epsilon^{1/\alpha}\mathcal{X}_t^{\epsilon})_{t\geq0}, (R_t^\alpha)_{t\geq0}) \underset{\epsilon\to0}{\longrightarrow}0.
\]

\noindent
We now use the fact the $\bm{\mathrm{M}}_2$-topology, originally introduced by Skorokhod in his seminal paper \cite{skorohod}, is weaker than the $\bm{\mathrm{M}}_1$-topology. The convergence in $\mathcal{D}$ endowed with $\bm{\mathrm{M}}_2$ can be characterized, see \cite[page 267]{skorohod}, as follows: a sequence $(x_n)_{n\in\mathbb{N}}$ converges to $x$ in $\mathcal{D}$ endowed with the $\bm{\mathrm{M}}_2$ if and only if
\[
  \inf_{u\in[s,t]}x_n(u) \underset{n\to\infty}{\longrightarrow} \inf_{u\in[s,t]}x(u)\quad \text{and}\quad \sup_{u\in[s,t]}x_n(u) \underset{n\to\infty}{\longrightarrow} \sup_{u\in[s,t]}x(u)
 \]
\noindent
for any $0 \leq s<t$ points of continuity of $x$. Therefore, since $\lambda-$a.s., for any $k\in\{1,\dots,n\}$, $t_k - \delta$ and $t_k + \delta$ are points of continuity of $(R_t^\alpha)_{t\geq0}$, we deduce that $\lambda-$a.s., for any $k\in\{1,\dots,n\}$, the following convergence holds
\[
 \sup_{s\in[t_k - \delta, t_k + \delta]}\epsilon^{1/\alpha}\mathcal{X}_s^{\epsilon} \underset{\epsilon\to0}{\longrightarrow}\sup_{s\in[t_k - \delta, t_k + \delta]}R_{s}^{\alpha}, \quad \hbox{as $\epsilon\to0$},
\]

\noindent
which implies \eqref{conv_set_d}.

\bigskip\noindent
\textit{Step 2:} We now grant Assumption \ref{assump_measure}-\textit{(ii)} and we seek to show that, under this assumption, $(\epsilon^{1/\alpha}\bm{X}_{t/ \epsilon})_{t\geq0}$ is tight for the $\bm{\mathrm{M}}_1$-topology. Here we use the representation $(\bm{X}_t, \bm{V}_t)_{t\geq0} = (\mathfrak{X}_{T_t'}, \mathfrak{V}_{T_t'})_{t\geq0}$, see Step 1. Our goal is to show that the conditions of Theorem \ref{thm_conv_m1}-\textit{(ii)} are satisfied. By the first step, it suffices to show that for any $\eta > 0$, for any $T > 0$,
\begin{equation}\label{tightness_second}
 \lim_{\delta\to0}\limsup_{\epsilon\to0}\mathbb{P}\left(w(\epsilon^{1/\alpha}\bm{X}_{t/\epsilon}, T, \delta) > \eta\right) = 0,
\end{equation}

\noindent
By Proposition \ref{conv_strong_assump}, the process $(\epsilon^{1/\alpha}\mathfrak{X}_{t/ \epsilon})_{t\geq0}$ is tight and by Proposition \ref{conv_time_change} and Dini's theorem, we have the following convergence in probability:
\[
 \sup_{t\in[0,T]}\left|\epsilon T_{t/\epsilon}' - t\right| \overset{\mathbb{P}}{\longrightarrow} 0, \quad \hbox{as $\epsilon\to0$}.
\]

\noindent
Let $T>0$ and $\delta \in (0,1)$, we will first place ourselves on the event $A_{T,\delta}^{\epsilon} = \{\sup_{t\in[0,T]}|\epsilon T_{t/\epsilon}' - t|< \delta\}$. On this event, $T_{t/\epsilon}' \in ((t-\delta) / \epsilon,(t + \delta) / \epsilon)$ for any $t\in[0,T]$. Let $t\in[0,T]$ and recall that $t_{\delta-} = 0 \vee (t-\delta)$ and $t_{\delta+} = T \wedge (t+\delta)$. We set $t_{2\delta-} = 0 \vee (t-2\delta)$ and (abusively) $t_{2\delta+} = (T +1)\wedge (t+2\delta)$. Then on the event $A_{T,\delta}^{\epsilon}$, we have
\[
 \sup_{t_{\delta-}\leq t_1 < t_2 < t_3 \leq t_{\delta+}}d(\mathfrak{X}_{T_{t_2 / \epsilon}'}, [\mathfrak{X}_{T_{t_1 / \epsilon}'}, \mathfrak{X}_{T_{t_3 / \epsilon}'}]) \leq \sup_{t_{2\delta-}\leq t_1 < t_2 < t_3 \leq t_{2\delta+}}d(\mathfrak{X}_{t_2 / \epsilon}, [\mathfrak{X}_{t_1 / \epsilon}, \mathfrak{X}_{t_3 / \epsilon}]),
\]

\noindent
from which we deduce that
\[
 w(\epsilon^{1/\alpha}\mathfrak{X}_{T_{t/\epsilon}'}, T, \delta) \leq w(\epsilon^{1/\alpha}\mathfrak{X}_{t/\epsilon}, T+1, \delta)
\]

\noindent
on the event $A_{T,\delta}^{\epsilon}$. As a consequence, we have for any $\eta, \delta >0$ and for any $T > 0$
\[
 \mathbb{P}\left(w(\epsilon^{1/\alpha}\bm{X}_{t/\epsilon}, T, \delta) > \eta\right) \leq \mathbb{P}\left(w(\epsilon^{1/\alpha}\mathfrak{X}_{t/\epsilon}, T+1, \delta) > \eta\right) + \mathbb{P}\left(\sup_{t\in[0,T]}\left|\epsilon T_{t/\epsilon}' - t\right| \geq \delta\right).
\]

\noindent
Therefore, since $(\epsilon^{1/\alpha}\mathfrak{X}_{t/ \epsilon})_{t\geq0}$ is tight, we have for any $\eta > 0$, for any $T > 0$
\[
 \lim_{\delta\to0}\limsup_{\epsilon\to0}\mathbb{P}\left(w(\epsilon^{1/\alpha}\bm{X}_{t/\epsilon}, T, \delta) > \eta\right) = 0,
\]

\noindent
which completes the proof.
\end{proof}

\subsection{Some persistence problems}\label{section_persistence}

The aim of this subsection is to prove Lemma \ref{persistence_result}. We recall that the random variable $(\tau, \sigma)$ are defined as follows: let $(\Omega, \mathcal{F}, (\mathcal{F}_t)_{t\geq0}, \mathbb{P})$ be a filtered probability space supporting an $(\mathcal{F}_t)_{t\geq0}$-Brownian motion $(B_t)_{t\geq0}$ and two independent $\mu$-distributed random variables $V_0$ and $M$, also independent of the Brownian motion. Consider the process $(X_t, V_t)_{t\geq0}$ solution of \eqref{eq_non_reflected} starting at $(0, V_0)$. During the whole subsection, $\alpha = (\beta +1) / 3$. Then $\tau$ and $\sigma$ are defined as
\[
 \tau = \inf\{t>0, \: X_t = 0\} \quad \text{and} \quad\sigma = \inf\{t\geq\tau, \: V_t = M\}.
\] We first introduce the random times
\[
 T_0 = \inf\{t \geq0,\: V_t =0\} \quad \text{and} \quad \rho = \inf\{t \geq\tau,\: V_t =0\}.
\]

\noindent
It should be clear that a.s., $T_0 \leq \tau \leq \rho \leq \sigma$, see Figure \ref{fig:decomp_random_times}. Indeed since $V_t$ is strictly positive on $(0, T_0)$, so is $X_t$ and $T_0 \leq \tau$. Moreover, since $V_{\tau}$ is non-positive, $M > 0$ and $(V_t)_{t\geq0}$ is continuous, $\rho \leq \sigma$. Lemma \ref{persistence_result} heavily relies on the two following lemmas.

\begin{figure}
\begin{center}
\includegraphics[scale=0.9]{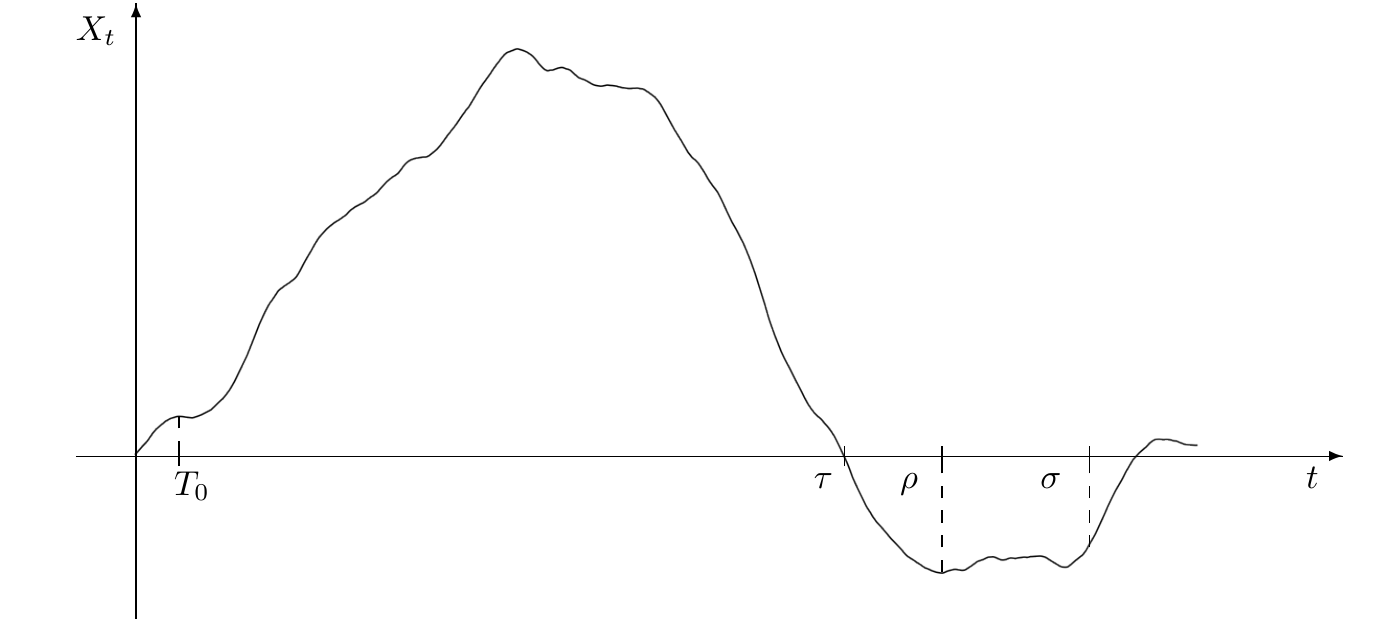}
\end{center}
\caption{\footnotesize Graphical representation of the random times $T_0$, $\tau$, $\rho$ and $\sigma$.}
\label{fig:decomp_random_times}
\end{figure}

\begin{lemma}\label{tail_t0}
\noindent
\begin{enumerate}[label=(\roman*)]
 \item Under Assumption \ref{assump_measure}-\textit{(i)}, we can find some $\delta \in (0, 1/2)$ such that $\mathbb{E}[T_0^{1/2 + \delta}] < \infty$ and $\mathbb{E}[(\sigma - \rho)^{1/2 + \delta}] < \infty$. As a consequence, we have
 \[
 \lim_{t\to\infty}t^{1/2}\mathbb{P}(T_0 > t) = \lim_{t\to\infty}t^{1/2}\mathbb{P}(\sigma - \rho > t) = 0.
\]
\item Under Assumption \ref{assump_measure}-\textit{(ii)}, there exists $\delta'\in(0, 1/\alpha - 2)$ such that $\mathbb{E}[(M(\sigma - \rho))^{\alpha / 2 + \delta'}] < \infty$.
\end{enumerate}
\end{lemma}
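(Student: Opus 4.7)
The plan is to reduce both estimates to a single moment bound on the first hitting time of a point by the velocity diffusion $V$, and then integrate against $\mu$. For (i), $T_0$ is simply the hitting time of $0$ by $V$ started at $V_0 \sim \mu$. For $\sigma - \rho$, the strong Markov property at the stopping time $\rho$ (at which $V_\rho = 0$ by construction), together with the independence of $M$ from $\mathcal{F}_\rho$, shows that $\sigma - \rho$ has, conditionally on $M$, the law of the first hitting time of $M$ by an independent copy of $V$ started from $0$. In both cases we face a hitting time of a random level by $V$, with the level integrable against $\mu$.

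The analytic key is a uniform-in-$v$ moment estimate: for every $q < (\beta+1)/2$, there exists $C_q > 0$ such that
\[
\mathbb{E}_v[T_0^q] \leq C_q(1 + v^{2q}) \quad \text{for every } v \in \mathbb{R},
\]
together with an analogous bound $\mathbb{E}_0[H_m^q] \leq C_q(1 + m^{2q})$ for the first hitting time $H_m$ of $m > 0$ from $0$. The exponent $2q$ and the threshold $(\beta+1)/2$ reflect the Bessel-like scaling of $V$ at large $|v|$: under Assumption \ref{assump_fournier}, the scale derivative $p'(v) = (\Theta(0)/\Theta(v))^{\beta}$ behaves like $|v|^\beta$ at infinity and the speed density like $|v|^{-\beta}$, so that $V$ is asymptotically comparable to a Bessel-type diffusion of (formal) dimension $1 - \beta$. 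The bound itself follows from the explicit solution of the ODE $\mathcal{L}u_q = -q\,u_{q-1}$ with $u_q(0) = 0$ by iterated integration against $p'$ and the speed density (starting from $u_1(v) = \int_0^v p'(w)\int_w^\infty m(s)\,\mathrm{d}s\,\mathrm{d}w$, which is of order $v^2$ at infinity), or equivalently via a Lyapunov argument using test functions $|v|^\gamma$ with $\gamma$ slightly below $\beta + 1$.

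For part (i), integration against $\mu$ yields $\mathbb{E}[T_0^{1/2+\delta}] \leq C(1 + \mathbb{E}[V_0^{1+2\delta}])$ and the analogous bound for $\sigma - \rho$ with $V_0$ replaced by $M$. Under Assumption \ref{assump_measure}-\textit{(i)}, both are finite as soon as $1 + 2\delta \leq (\beta+1)/2 + \eta$, i.e.\ $\delta \leq (\beta-1)/4 + \eta/2$, which is strictly positive since $\beta > 1$; the auxiliary constraint $1/2 + \delta < (\beta+1)/2$ is automatic. The tail statements $t^{1/2}\mathbb{P}(T_0 > t) \to 0$ and $t^{1/2}\mathbb{P}(\sigma - \rho > t) \to 0$ then follow from Markov's inequality. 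For part (ii), using the conditional bound on $\sigma - \rho$ and $3\alpha/2 = (\beta+1)/2$,
\[
\mathbb{E}\bigl[(M(\sigma - \rho))^{\alpha/2 + \delta'}\bigr] \leq C\,\mathbb{E}\bigl[M^{\alpha/2 + \delta'} + M^{3\alpha/2 + 3\delta'}\bigr] = C\,\mathbb{E}\bigl[M^{\alpha/2 + \delta'} + M^{(\beta+1)/2 + 3\delta'}\bigr].
\]
Under Assumption \ref{assump_measure}-\textit{(ii)}, the second expectation is finite provided $(\beta+1)/2 + 3\delta' \leq (\beta+1)(\beta+2)/6 + \eta$, that is, $\delta' \leq (\beta^2 - 1)/18 + \eta/3$, which is strictly positive for $\beta > 1$; one then chooses $\delta'$ small enough to also fit in the interval required by the statement.

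The main technical obstacle is the uniform-in-$v$ moment bound on $\mathbb{E}_v[T_0^q]$ up to the sharp threshold $q < (\beta+1)/2$. The Bessel-like scaling supplies the exponent $2q$ heuristically, but a rigorous argument must glue an explicit asymptotic analysis at large $|v|$ to a crude compact estimate for bounded $v$, where $V$ looks Brownian rather than Bessel-like. The companion bound for $H_m$ additionally requires a two-sided hitting computation, letting the opposite barrier tend to $-\infty$ and exploiting the positive recurrence of $V$ (which holds since $\beta > 1$) to pass to the limit.
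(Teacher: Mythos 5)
Your reduction of both quantities to hitting times of the velocity diffusion, and the identification of $\sigma-\rho$ (via the strong Markov property at $\rho$, where $V_\rho=0$, and the independence of $M$) with the first hitting time of the independent level $M$ starting from $0$, is exactly the paper's strategy; the paper then controls these hitting times through Kac's first-moment formula in scale/speed coordinates and Jensen's inequality (all exponents used are below $1$), and integrates against $\mu$. Your bound for the \emph{downward} time, $\mathbb{E}_v[T_0^q]\leq C_q(1+v^{2q})$ for $q<(\beta+1)/2$, is correct (and sharper than the crude $\mathbb{E}_v[T_0]\leq K(1+v^{\beta+1})$ the paper actually uses), so your treatment of $T_0$ in part \textit{(i)} is fine.

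The genuine gap is the ``analogous bound'' $\mathbb{E}_0[H_m^q]\leq C_q(1+m^{2q})$ for the \emph{upward} hitting time of the level $m$ from $0$: this is false, and the asymmetry is precisely the point of the lemma. By Kac's formula (the paper's \eqref{kacs_moment_formula_2}), $\mathbb{E}_0[H_m]=\sca(m)\int_{-\infty}^{m}\Theta^\beta(u)\,\dr u-\int_0^m\sca(u)\Theta^\beta(u)\,\dr u\sim c\,m^{\beta+1}$ as $m\to\infty$, since $\sca(m)\asymp m^{\beta+1}$ while the subtracted term is only $O(m^2)$; moreover $H_m$ is of order $m^{\beta+1}$ in distribution (to reach a high level the positive recurrent diffusion must wait a geometric number, with success probability $\asymp 1/\sca(m)$, of order-one excursions), so $\mathbb{E}_0[H_m^q]\gtrsim m^{(\beta+1)q}$ and no bound of order $m^{2q}$ can hold. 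Climbing to height $m$ costs time $m^{\beta+1}$, whereas falling from height $v$ costs only $v^2$. With the correct scaling, your exponent bookkeeping changes: for part \textit{(i)} one needs $\mathbb{E}[M^{(\beta+1)(1/2+\delta)}]<\infty$, which is exactly what Assumption \ref{assump_measure}-\textit{(i)} supplies (take $\delta=\eta/(\beta+1)$), so the conclusion survives; but for part \textit{(ii)} the bound $\mathbb{E}[(M H_M)^{\alpha/2+\delta'}\mid M]\lesssim M^{(\beta+2)(\alpha/2+\delta')}$ forces a $\mu$-moment of order $(\beta+1)(\beta+2)/6+(\beta+2)\delta'$, which is precisely why Assumption \ref{assump_measure}-\textit{(ii)} has that exponent. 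Your version would deduce \textit{(ii)} from moments of order barely above $(\beta+1)/2$, i.e.\ essentially from Assumption \ref{assump_measure}-\textit{(i)} alone, which should have been a warning sign. (Minor aside: the interval $(0,1/\alpha-2)$ in the statement is a typo for $(0,1-\alpha/2)$, as used in the paper's proof; your final ``choose $\delta'$ small enough'' is unaffected by this.)
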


\begin{lemma}\label{tail_square_root}
 Grant Assumptions \ref{assump_measure}-\textit{(i)}. Then there exists a constant $C >0$ such that
\[
 \mathbb{P}(\tau - T_0 > t) \sim \mathbb{P}(\rho - T_0 > t) \sim Ct^{-1/2}\quad \hbox{as $t\to\infty$}.
\]
\end{lemma}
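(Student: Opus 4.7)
My plan is to reduce, via the strong Markov property at $T_0$, the tail of $\tau - T_0$ to a persistence estimate for the free process started from $(0,0)$, and then to integrate against the independent random starting height $X_{T_0}$. Since $V_0 > 0$ a.s., we have $T_0 > 0$, $V_{T_0}=0$, and $X_{T_0}>0$ a.s. Setting $\widetilde X_s = X_{T_0+s}-X_{T_0}$ and $\widetilde V_s = V_{T_0+s}$, the strong Markov property gives that $(\widetilde X,\widetilde V)$ is a free process starting at $(0,0)$ and independent of $\mathcal{F}_{T_0}$, and
\[
 \tau - T_0 = \inf\{s>0 : \widetilde X_s = -X_{T_0}\}.
\]
Because $\mathrm{F}$ is odd (Assumption \ref{assump_fournier}), $(-\widetilde X,-\widetilde V)$ has the same law as $(\widetilde X,\widetilde V)$, so conditionally on $X_{T_0}$ we have $\tau - T_0 \stackrel{d}{=} \widetilde\tau^+_{X_{T_0}}$, where $\widetilde\tau^+_x := \inf\{s>0 : \widetilde X_s = x\}$ for $x>0$.

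The core of the proof is the persistence asymptotic: for every $x>0$,
\begin{equation}\label{plan_eq1}
 t^{1/2}\,\mathbb{P}(\widetilde\tau^+_x>t)\;\xrightarrow[t\to\infty]{}\;\kappa\,x^{\alpha/2},\qquad \kappa=\tfrac{2B}{\alpha},
\end{equation}
where $B$ is the Doney--Savov constant appearing in $\rho_1(x)\sim B x^{\alpha/2-1}$ as $x\to 0$, quoted right after the Corollary. Rewriting $\mathbb{P}(\widetilde\tau^+_x>t)=\mathbb{P}\big(\sup_{s\leq 1}t^{-1/\alpha}\widetilde X_{ts}<xt^{-1/\alpha}\big)$, one invokes Theorem \ref{conv_m2} together with continuity of the supremum functional for the $\bm{\mathrm{M}}_1$-topology (already exploited in Step 1 of Section \ref{section_proof_main}) to see that this probability converges to $\mathbb{P}(R_1^\alpha\leq xt^{-1/\alpha})$, and then applies $\mathbb{P}(R_1^\alpha\leq y)\sim\kappa y^{\alpha/2}$ as $y\to 0$ to produce \eqref{plan_eq1}.

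To pass from the pointwise limit \eqref{plan_eq1} to $\mathbb{P}(\tau-T_0>t)\sim\kappa\,\mathbb{E}[X_{T_0}^{\alpha/2}]\,t^{-1/2}$ by averaging, one needs (a) a uniform-in-$x$ upper bound $t^{1/2}\mathbb{P}(\widetilde\tau^+_x>t)\leq K x^{\alpha/2}$ valid for all $x>0$ and $t\geq t_0$, ensuring dominated convergence, and (b) the moment bound $\mathbb{E}[X_{T_0}^{\alpha/2}]<\infty$. For (b), the pathwise estimate $X_{T_0}\leq V_0 T_0+\tfrac12\|\mathrm{F}\|_\infty T_0^2+T_0\sup_{s\leq T_0}|B_s|$ combined with the subadditivity $(a+b+c)^p\leq a^p+b^p+c^p$ (valid for $p=\alpha/2\in(1/3,1)$), Hölder's inequality, the Burkholder--Davis--Gundy inequality, Lemma \ref{tail_t0}-\textit{(i)} (moments of $T_0$) and Assumption \ref{assump_measure}-\textit{(i)} (moments of $V_0\sim\mu$) combine to give the required finiteness, the exponent balance being the place where the precise value $\alpha/2$ is used. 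The uniform bound (a) is the main technical obstacle: it is a quantitative strengthening of Theorem \ref{conv_m2}, precisely of the type developed in the companion paper \cite{persistence_bbt}.

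Finally, the statement for $\rho-T_0$ follows by writing $\rho-T_0=(\tau-T_0)+(\rho-\tau)$. The term $\rho-\tau$ is the first return of $V$ to $0$ from the non-positive value $V_\tau$; by an argument entirely analogous to Lemma \ref{tail_t0}-\textit{(i)}, it belongs to $L^{1/2+\delta}$ for some $\delta>0$, so $t^{1/2}\mathbb{P}(\rho-\tau>t)\to 0$. Applying Lemma \ref{equiv_tail} (used already in the proof of Proposition \ref{conv_time_change}) with $X=\tau-T_0$ and $Y=\rho-\tau$ yields $\mathbb{P}(\rho-T_0>t)\sim\mathbb{P}(\tau-T_0>t)\sim Ct^{-1/2}$ with $C=\kappa\,\mathbb{E}[X_{T_0}^{\alpha/2}]$, completing the proof.
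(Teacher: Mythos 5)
The skeleton of your reduction matches the paper's: shift by $T_0$ via the strong Markov property, use the symmetry of the free process to turn the infimum into a supremum, average over the independent height $X_{T_0}$, and compare $\tau$ and $\rho$ through Lemma \ref{equiv_tail}. But the analytic core of your argument — the pointwise persistence asymptotic $t^{1/2}\,\mathbb{P}(\widetilde\tau^+_x>t)\to\kappa x^{\alpha/2}$ — is not established, and it does not follow from Theorem \ref{conv_m2} plus the small-ball behaviour of $R_1^\alpha$. Writing $\mathbb{P}(\widetilde\tau^+_x>t)=\mathbb{P}\bigl(\sup_{s\leq 1}t^{-1/\alpha}\widetilde X_{ts}<xt^{-1/\alpha}\bigr)$ puts you in a regime where the barrier $xt^{-1/\alpha}$ shrinks to $0$ as $t\to\infty$: weak convergence of the rescaled supremum gives $\mathbb{P}(\sup\leq y)\to\mathbb{P}(R_1^\alpha\leq y)$ only for \emph{fixed} $y$, and says nothing about the ratio of the two sides when $y=y(t)\to0$ and both probabilities are themselves of order $t^{-1/2}$; one would need a rate of convergence at the scale of the small-deviation probability, i.e.\ precisely a quantitative persistence estimate. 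This is exactly the obstruction the paper points out before Lemma \ref{persistence_result} ("it is not clear at all how to get an exact asymptotic \dots by approaching $(X_t)_{t\geq0}$ by its scaling limit"), and it is why the proof goes through excursion theory and the Wiener--Hopf factorization instead: the exact identity $\mathbb{P}(\xi_{g_e}<x)=\kappa(0,q,0)\mathcal{V}_q(x)$ at an independent exponential time, the Fristedt/Frullani computation $\kappa(0,q,0)\sim kq^{1/2}$, the renewal-function asymptotic $\mathcal{V}(x)\sim v_\alpha x^{\alpha/2}$ (Lemma \ref{renewal_equiv}), the uniform bound \eqref{ineq_proba_2} enabling dominated convergence over $X_{T_0}$, the sandwich $\{\Lambda_{d_t}>-X_{T_0}\}\subset\{\tau-T_0>t\}\subset\{\rho-T_0>t\}\subset\{\Lambda_{g_t}>-X_{T_0}\}$, and finally the Tauberian/monotone density theorem to return from exponential to deterministic times. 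Deferring both the pointwise limit and the uniform bound to "\cite{persistence_bbt}-type arguments" amounts to assuming the heart of the lemma.

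Two further steps are also not sound as stated. First, your route to $\mathbb{E}[X_{T_0}^{\alpha/2}]<\infty$ through the pathwise bound $X_{T_0}\leq V_0T_0+\tfrac12\|\mathrm{F}\|_\infty T_0^2+T_0\sup_{s\leq T_0}|B_s|$ is too lossy: conditionally on $V_0=v$, $T_0$ is of size $v^{\beta+1}$ (Kac's formula), so the terms $(V_0T_0)^{\alpha/2}$ and $T_0^{\alpha}$ require $\mu$-moments of order $(\beta+1)(\beta+2)/6$ and $(\beta+1)^2/3$ respectively, both strictly larger than the order $(\beta+1)/2+\eta$ granted by Assumption \ref{assump_measure}-\textit{(i)}. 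The paper's Lemma \ref{lemma_power_x} gets the bound under \textit{(i)} only by using the scale-function/time-change representation \eqref{feller_rep_x} and the explicit Biane--Yor-type law of $\int_0^{\theta_0}(W_s)^{1/\alpha-2}\dr s$, which captures the true size $X_{T_0}\approx[\sca(V_0)]^{1/\alpha}$. Second, your treatment of $\rho-T_0$ through the tail of $\rho-\tau$ is not "entirely analogous to Lemma \ref{tail_t0}-\textit{(i)}": that lemma exploits that the relevant starting velocity has the known law $\mu$ (or is $0$), whereas $\rho-\tau$ is the return time to $0$ from $V_\tau$, whose law and moments are precisely what is unknown here. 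The paper bypasses $V_\tau$ altogether by the $g_t/d_t$ sandwich above, which yields the tails of $\tau-T_0$ and $\rho-T_0$ simultaneously.
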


\begin{proof}[Proof of Lemma \ref{persistence_result}]
 Using Lemma \ref{equiv_tail} in Appendix \ref{section_invert_time} with $X=\tau-T_0$ and $Y = T_0$ and Lemmas \ref{tail_t0}-\textit{(i)} and \ref{tail_square_root}, it comes
\[
 \mathbb{P}(\tau > t) \sim Ct^{-1/2}\quad \hbox{as $t\to\infty$}.
\]

\noindent
By Lemma \ref{equiv_tail} again with $X = \rho - T_0$ and $Y=\sigma - \rho + T_0$, we get
\[
 \mathbb{P}(\sigma > t) \sim Ct^{-1/2}\quad \hbox{as $t\to\infty$},
\]

\noindent
which completes the proof.
\end{proof}

We now seek to show Lemma \ref{tail_t0}. To do so, we will use Feller's representation of regular diffusions i.e. we represent the velocity process through its scale function $\sca$ and its speed measure $m$:
$$\sca(v) = \int_0^v \Theta^{-\beta}(u)\dr u \quad \text{and}\quad m(v) = \Theta^{\beta}(v).$$

\noindent
Remember from Assumption \ref{assump_fournier} that $\Theta:\mathbb{R}\to(0,\infty)$ is a $C^1$ even function such that $\mathrm{F} = \frac{\beta}{2}\frac{\Theta '}{\Theta}$ and satisfying $\lim_{v\to\pm\infty}|v|\Theta(v) = 1$. The function $\sca$ is an increasing bijection from $\mathbb{R}$ to $\mathbb{R}$ and we denote by $\sca^{-1}$ its inverse. We also define the function $\psi = \sca' \circ \sca^{-1}$. Now consider another Brownian motion $(W_t)_{t\geq0}$ on $(\Omega, \mathcal{F}, (\mathcal{F}_t)_{t\geq0}, \mathbb{P})$ (or an enlargment of the space) and for $v\in\mathbb{R}$, we set
$$W_t^v = \sca(v) + W_t, \quad A_t^v = \int_0^t [\psi(W_s^{v})]^{-2}\dr s \quad \text{as well as} \quad \rho_t^v = \inf\{s>0, \: A_s^v > t\}.$$

\noindent
Then the process defined by $V_t^v = \sca^{-1}(W_{\rho_t^v}^{v})$ and $X_t^v = \int_0^t V_s^v \dr s$ is a solution of \eqref{eq_non_reflected} starting at $(v,0)$. This result is standard and we refer to Kallenberg \cite[Chapter 23, Theorem 23.1 and its proof]{kallenberg2006foundations} for more details. As a consequence, using the substitution $u = \rho_s^v$, we can write that almost surely, for any $t\geq0$,
\begin{equation}\label{feller_rep_x}
    X_t^v = \int_0^t \sca^{-1}(W_{\rho_s^v}^{v}) \dr s = \int_0^{\rho_t^v}\phi\left(W_{s}^{v}\right) \dr s,
\end{equation}

\noindent
where for any $v\in\mathbb{R}$, $\phi(v) = \sca^{-1}(v) / \psi^2(v)$. We set, for any $v\in\mathbb{R}$, $T_0^{v} = \inf\{t\geq0, \:V_t^{v} = 0\}$, the first hitting time of $(V_t^v)_{t\geq0}$ at the level $0$. We also note that since $\sca(v)\to\pm\infty$ as $v\to\pm\infty$ and $\int_{\mathbb{R}}m(v)\dr v < \infty$ because $\beta > 1$, the process $(V_t^{v})_{t\geq0}$ is a positive recurrent diffusion. Note that Assumption \ref{assump_fournier} yields the following asymptotics:
\begin{equation}\label{nnn}
 m(v) \sim v^{-\beta}, \quad \sca(v) \sim (\beta + 1)^{-1}v^{\beta + 1}, \quad \phi(v) \sim (\beta + 1)^{\frac{1-2\beta}{\beta +1}}v^{\frac{1-2\beta}{\beta +1}} \quad \text{as} \:\:v\to\infty.
\end{equation}

\medskip
Finally, we stress that by the strong Markov property, since $V_0$ is $\mu$-distributed, for any non negative functional $G$ of continuous functions, we have
\begin{equation}\label{markov_prop}
    \mathbb{E}\left[G((V_t)_{t\geq0})\right] = \int_0^{\infty}\mathbb{E}\left[G((V_t^v)_{t\geq0})\right]\mu(\dr v).
\end{equation}

\begin{proof}[Proof of Lemma \ref{tail_t0}]
\textit{Step 1: }We first deal with $T_0$. Let $\delta \in (0, 1/2)$, then by \eqref{markov_prop} and the Hölder inequality, we have
$$\mathbb{E}\left[T_0^{1/2 + \delta}\right] = \int_0^{\infty}\mathbb{E}\left[\left(T_0^v\right)^{1/2 + \delta}\right]\mu(\dr v) \leq \int_0^{\infty}\mathbb{E}\left[T_0^v\right]^{1/2 + \delta}\mu(\dr v).$$

We will show that the quantity on the right-hand-side is finite if $\delta$ is small enough and to do so, we need to understand the behavior of $\mathbb{E}[T_0^v]$ as $v$ tends to infinity. We use Kac's moment formula (see for instance Löcherbach \cite[Corollary 3.5]{locherbach2013ergodicity}) which, applied to our case, tells us that
\begin{equation}\label{kacs_moment_formula}
    \mathbb{E}\left[T_0^v\right] = \sca(v)\int_{v}^{\infty}m(u)\dr u + \int_0^v \sca(u)m(u)\dr u \leq \sca(v)\int_{\mathbb{R}}m(u)\dr u + \int_0^v \sca(u)m(u)\dr u.
\end{equation}

\noindent
By \eqref{nnn}, we deduce that $\int_0^v \sca(u)m(u)\dr u \sim(2(\beta + 1))^{-1}v^2$ as $v\to\infty$. Since $\sca(v) \sim (\beta + 1)^{-1}v^{\beta + 1}$ and $\beta + 1 > 2$, the dominant term on the right-hand-side in \eqref{kacs_moment_formula} is the first one and we deduce that there exists some positive constant $K$ such that for any $v\geq0$, $\mathbb{E}\left[T_0^v\right] \leq K(1 + v^{\beta + 1})$. Hence we have
\[
 \mathbb{E}\left[T_0^{1/2 + \delta}\right] \leq \int_0^{\infty}\mathbb{E}\left[T_0^v\right]^{1/2 + \delta}\mu(\dr v) \leq K^{1/2 + \delta}\int_0^{\infty}(1 + v^{\beta + 1})^{1/2 + \delta}\mu(\dr v),
\]

\noindent
which is finite by Assumption \ref{assump_measure}-\textit{(i)} if $\delta = \eta / (\beta + 1)$.

\bigskip\noindent
\textit{Step 2: }Note that the random time $\sigma - \rho$ only depends on the speed process. More precisely, since $\rho$ is a stopping time and $V_{\rho} = 0$, the process $(V_{t + \rho})_{t\geq0}$ is a solution of $\dr Y_t = F(Y_t)\dr t + \dr B_t$ starting at $0$, and thus, $\sigma - \rho$ is equal in law to $T_M^0 = \inf\{t\geq0, \: V_t^0 = M\}$. Since $M$ is independent of $(V_t)_{t\geq0}$, we can write, as in the first step, that
\[
 \mathbb{E}\left[(\sigma - \rho)^{1/2 + \delta}\right] = \int_0^{\infty}\mathbb{E}\left[\left(T_v^0\right)^{1/2 + \delta}\right]\mu(\dr v) \leq \int_0^{\infty}\mathbb{E}\left[T_v^0\right]^{1/2 + \delta}\mu(\dr v),
\]

\noindent
where $T_v^0 = \inf\{t\geq0, \: V_t^0 = v\}$. We use again Kac's moment formula, see \cite[Corollary 3.5]{locherbach2013ergodicity}:
\begin{equation}\label{kacs_moment_formula_2}
 \mathbb{E}\left[T_v^0\right] = \sca(v)\int_{\mathbb{R}}m(u)\dr u - \sca(v)\int_v^{\infty}m(u)\dr u - \int_0^v \sca(u)m(u)\dr u \leq \sca(v)\int_{\mathbb{R}}m(u)\dr u,
\end{equation}

\noindent
and we can conclude as in the previous step that $\mathbb{E}\left[T_v^0\right] \leq K(1 + v^{\beta + 1})$, whence $\mathbb{E}[(\sigma - \rho)^{1/2 + \delta}]$ is finite with the same value of $\delta$.

\bigskip\noindent
\textit{Step 3: } We deal with the second item and we grant Assumption \ref{assump_measure}-\textit{(ii)}. Let $\delta'\in(0, 1 - \alpha /2)$. Since $M$ is independent of $(V_t)_{t\geq0}$ and $\mu$ is the law of $M$, we get
\[
 \mathbb{E}\left[(M(\sigma - \rho))^{\alpha / 2 + \delta'}\right] = \int_0^\infty v^{\alpha / 2 + \delta'}\mathbb{E}\left[\left(T_v^0\right)^{\alpha / 2+ \delta'}\right]\mu(\dr v) \leq \int_0^\infty v^{\alpha / 2 + \delta'}\mathbb{E}\left[T_v^0\right]^{\alpha / 2+ \delta'}\mu(\dr v).
\]

\noindent
Then, since $\mathbb{E}\left[T_v^0\right] \leq K(1 + v^{\beta + 1})$ as in Step 2,
\[
 \mathbb{E}\left[(M(\sigma - \rho))^{\alpha / 2 + \delta'}\right] \leq K^{\alpha/2 + \delta'}\int_0^\infty v^{\alpha / 2 + \delta'}(1+v^{\beta + 1})^{\alpha / 2 + \delta'}\mu(\dr v)
\]

\noindent
which is finite by Assumption \ref{assump_measure}-\textit{(ii)} if $\delta' = \eta / (\beta +2)$, recall that $\alpha = (\beta + 1) /3$.
\end{proof}

\noindent
We now seek to show Lemma \ref{tail_square_root}, and we will need to estimate the moments of $X_{T_0}$.

\begin{lemma}\label{lemma_power_x}
Grant Assumption \ref{assump_measure}-\textit{(i)}, then we have $\mathbb{E}\left[X_{T_0}^{\alpha /2}\right] < \infty$.
\end{lemma}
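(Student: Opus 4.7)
The plan is to use the Feller representation from \eqref{feller_rep_x} together with Brownian scaling to reduce the problem to estimating a universal Brownian functional, then control this functional uniformly in $v$. By \eqref{markov_prop}, it suffices to bound $\mathbb{E}[X_{T_0^v}^{\alpha/2}]$ by an integrable function of $v$ against $\mu$. From \eqref{feller_rep_x}, one has $X_{T_0^v} = \int_0^{\zeta_v}\phi(W_s^v)\,\dr s$ where $W_s^v = \sca(v)+W_s$ and $\zeta_v = \inf\{s: W_s^v = 0\}$. Setting $x = \sca(v)$ and applying Brownian scaling yields
\[
X_{T_0^v} \stackrel{d}{=} x^2\int_0^{\tau^B}\phi(xB_u)\,\dr u,
\]
where $B$ is a standard Brownian motion from $1$ and $\tau^B = \inf\{u\geq 0: B_u = 0\}$.

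Next, combining the asymptotic $\phi(y)\sim cy^\gamma$ at infinity from \eqref{nnn}, with $\gamma = (1-2\beta)/(\beta+1)\in(-3/2,-1/2)$ for $\beta\in(1,5)$, together with the linear behavior of $\phi$ near $0$ (since $\sca^{-1}(y)\sim y/\sca'(0)$ and $\psi(y)\to \sca'(0)$), one obtains the global bound $\phi(y)\leq Cy^\gamma$ valid for all $y > 0$. Since $2+\gamma = 3/(\beta+1) = 1/\alpha$, this gives
\[
X_{T_0^v} \leq Cx^{1/\alpha}\cdot I,\qquad I:=\int_0^{\tau^B} B_u^\gamma\,\dr u,
\]
and hence $\mathbb{E}[X_{T_0^v}^{\alpha/2}] \leq Cx^{1/2}\mathbb{E}[I^{\alpha/2}]$. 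Since $\sca(v)^{1/2}\leq C(1+v^{(\beta+1)/2})$, it then suffices to show that $\mathbb{E}[I^{\alpha/2}]<\infty$ and integrate against $\mu$, invoking Assumption \ref{assump_measure}-\textit{(i)}.

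The main obstacle is the estimate $\mathbb{E}[I^{\alpha/2}]<\infty$. Since the natural tail of $I$ is $\mathbb{P}(I>t)\asymp t^{-\alpha}$ (so we sit just below the critical exponent), I would prove this by a self-similar iteration on the supremum of $B$. Fix $a > 1$, let $T_a=\inf\{u: B_u = a\}$, and split $I = Y + \mathbf{1}_{\{T_a<\tau^B\}}\int_{T_a}^{\tau^B} B_u^\gamma \dr u$, with $Y = \int_0^{T_a\wedge\tau^B}B_u^\gamma \dr u$. The strong Markov property at $T_a$ and Brownian scaling yield, conditionally on $\{T_a<\tau^B\}$, that $\int_{T_a}^{\tau^B} B_u^\gamma \dr u\stackrel{d}{=}a^{2+\gamma} I'$ for some $I'$ independent of $\mathcal{F}_{T_a}$ and distributed as $I$. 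Using the subadditivity $(y+z)^{\alpha/2}\leq y^{\alpha/2}+z^{\alpha/2}$, together with $\mathbb{P}(T_a<\tau^B)=1/a$ and $(2+\gamma)\alpha/2=1/2$, the truncated moments $f(M):=\mathbb{E}[I^{\alpha/2}\wedge M]$ satisfy the recursion
\[
f(M)\leq \mathbb{E}[Y^{\alpha/2}] + a^{-1/2}\, f(Ma^{-1/2}).
\]
Iterating $n$ times and using the trivial bound $f(M)\leq M$ to discard the remainder, one obtains $\mathbb{E}[I^{\alpha/2}]\leq \mathbb{E}[Y^{\alpha/2}]/(1-a^{-1/2})<\infty$, where $\mathbb{E}[Y^{\alpha/2}]<\infty$ follows by Jensen and the fact that the Green function of Brownian motion on $(0,a)$ killed at the boundary makes $\mathbb{E}[Y]$ finite (both tails $\int_0^1 y^{\gamma+1}\dr y$ and $\int_1^a y^\gamma\dr y$ converge since $\gamma\in(-3/2,-1/2)$). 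Combining with the previous step and integrating against $\mu$ using Assumption \ref{assump_measure}-\textit{(i)} then gives the result.
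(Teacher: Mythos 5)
Your proof is correct, and its skeleton coincides with the paper's: reduce via \eqref{markov_prop} to a fixed initial velocity $v$, use the Feller representation \eqref{feller_rep_x} to write $X_{T_0^v}$ as $\int_0^{\theta_0^v}\phi(W_s^v)\,\dr s$, bound $\phi(y)\leq Cy^{1/\alpha-2}$ globally (valid since $1/\alpha-2<0$ and $\phi$ vanishes linearly at $0$), pull out the factor $[\sca(v)]^{1/\alpha}$ by Brownian scaling, and integrate $[\sca(v)]^{1/2}\leq C(1+v)^{(\beta+1)/2}$ against $\mu$ using Assumption \ref{assump_measure}-\textit{(i)}. The genuine difference is how you handle the key estimate $\mathbb{E}\big[(\int_0^{\tau^B}B_u^{1/\alpha-2}\dr u)^{\alpha/2}\big]<\infty$: the paper simply cites the explicit identification of the law of this functional (it equals $\alpha^2/\bm{\Gamma}_\alpha$ in law, by Letemplier--Simon), which immediately gives the exact tail of index $\alpha$ and hence all moments of order $p<\alpha$. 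You instead prove finiteness by hand via a self-similar renewal argument: splitting at $T_a$, using $\mathbb{P}(T_a<\tau^B)=1/a$, the scaling factor $a^{(2+\gamma)\alpha/2}=a^{1/2}$, subadditivity of $x\mapsto x^{\alpha/2}$, and a truncated-moment recursion $f(M)\leq \mathbb{E}[Y^{\alpha/2}]+a^{-1/2}f(Ma^{-1/2})$ whose contraction is exactly the condition $p<\alpha$; the seed term $\mathbb{E}[Y^{\alpha/2}]$ is finite by Jensen and the Green function of Brownian motion on $(0,a)$, since $\gamma+1>-1$. I checked the details (the decomposition of $I$, the $\mathcal{F}_{T_a}$-measurability of $\{T_a<\tau^B\}$, the independence and scaling of the post-$T_a$ piece, the inequality $(u+v)\wedge M\leq u+(v\wedge M)$, and the vanishing remainder $Ma^{-n}$) and they all hold. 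Your route is more elementary and self-contained, works for any moment of order $p<\alpha$, and avoids the external citation; the paper's route is shorter and yields the sharper information of the exact polynomial tail, which is however not needed for the lemma.
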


\begin{proof}
 For any $v > 0$, we set $\theta^v_0 = \inf\{t\geq0, \:W_t^{v} = 0\}$. Since $\sca(v) \geq 0$ if and only if $v\geq0$, and $V_t^v = \sca^{-1}(W_{\rho_t^v}^v)$ with $(\rho_t^v)_{t\geq0}$ the inverse of $(A_t^v)_{t\geq0}$, it should be clear that $\mathbb{P}(T_0^{v} = A_{\theta_0^v}^v) = 1$. Hence, following \eqref{feller_rep_x}, we have for any $v >0$, almost surely
$$X_{T_0^v}^v = \int_0^{\theta_0^v}\phi\left(W_{s}^{v}\right) \dr s.$$

\noindent
\textit{Step 1: }We first introduce the non-negative random variable
$$Z_{v,\alpha} = \int_0^{\theta_0^v}(W_{s}^{v})^{1/\alpha - 2} \dr s.$$

\medskip
By the scaling of the Brownian motion, the law of $(W_t^v, \: 0\leq t \leq \theta_0^v)$ is the same as the law of $(\sca(v) W_{t/[\sca(v)]^2}^{\sca^{-1}(1)}, \: 0\leq t \leq [\sca(v)]^2 \theta_0^{\sca^{-1}(1)})$, and as a consequence,
$$\text{the law of } Z_{v,\alpha} \text{ is the same as the law of }[\sca(v)]^{1/\alpha} Z_{\sca^{-1}(1),\alpha}.$$

\noindent
In fact, the law of $Z_{\sca^{-1}(1),\alpha}$ is explicit and it holds that $Z_{\sca^{-1}(1),\alpha}$ has the same law as $\alpha ^2 / \bm{\Gamma}_\alpha$ where $\bm{\Gamma}_\alpha$ is a random variable whose law is the Gamma distribution of parameter $(\alpha, 1)$, see for instance Letemplier-Simon \cite[page 93]{simon_letemplier}. In particular, we have $\mathbb{P}(Z_{\sca^{-1}(1),\alpha} \geq x) \sim cx^{-\alpha}$ as $x \to \infty$ for some $c >0$. Therefore, it holds that $\mathbb{E}[Z_{\sca^{-1}(1),\alpha}^{\alpha / 2}] < \infty$.

\medskip\noindent
\textit{Step 2: }We conclude. Since $\alpha = (\beta+1)/3$, we have $1/\alpha - 2 = (1-2\beta) / (\beta + 1)$ and by \eqref{nnn}, there exists a constant $c >0$ such that $\phi(v)\sim c v^{1/\alpha - 2}$ as $v\to\infty$. Remark that $1/\alpha - 2 < 0$ as $\beta > 1$, and therefore there exists a constant $C >0$ such that for any $v \geq 0$, $\phi(v) \leq C v^{1/\alpha - 2}$. As a consequence, we have for any $v > 0$, almost surely $0 < X_{T_0^v}^v < CZ_{v,\alpha}$. Putting the pieces together,  by \eqref{markov_prop} and the previous step, we can write
\[
 \mathbb{E}\left[X_{T_0}^{\alpha /2}\right] = \int_0^{\infty}\mathbb{E}\Big[(X_{T_0^v}^v)^{\alpha /2}\Big]\mu(\dr v) \leq C^{\alpha / 2}\int_0^{\infty}\mathbb{E}\Big[(Z_{v,\alpha})^{\alpha /2}\Big]\mu(\dr v)
\]

\noindent
whence
\[
 \mathbb{E}\left[X_{T_0}^{\alpha /2}\right] \leq C^{\alpha / 2}\mathbb{E}[Z_{\sca^{-1}(1),\alpha}^{\alpha /2}]\int_0^{\infty}[\sca(v)]^{1/2}\mu(\dr v).
\]

\noindent
We can conclude since there exists a constant $K > 0$ such that for any $v > 0$, $\sca(v) \leq K(1+v)^{\beta+1}$, see the proof of Lemma \ref{tail_t0}, and $\int_0^{\infty}(1+v)^{(\beta+1) / 2}\mu(\dr v) < \infty$ by Assumption \ref{assump_measure}-\textit{(i)}.
\end{proof}

We now introduce the process $(\Bar{V}_t, \Bar{X}_t)_{t\geq0} = (V_{t+T_0}, X_{t+T_0} - X_{T_0})_{t\geq0}$ which is a solution of \eqref{eq_non_reflected} starting at $(0,0)$ and is independent of $X_{T_0}$. We emphasize that, since the restoring force $\mathrm{F}$ is odd by assumption, the processes $(\Bar{V}_t)_{t\geq0}$ and $(\Bar{X}_t)_{t\geq0}$ are symmetric. We also stress that the stopping times $\tau - T_0 = \inf\{t>0, \:\Bar{X}_t = - X_{T_0}\}$ and $\rho - T_0 = \inf\{t\geq\tau-T_0, \:\Bar{V}_t = 0\}$ only depend on $(\Bar{V}_t)_{t\geq0}$, $(\Bar{X}_t)_{t\geq0}$ and $X_{T_0}$. Let us introduce the supremum and infimum of $(\Bar{X}_t)_{t\geq0}$: $\xi_t = \sup_{s\in[0,t]}\Bar{X}_s$ and $\Lambda_t = \inf_{s\in[0,t]}\Bar{X}_s$. Let us also define for $t\geq0$
\[
 g_t = \sup\{s\leq t, \Bar{V}_s = 0\} \qquad \text{and}\qquad d_t = \inf\{s\geq t, \Bar{V}_s = 0\}.
\]

\noindent
Then we have the following inclusions of events:
\[
 \left\{\Lambda_{d_t} > -X_{T_0}\right\} \subset \left\{\tau - T_0 > t\right\} \subset \left\{\rho - T_0> t\right\} \subset \left\{\Lambda_{g_t} > -X_{T_0}\right\}.
\]

\noindent
The first two inclusions are straightforward since $\{\tau - T_0 > t\} = \{\Lambda_t > - X_{T_0}\}$, $d_t \geq t$ and $\tau \leq \rho$. Remember that $\rho-T_0$ is the first zero of $\Bar{V}_t$ after $\tau - T_0$ and thus it should be clear that $\{\Lambda_{g_t} \leq -X_{T_0}\} = \{\tau - T_0 \leq g_t\} \subset \{\rho - T_0 \leq g_t\}$, which establishes the third inclusion since $g_t \leq t$. Since $(\Bar{X}_t)_{t\geq0}$ is symmetric, we then have
\begin{equation}\label{ineq_prob}
 \mathbb{P}\left(\xi_{d_t} < X_{T_0}\right) \leq \mathbb{P}\left(\tau - T_0 > t\right) \leq \mathbb{P}\left(\rho - T_0 > t\right) \leq \mathbb{P}\left(\xi_{g_t} < X_{T_0}\right).
\end{equation}

We will show the following lemma which, combined with \eqref{ineq_prob}, immediately implies Lemma \ref{tail_square_root}.

\begin{lemma}\label{equi_dt_gt}
Grant Assumptions \ref{assump_measure}-\textit{(i)}. Then there exists a constant $C > 0$ such that
\[
 \mathbb{P}\left(\xi_{g_t} < X_{T_0}\right)\underset{t\to\infty}{\sim}\mathbb{P}\left(\xi_{d_t} < X_{T_0}\right)\underset{t\to\infty}{\sim}Ct^{-1/2}.
\]
\end{lemma}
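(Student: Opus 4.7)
The plan is to use the sandwich
\[
 \mathbb{P}(\xi_{d_t}<X_{T_0})\leq \mathbb{P}(\xi_t<X_{T_0}) \leq \mathbb{P}(\xi_{g_t}<X_{T_0})
\]
combined with a sharp asymptotic for $\mathbb{P}(\xi_t<x)$ that is uniform enough in $x$ to be integrated against the law of $X_{T_0}$, and then to show that replacing $t$ by either $g_t$ or $d_t$ yields only an $o(t^{-1/2})$ correction. Throughout, the independence of $X_{T_0}$ and $(\bar X_s,\bar V_s)_{s\geq 0}$ is essential.

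The first and main step is to prove
\[
 \mathbb{P}(\xi_t<x)\;\sim\;c_0\,x^{\alpha/2}\,t^{-1/2}\qquad(t\to\infty)
\]
for each fixed $x>0$, together with a uniform upper bound $\mathbb{P}(\xi_t<x)\leq C\,x^{\alpha/2}t^{-1/2}$ valid for all $x>0$ and $t$ large. The heuristic comes from Theorem~\ref{conv_m2}: since the supremum functional is continuous on $(\mathcal{D},\dr_{\bm{\mathrm{M}}_1})$ (see \cite[Chapter~13]{book_whitt}), $t^{-1/\alpha}\xi_t$ converges in law to $\xi_1^\alpha:=\sup_{s\in[0,1]}Z^\alpha_s$, and Doney--Savov (already invoked in the introduction) gives $\mathbb{P}(\xi_1^\alpha<y)\sim c_0\,y^{\alpha/2}$ as $y\to 0^+$. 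To turn this into the quantitative statement, I would use a random-walk embedding at the zeros of $\bar V$: set $\zeta_0=0$ and $\zeta_n=\inf\{s>\zeta_{n-1}:\bar V_s=0\}$; on each $[\zeta_{n-1},\zeta_n]$ the velocity $\bar V$ keeps a constant sign so $\bar X$ is monotone there, whence $\xi_{\zeta_n}=\max_{k\leq n}S_k$ with $S_k:=\bar X_{\zeta_k}$. By the strong Markov property and the oddness of $\mathrm{F}$, the increments $\Delta_k:=S_k-S_{k-1}$ form an i.i.d. symmetric sequence in the domain of attraction of $Z^\alpha$. Classical fluctuation theory for such walks (Sparre Andersen/Rogozin/Doney, and in the form most directly needed here the companion work \cite{persistence_bbt} cited in the acknowledgements) then yields both the exact asymptotic $\mathbb{P}(\max_{k\leq n}S_k<x)\sim c_0\,x^{\alpha/2}n^{-1/2}$ and the matching uniform bound. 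These transfer to continuous time via the ergodic control $N_t/t\to 1/\mathbb{E}_\nu[\text{excursion length}]$ (valid because $\bar V$ is positive recurrent with integrable mean return time to $0$, thanks to $\beta>1$).

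Given Step~1, the second step is immediate: Lemma~\ref{lemma_power_x} provides $\mathbb{E}[X_{T_0}^{\alpha/2}]<\infty$, and since $X_{T_0}$ is independent of $\bar X$, dominated convergence yields
\[
 t^{1/2}\,\mathbb{P}(\xi_t<X_{T_0})=\mathbb{E}\bigl[t^{1/2}\,\mathbb{P}(\xi_t<x)\big|_{x=X_{T_0}}\bigr]\;\longrightarrow\;c_0\,\mathbb{E}[X_{T_0}^{\alpha/2}]=:C>0,
\]
so that the middle term of the sandwich already has the announced asymptotic.

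For the final step, with $N_t$ the number of zeros of $\bar V$ in $[0,t]$, the monotonicity of $\bar X$ on excursions gives $\xi_{g_t}=\max_{k\leq N_t}S_k$ and $\xi_{d_t}=\xi_{g_t}\vee S_{N_t+1}$, so
\[
 \mathbb{P}(\xi_{g_t}<X_{T_0})-\mathbb{P}(\xi_{d_t}<X_{T_0})\;=\;\mathbb{P}\bigl(\max_{k\leq N_t}S_k<X_{T_0}\leq S_{N_t+1}\bigr).
\]
The increment $\Delta_{N_t+1}$ comes from the single (size-biased) excursion of $\bar V$ straddling $t$; conditional on its modulus, its sign is a fair coin toss by symmetry of $\mathrm{F}$ and independence of the sign of an excursion from its length. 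A standard symmetrisation combined with the uniform bound from Step~1 then controls the above display by $o(t^{-1/2})$, squeezing both extremes of the sandwich to the common limit $Ct^{-1/2}$. The principal obstacle I foresee is Step~1: not merely distributional convergence of the rescaled supremum, but the sharp leading constant and the uniform upper bound in the small-$x$ regime that is actually probed once one substitutes $x=X_{T_0}$ after rescaling. Getting this cleanly, and matching the constant to the Doney--Savov constant of the stable limit, is the technical core of the argument.
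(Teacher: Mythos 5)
Your overall architecture (sandwich between $\xi_{g_t}$ and $\xi_{d_t}$, independence of $X_{T_0}$ from $(\bar X,\bar V)$, a sharp asymptotic $\mathbb{P}(\xi_t<x)\sim c\,\mathcal{V}(x)\,t^{-1/2}$ with a dominating bound, then dominated convergence using $\mathbb{E}[X_{T_0}^{\alpha/2}]<\infty$) is indeed the skeleton of the paper's argument. But the central construction of your Step 1 does not exist: the velocity $(\bar V_t)_{t\geq0}$ is a regular diffusion started from $0$, so its zero set is almost surely a perfect set with no isolated points (this is used explicitly in the proof of Theorem \ref{conv_m2}). Consequently $\zeta_1=\inf\{s>\zeta_0:\bar V_s=0\}=0$, the sequence $(\zeta_n)$ collapses, there is no i.i.d.\ walk $S_k=\bar X_{\zeta_k}$, and $N_t$ (``the number of zeros in $[0,t]$'') is infinite; the identity $\xi_{g_t}=\max_{k\leq N_t}S_k$ and the ``ergodic control $N_t/t\to 1/\mathbb{E}[\text{excursion length}]$'' are therefore meaningless as stated. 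The correct discretization is through Itô excursion theory: index the excursions by the local time of $\bar V$ at $0$ and work with the L\'evy process $Z_t=\bar X_{\gamma_t}$ jointly with the subordinator $\gamma$. Once this repair is made, the ``classical fluctuation theory'' you invoke is no longer off-the-shelf Sparre Andersen for walks: what is needed is the joint (bivariate) Wiener--Hopf factorization of $(\gamma_t,Z_t)$, because real time $t$ and the excursion index are dependent, and the precise $t^{-1/2}$ rate comes exactly from $\kappa(0,q,0)=k\Phi(q)^{1/2}\sim kq^{1/2}$ via Fristedt's formula, Frullani's identity and the symmetry $\mathbb{P}(Z_t\geq0)=1/2$; the $x^{\alpha/2}$ behaviour is that of the renewal function $\mathcal{V}$ of the ladder height process (Lemmas \ref{coro_pers}, \ref{ddd} and \ref{renewal_equiv}), and the passage between fixed $t$ and the tractable exponentially randomized time is done by the Tauberian/monotone-density Theorem \ref{tt_dmt}. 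None of this is supplied, or replaceable by a citation to random-walk results, in your sketch.

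Two further points. First, your claimed uniform bound $\mathbb{P}(\xi_t<x)\leq Cx^{\alpha/2}t^{-1/2}$ for all $x>0$ is not the right dominating function: the correct one is $C\,\mathcal{V}(x)\,t^{-1/2}$ (equivalently $C(1+x^{\alpha/2})t^{-1/2}$), since $\mathcal{V}$ need not vanish at $0$ like $x^{\alpha/2}$; this is harmless for the dominated convergence step because $\mathbb{E}[\mathcal{V}(X_{T_0})]<\infty$ by Lemmas \ref{lemma_power_x} and \ref{renewal_equiv}, but it should be stated correctly. Second, your treatment of the straddling excursion in the last step again presupposes the discrete walk; the paper instead gains the needed independence by randomizing with an exponential time $e(q)$ (the pre-$g_e$ and post-$g_e$ pieces are independent by Salminen--Vallois--Yor, and $\xi_{g_e}$, $\xi_{g_e}-\bar X_{g_e}$, $I_e$ are mutually independent), which is what makes the comparison $\mathbb{P}(\xi_{g_e}<x)f(q)\leq\mathbb{P}(\xi_{d_e}<x)\leq\mathbb{P}(\xi_{g_e}<x)$ with $f(q)\to1$ (Lemma \ref{lemma_ineq}) clean; at fixed $t$ your ``fair coin for the sign of the straddling excursion'' argument would still have to contend with the dependence between that excursion and $(\xi_{g_t},\bar X_{g_t})$.
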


\noindent
To prove this result, we will heavily use some results developped in \cite{persistence_bbt}, which rely on works about Itô's excursion theory and the links with Lévy processes, in particular by Bertoin \cite{bertoin1996levy} and Vallois, Salminen and Yor \cite{salminen2007excursion}. The proof is rather long and we will segment it (again) into smaller pieces, see Lemmas \ref{ddd} and \ref{renewal_equiv} below.

\medskip
We use the following standard trick: let $e = e(q)$ be an exponential random variable of parameter $q >0$, independent of everything else, then we will look at the quantities $\mathbb{P}(\xi_{g_e} < X_{T_0})$ and $\mathbb{P}(\xi_{d_e} < X_{T_0})$ instead of looking at $\mathbb{P}(\xi_{g_t} < X_{T_0})$ and $\mathbb{P}(\xi_{d_t} < X_{T_0})$. We have nothing to loose doing this since a combination of the Tauberian theorem and the monotone density theorem (see Theorem \ref{tt_dmt} below) tells us that having an asymptotic of $\mathbb{P}(\xi_{g_t} < X_{T_0})$ (respectively $\mathbb{P}(\xi_{d_t} < X_{T_0})$) as $t\to\infty$ is equivalent to having an asymptotic of $\mathbb{P}(\xi_{g_e} < X_{T_0})$ (respectively $\mathbb{P}(\xi_{d_e} < X_{T_0})$) as $q\to0$, and we will first study $\mathbb{P}(\xi_{g_e} < x)$ and $\mathbb{P}(\xi_{d_e} < x)$ for a fixed $x>0$.

\medskip
The first reason we do this is that it brings independence between the quantities we are interested in. The second reason is that by doing this, we actually have explicit formulas for some quantities of interest, for instance for $\mathbb{P}(\xi_{g_e}< x)$ and, as we will see, there is a strong link with some Lévy process associated to $(\Bar{X_t}, \Bar{V}_t)_{t\geq0}$ and fluctuation's theory for Lévy processes, see for instance \cite[Chapter VI]{bertoin1996levy}. 

\medskip
The velocity process $(\Bar{V}_t)_{t\geq0}$ possesses a local time at $0$ and we will denote by $(\gamma_t)_{t\geq0}$ its right-continuous inverse. The latter is a subordinator and we will denote by $\Phi$ its Laplace exponent, i.e. $\mathbb{E}[e^{-q\gamma_t}] = \exp(-t\Phi(q))$. The process $(\Bar{V}_t)_{t\geq0}$ being positive recurrent, we have $\mathbb{E}\left[\gamma_1\right]<\infty$, see \cite[Chapter 2, page 22]{bertoin_subordinators}, and we choose to normalize the local time so that $\mathbb{E}\left[\gamma_1\right] = 1$, whence $\Phi(q) \sim q$ as $q\to0$. The strong law of large number for subordinators entails that a.s., $t^{-1}\gamma_t \to 1$ as $t\to\infty$. We will first prove the following which tells us that we only need to study $\mathbb{P}(\xi_{g_e}< x)$.
\begin{lemma}\label{lemma_ineq}
 There exists a function $f:(0,\infty) \to [0,1]$ such that $f(q) \to 1$ as $q\to0$ and such that for any $q,x > 0$,
 \begin{equation}\label{ineq_proba}
    \mathbb{P}(\xi_{g_e}< x)f(q) \leq \mathbb{P}(\xi_{d_e}< x) \leq \mathbb{P}(\xi_{g_e}< x).
\end{equation}
\end{lemma}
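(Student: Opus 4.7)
The right-hand inequality is immediate from $g_e \le d_e$ and the monotonicity of $t\mapsto \xi_t$: on any path, $\xi_{g_e}\le \xi_{d_e}$, so $\{\xi_{d_e}<x\}\subset\{\xi_{g_e}<x\}$. The work lies entirely in the left-hand inequality.

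The plan is to first obtain clean integral formulas for both probabilities by exploiting the independence of $e$, and then to control their ratio via Itô's excursion theory for $\bar V$ at $0$. Define the first-passage time $\sigma^x := \inf\{t\ge 0: \bar X_t \ge x\}$. By symmetry of $\bar X$ and Theorem~\ref{conv_m2}, $\sigma^x<\infty$ a.s. Up to $\mathbb{P}$-null events one checks that $\{\xi_{g_e}<x\}=\{\sigma^x>g_e\}=\{e<d_{\sigma^x}\}$ (either $e<\sigma^x$, or $e\ge\sigma^x$ and no zero of $\bar V$ lies in $[\sigma^x,e]$, which amounts to $e\in[g_{\sigma^x},d_{\sigma^x}]$), and similarly $\{\xi_{d_e}<x\}=\{e<g_{\sigma^x}\}$. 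Conditioning on the process and integrating out the independent exponential $e$ then yields
\[
\mathbb{P}(\xi_{g_e}<x)=1-\mathbb{E}[e^{-q\,d_{\sigma^x}}],\qquad \mathbb{P}(\xi_{d_e}<x)=1-\mathbb{E}[e^{-q\,g_{\sigma^x}}].
\]
Writing $d_{\sigma^x}=g_{\sigma^x}+\zeta^*$ where $\zeta^*$ is the length of the excursion of $\bar V$ straddling $\sigma^x$, the difference takes the compact form
\[
\mathbb{P}(\xi_{g_e}<x)-\mathbb{P}(\xi_{d_e}<x)=\mathbb{E}\!\left[e^{-q\,g_{\sigma^x}}\bigl(1-e^{-q\zeta^*}\bigr)\right].
\]

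Next I would pass to excursion theory. Let $\mathbf{n}$ be the Itô excursion measure of $\bar V$ at $0$, write $Y_s:=\bar X_{\gamma_s}$, and for each excursion $\epsilon$ denote by $\zeta(\epsilon)$ its lifetime and by $M(\epsilon):=\sup_{t\le\zeta}\int_0^t\epsilon(u)\,\dr u$ the maximal $\bar X$-displacement. The compensation formula, applied with the predictable weight $e^{-q\gamma_s}\bm{1}_{\{\xi_{\gamma_s}<x\}}$, gives on one hand
\[
\mathbb{P}(\xi_{g_e}<x)=\Phi(q)\int_0^\infty\mathbb{E}\!\left[e^{-q\gamma_s}\bm{1}_{\{\xi_{\gamma_s}<x\}}\right]\dr s,
\]
since $\mathbf{n}[1-e^{-q\zeta}]=\Phi(q)$, and on the other hand
\[
\mathbb{P}(\xi_{d_e}<x)=\int_0^\infty\mathbb{E}\!\left[e^{-q\gamma_s}\bm{1}_{\{\xi_{\gamma_s}<x\}}\,\Xi_q(x-Y_s)\right]\dr s,\quad \Xi_q(y):=\mathbf{n}\bigl[(1-e^{-q\zeta})\bm{1}_{\{M<y\}}\bigr],
\]
the extra indicator $\{M<x-Y_s\}$ encoding that the excursion at local time $s$ must not let $\bar X$ cross level $x$. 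Note $\Xi_q(y)\le \Phi(q)$ and $\Xi_q(y)\uparrow\Phi(q)$ as $y\to\infty$; taking this bound inside the integral recovers the trivial right inequality.

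The candidate is then $f(q):=\inf_{y>0}\Xi_q(y)/\Phi(q)$, or a slight refinement of it. Since $\mathbf{n}[\zeta]=\mathbb{E}[\gamma_1]=1<\infty$ and $\Phi(q)\sim q$ as $q\to 0$, writing $\Phi(q)-\Xi_q(y)=\mathbf{n}[(1-e^{-q\zeta})\bm{1}_{\{M\ge y\}}]\le q\,\mathbf{n}[\zeta\bm{1}_{\{M\ge y\}}]$ and using dominated convergence in $y\to\infty$ shows that for each fixed $y>0$, $\Xi_q(y)/\Phi(q)\to 1$ as $q\to 0$. The main obstacle is precisely the uniformity in $x$ (equivalently in $y=x-Y_s$): the naïve bound $\inf_{y>0}\Xi_q(y)/\Phi(q)=0$ is useless because $\Xi_q(y)\to 0$ as $y\to 0$. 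To close this gap, I would exploit the fact that under the weighted measure $e^{-q\gamma_s}\bm{1}_{\{\xi_{\gamma_s}<x\}}\dr s$ the variable $Y_s$ is not typically close to $x$: since $s\mapsto Y_s$ is a symmetric Lévy process (the bivariate ladder process of $(\gamma,\bar X_\gamma)$), one controls the joint law of $(Y_s,\xi_{\gamma_s})$ via Wiener--Hopf factorization, along the lines of Bertoin~\cite{bertoin1996levy} and Salminen--Vallois--Yor~\cite{salminen2007excursion}, to show that the mass of $s$ with $x-Y_s$ small is of lower order than $\Phi(q)\int \mathbb{E}[e^{-q\gamma_s}\bm{1}_{\{\xi_{\gamma_s}<x\}}]\dr s$ uniformly in $x$. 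This yields $f(q)\to 1$ and completes the left inequality. The Wiener--Hopf step is the genuine technical core of the argument; the identities in the first two steps are essentially bookkeeping around the independence of $e$.
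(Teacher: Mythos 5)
You have the right-hand inequality and the two compensation-formula identities correct ($\mathbb{P}(\xi_{g_e}<x)=\Phi(q)\int_0^\infty\mathbb{E}[e^{-q\gamma_s}\bm{1}_{\{\xi_{\gamma_s}<x\}}]\dr s$ and its analogue with the weight $\Xi_q(x-Y_s)$ are valid bookkeeping), and you have correctly located the difficulty; but the difficulty is exactly where the proposal stops. First, a local error: the claim that ``for each fixed $y>0$, $\Xi_q(y)/\Phi(q)\to 1$ as $q\to0$'' is false. Since $(1-e^{-q\zeta})/q\to\zeta$ dominatedly, $\mathbf{n}[\zeta]=1$ and $\Phi(q)\sim q$, one gets $\Xi_q(y)/\Phi(q)\to\mathbf{n}[\zeta\bm{1}_{\{M<y\}}]<1$ for fixed $y$; the ratio tends to $1$ only in the double limit $q\to0$ then $y\to\infty$. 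More importantly, the entire left inequality rests on the unexecuted step: an $x$-uniform proof that, under the tilted measure $e^{-q\gamma_s}\bm{1}_{\{\xi_{\gamma_s}<x\}}\dr s\,\dr\mathbb{P}$, the mass where $x-Y_s$ is small is negligible, \emph{and} that this can be packaged into a single function $f(q)\to1$ valid for every $x>0$ simultaneously (including small $x$, where $x-Y_s$ is automatically small, so your candidate $\inf_{y>0}\Xi_q(y)/\Phi(q)$ vanishes, as you note). Announcing that a Wiener--Hopf analysis of the joint law of $(Y_s,\xi_{\gamma_s})$ ``along the lines of'' \cite{bertoin1996levy,salminen2007excursion} would yield this is a statement of intent, not a proof; as written there is a genuine gap.

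For comparison, the paper closes this gap without any quantitative uniform-in-$x$ estimate. Setting $\Delta_e=\bar{X}_{d_e}-\xi_{g_e}$, monotonicity of $\bar{X}$ on the excursion straddling $e$ gives $\xi_{d_e}=\xi_{g_e}$ on $\{\Delta_e\le0\}$, hence $\mathbb{P}(\xi_{d_e}<x)\ge\mathbb{P}(\xi_{g_e}<x,\,\Delta_e\le0)$; and $\xi_{g_e}$ is \emph{exactly} independent of $\Delta_e$, because $I_e=\bar{X}_{d_e}-\bar{X}_{g_e}$ is independent of the pre-$g_e$ path by \cite[Theorem 9]{salminen2007excursion}, while $\xi_{g_e}$ and $\xi_{g_e}-\bar{X}_{g_e}$ are independent by the Wiener--Hopf input of Lemma~\ref{coro_pers}. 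This factorizes the lower bound with $f(q)=\mathbb{P}(\Delta_e\le0)$, which is independent of $x$ by construction, and $f(q)\to1$ because $I_e$ converges in law while $\xi_{g_e}-\bar{X}_{g_e}\overset{d}{=}\xi_{g_e}\to\infty$ in probability as $q\to0$. In short, the exact independence replaces the uniform estimate your route would need; unless you supply that estimate in full (or import this factorization), your argument does not establish \eqref{ineq_proba}.
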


\begin{proof}
 Let us define for any $t > 0$ the processes
\[
 I_t = \int_{g_t}^{d_t}\Bar{V}_s \dr s = \Bar{X}_{d_t} - \Bar{X}_{g_t} \quad \text{and} \quad \Delta_t = I_t + \Bar{X}_{g_t} - \xi_{g_t} = \Bar{X}_{d_t} - \xi_{g_t}.
\]

\noindent
Then we can express $\xi_{d_t}$ in terms of $\xi_{g_t}$ and $\Delta_t$: remarking that $(\Bar{X}_t)_{t\geq}$ is monotonic on every excursion of $(\Bar{V}_t)_{t\geq0}$, we get that $\sup_{s\in[g_t, d_t]}\Bar{X}_s = \Bar{X}_{g_t}\vee \Bar{X}_{d_t}$. Therefore, if $\Delta_t \leq 0$, then $\Bar{X}_{d_t} \leq \xi_{g_t}$ and $\xi_{d_t} = \xi_{g_t}$. On the other hand, if $\Delta_t > 0$, then $\xi_{d_t} = \Bar{X}_{d_t} = \xi_{g_t} + \Delta_t$. All in all, we have
\begin{equation}\label{fff}
 \xi_{d_t} = \xi_{g_t}\bm{1}_{\{\Delta_t \leq 0\}} + (\xi_{g_t} + \Delta_t)\bm{1}_{\{\Delta_t > 0\}}.
\end{equation}

\medskip
We can fatorize functionals of the trajectories of $(\Bar{V}_t)_{t\geq0}$ before time $g_e$ and functionals of the trajectories between $g_e$ and $d_e$. More precisely, it is shown in \cite[Theorem 9]{salminen2007excursion} that the processes $(\Bar{V}_u)_{0\leq u\leq g_e}$ and $(\Bar{V}_{u+g_e})_{0\leq u \leq d_e - g_e}$ are independent. Therefore, $I_e$ is independent of $(\xi_{g_e}, \Bar{X}_{g_e} - \xi_{g_e})$. Moreover, it is shown in \cite[Corollary 4.6]{persistence_bbt} that $\xi_{g_e}$ and $\Bar{X}_{g_e} - \xi_{g_e}$ are i.i.d., see also Lemma \ref{coro_pers} below. Hence the random variables $I_e$, $\xi_{g_e}$ and $\Bar{X}_{g_e} - \xi_{g_e}$ are mutually independent and thus $\xi_{g_e}$ is independent of $\Delta_e = I_e + \Bar{X}_{g_e} - \xi_{g_e}$. We set $f(q) = \mathbb{P}(\Delta_{e}\leq 0)$ and we deduce from \eqref{fff} that \eqref{ineq_proba} holds for any $x>0$.

\medskip
Let us now show that $f(q) \to 1$ as $q\to0$ and let us denote by $n$ the excursion measure of $(\Bar{V}_t)_{t\geq0}$ away from zero. Let $\mathcal{E}$ the set of excursions, i.e. the set of continuous functions $\varepsilon = (\varepsilon_t)_{t\geq0}$ such that $\varepsilon_0 = 0$ and such that there exists $\ell(\varepsilon) > 0$ for which $\varepsilon_s \neq 0$ for every $s \in(0,\ell(\varepsilon))$ and $\varepsilon_s = 0$ for every $s \geq \ell(\varepsilon)$. Then by Theorem 9 in \cite{salminen2007excursion}, we have for any measurable bounded function $G:\mathbb{R}\to\mathbb{R}$,
$$\mathbb{E}\left[G(I_e)\right] = \frac{1}{\Phi(q)}\int_{\mathcal{E}}G\Big(\int_0^{\ell(\varepsilon)}\varepsilon_s \dr s\Big)(1 - e^{-q\ell(\varepsilon)})n(\dr \varepsilon).$$

\noindent
But $\Phi(q) \sim q$ as $q\to 0$ and $\int_{\mathcal{E}}\ell(\varepsilon)n(\dr \varepsilon) = 1 < \infty$ since $\mathbb{E}[\gamma_1] = 1$ (by a direct application of the Master formula in the context of excursion theory), we get by dominated convergence that
$$\mathbb{E}\left[G(I_e)\right] \underset{q\to0}{\longrightarrow}\int_{\mathcal{E}}G\Big(\int_0^{\ell(\varepsilon)}\varepsilon_s \dr s\Big)\ell(\varepsilon)n(\dr \varepsilon),$$

\noindent
and thus $I_e$ converges in law as $q\to0$.

\medskip
Remember that $\xi_{g_e} - \Bar{X}_{g_e}$ is equal in law to $\xi_{g_e}$. Moreover $\xi_{g_e}\to\infty$ in probability as $q\to0$ because $g_e$ tends in probability to infinity as $q\to0$ and because $\xi_t = \sup_{s\in[0,t]}\Bar{X}_s$ tends to infinity in probability (Theorem \ref{conv_m2} clearly implies that $t^{-1/\alpha}\xi_t$ converges in law as $t\to\infty$). Since $I_e$ converges in law and $\xi_{g_e} - \Bar{X}_{g_e}$ converges in probability to $\infty$ as $q\to0$, we have that $\Delta_e = I_e - (\xi_{g_e} - \Bar{X}_{g_e})$ converges to $-\infty$ in probability, so that $\mathbb{P}(\Delta_e \leq 0) = f(q)$ converges to $1$ as $q\to0$.
\end{proof}

To handle the quantity $\mathbb{P}(\xi_{g_e}< x)$, we will rely on a Wiener-Hopf factorization of the bivariate Lévy process $(\gamma_t, Z_t)_{t\geq0} = (\gamma_t, \Bar{X}_{\gamma_t})_{t\geq0}$, which is developped in \cite[Appendix A]{persistence_bbt}. Let us denote by $(S_t)_{t\geq0}$ the supremum process of $(Z_t)_{t\geq0}$, i.e. $S_t = \sup_{s\in[0,t]}Z_s$ and let $(R_t)_{t\geq0} = (S_t - Z_t)_{t\geq0}$ be the reflected process, which is a strong Markov process that also possesses a local time at $0$, see \cite[Chapter VI]{bertoin1996levy} and we denote by $(\sigma_t)_{t\geq0}$ its right-continuous inverse. The process $(\sigma_t, \theta_t, H_t)_{t\geq0} = (\sigma_t, \gamma_{\sigma_t}, S_{\sigma_t})_{t\geq0}$ is a trivariate subordinator, see \cite[Lemma A.1 in Appendix A]{persistence_bbt}. Its Laplace exponent is denoted by $\kappa$, i.e. for any $\alpha, \beta, \delta \geq0$,
\[
 \mathbb{E}\left[e^{-\alpha\sigma_t - \beta \theta_t - \delta H_t}\right] = \exp\left(-t\kappa(\alpha, \beta, \delta)\right).
\]

\noindent
Finally we introduce the renewal function $\mathcal{V}$ defined on $[0,\infty)$ by
$$\mathcal{V}(x) = \int_0^{\infty}\mathbb{P}(H_t \leq x) \dr t,$$

\noindent
which is non-decreasing and right-continuous. We state the following lemma, which is borrowed from \cite[Proposition 4.4 and Corollary 4.6]{persistence_bbt}, which we will use several times.
\begin{lemma}\label{coro_pers}
There exists a constant $k >0$ such that for every $\alpha, \beta, \delta \geq0$, we have the following Fristedt formula
\[
 \kappa(\alpha, \beta, \delta) = k \exp\left(\int_0^{\infty}\int_{[0,\infty)\times\mathbb{R}}\frac{e^{-t} - e^{-\alpha t- \beta r - \delta x}}{t}\bm{1}_{\{x\geq0\}}\mathbb{P}(\gamma_t\in \dr r, Z_t\in \dr x)\dr t\right).
\]

\noindent
Recall that $e = e(q)$ is an exponential random variable of parameter $q >0$ independent of everything else. Then for every $\lambda,\mu\geq0$, we have
 \[
  \mathbb{E}\left[e^{-\lambda\xi_{g_e} - \mu(\xi_{g_e} - \Bar{X}_{g_e})}\right] = \frac{\kappa(0,q,0)}{\kappa(0,q,\lambda)} \frac{\kappa(0,q,0)}{\kappa(0,q,\mu)}.
 \]
\noindent
In particular, $\xi_{g_e}$ and $\xi_{g_e} - \Bar{X}_{g_e}$ are independent and have the same law.
\end{lemma}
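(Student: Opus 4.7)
The plan is to treat the lemma as an application of Wiener-Hopf theory for the bivariate Lévy process $(\gamma_t, Z_t) = (\gamma_t, \Bar{X}_{\gamma_t})_{t \geq 0}$, following the derivation of \cite{persistence_bbt}. First, I would verify that $(\gamma, Z)$ is genuinely a bivariate Lévy process. The subordinator $\gamma$ is such by definition as the right-continuous inverse of the local time of $\Bar{V}$ at zero. By Itô's excursion theory, the increment $Z_{t+s} - Z_t$ decomposes as $\sum_\varepsilon \int_0^{\ell(\varepsilon)}\varepsilon_u\dr u$ over the excursions of $\Bar{V}$ falling in the local-time interval $(t,t+s]$; since these excursions form a Poisson point process under the excursion measure $n$, the increments of $Z$ are independent and stationary in the local-time clock. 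The distribution of $Z$ is also symmetric because $\mathrm{F}$ is odd (Assumption \ref{assump_fournier}), so positive and negative excursions of $\Bar{V}$ contribute jumps of opposing but equidistributed laws.

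For the Fristedt formula, I would adapt the classical derivation of \cite[Chapter VI]{bertoin1996levy} to the bivariate setting. The ascending ladder set $\{t \geq 0 : Z_t = S_t\}$ is regenerative, and augmenting its local-time inverse with the ambient clock $\gamma$ yields the trivariate subordinator $(\sigma_t, \theta_t, H_t)$ with Laplace exponent $\kappa$. The Pecherskii-Rogozin identity applied to the bivariate process $(\gamma, Z)$ killed at rate $\alpha$ and weighted by $e^{-\beta r - \delta x}$ produces a multiplicative factorization into ascending and descending factors; by the symmetry of $Z$ these coincide up to a constant, and Frullani's identity then converts this factorization into the claimed multiplicative representation, with the indicator $\bm{1}_{\{x\geq 0\}}$ singling out the ascending side.

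For the Wiener-Hopf identity, the key observation is that $\Bar{X}$ is monotonic on every excursion of $\Bar{V}$ (as the integral of a function of constant sign on such intervals), so $\xi_{g_e} = \sup_{u \leq L_e} Z_u = S_{L_e}$ and $\Bar{X}_{g_e} = Z_{L_e-}$, where $L$ is the local time of $\Bar{V}$ at zero. I would then apply the Wiener-Hopf factorization at the exponential time $e(q)$ in the original clock, which translates via $\gamma$ into a killing rate $q$ placed in the second slot of $\kappa$. The factorization yields
\[
\mathbb{E}\Big[e^{-\lambda S_{L_e} - \mu(S_{L_e} - Z_{L_e})}\Big] = \frac{\kappa(0,q,0)}{\kappa(0,q,\lambda)}\cdot\frac{\kappa(0,q,0)}{\kappa(0,q,\mu)},
\]
using the symmetry of $Z$ once more to identify the descending ladder exponent with $\kappa$. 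Since the right-hand side factors as a function of $\lambda$ alone times a function of $\mu$ alone, the random variables $\xi_{g_e}$ and $\xi_{g_e} - \Bar{X}_{g_e}$ are independent; setting $\mu = 0$ or $\lambda = 0$ then shows they share the same marginal law.

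The main obstacle I anticipate is the bookkeeping required to extend Fristedt's formula to three parameters. The one-dimensional proof proceeds by computing the Laplace transform of $S_{e(q)}$ in two ways -- directly via the killed distribution, and through the ladder representation -- and then solving a functional equation to isolate $\kappa$. The bivariate version requires carrying the extra $\beta r$ factor throughout without introducing any new conceptual difficulty, but demanding notational care. A secondary subtlety is the identification of ascending and descending ladder exponents, which uses time-reversal combined with the symmetry of $(\Bar{X}_t)_{t\geq0}$ granted by Assumption \ref{assump_fournier}.
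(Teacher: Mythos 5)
The paper does not actually prove this lemma: it is imported verbatim from \cite[Proposition 4.4 and Corollary 4.6]{persistence_bbt}, the only proof content supplied here being the remark that the symmetry of $(Z_t)_{t\geq0}$ forces the descending ladder exponent $\hat{\kappa}$ to coincide with $\kappa$, which is what collapses the general Wiener--Hopf product into the symmetric form displayed. Your sketch reconstructs exactly the route of that cited source -- excursion theory to realize $(\gamma_t,Z_t)_{t\geq0}$ as a bivariate L\'evy process, the trivariate ladder subordinator $(\sigma_t,\theta_t,H_t)_{t\geq0}$, a Fristedt-type formula for its exponent, and a factorization evaluated at an exponential time of the ambient clock -- so in terms of approach you are aligned with the proof being referenced. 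Three caveats. First, the two steps you defer as ``bookkeeping'' are precisely the substance being imported: in particular, $e(q)$ is exponential in the real clock but $L_{e}$ is \emph{not} an exponential time in the local-time clock, so the textbook factorization of \cite[Chapter VI]{bertoin1996levy} at an independent exponential ladder time does not apply directly; justifying that $q$ lands in the second slot of $\kappa$ is the genuine content of \cite[Corollary 4.6]{persistence_bbt}, not a notational extension. Second, you invoke the symmetry of $Z$ already in deriving the Fristedt formula, where it is not needed (the formula with the indicator $\bm{1}_{\{x\geq0\}}$ is the general ascending-ladder identity); symmetry enters only to identify $\hat{\kappa}=\kappa$ in the factorization. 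Third, a small slip: $\xi_{g_e}=S_{L_e-}$ rather than $S_{L_e}$, since the excursion straddling $e$ must be excluded -- harmless, but it is the reason the source separates the pre-$g_e$ and post-$g_e$ pieces via \cite[Theorem 9]{salminen2007excursion}. None of these is a wrong turn; your proposal identifies the correct argument but, like the paper, ultimately leans on the companion work for the two computations that make it rigorous.
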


\noindent
Note that, when applying \cite[Corollary 4.6]{persistence_bbt}, we used the fact that $(Z_t)_{t\geq0}$ is symmetric so that if $(\hat{Z}_t)_{t\geq0} = (-Z_t)_{t\geq0}$ is the dual process, then the corresponding Laplace exponent $\hat{\kappa} = \kappa$. Let us finally introduce two last tools that are key to the proof of our result and that we will use several times. Let us first remind Frullani's identity which holds for every $b\in(0,1)$, see for instance \cite[page 73]{bertoin1996levy}:
\begin{equation}\label{frullani}
 \log b = \int_0^{\infty}\frac{e^{-x} - e^{-b x}}{x}\dr x.
\end{equation}

Let us also remind the following classical theorem, which can be obtained by a careful application of the classical Karamata's Tauberian theorem and the monotone density theorem, see for instance \cite[Theorem 1.7.1 page 37 and Theorem 1.7.2 page 39]{bgt89}.

\begin{theorem}\label{tt_dmt}
 Let $u:\mathbb{R}_+ \to \mathbb{R}_+$ be a monotone function. Let us denote by $\mathcal{L}u$ its Laplace transform i.e. for any $\lambda > 0$, $\mathcal{L}u(\lambda) = \int_0^{\infty}e^{-\lambda x}u(x) \dr x$. Let $\rho > -1$ and $\Gamma$ be the usual Gamma function, then the two following assertions are equivalent.
 \begin{enumerate}[label=(\roman*)]
  \item $u(x) \sim (\rho + 1)x^{\rho}$ as $x\to\infty$.
  \item $\lambda\mathcal{L}u(\lambda) \sim \Gamma(\rho +2)\lambda^{-\rho}$ as $\lambda \to0$.
 \end{enumerate}
\end{theorem}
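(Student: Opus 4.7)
The plan is to reduce everything to the classical Karamata Tauberian theorem for non-decreasing functions by introducing the antiderivative $U(x) = \int_0^x u(y)\dr y$, then to extract the density asymptotic via the monotone density theorem. Both directions rest crucially on the hypothesis $\rho > -1$, which guarantees that $U$ is regularly varying with strictly positive index $\rho+1$, placing us inside the domain of applicability of the standard Tauberian/Abelian machinery of \cite{bgt89}.

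For the direction (i) $\Rightarrow$ (ii), I would start from the pointwise asymptotic $u(x)\sim (\rho+1)x^\rho$ and integrate (using the standard integration lemma for regularly varying functions) to obtain $U(x)\sim x^{\rho+1}$ as $x\to\infty$. Since $u$ is monotone and non-negative, $U$ is non-decreasing with $U(0)=0$, and $\mathcal{L}u(\lambda) = \int_0^\infty e^{-\lambda x}\dr U(x)$ is precisely the Laplace--Stieltjes transform of $U$. The Abelian side of Karamata's Tauberian theorem \cite[Theorem 1.7.1]{bgt89}, with slowly varying factor identically equal to $1$, then yields $\mathcal{L}u(\lambda)\sim \Gamma(\rho+2)\lambda^{-(\rho+1)}$ as $\lambda\to 0$, which is exactly (ii) after multiplying by $\lambda$.

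For the converse (ii) $\Rightarrow$ (i), assuming $\mathcal{L}u(\lambda) \sim \Gamma(\rho+2)\lambda^{-(\rho+1)}$ as $\lambda\to 0$, I would first invoke the Tauberian side of the same \cite[Theorem 1.7.1]{bgt89} to deduce $U(x)\sim x^{\rho+1}$ as $x\to\infty$. The monotonicity of $u$ becomes essential in the final step: since $U$ is regularly varying with strictly positive index $\rho+1$ and $u$ is monotone, the monotone density theorem \cite[Theorem 1.7.2]{bgt89} applies and yields $u(x) \sim (\rho+1)U(x)/x \sim (\rho+1)x^{\rho}$.

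The only real care required is bookkeeping: one must track the $\Gamma$-factors produced by Karamata's theorem and the factor $\rho+1$ coming from the monotone density theorem, and verify that the hypotheses ($\rho+1>0$ for Karamata, monotonicity of $u$ for the density theorem) are indeed ensured by the assumption $\rho>-1$. No genuine difficulty arises beyond this, as the statement is a textbook composition of Abelian/Tauberian theorems with the monotone density theorem.
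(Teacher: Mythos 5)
Your proof is correct and follows exactly the route the paper has in mind: the paper states this result without a detailed proof, attributing it to ``a careful application of the classical Karamata's Tauberian theorem and the monotone density theorem'' \cite[Theorems 1.7.1 and 1.7.2]{bgt89}, which is precisely your argument via $U(x)=\int_0^x u(y)\dr y$, the Laplace--Stieltjes transform identity $\mathcal{L}u(\lambda)=\int_0^\infty e^{-\lambda x}\dr U(x)$, and the hypothesis $\rho+1>0$ together with monotonicity of $u$. The bookkeeping of the $\Gamma$-factor and the factor $\rho+1$ is also as intended.
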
 

\noindent
We have the following lemma.

\begin{lemma}\label{ddd}
Recall that $e = e(q)$ is an exponential random variable of parameter $q >0$ independent of everything else. There exists $k >0$ such that for any $x>0$, we have
\begin{equation}\label{equiv_proba}
    \mathbb{P}(\xi_{g_e}< x) \underset{q\to0}{\sim}\mathbb{P}(\xi_{d_e}< x)\underset{q\to0}{\sim} kq^{1/2}\mathcal{V}(x).
\end{equation}

\noindent
Moreover, there exists a constant $M>0$ such that for any $x>0$, for any $q\in(0,1)$,
\begin{equation}\label{ineq_proba_2}
    \mathbb{P}(\xi_{d_e}< x) \leq \mathbb{P}(\xi_{g_e}< x) \leq Mq^{1/2}\mathcal{V}(x).
\end{equation}
\end{lemma}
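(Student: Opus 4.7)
The plan is to combine the Laplace-transform identity from Lemma \ref{coro_pers} with Fristedt's formula and a symmetry argument, and then to invert the Laplace transforms to recover the distributional asymptotics. By Lemma \ref{lemma_ineq} we have $f(q)\mathbb{P}(\xi_{g_e}<x)\leq \mathbb{P}(\xi_{d_e}<x)\leq \mathbb{P}(\xi_{g_e}<x)$ with $f(q)\to 1$ as $q\to 0$, so both \eqref{equiv_proba} and the right-hand inequality in \eqref{ineq_proba_2} reduce to the corresponding statements for $\mathbb{P}(\xi_{g_e}<x)$. Setting $\mu=0$ in Lemma \ref{coro_pers} yields the core identity
\[
\mathbb{E}[e^{-\lambda\xi_{g_e}}] \;=\; \frac{\kappa(0,q,0)}{\kappa(0,q,\lambda)},
\]
so everything reduces to analyzing $\kappa(0,q,0)$ and $\kappa(0,q,\lambda)$ as $q\to 0$.

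The crucial step is an explicit formula for $\kappa(0,q,0)$. Since $\mathrm{F}$ is odd, the process $(\bar X_t,\bar V_t)_{t\geq 0}$ is symmetric: flipping the sign of $\bar V$ preserves its law, leaves the zero set of $\bar V$ (hence the inverse local time $\gamma$) unchanged, and flips the sign of $Z_t=\bar X_{\gamma_t}$. Because $Z_t$ has no atom at $0$, this yields the exact identity
\[
\mathbb{E}\bigl[e^{-q\gamma_t}\bm{1}_{\{Z_t\geq 0\}}\bigr] \;=\; \tfrac12\mathbb{E}[e^{-q\gamma_t}] \;=\; \tfrac12 e^{-t\Phi(q)}.
\]
Plugging this into Fristedt's formula of Lemma \ref{coro_pers} with $\alpha=\delta=0$ and applying Frullani's identity \eqref{frullani} (with $b=\Phi(q)\in(0,1)$ for $q\in(0,1)$), we obtain
\[
\kappa(0,q,0) \;=\; k\,\Phi(q)^{1/2}.
\]
Since $\Phi(q)\sim q$ as $q\to 0$ by the normalization $\mathbb{E}[\gamma_1]=1$, this gives $\kappa(0,q,0)\sim kq^{1/2}$; moreover, concavity of the subordinator exponent $\Phi$ together with $\Phi'(0{+})=1$ provides $\Phi(q)\leq q$ on $(0,1)$, and hence the uniform bound $\kappa(0,q,0)\leq kq^{1/2}$.

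For $\kappa(0,q,\lambda)$ at fixed $\lambda>0$, dominated convergence applied to the Fristedt integrand yields $\kappa(0,q,\lambda)\to\kappa(0,0,\lambda)$ as $q\to 0$; and since $e^{-q\gamma_t}\leq 1$, the Fristedt formula directly gives the monotonicity bound $\kappa(0,q,\lambda)\geq\kappa(0,0,\lambda)$ for every $q\geq 0$. Combining these facts,
\[
\mathbb{E}[e^{-\lambda\xi_{g_e}}] \;\underset{q\to 0}{\sim}\; \frac{kq^{1/2}}{\kappa(0,0,\lambda)} \;=\; kq^{1/2}\int_{[0,\infty)}e^{-\lambda y}\,d\mathcal{V}(y),
\]
i.e.\ the Laplace transforms of $q^{-1/2}\mathbb{P}(\xi_{g_e}\in\cdot)$ converge pointwise to the Laplace transform of $k\,d\mathcal{V}$. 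By the continuity theorem for Laplace transforms of Radon measures on $[0,\infty)$, these measures converge vaguely to $k\,d\mathcal{V}$, and since $\mathcal V$ is continuous on $(0,\infty)$ off a countable set we deduce $q^{-1/2}\mathbb{P}(\xi_{g_e}<x)\to k\mathcal V(x)$ at every continuity point of $\mathcal V$, which gives \eqref{equiv_proba}.

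For the uniform bound in \eqref{ineq_proba_2}, the two estimates above combine to $\mathbb{E}[e^{-\lambda\xi_{g_e}}]\leq kq^{1/2}/\kappa(0,0,\lambda)$ for all $q\in(0,1)$ and $\lambda>0$. Markov's inequality $\mathbb{P}(\xi_{g_e}<x)\leq e^{\lambda x}\mathbb{E}[e^{-\lambda\xi_{g_e}}]$ taken at $\lambda=1/x$ then yields $\mathbb{P}(\xi_{g_e}<x)\leq ek q^{1/2}\int_0^\infty e^{-y/x}d\mathcal V(y)$; the standard Karamata estimate $\int_0^\infty e^{-y/x}d\mathcal V(y)\leq C\mathcal V(x)$, valid uniformly in $x>0$ because $\mathcal V$ is regularly varying of index $1/2$ at infinity (itself a consequence of $\kappa(0,0,\lambda)\sim c\lambda^{1/2}$ as $\lambda\to 0$) and continuous and bounded ratio on compact subsets of $(0,\infty)$, yields the claimed bound. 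The main obstacles are the rigorous verification that $\mathbb{P}(Z_t=0)=0$ for each $t>0$ (upon which the symmetry identity relies), the careful handling of the singular behavior of the Fristedt integrand near $t=0$ when invoking Frullani and dominated convergence, and the Karamata estimate being genuinely uniform down to $x\to 0^+$.
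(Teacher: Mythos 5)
Your core computation coincides with the paper's: the reduction of $\xi_{d_e}$ to $\xi_{g_e}$ via Lemma \ref{lemma_ineq}, the Wiener--Hopf identity $\mathbb{E}[e^{-\lambda\xi_{g_e}}]=\kappa(0,q,0)/\kappa(0,q,\lambda)$ from Lemma \ref{coro_pers}, and the Fristedt/symmetry/Frullani argument giving $\kappa(0,q,0)=k\,\Phi(q)^{1/2}\sim kq^{1/2}$ (with $\Phi(q)\le q$ by concavity and the normalization $\mathbb{E}[\gamma_1]=1$) are exactly the steps of the paper. Where you diverge is in passing from Laplace-transform information back to the distribution function, and there the proposal has genuine gaps. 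First, your continuity-theorem argument only yields $q^{-1/2}\mathbb{P}(\xi_{g_e}<x)\to k\mathcal V(x)$ at continuity points of $\mathcal V$, whereas \eqref{equiv_proba} is asserted for every $x>0$: you neither prove that $\mathcal V$ (the renewal function of the ladder-height subordinator $H$) has no atoms on $(0,\infty)$ --- this requires ruling out, e.g., a compound-Poisson ladder height with atomic jump law --- nor argue that continuity points suffice for the later use in Lemma \ref{equi_dt_gt}. Second, the uniform bound \eqref{ineq_proba_2} in your route rests on $\int_{[0,\infty)}e^{-y/x}\,\dr\mathcal V(y)\le C\,\mathcal V(x)$ uniformly in $x>0$; your justification (regular variation of $\mathcal V$ at infinity --- of index $\alpha/2$, not $1/2$, incidentally --- plus bounded ratio on compacts) covers large $x$ and compacts away from $0$ but says nothing as $x\to0^+$, where $\mathcal V(x)$ may vanish; you flag this yourself as unresolved. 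The estimate is in fact true, but via subadditivity of renewal functions, $\mathcal V(a+b)\le\mathcal V(a)+\mathcal V(b)$, which gives $\int_{[0,\infty)}e^{-y/x}\,\dr\mathcal V(y)\le\sum_{n\ge0}e^{-n}\big[\mathcal V((n+1)x)-\mathcal V(nx)\big]\le(1-e^{-1})^{-1}\mathcal V(x)$; that argument is not in your proposal.

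The paper sidesteps both issues with a more exact device: it introduces $\mathcal V_q(x)=\mathbb{E}\big[\int_0^\infty e^{-q\theta_t}\bm 1_{\{H_t\le x\}}\,\dr t\big]$, notes that $1/\kappa(0,q,\lambda)$ is its Laplace transform, and deduces by injectivity the pointwise identity $\mathbb{P}(\xi_{g_e}<x)=\kappa(0,q,0)\,\mathcal V_q(x)$ for all $x,q>0$. Since $\mathcal V_q(x)\uparrow\mathcal V(x)$ as $q\to0$ and $\mathcal V_q\le\mathcal V$, this one identity delivers the asymptotics \eqref{equiv_proba} at every $x>0$ and the uniform bound \eqref{ineq_proba_2} simultaneously, with no tail inversion, no continuity of $\mathcal V$, and no renewal estimate. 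Either adopt that identity, or close the two gaps above (diffuseness of $\dr\mathcal V$ on $(0,\infty)$, and the subadditivity bound); as written, your inversion step does not yet prove the lemma as stated.
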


\begin{proof}
By Lemma \ref{coro_pers}, for any $q>0$, we have
\[
\lambda\int_0^{\infty}e^{-\lambda x}\mathbb{P}(\xi_{g_e} < x) \dr x = \mathbb{E}\left[e^{-\lambda \xi_{g_e}}\right] = \frac{\kappa(0,q,0)}{\kappa(0,q,\lambda)}.
\]

\noindent
We introduce for any $q>0$ the function $\mathcal{V}_q$ defined on $[0,\infty)$ by
\[
\mathcal{V}_q(x) = \mathbb{E}\left[\int_0^\infty e^{-q\theta_t}\bm{1}_{\{H_t \leq x\}}\dr t\right],
\]

\noindent
which is such that
\[
\lambda\int_0^{\infty}e^{-\lambda x}\mathcal{V}_q(x) \dr x = \mathbb{E}\left[\int_0^{\infty}e^{-q\theta_t - \lambda H_t}\dr t\right] = \int_0^{\infty}e^{-t\kappa(0,q,\lambda)}\dr t = \frac{1}{\kappa(0,q,\lambda)}.
\]

\noindent
Hence, by injectivity of the Laplace transform, we get $\mathbb{P}(\xi_{g_e} < x) = \kappa(0,q,0)\mathcal{V}_q(x)$ for any $x,q>0$. For any $x>0$, $\mathcal{V}_q(x)$ increases to $\mathcal{V}(x)$ as $q\to0$. Hence, by Lemma \ref{lemma_ineq}, to show \eqref{equiv_proba} and \eqref{ineq_proba_2}, it is enough to show that $\kappa(0,q,0)\sim kq^{1/2}$ as $q\to 0$ for some $k>0$.

\medskip
By the Fristedt formula from Lemma \ref{coro_pers} and by symmetry of $(\Bar{V}_t)_{t\geq0}$, and thus of $(Z_t)_{t\geq0}$ (observe $(Z_t)_{t\geq0}$ and $(-Z_t)_{t\geq0}$ share the same $(\gamma_t)_{t\geq0}$), we also have
\[
 \kappa(0,q,0) = k\exp\left(\int_0^{\infty}\int_{[0,\infty)\times\mathbb{R}}\frac{e^{-t} - e^{-q r}}{t}\bm{1}_{\{x\leq0\}}\mathbb{P}(\gamma_t\in \dr r, Z_t\in \dr x)\dr t\right).
\]

\noindent
Then we can write
\begin{align*}
 \log[\kappa(0,q,0)]^2 = & 2\log k + \int_0^{\infty}\int_{[0,\infty)\times\mathbb{R}}\frac{e^{-t} - e^{- qr}}{t}\mathbb{P}(\gamma_t\in \dr r, Z_t\in \dr x)\dr t \\
 = & 2\log k + \int_0^{\infty}\int_{0}^{\infty}\frac{e^{-t} - e^{- qr}}{t}\mathbb{P}(\gamma_t\in \dr r)\dr t \\
 = & 2\log k + \int_0^{\infty}\frac{e^{-t} - e^{- \Phi(q)t}}{t} \dr t \\
 = & 2\log k + \log\Phi(q)
\end{align*}

\noindent
by Frullani's formula \eqref{frullani}. Therefore $\kappa(0,q,0) = k (\Phi(q))^{1/2} \sim k q^{1/2}$ as $q\to0$.
\end{proof}

We need to show one last result, after which we will able to prove Lemma \ref{equi_dt_gt}, which will close this subsection.

\begin{lemma}\label{renewal_equiv}
There exists a constant $v_{\alpha} > 0$ such that $\mathcal{V}(x) \sim v_{\alpha}x^{\alpha / 2}$ as $x\to\infty$.
\end{lemma}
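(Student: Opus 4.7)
The main tool is the Tauberian-style Theorem \ref{tt_dmt} applied to the non-decreasing function $\mathcal{V}$. As already observed in the proof of Lemma \ref{ddd}, the Laplace transform of $\mathcal{V}$ satisfies $\lambda \int_0^\infty e^{-\lambda x}\mathcal{V}(x)\,dx = 1/\kappa(0,0,\lambda)$. Applying Theorem \ref{tt_dmt} with $\rho = \alpha/2$, the desired equivalent $\mathcal{V}(x) \sim v_\alpha x^{\alpha/2}$ is reduced to showing
$$\kappa(0,0,\lambda) \sim C \lambda^{\alpha/2} \quad \text{as } \lambda \to 0, \text{ for some constant } C > 0.$$

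The first step is to derive a manageable expression for $\kappa(0,0,\lambda)$. Since $(\bar X_t)_{t\geq 0}$ is symmetric, so is $(Z_t)_{t\geq 0}$ and the dual ladder exponent $\hat\kappa$ coincides with $\kappa$. Multiplying the Fristedt formula from Lemma \ref{coro_pers} for $\kappa(0,0,\lambda)$ by its ``dual counterpart'' (integrating over $\{x \leq 0\}$ with $e^{\lambda x}$), one obtains, exactly as in the proof of Lemma \ref{ddd},
$$[\kappa(0,0,\lambda)]^2 = k^2 \exp\!\left(\int_0^\infty \frac{e^{-t} - \mathbb{E}[e^{-\lambda|Z_t|}]}{t}\, dt\right).$$
Then, using Frullani's identity \eqref{frullani} to write $\alpha\log\lambda = \int_0^\infty (e^{-t} - e^{-\lambda^\alpha t})/t\,dt$, and substituting $u = \lambda^\alpha t$ in the remainder, I would split
$$\int_0^\infty \frac{e^{-t} - \mathbb{E}[e^{-\lambda|Z_t|}]}{t}\,dt = \alpha\log\lambda + I(\lambda), \quad I(\lambda) := \int_0^\infty \frac{e^{-u} - \mathbb{E}[e^{-\lambda|Z_{u/\lambda^\alpha}|}]}{u}\,du.$$

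The crux of the proof is to show $I(\lambda) \to C_\infty \in \mathbb{R}$ as $\lambda \to 0$. Recall $Z_t = \bar X_{\gamma_t}$ and that $\gamma_t / t \to 1$ almost surely by the law of large numbers for subordinators (since $\mathbb{E}[\gamma_1] = 1$). Combined with Theorem \ref{conv_m2} applied to $\bar X$, this yields, for each fixed $u>0$, the convergence in law $\lambda |Z_{u/\lambda^\alpha}| = \lambda |\bar X_{\gamma_{u/\lambda^\alpha}}| \to |Z_u^\alpha|$ as $\lambda \to 0$, hence the pointwise convergence $\mathbb{E}[e^{-\lambda |Z_{u/\lambda^\alpha}|}] \to \mathbb{E}[e^{-|Z_u^\alpha|}]$. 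The main obstacle is justifying interchange of limit and integral in $I(\lambda)$ through a uniform-in-$\lambda$ integrable dominant. Near $u=0$, one needs a bound of the form $1 - \mathbb{E}[e^{-\lambda |Z_{u/\lambda^\alpha}|}] \leq C u^{1/\alpha}$, which I would obtain by transferring moments from $X_t$ to $Z_t$ using $\gamma_t/t \to 1$ and the Feller-representation estimates employed in Lemma \ref{lemma_power_x}, combined with Markov's inequality; this yields an integrable dominant of order $u^{1/\alpha - 1}$ near $0$. Near $u = \infty$, the exponential factor $e^{-u}$ is negligible and one exploits the fact that $|Z_1^\alpha|$ has a bounded density at zero, so that $\mathbb{E}[e^{-s|Z_1^\alpha|}] = O(1/s)$ as $s \to \infty$; a uniform version of this estimate for $\lambda|Z_{u/\lambda^\alpha}|$ (using the same moment-transfer argument and the scaling limit) provides a tail dominant of order $u^{-1 - 1/\alpha}$, which is integrable since $1/\alpha > 1/2$.

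Once the domination is in place, dominated convergence yields $\lim_{\lambda \to 0} I(\lambda) = \int_0^\infty (e^{-u} - \mathbb{E}[e^{-|Z_u^\alpha|}])/u\,du =: C_\infty \in \mathbb{R}$. Consequently $[\kappa(0,0,\lambda)]^2 \sim k^2 e^{C_\infty} \lambda^\alpha$, hence $\kappa(0,0,\lambda) \sim k e^{C_\infty/2} \lambda^{\alpha/2}$, and Theorem \ref{tt_dmt} concludes with an explicit positive constant $v_\alpha = (\alpha/2 + 1)/(k e^{C_\infty/2} \Gamma(\alpha/2+1))$. The hard part will be the uniform domination described above, since the paper provides no off-the-shelf uniform two-sided tail bounds on $|Z_t| = |\bar X_{\gamma_t}|$; these must be assembled by hand from the scale-function estimates underlying Lemma \ref{lemma_power_x} and the $\bm{\mathrm{M}}_1$-scaling limit of Theorem \ref{conv_m2}.
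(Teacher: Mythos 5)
Your reduction is exactly the paper's: you pass to the Laplace transform identity $\lambda\int_0^\infty e^{-\lambda x}\mathcal{V}(x)\,\dr x = 1/\kappa(0,0,\lambda)$, invoke Theorem \ref{tt_dmt}, and then use the Fristedt formula, the symmetry of $(Z_t)_{t\geq0}$ and Frullani's identity to peel off the factor $\lambda^{\alpha}$ from $[\kappa(0,0,\lambda)]^2$; the paper does the same algebra, only packaged as the scaling identity $\kappa(0,0,\delta)=\delta^{\alpha/2}\kappa^{\delta^\alpha}(0,0,1)$ for the rescaled exponent $\kappa^\epsilon$. The difference is the last step: the paper proves the convergence of the remaining exponent by citing \cite[Proposition B.2]{persistence_bbt}, which is tailored to the convergence \eqref{zzz}, whereas you propose to prove the analogous statement $I(\lambda)\to C_\infty$ by hand via dominated convergence. (Incidentally, your final constant should read $v_\alpha = 1/(k e^{C_\infty/2}\Gamma(\alpha/2+1))$, i.e.\ $(\rho+1)/\Gamma(\rho+2)$, not $(\rho+1)/\Gamma(\rho+1)$; harmless since only positivity is claimed.)

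The genuine gap is precisely the domination you defer to "assembling by hand". Pointwise convergence of $\mathbb{E}[e^{-\lambda|Z_{u/\lambda^\alpha}|}]$ is indeed available (it is Proposition \ref{conv_levy_process} at a fixed time), but neither of your two dominants follows from the ingredients you cite. For small $u$ you need a uniform-in-$t$ moment bound of the type $\mathbb{E}[|Z_t|^p]\leq Ct^{p/\alpha}$ for some $p<\alpha\wedge 1$ (note that for $\alpha\leq1$ one has $\mathbb{E}|Z_t|=\infty$, so the literal bound $1-\mathbb{E}[e^{-\lambda|Z_{u/\lambda^\alpha}|}]\leq Cu^{1/\alpha}$ via Markov on first moments is not attainable); for large $u$ you need a uniform anti-concentration estimate of the type $\mathbb{P}(|Z_t|\leq r\,t^{1/\alpha})\leq Cr$, valid for all large $t$, in order to get $\mathbb{E}[e^{-\lambda|Z_{u/\lambda^\alpha}|}]\lesssim u^{-\theta}$ uniformly in $\lambda$ — convergence in law to a stable law with bounded density does not imply such a uniform small-ball bound, and the almost sure convergence $\gamma_t/t\to1$ gives no quantitative control either. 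Both estimates ultimately require tail (and mass-near-zero) control of the Lévy measure of $Z$, i.e.\ of the law of the excursion areas $\int_0^{\ell(\varepsilon)}\varepsilon_s\,\dr s$ under the excursion measure $n$; Lemma \ref{lemma_power_x} only controls the single random variable $X_{T_0}$ and does not yield this. In other words, the step you sketch is exactly the content that the paper outsources to \cite{persistence_bbt}, and as written your proposal does not supply it: either cite that result, as the paper does, or the uniform moment and anti-concentration bounds for $(Z_t)_{t\geq0}$ must actually be proved, which is a substantial piece of work rather than a routine application of Markov's inequality.
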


\begin{proof}
Let us first remark that we have for any $\lambda >0$, 
\[
\lambda\int_0^{\infty}e^{-\lambda x}\mathcal{V}(x) \dr y = \mathbb{E}\left[\int_0^{\infty}e^{- \lambda H_t}\dr t\right] = \int_0^{\infty}e^{-t\kappa(0,0,\lambda)}\dr t = \frac{1}{\kappa(0,0,\lambda)}.
\]

\noindent 
Then, by Theorem \ref{tt_dmt}, since $\mathcal{V}$ is non-decreasing and $\alpha / 2 > -1$, it is enough to show that there exists a constant $c_\alpha > 0$ such that $\kappa(0,0,\lambda)  \sim c_{\alpha}\lambda^{\alpha / 2}$ as $\lambda\to0$.

\medskip
To do so, we use the convergence in law of the rescaled Lévy process $(\epsilon^{1/\alpha}Z_{t/\epsilon})_{t\geq0}$ to the symmetric stable process $(Z_t^\alpha)_{t\geq0}$, see Proposition \ref{conv_levy_process} below. Moreover, since $t^{-1}\gamma_t \to 1$ as $t\to\infty$, we get
\begin{equation}\label{zzz}
 (\epsilon \gamma_{t/\epsilon}, \epsilon^{1/\alpha}Z_{t/\epsilon})_{t\geq0} \longrightarrow (t, Z_t^\alpha)_{t\geq0} \qquad \hbox{as $\epsilon\to0$},
\end{equation}

\noindent
in law for the usual $\bm{\mathrm{J}}_1$ Skorokhod topology. Indeed, since $(\gamma_t, Z_t)_{t\geq0}$ is Lévy, only the convergence in law of $(t^{-1}\gamma_t, t^{-1/\alpha}Z_t)$ to $(1, Z_1^\alpha)$ as $t\to\infty$ is required, see Jacod-Shiryaev \cite[Chapter VII, Corollary 3.6]{jacod2013limit}, and the convergence follows from Slutsky's lemma.

\medskip
Let us set, for every $\alpha,\beta,\lambda \geq0$,
\[
 \kappa^{\epsilon}(\alpha, \beta, \lambda) = k \exp\left(\int_0^{\infty}\int_{[0,\infty)\times\mathbb{R}}\frac{e^{-t} - e^{-\alpha t- \beta r - \lambda x}}{t}\bm{1}_{\{x\geq0\}}\mathbb{P}(\epsilon\gamma_{t/\epsilon}\in \dr r, \epsilon^{1/\alpha}Z_{t/\epsilon}\in \dr x)\dr t\right),
\]

\noindent
where $k>0$ is the constant from Lemma \ref{coro_pers}. We have
\begin{align*}
 \log\kappa^\epsilon(0, 0, \lambda) = & \log k +  \int_0^{\infty}t^{-1}\mathbb{E}\left[(e^{-t} - e^{-\lambda\epsilon^{1/\alpha} Z_{t/\epsilon}})\bm{1}_{\{Z_{t/\epsilon} \geq0\}}\right] \dr t\\
 = & \log k + \int_0^{\infty}t^{-1}\mathbb{E}\left[(e^{-t} - e^{-\lambda\epsilon^{1/\alpha} Z_{t}})\bm{1}_{\{Z_{t} \geq0\}}\right] \dr t \\
  & + \int_0^{\infty}t^{-1}(e^{-\epsilon t} - e^{-t})\mathbb{P}(Z_t \geq 0) \dr t \\
 = & \log \kappa(0, 0, \lambda \epsilon^{1/\alpha}) - \frac{1}{2}\log\epsilon.
\end{align*}

\noindent
In the third equality, we used Frullani's identity and the fact that $\mathbb{P}(Z_t \geq 0) = 1/2$. Therefore, we have $\kappa^\epsilon(0, 0, \lambda) = \epsilon^{-1/2}\kappa(0, 0, \lambda \epsilon^{1/\alpha})$. Then it is shown in \cite[Proposition B.2 in Appendix B]{persistence_bbt} that the convergence \eqref{zzz} entails that for any $\alpha,\beta,\lambda\geq0$,
\[
 \kappa^\epsilon(\alpha, \beta, \lambda)\underset{\epsilon\to0}{\longrightarrow}k \exp\left(\int_0^{\infty}\int_0^{\infty}\frac{e^{-t} - e^{-(\alpha +\beta)t - \lambda x}}{t}\mathbb{P}(Z_t^{\alpha}\in \dr x)\dr t\right)=:\Bar{\kappa}(\alpha, \beta, \lambda).
\]

\noindent
Now for $\delta > 0$, choosing $\lambda = 1$ and $\epsilon = \delta^{\alpha}$, we see that $\kappa(0,0,\delta) = \delta^{\alpha /2 }\kappa^{\delta^{\alpha}}(0,0,1)$, which is equivalent to $\delta^{\alpha / 2}\Bar{\kappa}(0,0,1)$, as desired.
\end{proof}

\begin{proof}[Proof of Lemma \ref{equi_dt_gt}]
Let $e = e(q)$ be an independent exponential random variable of parameter $q>0$. Since $t\mapsto\mathbb{P}(\xi_{g_t} < X_{T_0})$ and $t\mapsto\mathbb{P}(\xi_{d_t} < X_{T_0})$ are non-decreasing, since $-1/2 > -1$, and since
\[
 \mathbb{P}(\xi_{g_e} < X_{T_0}) = q\int_0^{\infty}e^{-qt}\mathbb{P}(\xi_{g_t} < X_{T_0})\dr t \quad \text{and}\quad\mathbb{P}(\xi_{d_e} < X_{T_0}) = q\int_0^{\infty}e^{-qt}\mathbb{P}(\xi_{d_t} < X_{T_0})\dr t,
\]

\noindent
by Theorem \ref{tt_dmt}, the result is equivalent to $\mathbb{P}(\xi_{g_e} < X_{T_0}) \sim \mathbb{P}(\xi_{d_e} < X_{T_0}) \sim \Bar{C}q^{1/2}$ as $q\to0$ for some $\Bar{C}>0$. Since $(\Bar{X}_t)_{t\geq0}$ is independent of $X_{T_0}$, we have
$$\mathbb{P}(\xi_{d_e} < X_{T_0}) = \int_0^{\infty}\mathbb{P}(\xi_{d_e} < x)\mathbb{P}(X_{T_0}\in \dr x).$$

\noindent
By \eqref{equiv_proba}, we know that for any $x>0$, $q^{-1/2}\mathbb{P}(\xi_{d_e} < x) \to k\mathcal{V}(x)$ as $q\to 0$, and by \eqref{ineq_proba_2}, we have $q^{-1/2}\mathbb{P}(\xi_{d_e} < x) \leq M\mathcal{V}(x)$. Finally, by Lemmas \ref{lemma_power_x} and \ref{renewal_equiv}, $\mathbb{E}[\mathcal{V}(X_{T_0})] < \infty$ and thus we can apply the dominated convergence theorem, which tells us that
$$\lim_{q\to0}q^{-1/2}\mathbb{P}(\xi_{d_e} < X_{T_0}) = k\mathbb{E}[\mathcal{V}(X_{T_0})].$$
\noindent
The same proof holds for $\mathbb{P}(\xi_{g_e} < X_{T_0})$.
\end{proof}

\section{$\mathrm{M}_1$-convergence of the free process}\label{append_proof_m1}

In this section, we give the proof of Theorem \ref{conv_m2}. Let $(X_t, V_t)_{t\geq0}$ be the solution to \eqref{eq_non_reflected} starting at $(0, v_0)$ where $v_0\in\mathbb{R}$. Let $(Z_t^\alpha)_{t\geq0}$ be the symmetric stable process with $\alpha = (\beta + 1) / 3$ as in the statement. As the convergence in the finite dimensional distribution sense was already proved in \cite{fournier2018one}, we only need to show the tightness for the $\bm{\mathrm{M}}_1$-topology. By Theorem \ref{thm_conv_m1}, we need that for any $T>0$, for any $\eta>0$,
\begin{equation}\label{tightness_criteria}
 \lim_{\delta\to0}\limsup_{\epsilon\to0}\mathbb{P}(w(\epsilon^{1/\alpha}X_{t/\epsilon}, T, \delta) > \eta) = 0,
\end{equation}

The idea of the proof is as follows: we first show that the convergence holds for $(X_t, V_t)_{t\geq0}$ starting at $(0, 0)$. To do so, we will show that some L\'evy process $(Z_t)_{t\geq0}$ associated to $(X_t)_{t\geq0}$ converges in the $\bm{\mathrm{J}}_1$-topology to a symmetric stable process (which is immediately implied by the finite dimensional distribution convergence). Therefore, $(Z_t)_{t\geq0}$ converges also in the $\bm{\mathrm{M}}_1$-topology and \eqref{tightness_criteria} is then satisfied by $(Z_t)_{t\geq0}$, which will imply that \eqref{tightness_criteria} is also satisfied by $(X_t)_{t\geq0}$. We will then extend the convergence to processes starting at $(0, v_0)$.

\subsection{Some preliminary results}

Before jumping in to the proof, we we recall some results that were used / proved in \cite{fournier2018one}. The first result can be found in Jeulin-Yor \cite{jeulin1981distributions} and Biane-Yor \cite{biane1987valeurs}, and represents symmetric stable processes using one Brownian motion.

\begin{theorem}[Biane-Yor]\label{thm_biane_yor}
Let $(W_t)_{t\geq0}$ be a Brownian motion and $(\tau_t)_{t\geq0}$ the inverse of its local time at $0$. Let $\alpha\in(0,2)$ and consider, for $\eta > 0$, the process
$$K_t^{\eta} = \int_0^t\mathrm{sgn}(W_s)|W_s|^{1/\alpha - 2}\bm{1}_{\{|W_s|>\eta\}}\dr s.$$

\noindent
Then the process $(K_t^{\eta})_{t\geq0}$ converges a.s. toward a process $(K_t)_{t\geq0}$ as $\eta \to 0$, uniformly on compact time intervals. Moreover the process $(S_t^\alpha)_{t\geq0} = (K_{\tau_t})_{t\geq0}$ is a symmetric stable process and for all $t\geq0$ and all $\xi\in\mathbb{R}$, $\mathbb{E}[\exp(i\xi S^{\alpha}_t)] = \exp(-\kappa_{\alpha}t|\xi|^{\alpha})$, where $\kappa_{\alpha} = \frac{2^{\alpha}\pi\alpha^{2\alpha}}{2\alpha\Gamma^2(\alpha)\sin(\pi\alpha/2)}$.
\end{theorem}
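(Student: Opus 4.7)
The plan splits into two parts: establishing the a.s.\ uniform-on-compacts limit $K_t$, and then identifying $(K_{\tau_t})_{t\geq 0}$ as a symmetric $\alpha$-stable Lévy process with the announced exponent.

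For the convergence, I would rewrite $K_t^\eta$ via the occupation time formula for Brownian local time $(L_t^x)_{x\in\mathbb{R},t\geq0}$, namely
\[
K_t^\eta = \int_{|x|>\eta}\mathrm{sgn}(x)|x|^{1/\alpha-2}L_t^x\,\dr x.
\]
Since the kernel is odd, $\int_{|x|>\eta}\mathrm{sgn}(x)|x|^{1/\alpha-2}\dr x = 0$, so $L_t^x$ may be replaced by $L_t^x-L_t^0$. By the joint Hölder continuity of local time (Barlow--Yor), for every $\gamma<1/2$ and $T>0$ there is an a.s.\ finite $C_{T,\gamma}$ with $\sup_{t\leq T}|L_t^x-L_t^0|\leq C_{T,\gamma}|x|^\gamma$. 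As $\alpha<2$ gives $3/2-1/\alpha<1/2$, I may fix $\gamma\in(3/2-1/\alpha,1/2)$; the integrand is then dominated near $0$ by $C_{T,\gamma}|x|^{1/\alpha-2+\gamma}$, which is integrable. Together with the a.s.\ compact support of $x\mapsto L_T^x$, dominated convergence delivers the a.s.\ uniform-in-$t\in[0,T]$ limit $K_t=\int_\mathbb{R}\mathrm{sgn}(x)|x|^{1/\alpha-2}(L_t^x-L_t^0)\,\dr x$.

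For the identification, I would appeal to Itô's excursion theory. The inverse local time $(\tau_t)$ is a $(1/2)$-stable subordinator whose jumps are in bijection (indexed by local time) with the excursions of $W$ away from $0$. Across the excursion $\epsilon$ corresponding to a jump $\Delta\tau_t=\ell(\epsilon)$, one has $K_{\tau_t}-K_{\tau_{t-}}=J(\epsilon)$, where $J(\epsilon):=\int_0^{\ell(\epsilon)}\mathrm{sgn}(\epsilon_s)|\epsilon_s|^{1/\alpha-2}\dr s$. As the excursion point process is Poisson with intensity $\dr t\otimes n$, with $n$ Itô's excursion measure, the process $(S_t^\alpha)=(K_{\tau_t})$ is a pure-jump Lévy process with Lévy measure $\nu=n\circ J^{-1}$. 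Brownian scaling yields $n\circ(c^{-1/2}\epsilon_{c\cdot})^{-1}=c^{-1/2}n$ and $J(c^{-1/2}\epsilon_{c\cdot})=c^{-1/(2\alpha)}J(\epsilon)$, whence $\nu(\lambda\,\cdot\,)=\lambda^{-\alpha}\nu(\cdot)$; combined with the symmetry $n\circ(-\epsilon)^{-1}=n$, this forces $\nu(\dr y)=c_\alpha|y|^{-1-\alpha}\dr y$ for some $c_\alpha>0$. A standard computation of $\int_\mathbb{R}(1-e^{i\xi y})\nu(\dr y)$ then produces a stable exponent of the form $\kappa_\alpha|\xi|^\alpha$.

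The main obstacle is the explicit determination of $c_\alpha$, and hence of $\kappa_\alpha$. This requires a concrete description of $n$, such as the Bismut/Williams decomposition of the excursion measure conditioned on its lifetime $\ell$ via a Brownian meander, together with the density $n(\ell\in\dr u)\propto u^{-3/2}\dr u$ coming from the stable-$(1/2)$ Lévy measure of $\tau$; one then evaluates the Fourier transform of $J$ under $n$ to extract $c_\alpha$. The precise value $\kappa_\alpha=\frac{2^\alpha\pi\alpha^{2\alpha}}{2\alpha\Gamma^2(\alpha)\sin(\pi\alpha/2)}$ reflects the chosen normalization of Brownian local time, and for this bookkeeping I would follow the Jeulin--Yor and Biane--Yor computations rather than reprove them from scratch.
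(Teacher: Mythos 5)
The paper does not prove this statement at all: it is imported verbatim as a classical result of Jeulin--Yor and Biane--Yor (see the citations right above the theorem in Section~\ref{append_proof_m1}), so there is no in-paper proof to compare against. Your sketch follows the standard route of those references (occupation-time formula plus H\"older continuity of local time for the convergence; excursion theory plus Brownian scaling for the stable identification), and the overall architecture is sound. Two points need repair, though. First, the H\"older window is miscomputed: integrability of $|x|^{1/\alpha-2+\gamma}$ near $0$ requires $\gamma>1-1/\alpha$, not $\gamma>3/2-1/\alpha$; since $3/2-1/\alpha\geq 1/2$ exactly when $\alpha\geq1$, the interval you write down is \emph{empty} for $\alpha\in[1,2)$, which is precisely the range used in this paper ($\alpha=(\beta+1)/3$ with $\beta\in(1,5)$ allows $\alpha>1$). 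The correct window $(\,(1-1/\alpha)\vee 0,\,1/2)$ is nonempty for all $\alpha\in(0,2)$, so this is an arithmetic slip rather than a structural failure. Second, the cancellation $\int_{|x|>\eta}\mathrm{sgn}(x)|x|^{1/\alpha-2}\,\dr x=0$ is only a symmetric principal value when $\alpha\leq1$ (each half diverges at infinity), so before replacing $L_t^x$ by $L_t^x-L_t^0$ you must first restrict the integral to a symmetric compact window $\{|x|<R\}$ containing the support of $x\mapsto L_T^x$; with that truncation the argument goes through and also yields uniformity in $t\leq T$. Finally, you explicitly defer the computation of the constant $\kappa_\alpha$ to the literature; since the sharp value of $\sigma_\alpha$ in \eqref{sigma} is inherited from exactly this constant, a self-contained proof would have to carry out the Bismut/Williams evaluation you mention, but deferring it is consistent with how the paper itself treats the theorem.
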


We now summarize the intermediate results that can be found in \cite[Lemmas 6 and 9]{fournier2018one}, enabling the authors to prove their main result, which is stated in \eqref{result_fournier}. We mention that the main tool used in the proofs is the theory of scale function and speed measure.

\begin{theorem}[Fournier-Tardif]\label{thm_fournier}
Let $(X_t, V_t)_{t\geq0}$ be a solution of \eqref{eq_non_reflected}, with $\beta \in(1,5)$ and starting at $(0,0)$. There exists a Brownian motion $(W_t)_{t\geq0}$ such that for any $\epsilon >0$, there exist a continuous process $(H_t^{\epsilon})_{t\geq0}$ and a continuous, increasing and bijective time-change $(A_t^{\epsilon})_{t\geq0}$ with inverse $(\rho_t^{\epsilon})_{t\geq0}$, adapted to the filtration generated by $(W_t)_{t\geq0}$, and having the following properties:
\begin{enumerate}[label=(\roman*)]
    \item For any $\epsilon > 0$, $(X_{t/\epsilon})_{t\geq0} \overset{d}{=} (H^{\epsilon}_{\rho_t^{\epsilon}})_{t\geq0}$.
    \item For every $t\geq0$, a.s., $\rho_t^{\epsilon}\underset{\epsilon\to0}{\longrightarrow}\tau_t$, where $(\tau_t)_{t\geq0}$ is the inverse of the local time at $0$ of $(W_t)_{t\geq0}$.
    \item Almost surely, for every $t\geq0$, $\sup_{[0,T]}\left|\epsilon^{1/\alpha}H_t^{\epsilon} - \theta K_t\right| \underset{\epsilon\to0}{\longrightarrow}0$, where $\alpha = (\beta +1)  /3$, where $\theta = (\beta  +1)^{1/\alpha - 2}c_\beta^{1/\alpha}$ and where $(K_t)_{t\geq0}$ is the process from Theorem \ref{thm_biane_yor}. 
\end{enumerate}
\end{theorem}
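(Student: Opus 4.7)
The plan is to combine the Feller time-change representation of the velocity diffusion, as used in the proof of Lemma~\ref{tail_t0}, with Brownian scaling, so that a single reference Brownian motion $W$ drives every $X^\epsilon$. Fix a Brownian motion $(W_t)_{t\geq0}$ and, for each $\epsilon>0$, set $\lambda_\epsilon := c_\beta/\epsilon$. By Brownian scaling, $B^\epsilon_u := \lambda_\epsilon W_{u/\lambda_\epsilon^2}$ is again a standard Brownian motion; apply the Feller representation of \eqref{feller_rep_x} with driving Brownian motion $B^\epsilon$ to build $V^\epsilon_t = \sca^{-1}(B^\epsilon_{\bar{\rho}^\epsilon_t})$ with $\bar{A}^\epsilon_t = \int_0^t [\psi(B^\epsilon_s)]^{-2}\dr s$, $\bar{\rho}^\epsilon = (\bar{A}^\epsilon)^{-1}$, and $X^\epsilon_t = \int_0^t V^\epsilon_s\dr s$. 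Since $B^\epsilon$ is a standard Brownian motion, $(X^\epsilon, V^\epsilon) \overset{d}{=} (X, V)$.

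The substitution $r = u/\lambda_\epsilon^2$ in the analogue of \eqref{feller_rep_x} yields
\[
X^\epsilon_{t/\epsilon} \;=\; \lambda_\epsilon^2 \int_0^{\bar{\rho}^\epsilon_{t/\epsilon}/\lambda_\epsilon^2} \phi(\lambda_\epsilon W_r)\,\dr r,\qquad \phi(v) = \sca^{-1}(v)/\psi^2(v),
\]
which prompts the definitions
\[
H^\epsilon_s := \lambda_\epsilon^2 \int_0^s \phi(\lambda_\epsilon W_r)\,\dr r,\quad A^\epsilon_s := \epsilon\lambda_\epsilon^2 \int_0^s [\psi(\lambda_\epsilon W_r)]^{-2}\,\dr r,\quad \rho^\epsilon := (A^\epsilon)^{-1},
\]
all continuous, adapted to the filtration of $W$, with $A^\epsilon$ strictly increasing and bijective on $\mathbb{R}_+$. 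A direct inversion yields $\rho^\epsilon_t = \bar{\rho}^\epsilon_{t/\epsilon}/\lambda_\epsilon^2$, so that $X^\epsilon_{t/\epsilon} = H^\epsilon_{\rho^\epsilon_t}$ and (i) holds. For (ii), the Brownian occupation time formula combined with the substitution $y = \lambda_\epsilon x$ gives
\[
A^\epsilon_s \;=\; \epsilon \lambda_\epsilon \int_{\mathbb{R}} L_s^{y/\lambda_\epsilon}(W)\,[\psi(y)]^{-2}\,\dr y.
\]
Since $\epsilon\lambda_\epsilon = c_\beta$, $\int_{\mathbb{R}}[\psi(y)]^{-2}\dr y = \int_{\mathbb{R}} m(u)\dr u = 1/c_\beta$, and $y\mapsto L_s^y(W)$ is a.s.\ continuous at $0$, dominated convergence gives $A^\epsilon_s \to L_s(W)$ a.s.\ for each $s$; inverting this pointwise limit of continuous strictly increasing functions yields $\rho^\epsilon_t \to \tau_t$ a.s.

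The crux is (iii). From \eqref{nnn} one reads $\phi(v) \sim (\beta+1)^{1/\alpha-2}\mathrm{sgn}(v)|v|^{1/\alpha-2}$ as $|v|\to\infty$, and the choice $\lambda_\epsilon = c_\beta/\epsilon$ is made precisely so that $(\epsilon\lambda_\epsilon)^{1/\alpha} = c_\beta^{1/\alpha}$ produces
\[
\epsilon^{1/\alpha}\lambda_\epsilon^2 \phi(\lambda_\epsilon v) \xrightarrow[\epsilon\to 0]{} \theta\,\mathrm{sgn}(v)|v|^{1/\alpha-2},\qquad \theta = (\beta+1)^{1/\alpha-2}c_\beta^{1/\alpha},
\]
uniformly on $\{|v|>\eta\}$ for each $\eta>0$. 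Following Lemma~9 of \cite{fournier2018one}, split $\epsilon^{1/\alpha}H^\epsilon_s$ along the indicator $\mathbf{1}_{\{|W_r|>\eta\}}$: the contribution from $\{|W_r|>\eta\}$ converges a.s., uniformly on $[0,T]$, to $\theta\int_0^s \mathrm{sgn}(W_r)|W_r|^{1/\alpha-2}\mathbf{1}_{\{|W_r|>\eta\}}\dr r$, which tends to $\theta K_s$ as $\eta\downarrow 0$ by Theorem~\ref{thm_biane_yor}. The complementary piece $\epsilon^{1/\alpha}\lambda_\epsilon^2\int_0^s \phi(\lambda_\epsilon W_r)\mathbf{1}_{\{|W_r|\leq\eta\}}\dr r$ must be shown to be negligible, uniformly in $\epsilon>0$, as $\eta\to 0$: using the occupation time formula, the local boundedness of $\phi$ near $0$, and a principal-value cancellation exploiting the oddness of $\phi$ (needed to handle the non-integrable behaviour at infinity in the regime $\alpha\leq 1$), one obtains an upper bound of the form $C(W,T)\eta^{\kappa}$ with $\kappa>0$, uniform in $\epsilon$. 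This remainder estimate near $0$, which adapts verbatim Lemma~9 of \cite{fournier2018one}, is the main obstacle of the proof, and completes (iii).
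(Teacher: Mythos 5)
Your proposal is correct and follows essentially the same route as the paper, which does not reprove this theorem but imports it from \cite[Lemmas 6 and 9]{fournier2018one}: the scale-function/speed-measure (Feller) representation, Brownian scaling with $\epsilon\lambda_\epsilon=c_\beta$ so that one Brownian motion drives all the $X^\epsilon$, the occupation-time formula for the convergence of the time change to the local time, and the splitting along $\{|W_r|>\eta\}$ with a local-time H\"older/oddness cancellation for the remainder. One small slip in your parenthetical: the cancellation via the oddness of $\phi$ is what rescues the regime $\alpha>1$, where the crude bound $\epsilon^{1/\alpha}\lambda_\epsilon\int|\phi|=c_\beta\,\epsilon^{1/\alpha-1}\int|\phi|$ blows up, rather than $\alpha\leq1$, where the non-integrability of $|\phi|$ at infinity is harmless since the crude bound gives $\epsilon^{1/\alpha}\lambda_\epsilon(\lambda_\epsilon\eta)^{1/\alpha-1}=c_\beta^{1/\alpha}\eta^{1/\alpha-1}\to0$ as $\eta\to0$ uniformly in $\epsilon$.
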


\noindent
Recall that $c_\beta$ is given in \eqref{sigma}. We now slightly improve their result, showing the convergence of past infimum and supremum.

\begin{proposition}\label{conv_inf_sup}
Grant Assumption \ref{assump_fournier} and let $(X_t, V_t)_{t\geq0}$ be a solution of \eqref{eq_non_reflected} with $\beta\in(1,5)$ and starting at $(0,0)$. Let $(Z_t^{\alpha})_{t\geq0}$ be a symmetric stable process with $\alpha = (\beta + 1) / 3$ and such that $\mathbb{E}[e^{i\xi Z_t^{\alpha}}] = \exp(-t\sigma_\alpha |\xi|^{\alpha})$. Then we have for every $0 \leq s < t$,
\[
\inf_{u\in[s,t]}\epsilon^{1/\alpha}X_{u/\epsilon} \overset{d}{\longrightarrow} \inf_{u\in[s,t]}Z_u^\alpha\quad \text{and} \quad \sup_{u\in[s,t]}\epsilon^{1/\alpha}X_{u/\epsilon} \overset{d}{\longrightarrow} \sup_{u\in[s,t]}Z_u^\alpha\quad\hbox{as $\epsilon\to0$}.
\]
\end{proposition}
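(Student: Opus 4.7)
The plan is to combine Theorems~\ref{thm_biane_yor} and~\ref{thm_fournier}. Fix $0 \le s < t$. By Theorem~\ref{thm_fournier}-\textit{(i)} and the fact that $u \mapsto \rho^\epsilon_u$ is continuous and strictly increasing, I would first rewrite
\[
\sup_{u \in [s,t]} \epsilon^{1/\alpha} X_{u/\epsilon} \overset{d}{=} \sup_{u \in [\rho^\epsilon_s,\, \rho^\epsilon_t]} \epsilon^{1/\alpha} H^\epsilon_u,
\]
and similarly for the infimum. Using the uniform convergence $\epsilon^{1/\alpha} H^\epsilon \to \theta K$ on compacts from Theorem~\ref{thm_fournier}-\textit{(iii)}, the almost sure limits $\rho^\epsilon_s \to \tau_s$ and $\rho^\epsilon_t \to \tau_t$ from Theorem~\ref{thm_fournier}-\textit{(ii)}, and the continuity of $K$, a standard triangle-inequality argument will yield
\[
\sup_{u \in [\rho^\epsilon_s,\, \rho^\epsilon_t]} \epsilon^{1/\alpha} H^\epsilon_u \overset{a.s.}{\longrightarrow} \theta \sup_{u \in [\tau_s, \tau_t]} K_u \qquad \text{as } \epsilon \to 0,
\]
and likewise for the infimum.

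The hard part will be the time-change identity
\[
\sup_{u \in [\tau_s, \tau_t]} K_u \;=\; \sup_{u \in [s,t]} K_{\tau_u} \;=\; \sup_{u \in [s,t]} S^\alpha_u,
\]
and its analogue for the infimum. The key observation is that $K$ is strictly monotonic on each excursion interval of $W$: on a positive excursion $(g,d)$ of $W$ (i.e. $W > 0$ on $(g,d)$ with $W_g = W_d = 0$), $K$ is strictly increasing on $[g,d]$, while on a negative excursion it is strictly decreasing. Since $\tau$ is constant outside its jump times, and each jump corresponds to an excursion interval $(g,d)$ of $W$ with $\tau_{u-} = g$ and $\tau_u = d$ for $u$ the local-time level of the excursion, the range $\{K_v : v \in [\tau_s, \tau_t]\}$ is precisely the union, over $u \in [s,t]$, of the closed intervals with endpoints $K_{\tau_{u-}}$ and $K_{\tau_u}$. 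Taking suprema (resp. infima) of these endpoints over $u \in [s,t]$ then recovers the supremum (resp. infimum) of the càdlàg process $S^\alpha = K \circ \tau$ on $[s,t]$, using that $\sup_{u \in [s,t]} \max(f(u), f(u-)) = \sup_{u \in [s,t]} f(u)$ for any càdlàg $f$.

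Finally, I would identify $\theta S^\alpha$ with $Z^\alpha$ in law. Since $\mathbb{E}[\exp(i\xi \theta S^\alpha_t)] = \exp(-\kappa_\alpha \theta^\alpha t |\xi|^\alpha)$, a direct computation using $\beta + 1 = 3\alpha$, the identity $\theta^\alpha = (3\alpha)^{1-2\alpha} c_\beta$ from Theorem~\ref{thm_fournier}-\textit{(iii)}, and the value of $\kappa_\alpha$ from Theorem~\ref{thm_biane_yor} gives $\kappa_\alpha \theta^\alpha = \sigma_\alpha$, so that the finite-dimensional distributions of $\theta S^\alpha$ and $Z^\alpha$ coincide. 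Combining the three steps yields the desired convergences in distribution for both the supremum and the infimum as $\epsilon \to 0$.
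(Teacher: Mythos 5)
Your proof is correct and follows essentially the same route as the paper's: reduce via Theorem \ref{thm_fournier} to the representation $(H^{\epsilon}_{\rho^{\epsilon}_{\cdot}})$, commute the supremum/infimum with the time change using monotonicity of $K$ on each excursion interval of $W$, and conclude by the uniform convergence of $\epsilon^{1/\alpha}H^{\epsilon}$ together with $\rho^{\epsilon}_s\to\tau_s$, $\rho^{\epsilon}_t\to\tau_t$ and the continuity of $K$ (the constant identification $\kappa_{\alpha}\theta^{\alpha}=\sigma_{\alpha}$, which you make explicit, is left implicit in the paper). The only point the paper treats more carefully is the time-change identity when $\alpha\in[1,2)$, where $K$ is defined only as the limit of the truncated processes $K^{\eta}$ of Theorem \ref{thm_biane_yor}, so your ``key observation'' that $K$ is monotone on each excursion interval requires the short justification that inside an excursion $|W|$ is bounded below on compact subintervals (so the truncation is inactive there), or else the $K^{\eta}$-approximation argument the paper uses.
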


\begin{proof}
By item \textit{(i)} of Theorem \ref{thm_fournier}, it is enough to show that for any $0 \leq s < t$, a.s., we have
\[
  \inf_{u\in[s,t]}\epsilon^{1/\alpha}H_{\rho_u^{\epsilon}}^{\epsilon}\underset{\epsilon\to0}{\longrightarrow}\inf_{u\in[s,t]}\theta K_{\tau_u} \quad \text{and} \quad \sup_{u\in[s,t]}\epsilon^{1/\alpha}H_{\rho_u^{\epsilon}}^{\epsilon}\underset{\epsilon\to0}{\longrightarrow}\sup_{u\in[s,t]}\theta K_{\tau_u}.
\]
\noindent
We will only show the result for the infimum as the proof for the supremum is identical.

\bigskip\noindent
\textit{Step 1 }: We first show that a.s., for any $s<t$, $\inf_{u\in[s,t]} K_{\tau_u} = \inf_{u\in[\tau_s,\tau_t]} K_u$, which is not straightforward since $t\mapsto\tau_t$ is discontinuous. We will first treat the case $\alpha\in(0,1)$. Observe that in this case, $\int_0^t|W_s|^{1/\alpha - 2}\dr s < \infty$ since $1/\alpha - 2 > -1$ and thus we have
$$K_{\tau_t} = \int_0^{\tau_t}\text{sgn}(W_s)|W_s|^{1/\alpha - 2}\dr s = \sum_{r\leq t} \int_{\tau_{r-}}^{\tau_r}\text{sgn}(W_s)|W_s|^{1/\alpha - 2}\dr s,$$

\noindent
which has finite variations and no drift part. For every $u\geq0$, $(W_t)_{t\geq0}$ is of constant sign on the time-interval $[\tau_{u-}, \tau_u]$ and consequently $t\mapsto K_t$ is monotone on every such interval. Hence, the infimum is necessarily reached at the extremities i.e. $\inf_{r\in[\tau_{u-}, \tau_u]}K_r = \min\{K_{\tau_{u-}}, K_{\tau_u}\}$.

\medskip
If $\alpha\in[1,2)$, we approximate $(K_t)_{t\geq0}$ by the processes $(K_t^{\eta})_{t\geq0}$, from Theorem \ref{thm_biane_yor}. Similarly, we have for every $t\geq0$,  $K_{\tau_t}^{\eta} = \sum_{s\leq t} \int_{\tau_{s-}}^{\tau_s}\text{sgn}(W_u)|W_u|^{1/\alpha - 2}\bm{1}_{\{|W_u|>\eta\}}\dr u$, and thus, by the previous reasoning, we have $\inf_{u\in[s,t]} K_{\tau_u}^{\eta} = \inf_{u\in[\tau_s,\tau_t]} K_u^{\eta}$. We can write
\begin{align*}
 \left|\inf_{u\in[s,t]} K_{\tau_u} - \inf_{u\in[\tau_s,\tau_t]} K_u\right| & \leq \left|\inf_{u\in[s,t]} K_{\tau_u} - \inf_{u\in[s,t]} K_{\tau_u}^{\eta}\right| + \left|\inf_{u\in[\tau_s,\tau_t]} K_u^{\eta} - \inf_{u\in[\tau_s,\tau_t]} K_u\right| \\
  & \leq \sup_{u\in[s,t]}\left|K_{\tau_u}^{\eta} - K_{\tau_u}\right| + \sup_{u\in[\tau_s,\tau_t]}\left|K_{u}^{\eta} - K_{u}\right| \\
  & \leq 2 \sup_{u\in[0,\tau_t]}\left|K_{u}^{\eta} - K_{u}\right|.
\end{align*}

\noindent
By Theorem \ref{thm_biane_yor}, $(K_t^{\eta})_{t\geq0}$ converges a.s. to $(K_t)_{t\geq0}$ uniformly on compact time intervals, and thus the last term vanishes as $\eta \to 0$.

\bigskip\noindent
\textit{Step 2 }: For every $\epsilon > 0$, $t\mapsto\rho_t^{\epsilon}$ and $t\mapsto H_t^{\epsilon}$ are almost surely continuous. Therefore, a.s. for every $s < t$, $\inf_{u\in[s,t]}\epsilon^{1/\alpha}H_{\rho_u^{\epsilon}}^{\epsilon} = \inf_{u\in[\rho_s^{\epsilon},\rho_t^{\epsilon}]}\epsilon^{1/\alpha}H_{u}^{\epsilon}$. Hence we can write, by Step 1,
\begin{align*}
    \left|\inf_{u\in[s,t]}\epsilon^{1/\alpha}H_{\rho_u^{\epsilon}}^{\epsilon} - \inf_{u\in[s,t]}\theta K_{\tau_u}\right| & \leq \left|\inf_{u\in[\rho_s^{\epsilon},\rho_t^{\epsilon}]}\epsilon^{1/\alpha}H_{u}^{\epsilon} - \inf_{u\in[\rho_s^{\epsilon},\rho_t^{\epsilon}]}\theta K_u\right| + \left|\inf_{u\in[\rho_s^{\epsilon},\rho_t^{\epsilon}]}\theta K_u - \inf_{u\in[\tau_s,\tau_t]}\theta K_u\right|\\
     & \leq \sup_{u\in[0,T]}\left|\epsilon^{1/\alpha}H_{u}^{\epsilon} - \theta K_u\right| + \left|\inf_{u\in[\rho_s^{\epsilon},\rho_t^{\epsilon}]}\theta K_u - \inf_{u\in[\tau_s,\tau_t]}\theta K_u\right|,
\end{align*}

\noindent
where $T = \sup_{\epsilon\in(0,1)}\rho_t^{\epsilon}$ is a.s. finite by item \textit{(ii)} of Theorem \ref{thm_fournier}. Almost surely, the first term on the right-hand-side goes to $0$ thanks to Theorem \ref{thm_fournier}-\textit{(iii)}. By item \textit{(iii)}, we have for any $0 \leq s < t$, almost surely, $\rho_s^{\epsilon} \to \tau_s$ and $\rho_t^{\epsilon} \to \tau_t$ as $\epsilon\to0$. Since $(K_t)_{t\geq0}$ is continuous, the second term vanishes as $\epsilon\to0$, which completes the proof.
\end{proof}

\subsection{Convergence of the associated L\'evy process}

In this subsection, we consider a solution $(X_t, V_t)_{t\geq0}$ of \eqref{eq_non_reflected} starting at $(0, 0)$. The velocity process possesses a local time at $0$ that we will denote by $(L_t)_{t\geq0}$ and we will also denote by $(\gamma_t)_{t\geq0}$ its right-continuous inverse. The latter is a subordinator. The process $(V_t)_{t\geq0}$ is positive recurrent  which implies that $\mathbb{E}\left[\gamma_1\right]<\infty$ and we choose to normalize the local time so that $\mathbb{E}\left[\gamma_1\right] = 1$. The strong law of large number for subordinators entails that a.s. $t^{-1}\gamma_t \to 1$ as $t\to\infty$. This also implies the same result for $(L_t)_{t\geq0}$, and by Dini theorem, we get that a.s., for any $t\geq0$,
\begin{equation}\label{dini_local_time}
    \sup_{s\in[0, t]}\left|\epsilon L_{s/\epsilon} - s\right| \underset{\epsilon\to0}{\longrightarrow}0
\end{equation}

\noindent
Next we define $(Z_t)_{t\geq0} = (X_{\gamma_t})_{t\geq0}$ which is a pure jump Levy process with finite variations and should be seen the following way:
$$Z_t = \int_0^{\gamma_t}V_s \dr s = \sum_{s\leq t}\int_{\gamma_s -}^{\gamma_s}V_u \dr u.$$

\noindent
We establish an $\alpha$-stable central limit theorem for the Levy process $(Z_t)_{t\geq0}$, which seems more or less clear in the light of \eqref{result_fournier} and the strong law of large number for $(\gamma_t)_{t\geq0}$.

\begin{proposition}\label{conv_levy_process}
Let $(Z_t^{\alpha})_{t\geq0}$ be the stable process of Proposition \ref{conv_inf_sup}. Then we have
\[
 (\epsilon^{1/\alpha}Z_{t/\epsilon})_{t\geq0} \longrightarrow(Z_t^{\alpha})_{t\geq0} \qquad \hbox{as $\epsilon\to0$}
\]
\noindent
in law in the $\bm{\mathrm{J}}_1$-topology
\end{proposition}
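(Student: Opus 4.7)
The process $(Z_t)_{t\geq0}=(X_{\gamma_t})_{t\geq0}$ is a pure-jump Lévy process: since $(\gamma_t)_{t\geq0}$ is the inverse local time at $0$ of the strong Markov process $(V_t)_{t\geq0}$, It\^o's excursion theory ensures that for any $t,s\geq0$, the increment $Z_{t+s}-Z_t=\int_{\gamma_t}^{\gamma_{t+s}}V_u\,\dr u$ is independent of $\sigma(Z_r,r\leq t)$ and distributed as $Z_s$. For every $\epsilon>0$, $(\epsilon^{1/\alpha}Z_{t/\epsilon})_{t\geq0}$ is therefore again a Lévy process, and since the candidate limit $(Z_t^{\alpha})_{t\geq0}$ is Lévy as well, the classical criterion of Jacod--Shiryaev \cite[Chapter VII, Corollary 3.6]{jacod2013limit} reduces the desired $\bm{\mathrm{J}}_1$-convergence to the one-dimensional convergence in law
\[
\epsilon^{1/\alpha}Z_{1/\epsilon}\ \overset{\mathcal{L}}{\longrightarrow}\ Z_1^{\alpha}\qquad\hbox{as }\epsilon\to0.
\]

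\textbf{One-dimensional convergence.} I plan to deduce this from the strong law $\epsilon\gamma_{1/\epsilon}\to 1$ a.s.\ (itself a consequence of $t^{-1}\gamma_t\to 1$ a.s., coming from $\mathbb{E}[\gamma_1]=1$), together with the convergence of infima and suprema established in Proposition~\ref{conv_inf_sup}. Fix $\eta\in(0,1)$; on the event $A_\epsilon=\{\epsilon\gamma_{1/\epsilon}\in[1-\eta,1+\eta]\}$, whose probability tends to $1$, the random time $\gamma_{1/\epsilon}$ belongs to $[(1-\eta)/\epsilon,(1+\eta)/\epsilon]$ and the sandwich
\[
\inf_{u\in[1-\eta,1+\eta]}\epsilon^{1/\alpha}X_{u/\epsilon}\ \leq\ \epsilon^{1/\alpha}Z_{1/\epsilon}=\epsilon^{1/\alpha}X_{\gamma_{1/\epsilon}}\ \leq\ \sup_{u\in[1-\eta,1+\eta]}\epsilon^{1/\alpha}X_{u/\epsilon}
\]
holds. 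By Proposition~\ref{conv_inf_sup}, the two bounds converge in law to $\inf_{u\in[1-\eta,1+\eta]}Z_u^{\alpha}$ and $\sup_{u\in[1-\eta,1+\eta]}Z_u^{\alpha}$, respectively. A routine sandwich argument at the level of distribution functions, using $\mathbb{P}(A_\epsilon)\to 1$ and then letting $\eta\to 0$ (invoking a.s.\ continuity of $(Z_u^{\alpha})_{u\geq 0}$ at the deterministic time $u=1$), yields $\epsilon^{1/\alpha}Z_{1/\epsilon}\overset{\mathcal{L}}{\longrightarrow}Z_1^{\alpha}$, which closes the proof.

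\textbf{Main obstacle.} The reduction to one-dimensional convergence via Jacod--Shiryaev is completely standard for Lévy processes, so the only substantive step is the sandwich above, which in turn relies crucially on Proposition~\ref{conv_inf_sup}. The care needed is to control simultaneously the fluctuation of $\gamma_{1/\epsilon}$ around $1/\epsilon$ and the oscillation of the $X$-process on a small window of size $\eta/\epsilon$; both are handled by taking $\epsilon\to 0$ first (at $\eta>0$ fixed) and then $\eta\to 0$. No serious analytic difficulty is expected, since the heavy lifting—the convergence of the suprema and infima of the rescaled free process—has already been carried out in Proposition~\ref{conv_inf_sup}.
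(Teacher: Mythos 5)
Your proposal is correct and follows essentially the same route as the paper: reduce to the one-dimensional convergence $\epsilon^{1/\alpha}Z_{1/\epsilon}\to Z_1^{\alpha}$ via Jacod--Shiryaev, then sandwich $Z_{1/\epsilon}=X_{\gamma_{1/\epsilon}}$ between the infimum and supremum of the rescaled free process over a window $[1-\eta,1+\eta]$ using the strong law for $(\gamma_t)_{t\geq0}$ and Proposition \ref{conv_inf_sup}, and finally let $\eta\to0$ using that $1$ is a.s. not a jump time of $(Z_t^{\alpha})_{t\geq0}$. The paper writes the sandwich directly at the level of probabilities rather than as a pathwise inequality on the event $A_\epsilon$, but this is only a presentational difference.
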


\begin{proof}
 As $(Z_t)_{t\geq0}$ is a L\'evy process, it is enough to show that $t^{-1/\alpha}Z_t$ converges in law to $Z_1^{\alpha}$, see for instance Jacod-Shiryaev \cite[Chapter VII, Corollary 3.6]{jacod2013limit}. Let $z\in\mathbb{R}$ and $\delta > 0$. On the one hand, we have
$$\mathbb{P}\left(t^{-1/\alpha}Z_t \geq z\right) \leq \mathbb{P}\left(t^{-1/\alpha}Z_t \geq z, \:|\gamma_t - t| \leq \delta t\right) + \mathbb{P}\left(|\gamma_t - t| > \delta t\right),$$

\noindent
and on the other hand, we have
$$\mathbb{P}\left(t^{-1/\alpha}Z_t \geq z\right) \geq \mathbb{P}\left(|\gamma_t - t| \leq \delta t\right) - \mathbb{P}\left(t^{-1/\alpha}Z_t < z, \:|\gamma_t - t| \leq \delta t\right).$$

\noindent
It follows from the strong law of large number that $\mathbb{P}(|\gamma_t - t| > \delta t)$ converges to $0$ as $t\to\infty$. Now on the event $\{|\gamma_t - t| \leq \delta t\}$, we have $\gamma_t \in [(1-\delta)t, (1+\delta)t]$, and thus, reminding that $Z_t = X_{\gamma_t}$, we have
$$\mathbb{P}\left(t^{-1/\alpha}Z_t \geq z, \:|\gamma_t - t| \leq \delta t\right) \leq \mathbb{P}\left(\sup_{s\in[1-\delta, 1+\delta]}t^{-1/\alpha}X_{st} \geq z\right),$$

\noindent
and
$$\mathbb{P}\left(t^{-1/\alpha}Z_t < z, \:|\gamma_t - t| \leq \delta t\right) \leq \mathbb{P}\left(\inf_{s\in[1-\delta, 1+\delta]}t^{-1/\alpha}X_{st} < z\right).$$

\noindent
The two quantities on the right-hand-side of the above equations converge by Proposition \ref{conv_inf_sup} to $\mathbb{P}(\sup_{s\in[1-\delta, 1+\delta]}Z_{s}^{\alpha} \geq z)$ and $\mathbb{P}(\inf_{s\in[1-\delta, 1+\delta]}Z_{s}^{\alpha} < z)$ as $t$ tends to infinity. Putting the pieces together, we have
$$\limsup_{t\to\infty}\mathbb{P}\left(t^{-1/\alpha}Z_t \geq z\right) \leq \mathbb{P}\left(\sup_{s\in[1-\delta, 1+\delta]}Z_{s}^{\alpha} \geq z\right).$$

\noindent
and
$$\liminf_{t\to\infty}\mathbb{P}\left(t^{-1/\alpha}Z_t \geq z\right) \geq \mathbb{P}\left(\inf_{s\in[1-\delta, 1+\delta]}Z_{s}^{\alpha} \geq z\right),$$

\noindent
Since almost surely, $1$ is not a jumping time of $(Z_t^{\alpha})_{t\geq0}$, it should be clear that by letting $\delta\to 0$, we can conclude that $\lim_{t\to\infty}\mathbb{P}(t^{-1/\alpha}Z_t \geq z) = \mathbb{P}(Z_1^{\alpha} \geq z)$.
\end{proof}

\subsection{Proof of Theorem \ref{conv_m2}}

\begin{proof}[Proof of Theorem \ref{conv_m2}]
\textit{Step 1}: As explained above, we start by showing that Theorem \ref{conv_m2} holds for a solution $(X_t, V_t)_{t\geq0}$ starting at $(0,0)$. We need to show that \eqref{tightness_criteria} holds. Let $(Z_t)_{t\geq0} = (X_{\gamma_t})_{t\geq0}$ as in the previous subsection, where $(\gamma_t)_{t\geq0}$ is the inverse of the local time $(L_t)_{t\geq0}$ at $0$ of $(V_t)_{t\geq0}$. Since the $\bm{\mathrm{M}}_1$-topology is weaker than the $\bm{\mathrm{J}}_1$-topology, by Proposition \ref{conv_levy_process} and Theorem \ref{thm_conv_m1}, we get, for any $T>0$, for any $\eta>0$,
\[
 \lim_{\delta\to0}\limsup_{\epsilon\to0}\mathbb{P}(w(\epsilon^{1/\alpha}Z_{t/\epsilon}, T, \delta) > \eta) = 0.
\]

\noindent
Now we show that for any $T>0$, for any $\eta>0$ and any $\delta\in(0,1)$,
\begin{equation}\label{bbb}
 \mathbb{P}\left(w(\epsilon^{1/\alpha}X_{t/\epsilon}, T, \delta) > \eta\right) \leq \mathbb{P}\left(w(\epsilon^{1/\alpha}Z_{t/\epsilon}, T + 1, 2\delta) > \eta\right) + \mathbb{P}\left(\sup_{t\in[0, T]}|\epsilon L_{t/\epsilon} - t| \geq \delta\right)
\end{equation}

\noindent
This will achieve the first step by \eqref{dini_local_time}. We first introduce for $t\geq0$,
\[
 g_t = \sup\{s\leq t, V_s = 0\} \qquad \text{and}\qquad d_t = \inf\{s\geq t, V_s = 0\}.
\]

\noindent
Note that $g_t$ and $d_t$ can be expressed in terms of the local time and its inverse, i.e. $g_t = \gamma_{L_t -}$ and $d_t = \gamma_{L_t}$. We also introduce the random function $\nu(t)$ such that $\nu(t) = 1$ if the excursion straddling the time $t$ is positive and $\nu(t) = -1$ if it is negative, i.e. $\nu(t) = \bm{1}_{\{V_t > 0\}} - \bm{1}_{\{V_t < 0\}}$.

\medskip
Let $T >0$ and $\delta \in (0,1)$, we first place ourselves on the event $A_{T,\delta} = \{\sup_{t\in[0, T]}|\epsilon L_{t/\epsilon} - t| < \delta\}$. Let $t\in[0, T]$ and $t_{\delta-}\leq t_1 < t_2 < t_3 \leq t_{\delta+}$, where $t_{\delta-} = 0 \vee (t-\delta)$ and $t_{\delta+} = T \wedge (t+\delta)$. We also introduce $t_{2\delta-} = 0 \vee (t-2\delta)$ and $t_{2\delta+} = (T +1)\wedge (t+2\delta)$. We emphasize that, since we are on the event $A_{T,\delta}$, $d_{t/\epsilon} = \gamma_{L_{t/\epsilon}} \leq \gamma_{(t+\delta) / \epsilon}$ for every $t\in[0,T]$. We first bound the distance $d(\epsilon^{1/ \alpha} X_{t_2 / \epsilon}, [\epsilon^{1/ \alpha} X_{t_1 / \epsilon}, \epsilon^{1/ \alpha} X_{t_3 / \epsilon}]) = \epsilon^{1/ \alpha}d(X_{t_2 / \epsilon}, [X_{t_1 / \epsilon}, X_{t_3 / \epsilon}])$. Without loss of generality, we will assume that $X_{t_1 / \epsilon} \leq X_{t_3 / \epsilon}$.
\begin{itemize}[leftmargin=*]
    \item First case: $X_{t_1 / \epsilon}\leq X_{t_2 / \epsilon} \leq X_{t_3 / \epsilon}$. Then we have
    \[
     d(X_{t_2 / \epsilon}, [X_{t_1 / \epsilon}, X_{t_3 / \epsilon}]) = 0 \leq \sup_{t_{2\delta-}\leq t_1 < t_2 < t_3 \leq t_{2\delta+}} d(Z_{t_2 / \epsilon}, [Z_{t_1 / \epsilon}, Z_{t_3 / \epsilon}]).
    \]
    
    \item Second case: $X_{t_2 / \epsilon}< X_{t_1 / \epsilon} \leq X_{t_3 / \epsilon}$. In this case, $d(X_{t_2 / \epsilon}, [X_{t_1 / \epsilon}, X_{t_3 / \epsilon}]) = X_{t_1 / \epsilon} - X_{t_2 / \epsilon}$. Let us note that, since $(X_t)_{t\geq0}$ is monotonic on every excursion of $(V_t)_{t\geq0}$, $t_1/\epsilon$ and $t_3/\epsilon$ can not belong to the same excursion, i.e. $d_{t_1/\epsilon} \leq g_{t_3/\epsilon}$. We define, for $i\in\{1, 2, 3\}$ and $\epsilon>0$, the positive real numbers $u_{i,\epsilon}$ defined as follows
    \begin{enumerate}
     \item $u_{2,\epsilon} = g_{t_2 / \epsilon}$ if $\nu(t_2 / \epsilon) = 1$, $u_{2,\epsilon} = d_{t_2 / \epsilon}$ if $\nu(t_2 / \epsilon) = -1$ and $u_{2,\epsilon} = t_2 / \epsilon$ if $\nu(t_2 / \epsilon) = 0$. Since $(X_t)_{t\geq0}$ is monotonic on every excursion of $(V_t)_{t\geq0}$, we have $X_{t_2 / \epsilon} \geq X_{u_{2,\epsilon}}$.
     
     \item For $i\in\{1,3\}$, $u_{i,\epsilon} = d_{t_i / \epsilon}$ if $\nu(t_i / \epsilon) = 1$, $u_{i,\epsilon} = g_{t_i / \epsilon}$ if $\nu(t_i / \epsilon) = -1$ and $u_{i,\epsilon} = t_i / \epsilon$ if $\nu(t_i / \epsilon) = -0$. We have $X_{t_i / \epsilon} \leq X_{u_{i,\epsilon}}$.
    \end{enumerate}
    
    \noindent
    Therefore, we have $X_{u_{2,\epsilon}} < X_{u_{1,\epsilon}}$ and $X_{u_{2,\epsilon}} < X_{u_{3,\epsilon}}$ so that necessarily, $u_{1,\epsilon} < u_{2,\epsilon} < u_{3,\epsilon}$. Now we stress that, if $r\geq0$ is such that $V_r=0$ (i.e. such that $\nu(r) = 0$), then since the zero set of $(V_t)_{t\geq0}$ has no isolated points, either $r = d_r$ or $r = g_r$. In any case, we always have for every $i\in\{1, 2, 3\}$, $u_{i,\epsilon} = \gamma_{L_{t_i / \epsilon}}$ or $u_{i,\epsilon} = \gamma_{L_{t_i / \epsilon}-}$. Let $\theta > 0$ and remember that on the event $A_{T,\delta}$, we have $L_{t_i / \epsilon} \in ((t_i -\delta) / \epsilon, (t_i +\delta) / \epsilon)$ for every $i\in\{1, 2, 3\}$. Using the fact that $(\gamma_t)_{t\geq0}$ is increasing and that $(X_t)_{t\geq0}$ is continuous, we can always find $s_{1} < s_{2} < s_{3} \in [t_{2\delta-}, t_{2\delta+}]$ such that
    \[
     X_{t_2/\epsilon} \geq X_{\gamma_{s_2/\epsilon}} - \frac{\theta}{2}, \quad X_{t_1/\epsilon} \leq X_{\gamma_{s_1/\epsilon}} + \frac{\theta}{2} \quad \text{and} \quad X_{t_3/\epsilon} \leq X_{\gamma_{s_3/\epsilon}} + \frac{\theta}{2},
    \]
    which leads to
    \[
      d(X_{t_2 / \epsilon}, [X_{t_1 / \epsilon}, X_{t_3 / \epsilon}]) \leq (Z_{s_1 / \epsilon} -Z_{s_2 / \epsilon}) \wedge (Z_{s_3 / \epsilon} - Z_{s_2 / \epsilon}) + \theta = d(Z_{s_2 / \epsilon}, [Z_{s_1 / \epsilon}, Z_{s_3 / \epsilon}]) + \theta.
     \]
     Since this holds for every $\theta > 0$, we deduce the following bound
     \[
      d(X_{t_2 / \epsilon}, [X_{t_1 / \epsilon}, X_{t_3 / \epsilon}])  \leq \sup_{t_{2\delta-}\leq t_1 < t_2 < t_3 \leq t_{2\delta+}} d(Z_{t_2 / \epsilon}, [Z_{t_1 / \epsilon}, Z_{t_3 / \epsilon}]). 
     \]
    
    \item Third case: $X_{t_1 / \epsilon} \leq X_{t_3 / \epsilon} < X_{t_2 / \epsilon}$. We can adapt the previous case to deduce that
    \[
      d(X_{t_2 / \epsilon}, [X_{t_1 / \epsilon}, X_{t_3 / \epsilon}])  \leq \sup_{t_{2\delta-}\leq t_1 < t_2 < t_3 \leq t_{2\delta+}} d(Z_{t_2 / \epsilon}, [Z_{t_1 / \epsilon}, Z_{t_3 / \epsilon}]). 
     \]
\end{itemize}

\noindent
To summarize, we proved that, on the event $A_{T,\delta}$, the following bound holds
\[
 w(\epsilon^{1/\alpha}X_{t/\epsilon}, T, \delta) \leq w(\epsilon^{1/\alpha}Z_{t/\epsilon}, T + 1, 2\delta).
\]

\noindent
This implies \eqref{bbb}. We proved Theorem \ref{conv_m2} in the case $v_0 = 0$.

\bigskip\noindent
\textit{Step 2:} We consider the solution $(X_t, V_t)_{t\geq0}$ of \eqref{eq_non_reflected} starting at $(0, v_0)$ where $v_0\in\mathbb{R}$. We show that there exists a constant $C > 0$ such that for any $T >0$,
\begin{equation}\label{bound_square_v}
 \mathbb{E}\Big[\sup_{t\in[0,T]}|V_t|\Big] \leq C(1+ T^{1/2}).
\end{equation}

\noindent
To this aim, we start by studying $\sup_{t\in[0,T]}|V_t|^{\beta + 1}$ and we introduce, recall Assumption \ref{assump_fournier}, the even function $\ell$ defined as
\[
 \ell(v) = 2 \int_0^{v}\Theta^{-\beta}(u)\int_0^u\Theta^{\beta}(w)\dr w \dr u,
\]

\noindent
which solves the Poisson equation $\ell'\mathrm{F} + \frac{1}{2}\ell'' = 1$. Then by the Itô formula, we have
\[
 \ell(V_t) = \ell(v_0) + t + \int_0^t\ell'(V_s)\dr B_s.
\]

\noindent
Moreover, remember that $|v|\Theta(v) \to 1$ as $v\to\pm\infty$ and that $\beta > 1$. As a consequence, there exist positive constants $c, c'>0$ such that
\[
 \ell(v)\sim c|v|^{\beta + 1} \quad \text{and} \quad \ell'(v)\sim c'\text{sgn}(v)|v|^{\beta} \quad \text{as }v\to\pm\infty.
\]

\noindent
Therefore, there exist positive constants $M, M' > 0$ such that for all $v\in\mathbb{R}$,
\begin{equation}\label{bound_ell}
 |v|^{\beta + 1} \leq M(1 + \ell(v)) \quad \text{and} \quad [\ell'(v)]^{2} \leq M'(1 + v^{2\beta}).
\end{equation}

\noindent
Using \eqref{bound_ell} and Doob's inequality, we get
\begin{align}
 \notag \mathbb{E}\Big[\sup_{t\in[0,T]}|V_t|^{\beta + 1}\Big] & \leq M\left[1 + \ell(v_0) + T + 4\mathbb{E}\Big[\Big(\int_0^T [\ell'(V_s)]^2 \dr s\Big)^{1/2}\Big]\right] \\
  & \leq M\left[1 + \ell(v_0) + T + 4(M'T)^{1/2}\right] + 4M(M'T)^{1/2}\mathbb{E}\Big[\sup_{t\in[0,T]}|V_t|^{\beta}\Big]. \label{aaa}
\end{align}

\noindent
Finally, using Young's inequality with $p=\beta + 1$ and $q=(\beta + 1) / \beta$, we find some $C > 0$ such that
\begin{align*}
 4M(M'T)^{1/2}\mathbb{E}\Big[\sup_{t\in[0,T]}|V_t|^{\beta}\Big] \leq & C T^{(\beta + 1) / 2} + \frac{1}{2}\mathbb{E}\Big[\sup_{t\in[0,T]}|V_t|^{\beta}\Big]^{(\beta + 1) / \beta} \\
 \leq & C T^{(\beta + 1) / 2} + \frac{1}{2}\mathbb{E}\Big[\sup_{t\in[0,T]}|V_t|^{\beta + 1}\Big],
\end{align*}

\noindent
which, inserted in \eqref{aaa}, implies that there exists a constant $K > 0$ such that
\[
 \mathbb{E}\Big[\sup_{t\in[0,T]}|V_t|^{\beta + 1}\Big] \leq K(1+T^{(\beta + 1)  /2}).
\]

\noindent
We deduce \eqref{bound_square_v} by Hölder's inequality again.

\bigskip\noindent
\textit{Step 3:} We finally show the result for any solution $(X_t, V_t)_{t\geq0}$ starting at $(0, v_0)$, where $v_0\in\mathbb{R}$, and we extend the technique used in \cite[page 21]{fournier2018one}. If we set $T_0 = \inf\{t\geq0, V_t = 0\}$, then the process
\[
(\bar{X}_t, \bar{V}_t)_{t\geq0} = (X_{t+T_0} - X_{T_0}, V_{t+T_0})_{t\geq0}
\]

\noindent
is a solution of \eqref{eq_non_reflected} starting at $(0,0)$. Therefore, by the first step, the process $(\epsilon^{1/\alpha}\bar{X}_{t/\epsilon})_{t\geq0}$ converges to $(Z_t^{\alpha})_{t\geq0}$ in the space $\mathcal{D}$ endowed with the $\bm{\mathrm{M}}_1$-topology. Then, by a version of the Slutsky lemma, see for instance \cite[Section 3, Theorem 3.1]{billingsley_conv}, it is enough to show that for any $T>0$,
\begin{equation}\label{conv_proba_sup}
 \sup_{t\in[0,T]}\epsilon^{1/\alpha}\left|X_{t/\epsilon} - \bar{X}_{t/\epsilon}\right| \overset{\mathbb{P}}{\longrightarrow} 0 \quad \hbox{as $\epsilon\to0$}.
\end{equation}

\noindent
Indeed, the $\bm{\mathrm{M}}_1$-topology is weaker than the topology induced by the uniform convergence on compact time-intervals. We distinguish two cases. First,
\[
 \bm{1}_{\{T_0 \geq t/\epsilon\}}\left|X_{t/\epsilon} - \bar{X}_{t/\epsilon}\right| \leq \bm{1}_{\{T_0 \geq t/\epsilon\}}\left|X_{t/\epsilon}\right| + \bm{1}_{\{T_0 \geq t/\epsilon\}}\left|X_{t/\epsilon + T_0} - X_{T_0}\right| \leq \int_0^{2T_0}|V_s|\dr s = D_1.
\]

\noindent
Second,
\[
 \bm{1}_{\{T_0 < t/\epsilon\}}\left|X_{t/\epsilon} - \bar{X}_{t/\epsilon}\right| \leq \left|X_{T_0}\right| + \bm{1}_{\{T_0 < t/\epsilon\}}\left|X_{t/\epsilon + T_0} - X_{t/\epsilon}\right| \leq D_1 + \bm{1}_{\{T_0 < t/\epsilon\}}\int_{t/\epsilon}^{t/\epsilon + T_0}|V_s|\dr s.
\]

\noindent
Hence, if we set $D_{t,\epsilon}^2 = \bm{1}_{\{T_0 < t/\epsilon\}}\int_{t/\epsilon}^{t/\epsilon + T_0}|V_s|\dr s$, we get
\[
 \sup_{t\in[0,T]}\epsilon^{1/\alpha}\left|X_{t/\epsilon} - \bar{X}_{t/\epsilon}\right| \leq \epsilon^{1/\alpha}D_1 + \sup_{t\in[0,T]}\epsilon^{1/\alpha}D_{t,\epsilon}^2.
\]

\noindent
The first term converges almost surely to $0$ as $\epsilon\to0$. Regarding the second one, we have
\begin{align*}
 \mathbb{E}\Big[\sup_{t\in[0,T]}\epsilon^{1/\alpha}D_{t,\epsilon}^2 \Big| \mathcal{F}_{T_0}\Big] & \leq \epsilon^{1/\alpha}T_0\mathbb{E}\Big[\sup_{t\in[0,T]}\bm{1}_{\{T_0 < t/\epsilon\}}\sup_{s\in[t/\epsilon, t/\epsilon + T_0]}|V_s|\Big| \mathcal{F}_{T_0}\Big] \\
  & \leq \epsilon^{1/\alpha}T_0\bm{1}_{\{T_0 < T/\epsilon\}}\mathbb{E}\Big[\sup_{t\in[0,2T/\epsilon]}|V_s|\Big] \\
  & \leq \epsilon^{1/\alpha}T_0C(1 + (2T)^{1/2}\epsilon^{-1/2})
\end{align*}

\noindent
by Step 2. This last quantity almost surely goes to $0$ as $\alpha\in(0,2)$. This implies the convergence in probability of $\sup_{t\in[0,T]}\epsilon^{1/\alpha}D_{t,\epsilon}^2$  to $0$, so that \eqref{conv_proba_sup} holds.
\end{proof}

\section*{Appendix}

\appendix

\section{Some useful results}\label{section_invert_time}

\begin{lemma}\label{equiv_tail}
Let $X$ and $Y$ be two positive random variables and let $C>0$. Assume that we have $\mathbb{P}(X > t) \sim Ct^{-1/2}$ as $t\to\infty$. Then the following assertions are equivalent.
\begin{enumerate}[label=(\roman*)]
 \item $\lim_{t\to\infty}t^{1/2}\mathbb{P}(Y > t) = 0$.
 
 \item $\mathbb{P}(X + Y > t) \sim Ct^{-1/2}$ as $t\to\infty$.
\end{enumerate}
\end{lemma}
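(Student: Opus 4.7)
The plan is to prove each implication separately. Both rely on elementary manipulations, with the backward direction requiring a Laplace-transform detour to handle the unrestricted dependence between $X$ and $Y$.

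For (i) $\Rightarrow$ (ii), I would proceed by a two-sided squeeze. The trivial lower bound $\mathbb{P}(X+Y>t) \geq \mathbb{P}(X > t) \sim C t^{-1/2}$ (valid since $Y \geq 0$) takes care of the $\liminf$. For the matching upper bound, the union bound $\{X + Y > t\} \subset \{X > (1-\epsilon)t\} \cup \{Y > \epsilon t\}$, valid for any $\epsilon \in (0,1)$, yields
\[
\limsup_{t\to\infty} t^{1/2} \mathbb{P}(X+Y > t) \leq C(1-\epsilon)^{-1/2},
\]
since $\mathbb{P}(Y > \epsilon t) = o(t^{-1/2})$ under (i). Letting $\epsilon \to 0$ closes the gap.

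For (ii) $\Rightarrow$ (i), the naive bound $\mathbb{P}(Y>t) \leq \mathbb{P}(X+Y>t) \sim Ct^{-1/2}$ only gives $O(t^{-1/2})$, not the desired $o(t^{-1/2})$. The plan is to translate into Laplace transforms via Karamata's Tauberian theorem (a variant of Theorem \ref{tt_dmt}): both tail hypotheses yield $1 - \mathbb{E}[e^{-\lambda X}] \sim 1 - \mathbb{E}[e^{-\lambda(X+Y)}] \sim C\sqrt{\pi}\,\lambda^{1/2}$ as $\lambda \to 0^+$. Subtracting gives
\[
\mathbb{E}\bigl[e^{-\lambda X}(1 - e^{-\lambda Y})\bigr] = \mathbb{E}[e^{-\lambda X}] - \mathbb{E}[e^{-\lambda(X+Y)}] = o(\lambda^{1/2}).
\]
Fixing $K > 0$, the pointwise bound $e^{-\lambda X}(1-e^{-\lambda Y}) \geq e^{-K}(1-e^{-1})\bm{1}_{\{X \leq K/\lambda,\, Y \geq 1/\lambda\}}$, together with the trivial inequality $\mathbb{P}(X \leq K/\lambda, Y \geq 1/\lambda) \geq \mathbb{P}(Y \geq 1/\lambda) - \mathbb{P}(X > K/\lambda)$ and $\mathbb{P}(X > K/\lambda) \sim CK^{-1/2}\lambda^{1/2}$, yields
\[
\mathbb{P}(Y > 1/\lambda) \leq CK^{-1/2}\lambda^{1/2}(1+o(1)) + o(\lambda^{1/2}).
\]
Dividing by $\lambda^{1/2}$, letting $\lambda \to 0^+$, then $K \to \infty$, gives $\limsup_{t\to\infty} t^{1/2}\mathbb{P}(Y > t) = 0$ after setting $t = 1/\lambda$, which is (i).

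The main (modest) obstacle is precisely the arbitrary joint law of $(X,Y)$: any elementary splitting of $\mathbb{P}(Y > t)$ via a sharp indicator on $X$ leaves a residual of exact order $t^{-1/2}$ from the tail of $X$ itself, losing the crucial small-$o$ factor. The Laplace transform trick circumvents this because the smooth weight $e^{-\lambda X}$ can be made essentially equal to $1$ on $\{X \leq K/\lambda\}$ (for any fixed $K$), while the Tauberian hypotheses force the quantity $\mathbb{E}[e^{-\lambda X}(1-e^{-\lambda Y})]$ to be genuinely $o(\lambda^{1/2})$, a strictly finer information than anything an indicator-based splitting can extract.
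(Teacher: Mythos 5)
Your proof is correct, and while the implication (i) $\Rightarrow$ (ii) is essentially the paper's own argument (split on whether $X$ exceeds $\delta t$; your $\delta=1-\epsilon$), your treatment of (ii) $\Rightarrow$ (i) takes a genuinely different route. The paper stays elementary: it covers $\{Y>3t\}$ by $\bigcup_{n\in\mathbb{N}}\{X+Y>(n+1)t,\,X\le nt\}$, bounds each of the first $N$ terms by $\mathbb{P}(X+Y>(n+1)t)-\mathbb{P}(X>(n+1)t)=o(t^{-1/2})$, and absorbs the remaining union into $\mathbb{P}(X+Y>(N+2)t)\sim C(N+2)^{-1/2}t^{-1/2}$, which is killed by letting $N\to\infty$ at the very end. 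You instead pass to Laplace transforms: the Abelian half of Karamata (the easy direction of Theorem~\ref{tt_dmt}) converts both tail hypotheses into $1-\mathbb{E}[e^{-\lambda X}]\sim 1-\mathbb{E}[e^{-\lambda(X+Y)}]\sim C\sqrt{\pi}\,\lambda^{1/2}$, so that $\mathbb{E}[e^{-\lambda X}(1-e^{-\lambda Y})]=o(\lambda^{1/2})$, and the truncation bound $e^{-\lambda X}(1-e^{-\lambda Y})\ge e^{-K}(1-e^{-1})\bm{1}_{\{X\le K/\lambda,\,Y\ge 1/\lambda\}}$ together with $\mathbb{P}(X\le K/\lambda,\,Y\ge 1/\lambda)\ge \mathbb{P}(Y\ge 1/\lambda)-\mathbb{P}(X>K/\lambda)$ recovers the $o(t^{-1/2})$ tail for $Y$ after letting $\lambda\to0$ and then $K\to\infty$; all steps check out, including the constant $C\sqrt{\pi}$ and the harmless passage from $\mathbb{P}(Y\ge t)$ to $\mathbb{P}(Y>t)$. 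As for what each approach buys: the paper's decomposition uses nothing beyond unions and monotonicity and works verbatim for an arbitrary tail index, whereas your argument imports standard (Abelian, not genuinely Tauberian) transform machinery but disposes of the arbitrary joint law of $(X,Y)$ in one stroke through the smooth weight $e^{-\lambda X}$; note that as written it is tied to indices in $(0,1)$ (here $1/2$), since the relation $1-\mathbb{E}[e^{-\lambda Z}]\sim C\Gamma(1-\alpha)\lambda^{\alpha}$ fails for $\alpha\ge1$.
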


\begin{proof}
Let us first show that \textit{(i)} implies \textit{(ii)}. Let $\delta\in(0,1)$ and write
\begin{align*}
    \mathbb{P}(X + Y > t) = &\: \mathbb{P}(X + Y > t, \:X >\delta t) + \mathbb{P}(X + Y > t,\: X \leq \delta t) \\
     \leq & \: \mathbb{P}(X >\delta t) + \mathbb{P}(Y >(1-\delta) t).
\end{align*}
\noindent
We deduce that
$$C = \liminf_{t\to\infty}t^{1/2}\mathbb{P}(X > t) \leq \liminf_{t\to\infty}t^{1/2}\mathbb{P}(X + Y > t) \leq \limsup_{t\to\infty}t^{1/2}\mathbb{P}(X + Y > t) \leq C\delta^{-1/2}.$$
\noindent
Letting $\delta\to 1$ completes the first step. 

\medskip
We now show that \textit{(ii)} implies \textit{(i)}. We first remark that
$$\{Y > 3t\} \subset \bigcup_{n\in\mathbb{N}}\{X+Y > (n+1)t\}\cap \{X \leq nt\}.$$

\noindent
Indeed, if $Y > 3t$, we set $n+2 =\lfloor\frac{X+Y}{t}\rfloor \geq3$, and get $\frac{X}{t} \leq \frac{X+Y}{t} - 3 \leq n$ and $\frac{X + Y}{t} > n+1$. Now let $N\in\mathbb{N}$, then we have $(\cup_{n>N}\{X+Y > (n+1)t\}\cap \{X \leq nt\}) \subset \{X+Y > (N+2)t\}$ and therefore
$$\mathbb{P}(Y > 3t) \leq \mathbb{P}(X+Y > (N+2)t) + \sum_{n=1}^N\mathbb{P}(X+Y > (n+1)t, \:X \leq nt).$$

\noindent
For any $n \geq 1$, we have
\begin{align*}
    \mathbb{P}(X+Y > (n+1)t, \:X \leq nt) = & \: \mathbb{P}(X+Y > (n+1)t) - \mathbb{P}(X+Y > (n+1)t, \:X > nt) \\
     & \leq \mathbb{P}(X+Y > (n+1)t) - \mathbb{P}(X > (n+1)t),
\end{align*}
\noindent
and thus $\lim_{t\to\infty}t^{1/2}\mathbb{P}(X+Y > (n+1)t, \:X \leq nt) = 0$, from which we deduce that for any $N\in\mathbb{N}$,
$$\limsup_{t\to\infty}t^{1/2}\mathbb{P}(Y > 3t) \leq C(N+2)^{-1/2}.$$

\noindent
Letting $N\to\infty$ completes the proof.
\end{proof}

\begin{proposition}\label{conv_prob_0_neg}
Let $(X_n)_{n\in\mathbb{N}}$ be i.i.d positive random variables and let $\alpha\in(0,1)$ such that $\lim_{t\to\infty}t^{\alpha}\mathbb{P}(X_1 > t) = 0$. If $S_n = \sum_{k=1}^n X_k$, then $n^{-1/\alpha}S_n$ converges to $0$ in probability as $n\to\infty$.
\end{proposition}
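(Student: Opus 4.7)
My plan is a standard truncation argument adapted to the fact that the tail is $o(t^{-\alpha})$ rather than having a precise regular variation. The key point is that, since $\alpha\in(0,1)$, the natural size of $S_n$ would be $n^{1/\alpha}$ only if the tail were genuinely $\sim c\,t^{-\alpha}$, and here the tail is strictly lighter, so we expect $n^{-1/\alpha}S_n\to 0$ in probability.

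Fix $\eta>0$ and, for a truncation level to be chosen, write $S_n = S_n^{\le} + S_n^{>}$ with
\[
S_n^{\le} = \sum_{k=1}^n X_k\mathbf{1}_{\{X_k\le n^{1/\alpha}\}}, \qquad S_n^{>} = \sum_{k=1}^n X_k\mathbf{1}_{\{X_k> n^{1/\alpha}\}}.
\]
First I would handle the ``large'' part: a union bound gives
\[
\mathbb{P}(S_n^{>}>0)\le n\,\mathbb{P}(X_1>n^{1/\alpha})=\bigl(n^{1/\alpha}\bigr)^{\alpha}\mathbb{P}(X_1>n^{1/\alpha})\longrightarrow 0,
\]
by the hypothesis $t^{\alpha}\mathbb{P}(X_1>t)\to 0$.

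For the ``small'' part I would bound its expectation by Markov's inequality and control the truncated first moment using Fubini: $\mathbb{E}[X_1\mathbf{1}_{\{X_1\le c\}}]=\int_0^c\mathbb{P}(X_1>t)\,dt-c\,\mathbb{P}(X_1>c)$. Given $\delta>0$, the assumption $t^{\alpha}\mathbb{P}(X_1>t)\to 0$ furnishes a constant $T=T(\delta)$ such that $\mathbb{P}(X_1>t)\le \delta t^{-\alpha}$ for $t\ge T$, so
\[
\mathbb{E}\bigl[X_1\mathbf{1}_{\{X_1\le n^{1/\alpha}\}}\bigr]\le T+\delta\int_T^{n^{1/\alpha}}t^{-\alpha}\,dt\le T+\frac{\delta}{1-\alpha}\,n^{(1-\alpha)/\alpha}.
\]
Multiplying by $n$ and dividing by $n^{1/\alpha}$ (using $\alpha<1$, which makes the exponent $1-1/\alpha$ negative) gives
\[
\frac{1}{n^{1/\alpha}}\mathbb{E}[S_n^{\le}]\le T\,n^{1-1/\alpha}+\frac{\delta}{1-\alpha},
\]
and by Markov's inequality $\mathbb{P}(n^{-1/\alpha}S_n^{\le}>\eta)\le \eta^{-1}\bigl(Tn^{1-1/\alpha}+\delta/(1-\alpha)\bigr)$. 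Letting $n\to\infty$ and then $\delta\to 0$ shows $\mathbb{P}(n^{-1/\alpha}S_n^{\le}>\eta)\to 0$, and combining with the bound on $S_n^{>}$ concludes.

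There is no serious obstacle; the only subtlety is that we cannot use a clean tail asymptotic $\mathbb{P}(X_1>t)\sim c t^{-\alpha}$, so the truncated mean must be estimated via the ``for every $\delta$ there is $T$'' characterization of the $o(t^{-\alpha})$ tail. The exponent condition $\alpha<1$ is used precisely in the step $n\cdot n^{(1-\alpha)/\alpha}=n^{1/\alpha}$ combined with the fact that the constant piece $nT/n^{1/\alpha}=Tn^{1-1/\alpha}$ vanishes.
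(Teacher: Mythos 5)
Your proof is correct, but it takes a different route from the paper. You argue by truncation at level $n^{1/\alpha}$: a union bound kills the part coming from values exceeding $n^{1/\alpha}$ (this is exactly where $t^{\alpha}\mathbb{P}(X_1>t)\to0$ enters), and a first-moment/Markov estimate, using the ``for every $\delta$ there is $T$ with $\mathbb{P}(X_1>t)\le\delta t^{-\alpha}$ for $t\ge T$'' form of the hypothesis together with $\alpha<1$, controls the truncated sum; the bookkeeping of exponents ($n\cdot n^{(1-\alpha)/\alpha}=n^{1/\alpha}$ and $Tn^{1-1/\alpha}\to0$) is accurate, and combining the two pieces gives convergence in probability. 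The paper instead shows that the Laplace transform $\mathbb{E}[e^{-\lambda n^{-1/\alpha}S_n}]$ tends to $1$: writing $\log\mathbb{E}[e^{-\lambda n^{-1/\alpha}S_n}]\sim n\,\mathbb{E}[e^{-\lambda n^{-1/\alpha}X_1}-1]=-\lambda n^{1-1/\alpha}\int_0^\infty e^{-\lambda u/n^{1/\alpha}}\mathbb{P}(X_1>u)\,\dr u$ and splitting the integral at $n^{1/\alpha}$, the $o(u^{-\alpha})$ tail makes the exponent vanish. The two arguments use the hypothesis in essentially the same way (a tail bound below the truncation level, a direct tail estimate above it), but yours is more elementary and avoids transform methods, while the paper's fits naturally alongside the $\alpha$-stable limit theorems invoked elsewhere, where the same Laplace/characteristic-function computations appear. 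Either proof is complete; no gap to report.
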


\begin{proof}
We will show that the Laplace transform of $n^{-1/\alpha}S_n$ converges to 1 as $n\to\infty$. We have for any $\lambda > 0$,
$$\mathbb{E}\left[e^{-\lambda n^{-1/\alpha}S_n}\right] = \mathbb{E}\left[e^{-\lambda n^{-1/\alpha}X_1}\right]^n,$$

\noindent
and thus
$$\log \mathbb{E}\left[e^{-\lambda n^{-1/\alpha}S_n}\right] \underset{n\to\infty}{\sim}n\mathbb{E}\left[e^{-\lambda n^{-1/\alpha}X_1} - 1\right] = \frac{-\lambda}{n^{1/\alpha - 1}}\int_0^{\infty}e^{-\lambda u / n^{1/\alpha}}\mathbb{P}(X_1 > u) \dr u.$$

\noindent
Then we use that for $u\geq n^{1/\alpha}$, $\mathbb{P}(X_1 > u) \leq \mathbb{P}(X_1 > n^{1/\alpha})$ and we get
$$\frac{\lambda}{n^{1/\alpha - 1}}\int_0^{\infty}e^{-\lambda u / n^{1/\alpha}}\mathbb{P}(X_1 > u) \dr u \leq \frac{\lambda}{n^{1/\alpha - 1}} \int_0^{n^{1/\alpha}}\mathbb{P}(X_1 > u) \dr u + e^{-\lambda}n\mathbb{P}(X_1 > n^{1/\alpha}).$$

\noindent
The second term on the right-hand-side converges to $0$ by assumption. Regarding the first term, since $\mathbb{P}(X_1 > u) = o(u^{-\alpha})$ as $u\to\infty$ and since $\lim_{x\to\infty}\int_1^x u^{-\alpha}\dr = \infty$, we have $\int_0^{n^{1/\alpha}}\mathbb{P}(X_1 > u) \dr u = o(\int_1^{n^{1/\alpha}}u^{-\alpha} \dr u) = o(n^{1/\alpha - 1})$ as $n\to\infty$, which completes the proof.
\end{proof}

\section{On the PDE result}\label{section_pde}

In this subsection, we formalize the P.D.E. result briefly exposed in the introduction. We first explain how the law of $(\bm{X}_t, \bm{V}_t)_{t\geq0}$ is linked with a kinetic Fokker-Planck equation with diffusive boundary conditions. For every $\varphi\in C^2_b(\mathbb{R})$, we define $\mathcal{L}\varphi = \mathrm{F}\varphi' + \frac{1}{2}\varphi''$. Then $\mathcal{L}$ is the infinitesimal generator of the (free) speed process $(V_t)_{t\geq0}$. We also denote by $\mathcal{L}^*$ its adjoint operator which is such that $\mathcal{L}^*\varphi = \frac{1}{2}\varphi'' - [F\varphi]'$.

\begin{proposition}\label{prop_32}
 Let $(\bm{X}_t, \bm{V}_t)_{t\geq0}$ be a solution to \eqref{reflected_equation} starting at $(x_0, v_0)\in((0,\infty)\times\mathbb{R}) \cup (\{0\} \times(0,\infty))$. Let us denote by $f(\dr t, \dr x, \dr v) = \mathbb{P}(\bm{X}_t \in \dr x, \bm{V}_t \in \dr v)\dr t$ which is a measure on $\mathbb{R}_+^2\times \mathbb{R}$. There exist two measures $\nu_- \in \mathcal{M}(\mathbb{R}_+ \times \mathbb{R}_-)$ and $\nu_+\in \mathcal{M}(\mathbb{R}_+^2)$ such that for every $\varphi\in C^{\infty}_c(\mathbb{R}_+^2\times \mathbb{R})$, we have
 \begin{align}\label{mmm}
  \varphi(0, x_0, v_0) + \int_{\mathbb{R}_+^2\times \mathbb{R}}&\left[\partial_t\varphi + v\partial_x\varphi + \mathcal{L}\varphi\right]f(\dr \notag s, \dr x, \dr v) \\
   & + \int_{\mathbb{R}_+^2}\varphi(s,0,v)\nu_+(\dr s, \dr v) - \int_{\mathbb{R}_+\times\mathbb{R}_-}\varphi(s,0,v)\nu_-(\dr s, \dr v) = 0.
 \end{align}

\noindent
Moreover the measures $\nu_-$ and $\nu_+$ satisfy $\nu_+(\dr t, \dr v) = \mu(\dr v)\int_{w \in \mathbb{R}_-}\nu_-(\dr t, \dr w)$.
\end{proposition}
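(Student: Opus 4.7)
The plan is to derive~\eqref{mmm} as the expectation of Itô's formula applied to $\varphi(t,\bm{X}_t,\bm{V}_t)$. Since $\bm{X}$ is continuous and $\bm{V}$ is a càdlàg semimartingale whose only jumps occur at the hitting times $\bm{\tau}_n$, with $\Delta\bm{V}_{\bm{\tau}_n}=M_n-\bm{V}_{\bm{\tau}_n-}$ and $\bm{X}_{\bm{\tau}_n}=0$, a direct application of Itô's formula (the $\partial_v\varphi\,\Delta\bm{V}$ corrections cancelling against the jump part of $\int\partial_v\varphi\,\dr\bm{V}$) yields pathwise
\begin{align*}
\varphi(t,\bm{X}_t,\bm{V}_t)-\varphi(0,x_0,v_0) =&\: \int_0^t\bigl[\partial_s\varphi + \bm{V}_s\partial_x\varphi + \mathcal{L}\varphi\bigr]\dr s + \int_0^t\partial_v\varphi\,\dr B_s \\
&\: + \sum_{n:\,\bm{\tau}_n\leq t}\bigl[\varphi(\bm{\tau}_n,0,M_n)-\varphi(\bm{\tau}_n,0,\bm{V}_{\bm{\tau}_n-})\bigr].
\end{align*}
For $\varphi\in C^\infty_c$ with temporal support in $[0,T]$, I would choose $t\geq T$, which kills the left-hand side; the stochastic integral is a true martingale of zero mean since $\partial_v\varphi$ is bounded.

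Next I would define the two non-negative measures
\[
\nu_-(\dr s,\dr v) := \mathbb{E}\Bigl[\sum_{n\geq 1}\delta_{(\bm{\tau}_n,\bm{V}_{\bm{\tau}_n-})}(\dr s,\dr v)\Bigr], \qquad \nu_+(\dr s,\dr v) := \mathbb{E}\Bigl[\sum_{n\geq 1}\delta_{(\bm{\tau}_n,M_n)}(\dr s,\dr v)\Bigr],
\]
the former being supported in $\mathbb{R}_+\times\mathbb{R}_-$ since $\bm{V}_{\bm{\tau}_n-}\leq 0$ (as recalled in Subsection~\ref{subsection_well_pd}). To check they are Radon, any slab $[0,T]\times\mathbb{R}$ has mass at most $\mathbb{E}[N_T]$ with $N_T=\#\{n:\bm{\tau}_n\leq T\}$, and a Markov-exponential estimate gives $\mathbb{E}[N_T]\leq e^T\mathbb{E}[e^{-\bm{\tau}_1}]\sum_{k\geq 0}\mathbb{E}[e^{-\xi}]^k$, where $\xi\overset{d}{=}\bm{\tau}_2-\bm{\tau}_1$; since $M_n>0$ almost surely forces $\xi>0$ almost surely, one has $\mathbb{E}[e^{-\xi}]<1$ and the series converges. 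Fubini then rewrites the expectation of the jump sum as $\int\varphi(s,0,v)\nu_+(\dr s,\dr v) - \int\varphi(s,0,v)\nu_-(\dr s,\dr v)$, which is exactly the boundary contribution in~\eqref{mmm}.

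To finish, the factorization is obtained by conditioning. By the strong Markov property at $\bm{\tau}_n$ together with independence of the sequence $(M_n)$ from the driving Brownian motion, $M_n$ is independent of $\mathcal{F}_{\bm{\tau}_n-}$ (in particular of $\bm{\tau}_n$) and $\mu$-distributed. Hence for any test function $\psi$,
\[
\int\psi\,\dr\nu_+ = \mathbb{E}\Bigl[\sum_n\psi(\bm{\tau}_n,M_n)\Bigr] = \int_{\mathbb{R}_+}\int_{\mathbb{R}_+}\psi(t,v)\,\mu(\dr v)\,\Lambda(\dr t),
\]
where $\Lambda(\dr t):=\mathbb{E}\bigl[\sum_n\delta_{\bm{\tau}_n}(\dr t)\bigr]$ is precisely the marginal $\int_w\nu_-(\dr t,\dr w)$, giving the stated factorization. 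The only genuinely technical point I anticipate is establishing the local finiteness of $\nu_\pm$; once that is done, the remainder is a standard Itô-with-jumps computation and an elementary independence argument.
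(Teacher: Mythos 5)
Your proposal is correct and follows essentially the same route as the paper's proof: Itô's formula with jumps, taking expectations with the stochastic integral a true martingale, defining $\nu_\pm$ as the expected occupation measures of $(\bm{\tau}_n,\bm{V}_{\bm{\tau}_n-})$ and $(\bm{\tau}_n,M_n)$, and deducing the factorization from the independence of $M_n$ from the trajectory up to $\bm{\tau}_n$. The only difference is cosmetic: where the paper justifies $\mathbb{E}[\#\{n:\bm{\tau}_n\leq T\}]<\infty$ by simply invoking that the increments $(\bm{\tau}_{n+1}-\bm{\tau}_n)_{n\geq1}$ are i.i.d.\ and strictly positive, you make this explicit via a Chernoff-type geometric bound, which is a perfectly valid (and slightly more detailed) rendering of the same fact.
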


\begin{proof}
Let $\varphi\in C^{\infty}_c(\mathbb{R}_+^2\times \mathbb{R})$, then by Itô's formula, and passing to the expectation, we get for every $T >0$ that
 \begin{align*}
  \mathbb{E}\left[\varphi(T, \bm{X}_T, \bm{V}_T)\right] = & \:\varphi(0, x_0, v_0) + \int_0^T\mathbb{E}\left[(\partial_t\varphi + v\partial_x\varphi + \mathcal{L}\varphi)(s, \bm{X}_s, \bm{V}_s)\right]\dr s \\
  & + \sum_{n\in \mathbb{N}}\mathbb{E}\left[(\varphi(\bm{\tau}_n, 0, M_n) - \varphi(\bm{\tau}_n, 0, \bm{V}_{\tau_n -}))\bm{1}_{\{\bm{\tau}_n \leq t\}}\right].
 \end{align*}
The local martingale part $\int_0^T \partial_v \varphi(s,\bm{X}_s,\bm{V}_s)\dr B_s$ is indeed a true martingale since $\partial_v\varphi$ is bounded. Let us now define the measures $\nu_- \in \mathcal{M}(\mathbb{R}_+ \times \mathbb{R}_-)$ and $\nu_+\in \mathcal{M}(\mathbb{R}_+^2)$ by
\[
 \nu_-(\dr t, \dr v) = \sum_{n\in\mathbb{N}}\mathbb{P}(\bm{\tau}_n \in \dr t, \bm{V}_{\tau_n -} \in \dr v) \quad \text{and} \quad \nu_+(\dr t, \dr v) = \sum_{n\in\mathbb{N}}\mathbb{P}(\bm{\tau}_n \in \dr t, M_n \in \dr v).
\]
Notice that for any $T > 0$, we have $\nu_-([0,T] \times \mathbb{R}_-) = \nu_+([0,T] \times \mathbb{R}_+) = \mathbb{E}[\sum_{n\in\mathbb{N}}\bm{1}_{\{\bm{\tau}_n \leq T\}}]$, which is finite since $(\bm{\tau}_{n+1} - \bm{\tau}_n)_{n\geq1}$ is an i.i.d. sequence of positive random variables. This also justifies the above exchange between $\mathbb{E}$ and $\sum$. Therefore we have for any $T > 0$
 \begin{align*}
  \mathbb{E}\left[\varphi(T, \bm{X}_T, \bm{V}_T)\right] = & \:\varphi(0, x_0, v_0) + \int_{\mathbb{R}_+^2\times \mathbb{R}}\left[\partial_t\varphi + v\partial_x\varphi + \mathcal{L}\varphi\right]\bm{1}_{\{s\leq T\}}f(\dr s, \dr x, \dr v) \\
  & + \int_{\mathbb{R}_+^2}\varphi(s,0,v)\bm{1}_{\{s\leq T\}}\nu_+(\dr s, \dr v) - \int_{\mathbb{R}_+\times\mathbb{R}_-}\varphi(s,0,v)\bm{1}_{\{s\leq T\}}\nu_-(\dr s, \dr v).
 \end{align*}
 
\noindent
Choosing $T$ large enough so that the support of $\varphi$ is included in $[0,T]\times\mathbb{R}_+\times \mathbb{R}$, we get the desired identity.

\medskip
The relation between the two measures comes from the fact that for every $n\in\mathbb{N}$, $\bm{\tau}_n$ and $M_n$ are independent and that $M_n$ is $\mu$-distributed. Indeed it is clear that, since $(M_n)_{n\in\mathbb{N}}$ is i.i.d. and also independent from the driving Brownian moton, $M_n$ is independent from $(\bm{X}_t, \bm{V}_t)_{0\leq t < \bm{\tau}_n}$ for every $n\in\mathbb{N}$. Hence we have
\[
 \nu_+(\dr t, \dr v) = \mu(\dr v)\sum_{n\in\mathbb{N}}\mathbb{P}(\bm{\tau}_n \in \dr t) = \mu(\dr v)\int_{w \in \mathbb{R}_-}\nu_-(\dr t, \dr w),
\]

\noindent
which achieves the proof.
\end{proof}

\begin{remark}\label{remark_33}
Assume for simplicity that $\mu(\dr v) = \mu(v) \dr v$. The preceding proposition shows that $f$ is a weak solution of
\begin{equation*}
    \left\lbrace
        \begin{aligned}
            \partial_t & f + v\partial_x f = \mathcal{L}^* f  \quad &\text{for} \:\: (t,x,v)\in(0,\infty)^2 \times \mathbb{R} \\
            v &  f(t,0,v) = -\mu(v)\int_{(-\infty, 0)}w f(t,0,w) \dr w & \text{for} \:\: (t,v)\in(0,\infty)^2 \\
            \:\:f&(0,\cdot,\cdot)  = \delta_{(0,v_0)} 
        \end{aligned}
    \right.
\end{equation*}
Informally, it automatically holds that $\nu_+(\dr s, \dr v) = vf(s,0,v)\bm{1}_{\{v>0\}}\dr s \dr v$ and $\nu_-(\dr s, \dr v) = -vf(s,0,v)\bm{1}_{\{v<0\}}\dr s \dr v$.
\end{remark}
For similar notions of weak solutions associated to closely related equations, we refer to Jabir-Profeta \cite[Theorem 4.2.1]{jabir2019stable} and Bernou-Fournier \cite[Definition 4]{bernou_fournier}.

\begin{proof}
We assume that $f(\dr t, \dr x, \dr v) = f(t,x,v)\dr x\dr v \dr t$ with $f$ smooth enough. Let $\varphi$ be a function belonging to $ C^{\infty}_c(\mathbb{R}_+^2\times \mathbb{R})$. We perform some integrations by parts. We have
\[
 \int_{\mathbb{R}_+^2\times \mathbb{R}}f\partial_t\varphi = -\varphi(0, x_0, v_0) - \int_{\mathbb{R}_+^2\times \mathbb{R}}\varphi \partial_tf \quad \text{and} \quad \int_{\mathbb{R}_+^2\times \mathbb{R}}f\mathcal{L}\varphi = \int_{\mathbb{R}_+^2\times \mathbb{R}}\varphi\mathcal{L}^* f.
\]

\noindent
Regarding the integration by part in $x$, we have
\[
 \int_{\mathbb{R}_+^2\times \mathbb{R}}vf\partial_x\varphi = - \int_{\mathbb{R}_+^2\times \mathbb{R}}\varphi v\partial_x f - \int_{\mathbb{R}_+}\int_{\mathbb{R}}v\varphi(s,0,v)f(s,0,v)\dr v \dr s.
\]

\noindent
Inserting the previous identities in \eqref{mmm}, it comes that
\begin{align*}
  \int_{\mathbb{R}_+^2\times \mathbb{R}}\left[\partial_tf + v\partial_xf - \mathcal{L}^*f\right]\varphi + &\int_{\mathbb{R}_+}\int_{\mathbb{R}}v\varphi(s,0,v)f(s,0,v)\dr v \dr s \\
   & = \int_{\mathbb{R}_+^2}\varphi(s,0,v)\nu_+(\dr s, \dr v) - \int_{\mathbb{R}_+\times\mathbb{R}_-}\varphi(s,0,v)\nu_-(\dr s, \dr v).
\end{align*}

\noindent
Since this holds for every $\varphi\in C^{\infty}_c(\mathbb{R}_+\times(0,\infty)\times \mathbb{R})$, we first conclude that $\partial_tf + v\partial_xf =\mathcal{L}^*f$ for $(t,x,v)\in(0,\infty)^2 \times \mathbb{R}$. Then in a second time, we see that
\[
 vf(s,0,v)\dr s \dr v = \nu_+(\dr s, \dr v) - \nu_-(\dr s, \dr v), 
\]

\noindent
i.e. $\nu_+(\dr s, \dr v) = vf(s,0,v)\bm{1}_{\{v>0\}}\dr s \dr v$ and $\nu_-(\dr s, \dr v) = -vf(s,0,v)\bm{1}_{\{v<0\}}\dr s \dr v$. Then, since $\nu_+(\dr s, \dr v) = \mu(\dr v)\int_{w \in \mathbb{R}_-}\nu_-(\dr s, \dr w)$, we conclude that $v f(t,0,v) = -\mu(v)\int_{(-\infty, 0)}w f(t,0,w) \dr w$ for $(t,v)\in(0,\infty)^2$.
\end{proof}

We finally study the limiting fractional diffusion equation. We have the following result.

\begin{proposition}\label{yyy}
Let $(R_t^\alpha)_{t\geq0} = (Z_t^\alpha - \inf_{s\in[0,t]}Z_s^\alpha)_{t\geq0}$ where $(Z_t^\alpha)_{t\geq0}$ is the stable process from Theorem \ref{main_theorem}. Let us denote by $\rho_t(\dr x) = \mathbb{P}(R_t^\alpha \in \dr x)$. The following assertions hold.
\begin{enumerate}[label=(\roman*)]
 \item If $\alpha\in(0,1)$, then for every $\varphi\in C^\infty_c(\mathbb{R}_+)$, we have
 \[
  \int_{\mathbb{R}_+}\varphi(x)\rho_t(\dr x) = \varphi(0) + \int_0^t\int_{\mathbb{R}_+}\mathcal{L}^{\alpha}\varphi(x)\rho_s(\dr x)\dr s,
 \]
 \noindent
 where
 \[
  \mathcal{L}^{\alpha}\varphi(x)=\frac{\sigma_{\alpha}}{2} \int_{\mathbb{R}}\frac{\varphi((x+z)_+) - \varphi(x)}{|z|^{1+\alpha}}\dr z = \frac{\sigma_{\alpha}}{2\alpha}\int_0^{\infty}\frac{\varphi'(y)(y-x)}{|y-x|^{\alpha + 1}}\dr y.
 \]
 
 \item If $\alpha\in[1,2)$, then for every $\varphi\in C^\infty_c(\mathbb{R}_+)$ such that $\varphi'(0) = 0$, we have
 \[
  \int_{\mathbb{R}_+}\varphi(x)\rho_t(\dr x) = \varphi(0) + \int_0^t\int_{\mathbb{R}_+}\mathcal{L}^{\alpha}\varphi(x)\rho_s(\dr x)\dr s,
 \]
 \noindent
 where
 \[
  \mathcal{L}^{\alpha}\varphi(x)=\frac{\sigma_{\alpha}}{2} \mathrm{P.V.}\int_{\mathbb{R}}\frac{\varphi((x+z)_+) - \varphi(x)}{|z|^{1+\alpha}}\dr z = \frac{\sigma_{\alpha}}{2\alpha}\mathrm{P.V.}\int_0^{\infty}\frac{\varphi'(y)(y-x)}{|y-x|^{\alpha + 1}}\dr y,
 \]
 \noindent
 where $\mathrm{P.V.}$ stands for principal values.
\end{enumerate}
\end{proposition}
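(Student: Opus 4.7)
The strategy is to identify $\mathcal{L}^\alpha$ as the infinitesimal generator of the strong Markov process $(R_t^\alpha)_{t\geq 0}$ on the prescribed class of test functions, and then to apply Dynkin's formula; the displayed weak formulation will then follow by taking expectations of the resulting martingale. The key combinatorial observation that makes $\mathcal{L}^\alpha$ arise naturally is the following jump rule: at every jump time $s$ of $Z^\alpha$ with $\Delta Z_s^\alpha = z$, one has $R_s^\alpha = (R_{s-}^\alpha + z)_+$, since either the jump does not update the infimum (yielding $\Delta R = z$) or $Z^\alpha$ jumps strictly below $\inf_{r<s}Z_r^\alpha$ (yielding $R_s^\alpha = 0$). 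Extending $\varphi$ to $\mathbb{R}$ by $\tilde\varphi(y) := \varphi(y_+)$, the jump of $\varphi(R_\cdot^\alpha)$ at time $s$ is therefore exactly $\tilde\varphi(R_{s-}^\alpha + z) - \varphi(R_{s-}^\alpha)$, which is precisely the integrand of $\mathcal{L}^\alpha\varphi(R_{s-}^\alpha)$ against the Lévy measure $\nu(\dr z) = (\sigma_\alpha/2)|z|^{-1-\alpha}\dr z$ of $Z^\alpha$.

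I would then apply the Itô formula for semimartingales with jumps to $\varphi(R_t^\alpha)$. When $\alpha \in (0,1)$, the measure $\nu$ satisfies $\int(|z|\wedge 1)\nu(\dr z)<\infty$, so $Z^\alpha$ has bounded variation and the infimum process $I_t^\alpha$ is itself pure-jump with no continuous component. The standard jump-compensation formula applied to $\varphi(R^\alpha)$ then yields
\[
\varphi(R_t^\alpha) - \varphi(0) = \int_0^t \mathcal{L}^\alpha\varphi(R_{s-}^\alpha)\,\dr s + M_t,
\]
where $M_t$ is a local martingale; since $\varphi$ has compact support in $\mathbb{R}_+$ and $\mathcal{L}^\alpha\varphi$ is bounded (integrability of the compensator reduces to $|\tilde\varphi(x+z)-\varphi(x)|\lesssim |z|\wedge 1$, compatible with $\alpha<1$), $M$ is a true martingale and taking expectation gives the result. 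When $\alpha\in[1,2)$, one must compensate small jumps in the principal-value sense by adding and subtracting $\bm{1}_{\{|z|<1\}}z\varphi'(R_{s-})$; on $\{R^\alpha>0\}$ this correction is absorbed by the smoothness of $\varphi$, and at the boundary $\{R^\alpha=0\}$ the hypothesis $\varphi'(0)=0$ both ensures that $\mathcal{L}^\alpha\varphi(0)$ is well-defined as a principal value (since $\tilde\varphi(z)-\varphi(0)=0$ for $z<0$ and $\varphi(z)-\varphi(0)=O(z^2)$ for $z>0$) and recovers the same conclusion.

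The main obstacle in the case $\alpha\in[1,2)$ is that, because $Z^\alpha$ then has unbounded variation, the nondecreasing process $L_t := -\inf_{s\leq t}Z_s^\alpha$ carries a genuine continuous local-time-type component, and the Itô expansion formally produces a boundary contribution $\int_0^t \varphi'(R_s^\alpha)\,\dr L_s^c = \int_0^t \varphi'(0)\,\dr L_s^c$, since $L$ only increases on $\{R^\alpha=0\}$. Under the hypothesis $\varphi'(0)=0$ this boundary term vanishes identically, which is the precise reason the additional assumption is needed when $\alpha\geq 1$; this justifies the local-martingale closure of the decomposition. Finally, the equivalent representation $\mathcal{L}^\alpha\varphi(x) = (\sigma_\alpha/2\alpha)\int_0^\infty \varphi'(y)(y-x)|y-x|^{-\alpha-1}\dr y$ (in principal value when $\alpha\geq 1$) follows from a direct integration by parts in the $z$-integral, using the compact support of $\varphi$ in $\mathbb{R}_+$ and the fact that $\tilde\varphi$ is constant on $(-\infty,0]$.
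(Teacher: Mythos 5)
Your part (i) is correct and is essentially the paper's own argument: $R^\alpha$ is pure jump with update rule $R^\alpha_s=(R^\alpha_{s-}+\Delta Z^\alpha_s)_+$, Itô's formula with jump compensation gives the weak identity, and an integration by parts gives the second form of $\mathcal{L}^\alpha$.

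For part (ii), however, your argument rests on a false structural claim, and as a consequence the hypothesis $\varphi'(0)=0$ is never used where it is actually needed. You assert that $L_t=-\inf_{s\le t}Z^\alpha_s$ carries a genuine continuous local-time-type component and that $\varphi'(0)=0$ serves to kill the boundary term $\int_0^t\varphi'(0)\,\dr L^c_s$. But a symmetric $\alpha$-stable process has no Gaussian part and does not creep downwards: its descending ladder height process is a driftless stable subordinator, so $-I^\alpha_t$ increases by jumps only, each jump occurring at a time when $Z^\alpha$ jumps strictly below its past infimum (after which $R^\alpha=0$). Hence $L^c\equiv0$ and the term you invoke vanishes for trivial reasons, for every $\varphi$. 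The genuine difficulty lies elsewhere: after compensating the small jumps, the summand at a jump of size $z$ with $R^\alpha_{s-}=x$ small and $x+z<0$ equals $\varphi(0)-\varphi(x)-z\varphi'(x)$, and its compensator involves $\int_{x<|z|<1}|\varphi'(x)|\,|z|\,|z|^{-1-\alpha}\dr z\asymp|\varphi'(x)|\,x^{1-\alpha}$, which blows up as $x\downarrow0$ when $\alpha\ge1$ unless $|\varphi'(x)|\lesssim x$, i.e.\ unless $\varphi'(0)=0$. This uniform control of $\mathcal{L}^\alpha\varphi$ as $x\downarrow 0$ (not merely the well-posedness of $\mathcal{L}^\alpha\varphi(0)$ at the single point $x=0$, which is negligible for the time integral) is what makes the compensator finite, $\mathcal{L}^\alpha\varphi$ bounded and continuous, and the local martingale a true martingale; your remark that the correction is ``absorbed by the smoothness of $\varphi$'' on $\{R^\alpha>0\}$ ignores that this absorption degenerates as $R^\alpha\downarrow0$, which is exactly where the proof must work. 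A direct Itô argument on $R^\alpha$ can be completed along these lines (keeping track of the pure-jump term $\int_0^t\varphi'(R^\alpha_{s-})\,\dr(-I^\alpha)_s$, which merges with the jump sum), but as written the closure of the martingale/compensation step for $\alpha\in[1,2)$ is missing and the stated reason for the hypothesis $\varphi'(0)=0$ is wrong. The paper instead truncates the small jumps, applies the finite-variation case to the reflected truncated process, and passes to the limit using $\bm{\mathrm{J}}_1$-convergence of the truncated Lévy processes, continuity of the reflection map, and the uniform bound $\|\mathcal{L}^\alpha\varphi-\mathcal{L}^{\alpha,\delta}\varphi\|_\infty\le C_\varphi\delta^{2-\alpha}$ of \eqref{sup_norm}, which is precisely where $\varphi'(0)=0$ enters.
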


\begin{proof}
We will denote by $\nu(\dr z) = \frac{\sigma_\alpha}{2}|z|^{-\alpha - 1}\dr z$ the Lévy measure of the symmetric stable process $(Z_t^{\alpha})_{t\geq0}$ and we set $I_t^\alpha = \inf_{s\in[0,t]}Z_s^\alpha$.

\medskip\noindent
\textit{Item (i):} Let us denote by $\Pi(\dr t, \dr z)$ the random Poisson measure on $\mathbb{R}_+\times \mathbb{R}$ with intensity $\dr t \otimes \nu(\dr z)$, associated with $(Z_t^\alpha)_{t\geq0}$. Since $\alpha\in(0,1)$, $(Z_t^\alpha)_{t\geq0}$ has finite variations and we have $Z_t^\alpha =  \int_0^t\int_{\mathbb{R}}z\Pi(\dr s, \dr z)$. The process $(R_t^\alpha)_{t\geq0}$ is also a pure jump Markov process and we check that $\Delta R_t^\alpha = (R_{t-}^\alpha + \Delta Z_t^\alpha)_+ - R_{t-}^\alpha$. If first $\Delta Z_t^\alpha > -R_{t-}^\alpha = -Z_{t-}^\alpha + I_{t-}^\alpha$, then $Z_{t}^\alpha > I_{t-}^\alpha$ so that $I_{t}^\alpha = I_{t-}^\alpha$ and $\Delta R_t^\alpha = \Delta Z_t^\alpha$. If next $\Delta Z_t^\alpha \leq -R_{t-}^\alpha$, then $Z_t^\alpha \leq I_{t-}^\alpha$ so that $I_{t}^\alpha = Z_{t}^\alpha$ and thus $R_{t}^\alpha = 0$ i.e. $\Delta R_{t}^\alpha = -R_{t-}^\alpha$. Therefore, we have $R_t^\alpha =  \int_0^t\int_{\mathbb{R}}[(R_{s-}^\alpha + z)_+ - R_{s-}^\alpha]\Pi(\dr s, \dr z)$. By Itô's formula, we get that for any $\varphi\in C^\infty_c(\mathbb{R}_+)$,
\[
 \mathbb{E}\left[\varphi(R_t^\alpha)\right] = \varphi(0) + \int_0^t\int_{\mathbb{R}}\mathbb{E}\left[\varphi((R_{s}^\alpha + z)_+) - \varphi(R_{s}^\alpha)\right]\nu(\dr x)\dr s,
\]

\noindent
which exactly means $\int_{\mathbb{R}_+}\varphi(x)\rho_t(\dr x) = \varphi(0) + \int_0^t\int_{\mathbb{R}_+}\mathcal{L}^{\alpha}\varphi(x)\rho_s(\dr x)\dr s$, where the operator $\mathcal{L}^\alpha$ is defined as $\mathcal{L}^{\alpha}\varphi(x) = \frac{\sigma_\alpha}{2}\int_{\mathbb{R}}[\varphi((x+z)_+) - \varphi(x)]|z|^{-1-\alpha}\dr z$. It only remains to prove the second identity for $\mathcal{L}^{\alpha}$. We assume that $x > 0$, the proof for $x=0$ being similar. We have
\[
 \frac{2}{\sigma_\alpha}\mathcal{L}^{\alpha}\varphi(x) = \frac{\varphi(0) - \varphi(x)}{\alpha}x^{-\alpha} + \int_{-x}^0\frac{\varphi(x+z) - \varphi(x)}{|z|^{1+\alpha}}\dr z + \int_0^{\infty}\frac{\varphi(x+z) - \varphi(x)}{|z|^{1+\alpha}}\dr z
\]

\noindent
Then, performing carefully two integration by parts, we see that
\[
 \int_{-x}^0\frac{\varphi(x+z) - \varphi(x)}{|z|^{1+\alpha}}\dr z = -\frac{\varphi(0) - \varphi(x)}{\alpha}x^{-\alpha} + \frac{1}{\alpha}\int_{-x}^0\frac{\varphi'(x+z)z}{|z|^{1+\alpha}}\dr z
\]

\noindent
and
\[
 \int_0^{\infty}\frac{\varphi(x+z) - \varphi(x)}{|z|^{1+\alpha}}\dr z = \frac{1}{\alpha}\int_0^{\infty}\frac{\varphi'(x+z)z}{|z|^{1+\alpha}}\dr z.
\]

\noindent
Putting the pieces together, we get
\[
 \frac{2}{\sigma_\alpha}\mathcal{L}^{\alpha}\varphi(x) = \frac{1}{\alpha}\int_{-x}^{\infty}\frac{\varphi'(x+z)z}{|z|^{1+\alpha}}\dr z = \frac{1}{\alpha}\int_{0}^{\infty}\frac{\varphi'(y)(y-x)}{|y-x|^{1+\alpha}}\dr y.
\]

\medskip\noindent
\textit{Item (ii):}  When $\alpha\in[1,2)$, the second expression of $\mathcal{L}^\alpha$ is obtained as in the previous case. We need to remove the small jumps and work with the pure jump Lévy process $(Z_t^{\alpha, \delta})_{t\geq0}$ with Lévy measure $\nu_{\delta}(\dr z) = \bm{1}_{\{|z| > \delta\}}\nu(\dr z)$. Since $\nu$ is symmetric, $Z_t^{\alpha, \delta} \to Z_t^\alpha$ in law as $\delta\to0$ for every $t\geq0$. This implies, see \cite[Chapter VII, Corollary 3.6]{jacod2013limit}, that $(Z_t^{\alpha, \delta})_{t\geq0}$ converges in law to $(Z_t^{\alpha})_{t\geq0}$ in the space of càdlàg functions endowed with the $\bm{\mathrm{J}}_1$-topology as $\delta\to0$. But since the reflection map is continuous with respect to this topology, see \cite[Chapter 13, Theorem 13.5.1]{book_whitt}, the continuous mapping theorem implies that $(R_t^{\alpha, \delta})_{t\geq0} = (Z_t^{\alpha, \delta} - \inf_{s\in[0,t]}Z_s^{\alpha, \delta})_{t\geq0}$ converges weakly to $(R_t^\alpha)_{t\geq0}$ as $\delta\to0$. In particular, if we set $\rho_t^\delta(\dr x) = \mathbb{P}(R_t^{\alpha,\delta}\in \dr x)$, then the probability measure $\rho_t^\delta$ converges weakly to $\rho_t$ as $\delta\to0$.

\medskip
But since $(R_t^{\alpha, \delta})_{t\geq0}$ has finite variations, we can use the very same argument as in the first step to see that for every function $\varphi\in C^\infty_c(\mathbb{R}_+)$, we have 
\[
  \int_{\mathbb{R}_+}\varphi(x)\rho_t^\delta(\dr x) = \varphi(0) + \int_0^t\int_{\mathbb{R}_+}\mathcal{L}^{\alpha, \delta}\varphi(x)\rho_s^\delta(\dr x)\dr s,
\]

\noindent
where $\mathcal{L}^{\alpha, \delta}$ is such that for every $x \geq0$
\[
 \mathcal{L}^{\alpha,\delta}\varphi(x)=\frac{\sigma_{\alpha}}{2} \int_{|z| > \delta}\frac{\varphi((x+z)_+) - \varphi(x)}{|z|^{1+\alpha}}\dr z.
\]

\noindent
First, it is clear that $\int_{\mathbb{R}_+}\varphi(x)\rho_t^\delta(\dr x)$ converges to $\int_{\mathbb{R}_+}\varphi(x)\rho_t(\dr x)$ as $\delta\to0$. We will now conclude by showing that, if $\varphi'(0) = 0$,
\begin{equation}\label{final}
 \int_0^t\int_{\mathbb{R}_+}\mathcal{L}^{\alpha, \delta}\varphi(x)\rho_s^\delta(\dr x)\dr s \longrightarrow \int_0^t\int_{\mathbb{R}_+}\mathcal{L}^{\alpha}\varphi(x)\rho_s
 (\dr x)\dr s \quad \text{as  }\delta\to0.
\end{equation}

\noindent
\textit{Step 1:} To do so, we show in Step 2 that there exists a positive constant $C_{\varphi}$ such that 
\begin{equation}\label{sup_norm}
 ||\mathcal{L}^{\alpha}\varphi - \mathcal{L}^{\alpha,\delta}\varphi||_{\infty} \leq C_{\varphi}\delta^{2-\alpha}.
\end{equation}

\noindent
This shows that $\mathcal{L}^{\alpha}\varphi$ is a continuous and bounded function. Moreover, since
\begin{align*}
 \Big|\int_0^t\int_{\mathbb{R}_+}\mathcal{L}^{\alpha}\varphi(x)\rho_s(\dr x)\dr s - &\int_0^t\int_{\mathbb{R}_+}\mathcal{L}^{\alpha,\delta}\varphi(x)\rho_s^\delta
 (\dr x)\dr s\Big| \\
  & \leq \int_0^t\int_{\mathbb{R}_+}|\mathcal{L}^{\alpha}\varphi(x) - \mathcal{L}^{\alpha,\delta}\varphi(x)|\rho_s^\delta(\dr x)\dr s \\
  & + \Big|\int_0^t\int_{\mathbb{R}_+}\mathcal{L}^{\alpha}\varphi(x)\rho_s^\delta(\dr x)\dr s - \int_0^t\int_{\mathbb{R}_+}\mathcal{L}^{\alpha}\varphi(x)\rho_s
 (\dr x)\dr s\Big|,
\end{align*}

\noindent
it is clear that \eqref{sup_norm} implies \eqref{final}. Indeed the first term clearly goes to $0$ as $\delta\to0$ and the second term converges since for every $s\in[0,t]$, $\int_{\mathbb{R}_+}\mathcal{L}^{\alpha}\varphi(x)\rho_s^\delta(\dr x) \to \int_{\mathbb{R}_+}\mathcal{L}^{\alpha}\varphi(x)\rho_s(\dr x)$ as $\mathcal{L}^{\alpha}\varphi$ is a continuous and bounded function and we can thus apply the dominated convergence theorem.

\medskip\noindent
\textit{Step 2:} We show that \eqref{sup_norm} holds. Let us first explicit a bit more $\mathcal{L}^{\alpha}\varphi(x)$. Let us remark that since $\varphi'(0) = 0$, there is no need for principal values for $\mathcal{L}^{\alpha}\varphi(0) = \int_0^{\infty}(\varphi(z) - \varphi(0))|z|^{-1-\alpha}\dr z$. If $x > 0$, we have for any $\epsilon < x$, $\int_{-x}^{\epsilon}z|z|^{-1-\alpha}\dr z + \int_{\epsilon}^{x}z|z|^{-1-\alpha} = 0$ \begin{equation}\label{eq_generateur}
 \mathcal{L}^{\alpha}\varphi(x) = \int_{\mathbb{R}}\frac{\varphi((x+z)_+) - \varphi(x) - z\varphi'(x)\bm{1}_{\{|z| < x\}}}{|z|^{1+\alpha}}\dr z.
\end{equation}
For the very same reason, the same identity holds for $\mathcal{L}^{\alpha, \delta}\varphi(x)$ and therefore, for any $x > 0$
\begin{align*}
 \mathcal{L}^{\alpha}\varphi(x) - \mathcal{L}^{\alpha,\delta}\varphi(x) = &  \int_{|z| < \delta}\frac{\varphi((x+z)_+) - \varphi(x) - z\varphi'(x)\bm{1}_{\{|z| < x\}}}{|z|^{1+\alpha}}\dr z \\ 
  = & \frac{\varphi(0) - \varphi(x)}{\alpha}(x^{-\alpha} - \delta^{-\alpha}) \bm{1}_{\{\delta > x\}} \\
  & + \int_{-(\delta \wedge x)}^\delta \frac{\varphi(x+z) - \varphi(x) - z\varphi'(x)\bm{1}_{\{|z| < x\}}}{|z|^{1+\alpha}}\dr z.
\end{align*}
Since $\varphi\in C^\infty_c(\mathbb{R}_+)$ and $\varphi'(0) = 0$, it is clear that if we set $D_\varphi = ||\varphi''||_{\infty}$ we have for any $x \geq 0$ and any $z\geq-x$, $|\varphi(x+z) - \varphi(x) - z\varphi'(x)| \leq D_\varphi z^2 / 2$. Since $\varphi'(0) = 0$, we also have for any $x\geq0$, $|\varphi'(x)| \leq D_\varphi |x|$, from which we get that $|\varphi(x+z) - \varphi(x)| \leq D_\varphi |x||z| + D_\varphi z^2$. In any case, we see that for any $x \geq 0$ and any $z\geq-x$
\[
 \left| \varphi(x+z) - \varphi(x) - z\varphi'(x)\bm{1}_{\{|z| < x\}} \right| \leq 2D_\varphi z^2,
\]

\noindent
It comes that 
\[
 \left|\int_{-(\delta \wedge x)}^\delta \frac{\varphi(x+z) - \varphi(x) - z\varphi'(x)\bm{1}_{\{|z| < x\}}}{|z|^{1+\alpha}}\dr z\right| \leq \frac{4D_\varphi}{2-\alpha} \delta^{2-\alpha}.
\]
We also get
\[
 \Big|\frac{\varphi(0) - \varphi(x)}{\alpha}(x^{-\alpha} - \delta^{-\alpha})\Big| \bm{1}_{\{\delta > x\}}\leq \frac{D_\varphi}{\alpha}x^{2-\alpha}\bm{1}_{\{\delta > x\}} \leq \frac{D_\varphi}{\alpha}\delta^{2-\alpha}.
\]
All in all, we showed that for any $x > 0$, $|\mathcal{L}^{\alpha}\varphi(x) - \mathcal{L}^{\alpha,\delta}\varphi(x)| \leq C_\varphi \delta^{2-\alpha}$. When $x = 0$, we have
\[
 \left|\mathcal{L}^{\alpha}\varphi(0) - \mathcal{L}^{\alpha,\delta}\varphi(0)\right| = \left|\int_0^\delta\frac{\varphi(z) - \varphi(0)}{|z|^{1+\alpha}}\dr z\right| \leq \frac{D_\varphi}{2-\alpha}\delta^{2-\alpha} \leq C_\varphi\delta^{2-\alpha}.
\]

\noindent
This shows that \eqref{sup_norm} holds.
\end{proof}

\begin{remark}\label{ggg}
 The above proposition shows that $\rho$ is a weak solution of
 \begin{equation*}
    \left\lbrace
        \begin{aligned}
            &\partial_t \rho_t(x) = \frac{\sigma_\alpha}{2}\int_{\mathbb{R}}\frac{\rho_t(x-z) \bm{1}_{\{x > z\}} - \rho_t(x) + z\partial_x\rho_t(x)\bm{1}_{\{|z|< x\}}}{|z|^{1+\alpha}}\dr z  \;\; &\text{for} \:\: (t,x)\in(0,\infty)^2, \\
            &\int_{0}^\infty \rho_t(x) \dr x = 1 \quad &\text{for} \:\: t\in(0,\infty), \\
            &\:\rho_0(\cdot)  = \delta_{0}. 
        \end{aligned}
    \right.
\end{equation*}
The term $z\partial_x\rho_t(x)\bm{1}_{\{|z|< x\}}$ is useless when $\alpha \in(0,1)$.
\end{remark}
\begin{proof}
 As usual, we do as if $\rho_t$ was sufficiently regular so that all the computations below hold true. By the way, it might actually be the case, see for instance Chaumont-Malecki \cite{chaumont2020entrance}, but this is not our purpose. Consider a function $\varphi \in C^\infty_c((0,\infty))$. Then, by Proposition \ref{yyy}, we have that
 \[
  \frac{\dr }{\dr t} \int_{\mathbb{R}_+}\varphi(x)\rho_t(x) \dr x = \int_{\mathbb{R}_+}\rho_t(x) \mathcal{L}^\alpha\varphi(x) \dr x = \frac{\sigma_\alpha}{2}[I_t(\varphi) - J_t(\varphi)],
 \]
where, recalling \eqref{eq_generateur},
\[
 I_t(\varphi) = \int_{\mathbb{R}_+}\int_{-x}^\infty\rho_t(x)\frac{\varphi(x+z) - \varphi(x) - z\varphi'(x)\bm{1}_{\{|z|< x\}}}{|z|^{1+\alpha}} \dr z \dr x
\]
and, since $\varphi(0) = 0$,
\[
 J_t(\varphi) = \int_{\mathbb{R}_+} \frac{\varphi(x)\rho_t(x)}{\alpha x^{\alpha}}.
\]
We first focus on $I_t(\varphi)$. Exchanging integrals, we get
\[
 I_t(\varphi) = \int_{\mathbb{R}}|z|^{-1-\alpha}\int_{(-z)\vee 0}^\infty\rho_t(x)[\varphi(x+z) - \varphi(x) - z\varphi'(x)\bm{1}_{\{|z|< x\}}]\dr x \dr z.
\]
We next write, for any $z \in\mathbb{R}$,
\begin{align*}
 \int_{(-z)\vee 0}^\infty&\rho_t(x)[\varphi(x+z) - \varphi(x) - z\varphi'(x)\bm{1}_{\{|z|< x\}}]\dr x \\
  & = \int_{z\vee 0}^\infty\rho_t(x-z)\varphi(x) \dr x - \int_{(-z)\vee 0}^\infty\rho_t(x)\varphi(x) \dr x - z\int_{(-z)\vee 0}^\infty\rho_t(x)\varphi'(x)\bm{1}_{\{|z|< x\}} \dr x.
\end{align*}
Regarding the third term, we have $\int_{(-z)\vee 0}\rho_t(x)\varphi'(x)\bm{1}_{\{|z|< x\}} \dr x = \int_{|z|}^\infty\rho_t(x)\varphi'(x) \dr x$, so that, performing an integration by part, we have
\begin{align*}
 z\int_{(-z)\vee 0}\rho_t(x)\varphi'(x)\bm{1}_{\{|z|< x\}} \dr x = z\rho_t(|z|)\varphi(|z|) - z\int_{(-z)\vee 0}\varphi(x)\partial_x\rho_t(x)\bm{1}_{\{|z|< x\}} \dr x.
\end{align*}
All in all, it holds that
\begin{align*}
 I_t(\varphi) = & \int_{\mathbb{R}_+}\varphi(x)\int_{\mathbb{R}}\frac{\rho_t(x-z)\bm{1}_{\{x > z\}} - \rho_t(x)\bm{1}_{\{x > -z\}} + z\partial_x\rho_t(x)\bm{1}_{\{|z|< x\}}}{|z|^{1+\alpha}} \dr z \dr x \\
 & + \int_{\mathbb{R}}\frac{z}{|z|^{1+\alpha}}\rho_t(|z|)\varphi(|z|) \dr z.
\end{align*}
The last term is equal to zero since the integrand is an odd function. Finally, we write
\[
 J_t(\varphi) = \int_{\mathbb{R}_+}\varphi(x)\int_{-\infty}^{-x}\frac{\rho_t(x)}{|z|^{1+\alpha}}\dr z.
\]
Recombinig all the terms, we get that
\[
 \int_{\mathbb{R}_+}\rho_t(x) \mathcal{L}^\alpha\varphi(x) \dr x = \int_{\mathbb{R}_+}\varphi(x) \mathcal{A}\rho_t(x) \dr x,
\]
where $\mathcal{A}\rho_t$ is defined for every $x > 0$ as
\[
 \mathcal{A}\rho_t(x) = \frac{\sigma_{\alpha}}{2}\int_{\mathbb{R}}\frac{\rho_t(x-z)\bm{1}_{\{x > z\}} - \rho_t(x) + z\partial_x\rho_t(x)\bm{1}_{\{|z|< x\}}}{|z|^{1+\alpha}} \dr z.
\]
Therefore, we conclude that for any $\varphi\in C^\infty_c((0,\infty))$, we have
\[
 \int_{\mathbb{R}_+}\varphi(x)[\partial_t \rho_t(x) - \mathcal{A}\rho_t(x)] \dr z = 0,
\]
which is enough to deduce that for any $t>0$ and any $x >0$, $\partial_t\rho_t(x) = \mathcal{A}\rho_t(x)$. 
\end{proof}

\bibliographystyle{plain}
\bibliography{refs.bib}
\end{document}